\tikzstyle{boxes} = [rectangle, minimum width=1cm, minimum height=0.5cm, text centered, draw=black, fill=white]
\tikzstyle{nocontr} = [rectangle, minimum width=1cm, minimum height=0.5cm, text centered, draw=black, fill=yellow]
\tikzstyle{contr} = [rectangle, minimum width=0.6cm, minimum height=0.2cm, text centered, draw=black, fill=red!30]
\tikzstyle{arrow} = [thick,->,>=stealth]
\newcommand{\Proj}{{\rm Proj}}
\newcommand{\Spec}{{\rm Spec}}
\newcommand{\Hom}{ \,{\rm Hom} \,}
\newcommand{\Sym}{ \,{\rm Sym} \,}
\newcommand{\im}{ \,{\rm Im} \,}
\newtheorem{theorem}{Theorem}[section]
\newtheorem*{theorem*}{Theorem}
\newtheorem*{maintheorem}{Main Theorem}
\newtheorem{proposition}[theorem]{Proposition}
\newtheorem{corollary}[theorem]{Corollary}
\newtheorem{lemma}[theorem]{Lemma}
\newtheorem{definition}[theorem]{Definition}
\newtheorem{remark}{Remark}
\newcommand{\RR}{{\mathbb R }}
\newcommand{\AAA}{{\mathbb A }}
\newcommand{\CC}{{\mathbb C }}
\newcommand{\ZZ}{{\mathbb Z }}
\newcommand{\PP}{ {\mathbb P }}
\newcommand{\QQ}{{\mathbb Q }}
\newcommand{\GG}{{\mathbb G }}
\newcommand{\ff}{{\mathbf f }}
\newcommand{\bgg}{{\mathbf g }}
\newcommand{\kk}{{\mathbf k }}
\newcommand{\tPP}{ {\tilde{\mathbb P }}}
\newcommand{\calc}{\mathcal{C}}
\newcommand{\calo}{\mathcal{O}}
\newcommand{\cale}{\mathcal{E}}
\newcommand{\calt}{\mathcal{T}}
\newcommand{\call}{\mathcal{L}}
\newcommand{\calz}{\mathcal{Z}}
\newcommand{\calm}{\mathcal{M}}
\newcommand{\lieb}{{\mathfrak b}}
\newcommand{\liek}{{\mathfrak k}}
\newcommand{\lieu}{{\mathfrak u}}
\newcommand{\liet}{{\mathfrak t}}
\newcommand{\liev}{{\mathfrak v}}
\DeclareMathOperator{\stab}{Stab}
\DeclareMathOperator{\Stab}{Stab}
\newcommand{\env}{\!
\mathbin{\text{\rotatebox[origin=c]{70}{\scalebox{1.2}{$\approx$}}}} \!}
\newcommand{\weight}{\omega}
\newcommand{\wt}{\gamma}
\newcommand{\hU}{\hat{U}}
\newcommand{\tf}{\tilde{f}}
\newcommand{\reg}{\mathrm{reg}}
\newcommand{\Euler}{\mathrm{Euler}}
\newcommand{\mapnk}{J(n,m)}
\newcommand{\gnk}{\mathrm{Diff}_n \times \mathrm{Diff}_m}
\newcommand{\Tp}{\mathrm{Tp}}
\newcommand{\mult}{\mathrm{mult}}
\newcommand{\epd}[1]{\mathrm{eP}[#1]}
\newcommand{\mdeg}[1]{\mathrm{mdeg}[#1]}
\newcommand{\epdgl}[1]{\mathrm{eP}_{\GL(n) \times \GL(m)}[#1]}
\newcommand{\emu}{\mathrm{emult}}
\newcommand{\dist}{{\mathrm{dst}}}
\newcommand{\lead}{\mathrm{lead}}
\newcommand{\Span}{\mathrm{Span}}
\newcommand{\Bipi}{{\boldsymbol{\Pi}_k}}
\newcommand{\bipi}{{\boldsymbol{\pi}}}
\newcommand{\sg}[1]{\mathcal{S}_{#1}}
\newcommand{\res}{\operatornamewithlimits{Res}}
\newcommand{\ires}{\res_{z_1=\infty}\res_{z_{2}=\infty}\dots\res_{z_k=\infty}}
\newcommand{\sires}{\res_{\mathbf{z}=\infty}}
\newcommand{\dbz}{\,d\mathbf{z}}
\newcommand{\coeff}{\mathrm{coeff}}
\newcommand{\symdot}{\mathrm{Sym}^{\le k}\CC^n}
\newcommand{\grass}{\mathrm{Grass}}
\newcommand{\flag}{\mathrm{Flag}}
\newcommand{\diff}{\mathrm{Diff}}
\newcommand{\TT}{\mathrm{T}}
\newcommand{\kt}{{K}}
\newcommand{\jetk}[2]{J_{k}({#1},{#2})}
\newcommand{\jetreg}[2]{J_{k}^{\mathrm{reg}}({#1},{#2})}
\newcommand{\jetnondeg}[2]{J_{k}^{\mathrm{nondeg}}({#1},{#2})}
\newcommand{\nondeg}{\mathrm{nondeg}}
\newcommand{\tc}{\hat T}
\newcommand{\GL}{\mathrm{GL}}
\newcommand{\sym}{\mathrm{Sym}}
\newcommand{\mon}{\mathrm{mon}}
\newcommand{\red}{\mathrm{red}}
 \newcommand{\lieg}{{\mathfrak g}}
\newcommand{\Hilb}{\mathrm{Hilb}}
\newcommand{\CHilb}{\mathrm{CHilb}}
\newcommand{\Thom}{\mathrm{Thom}}
\newcommand{\blow}{\mathrm{Bl}}
\newcommand{\C}{\mathbb{C}}
\newcommand{\tZmin}{\tilde{Z}_{\min}}
\newcommand{\vv}{v}
\newcommand{\bz}{\mathbf{z}}
\newcommand{\bv}{\mathbf{v}}
\newcommand{\bt}{\mathbf{t}}
\newcommand{\bu}{\mathbf{u}}
\newcommand{\bx}{\mathbf{x}}
\def\a{\alpha}
\def\b{\beta}
\def\g{\gamma}
\def\l{\lambda}
\def\x{\xi}
\def\s{\sigma}
\def\vp{\varphi}
\title{Non-reductive geometric invariant theory and Thom polynomials} 
\author{Gergely B\'erczi}
\address{Department of Mathematics, Aarhus University} \email{gergely.berczi@math.au.dk}
\thanks{This work was partially supported by the Aarhus University Research Foundation grant AUFF 29289.}
\dedicatory{To Eszter, Dani, Flora and Epsilon}
\date{}
\begin{document}

\begin{abstract}
We combine recently developed intersection theory for non-reductive geometric invariant theoretic quotients with equivariant localisation to prove a formula for Thom polynomials of Morin singularities. These formulas use only toric combinatorics of certain partition polyhedra, and our new approach circumvents the poorly understood Borel geometry of existing models. 
\end{abstract}

\maketitle
\tableofcontents

\section{Introduction} 
We combine the results of \cite{BDHK0} with a blow-up process to construct a new non-reductive GIT type model for the moduli space of $k$-jets, which parametrises holomorphic $k$-jets of map germs $(\CC,0) \to (\CC^n,0)$ modulo polynomial reparametrisations of $\CC$ at the origin. Intersection theory developed in \cite{bkcoh} for non-reductive GIT quotients accompanied with equivariant localisation and a residue vanishing theorem leads us to a closed iterated residue formula for Thom polynomials of Morin singularities. Our formula builds only on the toric geometry of partition polyhedra, and we avoid the use of unknown Borel geometry appeared in the earlier works \cite{kazarian3,bsz,rimanyifeher}.

Global singularity theory studies the topology of holomorphic maps between complex manifolds. A fundamental question is how to describe the topological locus in the source space where a generic map has a given type of singularity. A more precise formulation of this problem goes back to Thom \cite{thom} and reads as follows. Let $f: N \to M$ be a map between two complex manifolds, $N$ and $M$, of dimensions $n\le m$. We say that $p\in N$ is a {\em singular} point of $f$ if the rank of the differential $df_p:T_pN\to T_{f(p)}M$ is less than $n$. In order to introduce a finer classification of singular points, choose local coordinates near $p\in N$ and $f(p)\in M$, and consider the resulting
map-germ $f_p:(\CC^n,0)\to(\CC^m,0)$, which may be thought of as a
sequence of $m$ power series in $n$ variables without constant terms. Let $J(n,m)$ denote the space of such map-germs endowed with the action of the group $\gnk$ of infinitesimal local coordinate changes. The $\gnk$-orbits or, more generally, $\gnk$-invariant subsets
$O\subset \mapnk$ are called {\em singularities}. For a singularity $O$ and
holomorphic $f:N\to M$, we can define the set
\[\Sigma_O(f) = \{p\in N;\; f_p\in O\}, \] which is independent of any
coordinate choices. $N$. Assuming $N$
is compact and $f$ is sufficiently generic, $\Sigma_O(f)$ is an analytic subvariety of $N$ giving a Poincar\'e dual class $[\Sigma_O(f)]\in H^*(N,\mathbb{Z})$. This problem was first studied by Ren\'e Thom (cf. \cite{thom, haef}) in the category of smooth varieties and smooth maps; in this case cohomology with $\ZZ/2\ZZ$-coefficients is used. Thom discovered
that to every singularity $O$ one can associate a bivariant characteristic class $\tau_O$, which, when evaluated on the pair $(TN,f^*TM)$ produces the Poincar\'e dual class $[\Sigma_O(f)]$.  One of the consequences of this result is that the class $[\Sigma_O(f)]$ depends only on the homotopy class of $f$.

The structure of the $\gnk$-action on $\mapnk$ is rather complicated; even the parametrization of the orbits is difficult.  There is, however, a simple invariant on the space of orbits: to each map-germ $f=(f_1,\ldots, f_m):(\C^n,0)\to(\C^m,0)$, we can associate the finite-dimensional nilpotent algebra $A_f=(x_1,\ldots, x_n)/(f_1,\ldots, f_m)$ defined as the quotient of the algebra of power series with no constant term by the ideal generated by the pull-back subalgebra $f^*(y_1\ldots,y_m)$. This algebra $A_{f}$ is trivial if
the map-germ $f$ is nonsingular, and it does not change along a $\gnk$-orbit. Hence, for a fixed nilpotent algebra $A$ with $n$ generators one can associate the invariant subset 
$\Sigma_A=\{f \in J(n,m):A_f \simeq A\}$,
the set of jets with local algebra isomorphic to $A$. The {\em Thom principle} in this holomorphic setting (cf \cite{kazariantalk,rimanyifeher2,bsz}) for $\calo=\Sigma_A$  tells that there is a jet order $k$ depending only on $A$ such that  
\[ [\Sigma_A(f)]=\epd{\Sigma_A,J_k(n,m)}(f^*TN,TM),\]
is obtained by substituting the Chern roots of $TN,f^*TM$ into the $\GL(n) \times \GL(m)$-equivariant dual of $\Sigma_A$ in $J_k(n,m)$. This equivariant dual sits in the ring $\CC[x_1,\ldots, x_n,y_1,\ldots, y_m]^{S_n\times S_m}$ of bi-symmetric polynomials on $n+m$ variables. Damon and Ronga \cite{damon,sigma11} proved that $[\Sigma_A(f)]$ depends on $TN,f^*TM$ only through the Chern classes of the difference bundle $TN-f^*TM$, and hence $[\Sigma_A(f)]=\Tp_A(c_1,c_2,\ldots)$ is a polynomial in these classes, which we call the {\em Thom polynomial of $A$}. Calculation of these polynomials remains a major open problem ever since then, see e.g \cite{arnold, rimanyi, kazariantalk, rimanyifeher3}.

In this paper we study the Morin case, i.e  when $A_k=\CC[t]/t^{k+1}$ for some $k$. The problem of calculating $\Tp_k=\Tp_{A_k}$ goes back to Thom \cite{thom,teissier}.  The case $k=1$ is the classical formula of Porteous \cite{porteous}: $\Tp_1=c_{m-n+1}$. The Thom polynomial in the $k=2$ case was computed by Ronga in \cite{sigma11}. An explicit formula for $\Tp_3$ was proposed in \cite{bfr}, and P. Pragacz has given a sketch of its proof \cite{prag}. Finally, using his method of restriction equations, Rim\'anyi \cite{rimanyi} was able to treat the $n=k$ case, and computed $\Tp_k$ for $k\le 8$ in the equidimensional $n=m$ case (cf. \cite{gaffney} for the case $k=4$).

In \cite{bsz} we introduced a new approach and used equivariant localisation to prove iterated residue formulas for Morin singularities for any $n,k$. Let $J_k(1,n)$ denote the $k$-jets of holomorphic map germs $(\mathbb{C},0) \to (\mathbb{C}^n,0)$. These jets can be reparametrised by the polynomial reparametrisation group $\diff_k=J_k^{reg}(1,1)$ formed by jets with nonzero first derivative. The quotient $J_k(1,n)/\diff_k$ is quasi-projective, it is the moduli of invariant jets of order $k$ in $\CC^n$. 
A key observation of \cite{bsz} is that (some open part of) $\Sigma_k(n,m)$ fibers over the non-reductive quotient $J_k(1,n)/\diff_k$ with affine fibers. We show in \cite{bsz} that the curvilinear component of the Hilbert scheme of $k$ points on $\CC^n$ provides a natural compactification of this non-reductive quotient, and \textit{the main message of \cite{bsz} is that the Thom polynomials are certain equivariant intersection numbers on the curvilinear Hilbert scheme}. 

This formula, however, contains an unknown ingredient, the $Q_k$ polynomial, which is the equivariant dual of a certain Borel orbit in a $\GL(n)$-module. Our knowledge about $Q_k$ is very limited. In \cite{kazarian3}, Kazarian reinterprets the defining equations of $Q_k$ as the ideal of the Morin algebra sitting in a smooth ambient space what he calls the {\em nonassociative Hilbert scheme}. These equations fully determine $Q_k$---however the description of them is out of reach, and a major problem. The problem is that besides the {\em associativity equations} (which do not cut out even the right dimensional variety--there there are other {\em exotic equations}, which are determined by the Borel geometry of the nonassociative Hilbert scheme. 

In this paper we develop an approach which avoids working with $Q_k$ and Borel orbits, and reduces the problem to toric geometry. We replace the curvilinear Hilbert scheme with a birational model, namely, the non-reductive GIT quotient $\overline{J_k(1,n)}/\diff_k$ for some proper projective completion $\overline{J_k(1,n)}$ of $J_k(1,n)$. This compactification will be a result of a blow-up procedure fully determined by the combinatorics of certain partition polyhedra. We combine intersection theory developed in \cite{bkcoh} for non-reductive GIT quotients with equivariant localisation on the blown-up space, and in particular, we use the non-reductive Jeffrey-Kirwan type integration formula proved in \cite{bkcoh}.  We prove a residue vanishing theorem, which leads us to closed iterated residue formulas for Thom polynomials of Morin singularities, detailed in the next section. 

In particular, this paper provides a combinatorial (toric) approach to a {\em partial fraction decomposition} of the rational expression $\frac{Q_k}{\prod_{1\le i+j \le k} (z_i+z_j-z_k)}$ in the formula of B\'erczi-Szenes \cite{bsz}, which might serve as a starting point towards Rim\'anyi's conjecture on Chern positivity of Thom polynomials \cite{rimanyi,pragaczweber}.  

As a final remark, we note that although this paper focuses only on Morin singularities, our approach extends to compute Thom polynomials for other contact singularity classes (see \cite{rimanyifeher,rimanyi}).

\section{The strategy and the result}\label{sec:strategy}

Fix the parameters $k\le n\le m$. Let $J_k(n,m)$ denote the $k$-jets of holomorphic map germs $(\mathbb{C}^n,0) \to (\mathbb{C}^m,0)$, and  $\Sigma_k(n,m)$
be the set of jets with local algebra $t\mathbb{C}[t]/t^{k+1}$. Its closure, $\overline{\Sigma}_k(n,m)$ is a $\GL(n) \times \GL(m)$-invariant, singular subvariety of the affine space $J_k(n,m)$. The Thom principle (see e.g (see \cite{rimanyi,kazarian3,bsz} says that the  polynomial of $A_k$ singularities is given by the $\GL(n) \times \GL(m)$-equivariant dual of $\Sigma_k(n,m)$ in $J_k(n,m)$ after substituting the Chern roots of $TN$ and $f^*TM$ into the weights:
\[\Tp_k^{n,m}=\epdgl{\Sigma_k(n,m),J_k(n,m)}(TN,f^*TM)\] 
Gaffney \cite{gaffney} observed that a generic germ $\Psi \in \Sigma_k$ has a test curve $\gamma \in J_k^{reg}(1,n)$ ({\em reg} refers to nonvanishing linear part) such that $\Psi \circ \gamma=0$ in $J_k(n,m)$. Here $\Psi=\Psi^1+\ldots +\Psi^k$ is generic if its linear part is generic, that is, the rank of $\Psi^1$ is $1$. We denote the set of these jets by $\Sigma_k^{0}(n,m)$. The test curve is unique up to polynomial reparametrizations of $(\mathbb{C},0)$; that is, up to the action of the reparametrisation group $\diff_k=J_k^{reg}(1,1)$. Therefore $\Sigma_k^{0}(n,m)$ fibers over the quasi-projective quotient $J_k^{reg}(1,n)/\diff_k$, which we call the moduli space of $k$-jets in $\CC^n$. The fibers are linear subspaces in $J_k(n,m)$ of codimension $km$.

In \cite{bsz} we construct an embedding $\phi: J_k^{reg}(1,n)/\diff_k \hookrightarrow \mathrm{Grass}(k,J_k(n,1)^*)$
and we work with the the closure $\overline{\mathrm{im}(\phi)}$ as a natural projective completion of the quotient. The key property of this compactification is that the linear fibration $\Sigma_k^{0}(n,m)$ over $J_k^{reg}(1,n)/\diff_k$ extends to a bundle $V$ over $\overline{\mathrm{im}(\phi)}$ which sits in the trivial bundle $V \subset J_k(n,m)\times \overline{\mathrm{im}(\phi)}$. We develop equivariant localisation on $\overline{\mathrm{im}(\phi)}$ to come up with an iterated residue formula for integrals over $\overline{\mathrm{im}(\phi)}$, and we present Thom polynomials of $A_k$ singularities as an equivariant intersection number, namely,
\[\epd{\Sigma_k(n,m),J_k(n,m)}=\int_{\overline{\mathrm{im}(\phi)}}\Thom(V \subset J_k(n,m))\]
is the integral of the Thom form of $V$ in $J_k(n,m)\times \overline{\mathrm{im}(\phi)}$, whose restriction to each linear fiber $V_f$ is the Euler class of the normal bundle  of $V_f$ in $J_k(n,m)$.  

In \cite{b0} we observed that $k$-jets of smooth curves at the origin on $\CC^n$ correspond to so-called curvilinear subschemes on $\CC^n$. These are points of the Hilbert scheme $\Hilb^{k+1}(\CC^n)$ which are supported at the origin and isomorphic to the algebra $t\CC[t]/t^{k+1}$. The closure of this locus in the Hilbert scheme is a punctual component, called the curvilinear component, denoted by $\CHilb^{k+1}_0(\CC^n)$. Hence this component is a natural projective compactification of the moduli space of jets, and in \cite{b0} we showed that 
%In \cite{b0} the PI identifies $\overline{\mathrm{im}(\phi)}$ with the curvilinear component of the punctual Hilbert scheme $\Hilb^k_0(\CC^n)$. The idea is the following. Let $\xi \in CX^{[k+1]}_p$ be a curvilinear subscheme supported at $p \in X$. Then $\xi$ is (scheme theoretically) contained in a smooth curve germ $\mathcal{C}_p$ in $X$: $\xi \subset \mathcal{C}_p \subset X$.
%Let $f_{\xi}:(\CC,0)\to (X,p)$ be a $k$-jet of germ parametrising $\mathcal{C}_p$. 
%Then $f_{\xi}\in J^{reg}_kX_p$ is determined only up to polynomial reparametrisation germs $\phi: (\CC,0)\to (\CC,0)$ and therefore we get the following crucial observation: 
$\CHilb^{k+1}_0(\CC^n)=\overline{\mathrm{im}(\phi)} \subset \grass_k(J_k(n,1)^*)$. Let $\cale$ denote the tautological rank $k$ bundle, and $\CC^m$ the trivial $\GL(m)$-equivariant bundle over $\overline{\mathrm{im}(\phi)}$. Then $\cale \otimes \CC^m$ is $\GL(n) \times \GL(m)$-equivariant, and in \cite{bsz} we show that 
%In the upcoming paper \cite{bsz3} we show that the building blocks of multisingularity classes are certain intersection numbers not on the curvilinear component (which belong to the algebra $t\CC[t]/t^{k+1}$) of the punctual Hilbert scheme, but on other geometrically well-defined punctual components, which belong to other nilpotent algebras.
$\Thom(V \subset J_k(n,m))=\mathrm{Euler}(\cale \otimes \CC^m)$ is the equivariant Euler class, hence
\[\epd{\Sigma_k(n,m),J_k(n,m)}=\int_{\CHilb^{k+1}_0(\CC^n)}\Euler(\cale \otimes \CC^m).\]

In this paper we substitute the curvilinear Hilbert scheme with a birational model, namely, the non-reductive geometric invariant theoretic (NRGIT) quotient $\mathrm{Jet}_k/\!/\diff_k$ for some suitable projective completion $\mathrm{Jet}_k=\overline{J_k(1,n)}$ of $J_k(1,n)$. %Here suitable means that $\mathrm{Jet}_k/\!/\diff_k$ has well-behaved boundary structure, and $\Sigma_k^0(n,m)$ fibers over this NRGIT quotient with affine fibers.
For our purposes 'well-behaved' means a completion $\overline{J_k(1,n)}$ such that our $\diff_k$-invariant morphism $\phi: J_k^{reg}(1,n) \to \CHilb^{k+1}(\CC^n)$ extends to the NRGIT semistable locus in $\overline{J_k(1,n)}$. This morphism guarantees that integration over the curvilinear Hilbert scheme can be pulled back to the NRGIT quotient, and we can work with the much better behaved NRGIT model. 
We start with the simple compactification $\PP(\CC \oplus J_k(n,m))$, and we construct a $\diff_k$-equivariant blow-up $\mathrm{Jet}_k$ such that the $\diff_k$-semistable locus $\mathrm{Jet}_k^{ss}$ admits a $\diff_k$-invariant morphism $\mathrm{Jet}_k^{ss}\to \CHilb^{k+1}(\CC^n)$: 
\begin{equation}\label{strategy}
\xymatrix{**[r] \mathrm{Jet}_k  \ar[d]^\pi & **[l] \mathrm{Jet}_k^{ss} \ar@{_{(}->}[l] \ar[rd]^-{\tilde{\phi}} & & \\
 \PP(\CC \oplus J_k(1,n)) \ar@{.>}[rr]^-{\phi} & & \CHilb^{k+1}_0(\CC^n) \ar@{^{(}->}[r] &  \grass_k(J_k(1,n)^*) } 
\end{equation}

With such a diagram in hand we will be in good position: the NRGIT quotient is categorical,  hence $\tilde{\phi}$ induces a morphism $\varphi: \mathrm{Jet}_k/\!/\diff_k \to \CHilb^{k+1}_0(\CC^n)$. The Thom polynomials and multisingularity classes will be expressed as an integral on the NRGIT quotient $\mathrm{Jet}_k/\!/\diff_k$:
\[\epd{\Sigma_k(n,m),J_k(n,m)}=\int_{\mathrm{Jet}_k/\!/\diff_k}\varphi^*\Euler(\cale \otimes \CC^m).\]
Our equivariant localisation strategy can be summarised as follows:

1) The non-reductive Jeffrey-Kirwan residue formula developed in \cite{bkcoh} can be applied to get
\[\epd{\Sigma_k(n,m),J_k(n,m)}=\res_{z=\infty} \sum_{F\in (\mathrm{Jet}_k^{\CC^*})_{\min}}\int_F\frac{(k-1)!z^{k-1}i_F^*\varphi^*\Euler(\cale \otimes \CC^m)}{\mathrm{Euler}(\mathcal{N}_{F})(z)} dz\]
Here the sum is taken over those $\CC^* \subset \diff_k$-fixed point components on the blown up $\mathrm{Jet}_k$ (these will be isolated fixed points in our construction) which have minimal $\CC^*$-weight. 

2) We will perform blow-ups at smooth centers, invariant under the $\diff_k$ action. Starting from the projective space $\PP(\CC \oplus J_k(n,m))$, this way we will end up in having a smooth iterated blow-up $\mathrm{Jet}_k$ with an induced $\diff_k$ action. In order to follow this plan, we need to start with an {\em initial master blow-up}: instead of  $\PP(\CC \oplus J_k(n,m))$, we start the blow-up process with a fiberwise compactification $\tilde{\PP}$ of the fibration over the complete flag $J_k(1,n)^{nondeg} \to J_k(1,n)^{nondeg}/B=\flag_k(\CC^n)$. This space is a partial blow-up of $\PP(\CC \oplus J_k(n,m))$, and the crucial advantage of using it is that  we can apply equivariant localisation on $\flag_k(\CC^n)$ with respect to the $\GL(n)$ action (which commutes with the $\CC^*$ action coming from $\diff_k$) and rewrite the localisation formula as an iterated residue w.r.t $k$ variables $z_1,\ldots, z_k$. 

3) Let $X_0=\tilde{\PP}$ and let $X_i$ denote the space we get after performing $i$ blow-ups. The center of each blow-up will be given by an ideal of the form $(\b_1,\ldots, \b_r)$ where the $\b_j$'s are some local affine coordinates on some $X_i$. Hence the blow-up process will be described by a \textit{rooted tree}, whose nodes are labeled by \textit{clusters of affine coordinates} on the intermediate spaces, and the \textit{edges are labeled by variables} indicating the affine chart we pick. Fixed points on $\mathrm{Jet}_k$ correspond to \textit{paths on the tree from the root to some leave} (leave=node of valency $1$). 

4) Due to a residue vansihing theorem proved in \S \ref{subsec:residuevanishing}, if a leaf (i.e torus fixed point on $\mathrm{Jet}_k$) has nonzero contribution to the residue formula, then it must be mapped by $\tilde{\phi}$ to the distinguished subset $\Lambda_k = \{\xi \in \CHilb^{k+1}(\CC^n): \calo_\xi \simeq (x_1,\ldots, x_k)/(x_1,\ldots, x_{k})^2\}$. This is a very strong condition, which allows us to drop most of the leaves from the blow-up tree. The yellow labels in the blow-up diagrams in our examples in \S \ref{sec:examples} have zero contribution, and only the red leaves have nonzero contribution.  

Before stating the main result, we fix some terminology. We will refer to fixed points on $\mathrm{Jet}_k$ as leaves of the blow-up tree. There is a unique path from the root to the leaf $L$, and such a path has the form 
\[\xymatrix{\calc_1 \ar[r]^{\b_1} & \calc_2 \ar[r]^{\b_2} & \ldots \ar[r]^{\b_{r-1}} & \calc_{\max}}\]
where $\calc_i$ are clusters of affine coordinates, that is, $\calc_i \subset \mathbf{B}$ for a fixed set of coordinates $\mathbf{B}$. The edges are labeled by variables such that $\b_i \in \calc_i$ for all $i$. The set of associated paths is denoted my $\Gamma_k$. Each variable of $\mathbf{B}$ at each level is endowed with a $T^k \times \CC^*$ weight in this path; the weight of $\b \in \mathbf{B}$ after $i$ blow-ups is denoted by $\wt^L_i(\b)$. The simple rule of forming these weights from some initial weights $\wt_1(\b)$ is the following:
\begin{equation}\label{changeofweights}
\wt^L_{i+1}(\b)=\begin{cases} \wt^L_{i}(\b) & \text{ if } \b=\b_i \text{ or } \b \notin \calc_{i+1}\\
                                        \wt^L_{i}(\b)-\wt^L_i(\b_i) & \text{ if } \b \in \calc_{i+1}\setminus \{\b_i\} \\
                                               \end{cases}
\end{equation}                                                                                
We will use the shorthand notation $\wt^L(\b)=\wt^L_{\max}(\b)$. We are ready to formulate the main theorem, which we will restate as Theorem \ref{maintheorem} at the end of this paper. 
\begin{maintheorem} For arbitrary integers $k\ll n \le m$ the Thom polynomial for the $A_k$-singularity with
$n$-dimensional source space and $m$-dimensional target space is given by the following iterated residue formula:
\[\Tp_k^{n,m}=\res_{z_1=\infty}\ldots \res_{z_k=\infty}\cdot \sum_{L \in \tilde{\phi}^{-1}(\bgg)} \frac{(k-1)!z^{k-1} (z_1\ldots z_k)^{m-n}\prod_{i<j}(z_i-z_j)}{\prod_{\b \in \mathbf{B}} \wt^L(\b)} \prod_{i=1}^k c_{TM-TN}(1/z_i)d\bz\]
where we sum over those leaves of the blow-up tree, which are mapped to the distinguished point $\bgg=[e_1\wedge \ldots \wedge e_k]\in \PP(\wedge^k \symdot)$ under the morphism $\tilde{\phi}$. 
\end{maintheorem}

\noindent \textbf{Examples} For $k=3$ we get back the formula of \cite{bsz}:
\[\Tp_3=\res_{z_1,z_2,z_3=\infty}\frac{(z_1-z_2)(z_1-z_3)(z_2-z_3)}{(2z_1-z_2)(2z_1-z_3)(z_1+z_2-z_3)}\prod_{i=1}^3 c_{TM-TN}(1/z_i)\]
For the blow-up tree and the details of the computation see \S \ref{subsec:k=3}. However, for $k=4$ we already get something different:
{\footnotesize \begin{align*}
\Tp_4 =\res_{\bz}\frac{(z_1z_2z_3z_4)^{N-n}\prod_{i<j}(z_i-z_j)\prod_{i=1}^4 c_{TM-TN}(1/z_i)d\mathbf{z}}{(z_2+z_3-z_1-z_4)(z_1+z_2-z_4)(2z_1-z_3)(2z_1-z_4)}\cdot \left(\frac{1}{(2z_1-z_2)(z_1+z_2-z_3)}+\frac{1}{(z_4-z_1-z_3)(2z_2-z_4)}\right) 
\end{align*}}
%{\scriptsize \begin{align*}
%\Tp_4=\res_{z_1,\ldots ,z_4}\frac{(z_1z_2z_3z_4)^{N-n}\prod_{i<j}(z_i-z_j)}{(z_2+z_3-z_1-z_4)(z_1+z_2-z_4)^2(2z_1-z_3)(2z_1-z_4)}
% \left(\frac{z_1+z_2-z_4}{(2z_1-z_2)(z_1+z_2-z_3)}+\frac{2z_1-z_3}{(z_4-z_1-z_3)(2z_2-z_4)}\right)\prod_{i=1}^4 c_{TM-TN}(1/z_i)
%\end{align*}}
The details of the computation including the blow-up tree and the weight table is given in \S \ref{subsec:k=4}. Recall formula from \cite{bsz}:
{\footnotesize \begin{align*}\label{Tp4bsz}
\Tp_4^{BSZ}& =\res_{z_1,\ldots,z_4=\infty}\frac{(2z_1+z_2-z_4)(z_1z_2z_3z_4)^{N-n}\prod_{i<j}(z_i-z_j)d\bz}{(2z_1-z_2)(2z_1-z_3)(z_1+z_2-z_3)(z_1+z_2-z_4)(2z_1-z_4)(z_1+z_3-z_4)(2z_2-z_4)}\cdot \prod_{i=1}^4 c_{TM-TN}(1/z_i)
\end{align*}}
The two formula give the same result, and in particular the new formula provides a nontrivial partial fraction decomposition  
\begin{multline}\nonumber \frac{1}{(z_2+z_3-z_1-z_4)(2z_1-z_2)(z_1+z_2-z_3)}+\frac{1}{(z_2+z_3-z_1-z_4)(z_4-z_1-z_3)(2z_2-z_4)}=\\
=\frac{2z_1+z_2-z_4}{(2z_1-z_2)(z_1+z_2-z_3)(z_1+z_3-z_4)(2z_2-z_4)}
\end{multline}
For $k=5$ the blow-up tree has $15$ leaves mapped to $\Xi_5$, and hence in the residue formula we have the sum of $15$ rational expression. Two of these have 0 contribution, and  in short, our formula replaces the rational expression 
\[\frac{Q_5(z_1,\ldots, z_5)}{\prod_{1\le i+j \le l\le k} (z_i+z_j-z_l)}\]
where 
\[\QQ_5(z_1,z_2,z_3,z_4,z_5)=
(2z_1+z_2-z_5)(2z_1^2+3z_1z_2-2z_1z_5+2z_2z_3-z_2z_4-z_2z_5-z_3z_4+z_4z_5).
\]
which appears in \cite{bsz} with the sum of $13$ rational expressions. This might look as a bad deal, but the real benefit comes for $k\ge 6$; for $k=6$ we know that $Q_6$ is a is a degree-$7$ polynomial in $6$ variables, with more than $1000$ terms, but our blow-up tree has $\sim 60$ leaves. And for $k\ge 7$ there is no computing capacity to cope with the relations and hence $Q_k$ is unknown, whereas our algorithm is purely toric, working entirely with monomial ideals. 

\section{Hilbert schemes and tautological integrals}

\subsection{Tautological integrals} Let $X$ be a smooth projective variety of dimension $n$ and let $F$ be a rank $r$ bundle (loc. free sheaf) on $X$.  
Let 
\[\Hilb^k(X)=\{\xi \subset X:\dim(\xi)=0,\mathrm{length}(\xi)=\dim H^0(\xi,\calo_\xi)=k\}\]
denote the Hilbert scheme of $k$ points on $X$ parametrizing length $k$ subschemes of $X$ and $F^{[k]}$ the corresponding rank $rk$ bundle on $\Hilb^k(X)$ whose fibre over $\xi \in \Hilb^k(X)$ is $F \otimes \calo_{\xi}=H^0(\xi,F|_\xi)$. 
  
 Equivalently, $F^{[k]}=q_*p^*(F)$ where $p,q$ denote the projections from the universal family of subschemes $\mathcal{U}$ to $X$ and $\Hilb^k(X)$ respectively: 
 \[\Hilb^k(X)\times X \supset \xymatrix{\mathcal{U} \ar[r]^-q \ar[d]^-p & \Hilb^k(X) \\ X & }.\]

In this paper we work with singular homology and cohomology with rational coefficients. For a smooth manifold $X$ the degree of a class $\eta \in H_*(X)$ means its pushforward to $H_*(pt)=\QQ$. By choosing  $\alpha_\eta \in \Omega^{top}(X)$,  a closed compactly supported differential form representing the cohomology class $\eta$ this degree is equal to the integral  
\[\eta \cap [X]=\int_{X} \alpha_\eta.\]

Let $\calz \subset \Hilb^k(X)$ be a geometric subset with closure $\overline{\calz}$ and $M(c_1,\ldots, c_{rk})$ be a monomial in the Chern classes $c_i=c_i(F^{[k]})$ of weighted degree equal to $\dim \overline{\calz}$ where the weight of $c_i$ is $i$. By choosing $\alpha_M \in \Omega^*(\Hilb^k(X))$, a closed compactly supported differential form representing the cohomology class of $M(c_1,\ldots, c_{rk})$, the degree  
\begin{equation}\label{integral}
[\overline{\calz}]\cap M(c_1,\ldots, c_{rk})=\int_{\overline{\calz}} \alpha_M
\end{equation}
is called a tautological integral of $F^{[k]}$. 

\subsection{Curvilinear Hilbert schemes}

Let $X$ be a smooth projective variety of dimension $n$ and for $p\in X$ let 
\[\Hilb^k_p(X)=\{ \xi \in \Hilb^k(X): \mathrm{supp}(\xi)=p\}\]
denote the punctual Hilbert scheme consisiting of subschemes supported at $p$. If $\rho: \Hilb^k(X) \to S^kX$, $\xi \mapsto \sum_{p\in X}\mathrm{length}(\calo_{\xi,p})p$ denotes the Hilbert-Chow morphism then $\Hilb^k_p(X)=\rho^{-1}(kp)$.
\begin{definition}
A subscheme $\xi \in \Hilb^k_p(X)$ is called curvilinear if $\xi$ is contained in some smooth curve $C\subset X$. Equivalently, $\xi$ is curvilinear if $\calo_\xi$ is isomorphic  to the $\CC$-algebra $\CC[z]/z^{k}$.
The punctual curvilinear locus at $p\in X$ is the set of curvilinear subschemes supported at $p$: 
\[\mathrm{Curv}^k(X)_p=\{\xi \in \Hilb^k_p(X): \xi \subset \mathcal{C}_p \text{ for some smooth curve } \mathcal{C} \subset X\}=
\{\xi:\calo_\xi \simeq \CC[z]/z^{k}\}\]
\end{definition}

For surfaces ($n=2$) $\mathrm{Curv}^k_p(X)$ is an irreducible quasi-projective variety of dimension $k-1$ which is an open dense subset in $\Hilb^k_p(X)$ and therefore its closure is the full punctual Hilbert scheme at $p$, that is, $\overline{\mathrm{Curv}}^{[k]}_p(X)=\Hilb^k_p(X)$. When $n\ge 3$ the punctual Hilbert scheme $\Hilb^k_p(X)$ is not necessarily irreducible or reduced, but the closure of the curvilinear locus is one of its irreducible components:  
\begin{lemma} $\overline{\mathrm{Curv}}^{[k]}_p$ is an irreducible component of the punctual Hilbert scheme $\Hilb^k_p(X)$ of dimension $(n-1)(k-1)$. We call this punctual component the curvilinear Hilbert scheme (supported at $p$) and denote it by $\CHilb^k_p(X)$.
\end{lemma}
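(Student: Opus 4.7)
The plan is to realise $\mathrm{Curv}^k_p(X)$ as the image of a natural algebraic map from a space of regular jets, read off irreducibility and dimension from that description, check it is Zariski-open in $\Hilb^k_p(X)$, and conclude by an elementary topological argument that its closure is an irreducible component.

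For the quotient picture, fix local coordinates identifying a neighbourhood of $p\in X$ with $(\CC^n,0)$ and consider the algebraic map
\[\pi:J_{k-1}^{\mathrm{reg}}(1,n)\longrightarrow\Hilb^k_p(\CC^n),\qquad\gamma\longmapsto\mathrm{im}(\gamma),\]
where a regular $(k-1)$-jet $\gamma$ is viewed as a closed immersion $\mathrm{Spec}(\CC[t]/t^k)\hookrightarrow\CC^n$ centred at $0$ and $\mathrm{im}(\gamma)$ is its schematic image. The image of $\pi$ is exactly $\mathrm{Curv}^k_p(X)$: every regular jet produces a subscheme with $\calo_\xi\simeq\CC[t]/t^k$, and conversely any such $\xi$ arises from an immersion once one chooses an isomorphism $\calo_\xi\simeq\CC[t]/t^k$. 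The fibres of $\pi$ are the orbits of the free action of the $(k-1)$-dimensional group $\diff_{k-1}$: matching coefficients in $\gamma\circ\phi=\gamma$ with $\gamma$ regular forces $\phi=\mathrm{id}$. Since $J_{k-1}^{\mathrm{reg}}(1,n)$ is a Zariski-open of the affine space $(\CC^n)^{k-1}$, it is smooth and irreducible of dimension $n(k-1)$; hence its image is irreducible and, by the fibre dimension theorem, of dimension $n(k-1)-(k-1)=(n-1)(k-1)$.

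For openness, for $\xi\in\Hilb^k_p(X)$ the length-$k$ Artin local ring $\calo_\xi$ is isomorphic to $\CC[t]/t^k$ iff its embedding dimension $\dim_\CC\mathfrak{m}_\xi/\mathfrak{m}_\xi^2$ equals $1$, which is an open condition by upper semicontinuity of the fibre dimension of the coherent sheaf $\mathcal{I}_\mathcal{U}/\mathcal{I}_\mathcal{U}^2$ on the universal subscheme $\mathcal{U}\subset\Hilb^k(X)\times X$. Thus $\mathrm{Curv}^k_p(X)$ is a nonempty, open, irreducible subset of $\Hilb^k_p(X)$; any irreducible closed set $Z$ containing $\overline{\mathrm{Curv}^k_p(X)}$ then contains $\mathrm{Curv}^k_p(X)$ as a nonempty open (hence dense) subset, forcing $Z=\overline{\mathrm{Curv}^k_p(X)}$. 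This exhibits the closure as an irreducible component of $\Hilb^k_p(X)$ of the claimed dimension $(n-1)(k-1)$.

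The main technical obstacle is the set-theoretic matching of $\mathrm{Curv}^k_p(X)$ with $\mathrm{im}(\pi)$: one must check that any closed immersion $\mathrm{Spec}(\CC[t]/t^k)\hookrightarrow X$ centred at $p$ extends to a $(k-1)$-jet of a curve germ $(\CC,0)\to(X,p)$ (using smoothness of $X$ at $p$ to lift coefficient-by-coefficient), and that the $\diff_{k-1}$-ambiguity in the fibres of $\pi$ corresponds precisely to the ambiguity in the chosen isomorphism $\calo_\xi\simeq\CC[t]/t^k$. Both verifications are routine, but the conceptual content of the lemma is bundled into this identification.
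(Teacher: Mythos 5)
Your proof is correct and follows essentially the same route as the paper's: realise the curvilinear locus as the free $\diff_{k-1}$-quotient of $J^{\reg}_{k-1}(1,n)$ (giving irreducibility and the dimension count $n(k-1)-(k-1)=(n-1)(k-1)$), observe that this locus is open in $\Hilb^k_p(X)$, and conclude that its closure is an irreducible component. The only cosmetic difference is the open condition chosen: you invoke upper semicontinuity of the embedding dimension $\dim_\CC\mathfrak{m}_\xi/\mathfrak{m}_\xi^2$ (curvilinear $\Leftrightarrow$ embedding dimension $\le 1$), whereas the paper uses the equivalent characterisation $\mathfrak{m}_\xi^{k-1}\neq 0$, i.e.\ that $I_\xi$ does not contain $\mathfrak{m}^{k-1}$, which is a closed condition on the non-curvilinear complement; both are standard and give the same conclusion.
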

\begin{proof}
Note that $\xi \in \Hilb^{[k]}_0(\CC^n)$ is not curvilinear if and only if $\calo_\xi$ does not contain elements of degree $k-1$, that is, after fixing some local coordinates $x_1,\ldots, x_n$ of $\CC^n$ at the origin we have
\[\calo_\xi \simeq \CC[x_1,\ldots, x_n]/I \text{ for some } I\supseteq (x_1,\ldots, x_n)^{k-1}.\]
This is a closed condition and therefore curvilinear subschemes can't be approximated by non-curvilinear subschemes in $\Hilb^{[k]}_0(\CC^n)$. The dimension of $\CHilb^k_p(X)$ comes from the description of it as a non-reductive quotient in the previous section.  
\end{proof}

Note that any curvilinear subscheme contains only one subscheme for any given smaller length and any small deformation of a curvilinear subscheme is again locally curvilinear.

\begin{remark}\label{remark:embed}
Fix coordinates $x_1,\ldots, x_n$ on $\CC^n$. Recall that the defining ideal $I_\xi$ of any subscheme $\xi \in \Hilb^{k+1}_0(\CC^n)$ is a codimension $k$ subspace in the maximal ideal $\mathfrak{m}=(x_1,\ldots, x_n)$. The dual of this is a $k$-dimensional subspace $S_\xi$ in $\mathfrak{m}^*\simeq \symdot$ giving us a natural embedding $\rho: X^{[k+1]}_p \hookrightarrow \grass_k(\symdot)$. The test curve model of Theorem \ref{bszmodel} gives an explicit  parametrization of this embedding.
\end{remark}
 
\subsection{Tautological bundles over punctual components}\label{subsec:taurestrict}

Over the punctual Hilbert scheme $\Hilb^k_0(X)$ the tautological integral can be described as follows. Let $F$ be a rank $r$ vector bundle over $X$. The fibre of the corresponding rank $rk$ tautological bundle $F^{[k]}$ on $\CHilb^k(X)$ at the point $\xi$ is 
\[F^{[k]}_{\xi}=H^0(\xi, F|_{\xi})=H^0(\calo_{\xi} \otimes F).\]
On the level of bundles we have the following.
\begin{lemma}[\cite{b0} Lemma 3.15]\label{lemma:tauoncurvi} There is an isomorphism of topological vector bundles 
\[F^{[k]}|_{\CHilb^k_0(X)}\simeq \calo_{X}^{[k]} \otimes \pi^*F\]
 where $\pi: \CHilb^k_0(X) \to X$ is the projection. 
\end{lemma}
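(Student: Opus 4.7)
The plan is to construct the isomorphism from local trivialisations of $F$ and to show that the glueing ambiguity, although nontrivial as a cocycle of coherent-sheaf automorphisms, is continuously trivial and therefore vanishes in the category of topological vector bundles.

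First I would pin down the fibre-wise identification. For $\xi \in \CHilb^k_0(X)$ supported at $p=\pi(\xi)$, the restriction $F|_\xi$ is a free $\calo_\xi$-module of rank $r=\mathrm{rank}(F)$, since $F$ is locally free and $\calo_\xi$ is a local Artinian ring. A trivialisation $\varphi:F|_W \xrightarrow{\sim} W\times\CC^r$ over an open neighbourhood $W\ni p$ in $X$ induces $\varphi|_\xi:F|_\xi \xrightarrow{\sim} \calo_\xi^{\oplus r}$, and composing with $\varphi(p)^{-1}:\CC^r\xrightarrow{\sim} F_p$ yields a fibre-wise identification $H^0(\xi,F|_\xi)\xrightarrow{\sim}\calo_\xi\otimes F_p=(\calo_X^{[k]}\otimes \pi^*F)_\xi$. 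Letting $V_W\subset \CHilb^k_0(X)$ denote the open subset parametrising subschemes supported in $W$, these fibre identifications assemble into a local topological trivialisation $\Phi_W:F^{[k]}|_{V_W}\xrightarrow{\sim}(\calo_X^{[k]}\otimes\pi^*F)|_{V_W}$.

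Next I would analyse the transition cocycle on overlaps $V_W\cap V_{W'}$. If $g=g_{WW'}:W\cap W'\to \GL_r(\CC)$ is the transition function of $F$, then the discrepancy $\Phi_W\circ\Phi_{W'}^{-1}$ at a subscheme $\xi$ supported at $p$ is multiplication by $h(\xi):=g|_\xi\cdot g(p)^{-1}\in \GL_r(\calo_\xi)$. Since $h(\xi)$ reduces to the identity modulo the maximal ideal $\mathfrak{m}_\xi$, it lies in the unipotent subgroup $I+M_r(\mathfrak{m}_\xi)\subset \GL_r(\calo_\xi)$; as a coherent-sheaf automorphism of $\calo_\xi^{\oplus r}$ this element is generally nontrivial because it records the higher-order Taylor coefficients of $g$ along the fat point $\xi$, which is precisely why the lemma fails in the coherent category. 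Topologically, however, $I+M_r(\mathfrak{m}_\xi)$ is contractible: the straight-line homotopy $(t,\xi)\mapsto I+t\bigl(h(\xi)-I\bigr)$ depends continuously on both variables and stays inside $\GL_r(\calo_\xi)$ throughout (its determinant is a unit of the form $1+O(\mathfrak{m}_\xi)$). Hence the \v{C}ech cocycle $\{\Phi_W\Phi_{W'}^{-1}\}$ is continuously equivalent to the identity, and the local trivialisations $\{\Phi_W\}$ glue into a global isomorphism of topological vector bundles $F^{[k]}|_{\CHilb^k_0(X)}\xrightarrow{\sim}\calo_X^{[k]}\otimes\pi^*F$.

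The main obstacle, and the step to which I would devote most care, is precisely the passage from algebraic to topological data: one must verify that $h(\xi)$ varies continuously in $\xi$ even though its coherent interpretation depends on the scheme structure of $\xi$, and that the straight-line homotopy indeed defines a continuous path of $\CC$-linear automorphisms of the underlying topological bundle of $\calo_\xi^{\oplus r}$. Once this is secured, the remaining verifications -- matching ranks ($rk$ on both sides), functoriality in $F$, and compatibility with the projection $\pi$ -- are routine and impose no further difficulty.
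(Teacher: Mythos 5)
Your setup through the computation of the discrepancy cocycle $h(\xi)=g|_\xi\cdot g(p)^{-1}\in I+M_r(\mathfrak m_\xi)$ is sound, and you have correctly isolated the key phenomenon: the transition ambiguity is unipotent. The gap is in the final step. The straight-line homotopy $h_t=I+t\bigl(h-I\bigr)$ does \emph{not} preserve the \v{C}ech cocycle condition: if $h^{12}=I+A$, $h^{23}=I+B$ and $h^{13}=h^{12}h^{23}=I+A+B+AB$, then $h_t^{12}h_t^{23}=I+tA+tB+t^2AB$ while $h_t^{13}=I+tA+tB+tAB$, which agree only at $t=0,1$ (or if $AB=0$). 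So the interpolation is not a family of cocycles, and the phrase ``the cocycle is continuously equivalent to the identity'' is not justified. Moreover, even a genuine path of cocycles ending at the trivial one would only give abstract isomorphism; it would not make the $\Phi_W$ themselves glue, as you assert -- one must first \emph{correct} each $\Phi_W$ by a $0$-cochain whose coboundary is $\{h_{WW'}\}$.

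What is actually needed is that the cocycle $\{h_{WW'}\}$, valued in the sheaf of continuous sections of the group bundle $\xi\mapsto I+M_r(\mathfrak m_\xi)$, is a coboundary. This does follow from unipotence, but via the filtration $I+M_r(\mathfrak m_\xi)\supset I+M_r(\mathfrak m_\xi^2)\supset\cdots\supset I+M_r(\mathfrak m_\xi^{k-1})\supset \{I\}$ (using that $\mathfrak m_\xi^k=0$ on length-$k$ schemes). Each successive quotient is the continuous-sections sheaf of a finite-rank complex vector bundle on the paracompact space $\CHilb^k_0(X)$; such a sheaf is fine, so its $H^1$ vanishes by a partition-of-unity argument. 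Descending the filtration step by step and correcting the $\Phi_W$ at each stage kills the cocycle, after which the corrected local isomorphisms glue. Replacing your homotopy paragraph with this fineness-and-filtration argument closes the gap.
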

%Hence, in particular, $F|_{\CHilb^k_0(X)} \subset F^{[k]}|_{\CHilb^k_0(X)}$ is a subbundle and hence the definition $TM^{[k+1]}_{red}:=TM^{[k+1]}/f^*TM$ makes sense in \S \ref{sec:strategy}. 
The natural embedding $\rho: \CHilb^{k+1}(X) \hookrightarrow \grass_{k}(J_k(n,1)^*)$ 
in Remark \ref{remark:embed} identifies the fibres of $\calo_{X}^{[k+1]}$ over $\xi\in \CHilb^{k+1}_p(X)$ with
$H^0(\calo_\xi) \simeq \calo_{X,p} \oplus \cale_{\rho(\xi)}$
where $\cale$ is the tautological rank $k$ bundle over $\grass_k(J_k(n,1)^*)$.
%\[F^{[k+1]}_\xi=F_p \otimes (\calo_{\grass_{k}(\symdotx)} \oplus \cale)_{\phi^\grass(\xi)}\]
Hence the total Chern class of $F^{[k+1]}$ can be written as 
\begin{equation}\label{chernroots}
c(F^{[k+1]})=\prod_{j=1}^r(1+\theta_j)\prod_{i=1}^k\prod_{j=1}^r(1+\eta_i+\theta_j)
\end{equation}
where $c(F)=\prod_{j=1}^r(1+\theta_j)$ and $c(\cale)=\prod_{i=1}^k(1+\eta_i)$ are the Chern classes for the corresponding bundles. In particular the Chern class
\begin{equation}\label{chernclasses}
c_i(F^{[k+1]})=\mathcal{C}_i(c_1(\cale),\ldots c_k(\cale),c_1(F),\ldots, c_r(F))
\end{equation}
can be expressed as a polynomial function $\mathcal{C}_i$ in Chern classes of $\cale$ and $F$.

\section{The test curve model of Morin singularities}

In this section we describe the so-called test jet model of the curvilinear locus of the punctual Hilbert scheme $\Hilb_0^k(\CC^n)$. This open locus can be described as the moduli of holomorphic map jets $\CC \to \CC^n$ of order $k$, which has a canonical embedding into a Grassmannian. The compactification in this Grassmannian is the curvilinear component. 
 
\subsection{The test jet model of Berczi-Szenes \cite{bsz}}\label{subsec:jetdiff}

This model was developed as a compactification of the Morin contact singularity class of type $A_k$, and localisation on this model resulted in new formulas for Thom polynomials in \cite{bsz}. 

\subsubsection{Jets of holomorphic maps}\label{subsubsec:holmaps} If $u,v$ are positive integers let $J_k(u,v)$ denote the vector
space of $k$-jets of holomorphic maps $(\CC^u,0) \to (\CC^v,0)$ at
the origin, that is, the set of equivalence classes of maps
$f:(\CC^u,0) \to (\CC^v,0)$, where $f\sim g$ if and only if
$f^{(j)}(0)=g^{(j)}(0)$ for all $j=1,\ldots ,k$. This is a finite-dimensional complex
vector space, which one can identify with $J_k(u,1) \otimes \CC^v$; hence
$\dim J_k(u,v) =v \binom{u+k}{k}-v$.  We will call the elements of
$J_k(u,v)$ {\em map-jets of order} $k$, or simply map-jets. 

Eliminating the terms of degree $k+1$ results in an algebra homomorphism
$J_k(u,1) \twoheadrightarrow J_{k-1}(u,1)$, and the chain
$J_k(u,1) \twoheadrightarrow J_{k-1}(u,1)
\twoheadrightarrow \ldots \twoheadrightarrow J_1(u,1)$ induces
an increasing filtration on $J_k(u,1)^*$:
\begin{equation}\label{filtration} 
J_1(u,1)^* \subset J_2(u,1)^* \subset \ldots \subset J_k(u,1)^*
\end{equation}
\begin{remark}\label{remark:diffop} The space $J_i(u,1)^*$ may be interpreted as set of
differential operators on $\CC^u$ of degree at most
$i$, and in particular, by taking symbols, we have
\begin{equation}\label{eq:dual}
  J_k(u,1)^* \cong \Sym^{\le k}\CC^u \overset{\mathrm{def}}=
\oplus_{l=1}^k \sym^l\CC^u,
\end{equation}
where $\sym^l$ stands for the symmetric tensor product and the
isomorphism is that of filtered $\GL(n)$-modules. 
Given a regular $k$-jet $f: (\CC,0) \to (\CC^n,0)$ in $J_k^{\reg}(1,n)$ we may push forward the differential operators of order $k$ on $\CC$ (with constant coefficients) to $\CC^n$ along $f$ which gives us a map 
\[\tilde{f}:J_k(1,1)^* \to \grass(k,J_k(n,1)^*).\]

\end{remark}

Choosing coordinates on $\CC^u$ and $\CC^v$ a $k$-jet $f \in J_k(u,v)$ can be identified with the set of derivatives at the
origin, that is the vector $(f'(0),f''(0),\ldots, f^{(k)}(0))$, where
$f^{(j)}(0)\in \mathrm{Hom}(\mathrm{Sym}^j\CC^u,\CC^v)$. This way we
get the equality
\begin{equation}\label{identification}
J_k(u,v) \simeq J_k(u,1) \otimes \CC^v \simeq \oplus_{j=1}^k\mathrm{Hom}(\mathrm{Sym}^j\CC^u,\CC^v).
\end{equation}
One can compose map-jets via substitution and elimination of terms
of degree greater than $k$; this leads to the composition map
\begin{equation}
  \label{comp}
\jetk uv \times \jetk vw \to \jetk uw,\;\;  (\Psi_1,\Psi_2)\mapsto
\Psi_2\circ\Psi_1 \mbox{ modulo terms of degree $>k$ }.
\end{equation}
When $k=1$, $J_1(u,v)$ may be identified with $u$-by-$v$ matrices,
and \eqref{comp} reduces to multiplication of matrices.

The $k$-jet of a curve $(\CC,0) \to (\CC^n,0)$ is simply an element
of $J_k(1,n)$. We call such a curve $\gamma$ {\em regular}, if
$\gamma'(0)\neq 0$; introduce the notation $\jetreg 1n$ for the set
of regular curves:
\[\jetreg 1n=\left\{\g \in \jetk 1n; \g'(0)\neq 0 \right\}\]
Note that $\jetreg uu$ with the composition map \eqref{comp} has a natural group structure and we will often use the notation
\[\mathrm{Diff}_k(u)=\jetreg uu\]
and refer to this set as the {\em $k$-jet diffeomorphism group} to underline this property.

\subsubsection{Test curves for curvilinear subschemes}

Let $\xi \in \CHilb^{k+1}_0(\CC^n)$ be a curvilinear subscheme supported at the origin. Then $\xi$ is (scheme theoretically) contained in a smooth curve germ $\mathcal{C}_p$ in $\CC^n$:
\[\xi \subset \mathcal{C}_p \subset \CC^n.\]
Let $f_{\xi}:(\CC,0)\to (\CC^n,0)$ be a $k$-jet of germ parametrising $\mathcal{C}_p$. 
Then $f_{\xi}\in J^\reg_k(1,n)$ is determined only up to polynomial reparametrisation germs $\phi: (\CC,0)\to (\CC,0)$ and therefore we get the following lemma. 
\begin{lemma}\label{lemma:quotient} The punctual curvilinear locus $\CHilb^{[k+1]}_0(\CC^n)$ is equal (as a set) to the set of $k$-jet of regular germs at the origin, modulo polynomial reparametrisations: 
\begin{equation}\nonumber
\CHilb^{k+1}_0(\CC^n)=\{\text{regular }k\text{-jets } (\CC,0)\to (\CC^n,0)\}/\{\text{regular }k\text{-jets } (\CC,0)\to (\CC,0)\}=J^\reg_k(1,n)/\diff_k(1).
\end{equation}
\end{lemma}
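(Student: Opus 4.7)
The plan is to exhibit an explicit set-theoretic bijection by sending a regular jet $f \in J_k^\reg(1,n)$ to the scheme-theoretic image of the induced map $\mathrm{Spec}(\CC[t]/t^{k+1}) \to \CC^n$, and then to verify four things: that the target lands in the curvilinear locus, that the map is $\diff_k(1)$-invariant, that it is surjective, and that it is injective modulo the $\diff_k(1)$-action.

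First I would define $\Phi : J_k^\reg(1,n) \to \CHilb^{k+1}_0(\CC^n)$ by sending $f = (f_1,\dots,f_n)$ to the subscheme $\xi_f$ cut out by $I_f := \ker(f^*)$, where $f^* : \calo_{\CC^n,0} \to \CC[t]/t^{k+1}$ is the ring map $x_i \mapsto f_i(t)$. Regularity of $f$ guarantees that some $f_i$ has a nonzero linear term, so $t$ lies in the image of $f^*$ and hence $f^*$ is surjective. Therefore $\calo_{\xi_f} = \calo_{\CC^n,0}/I_f \simeq \CC[t]/t^{k+1}$, which confirms that $\xi_f$ is a length $k+1$ curvilinear subscheme supported at the origin.

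Next I would check $\diff_k(1)$-invariance using the composition law \eqref{comp}: for $\phi \in \diff_k(1)$ one has $(f\circ\phi)^* = \phi^* \circ f^*$ modulo $t^{k+1}$, and regularity of $\phi$ implies $\phi^*$ is an automorphism of $\CC[t]/t^{k+1}$, so $\ker((f\circ\phi)^*) = \ker(f^*)$ and $\Phi$ descends to the quotient. For surjectivity, I start with an arbitrary $\xi \in \CHilb^{k+1}_0(\CC^n)$, fix an isomorphism $\alpha : \calo_\xi \xrightarrow{\sim} \CC[t]/t^{k+1}$, and let $f_i(t)$ be the image of $x_i$ under the composition $\calo_{\CC^n,0} \twoheadrightarrow \calo_\xi \xrightarrow{\alpha} \CC[t]/t^{k+1}$; since the $f_i$ must generate the maximal ideal $(t)$, at least one has a nonzero linear coefficient, so $(f_1,\dots,f_n) \in J^\reg_k(1,n)$ and $\Phi(f) = \xi$. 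For injectivity modulo reparametrisations, if $\Phi(f) = \Phi(g)$ then $f^*$ and $g^*$ share the same kernel and both induce isomorphisms $\calo_{\CC^n,0}/I \xrightarrow{\sim} \CC[t]/t^{k+1}$; composing gives an automorphism $\psi$ of $\CC[t]/t^{k+1}$ with $g^* = \psi \circ f^*$. Any such $\psi$ sends $t \mapsto a_1 t + a_2 t^2 + \cdots + a_k t^k$ with $a_1 \neq 0$, hence $\psi = \phi^*$ for a unique $\phi \in \diff_k(1) = J^\reg_k(1,1)$, giving $g = f \circ \phi$.

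There is no serious obstacle here, since the statement is essentially a translation between the geometric (curvilinear subscheme) and the jet-theoretic language. The only subtlety worth flagging is the symmetric role played by regularity at both ends of the bijection: it is what forces $f^*$ to be a surjection onto $\CC[t]/t^{k+1}$ (so the image has length exactly $k+1$ rather than something smaller), and it is conversely forced by the requirement that the $x_i$'s generate $\mathfrak{m}_\xi / \mathfrak{m}_\xi^2$ in a curvilinear $\calo_\xi \simeq \CC[t]/t^{k+1}$. Identifying $\mathrm{Aut}(\CC[t]/t^{k+1})$ with $\diff_k(1)$ is a standard and purely formal check.
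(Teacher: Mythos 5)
Your proof is correct and takes essentially the same approach as the paper, which observes that a curvilinear $\xi$ lies in a smooth curve germ parametrised by a regular $k$-jet unique up to reparametrisation. You have simply made this precise in the algebraic language of the ring map $f^*:\calo_{\CC^n,0}\to\CC[t]/t^{k+1}$ and its kernel, and spelled out the surjectivity and injectivity-up-to-reparametrisation checks that the paper leaves implicit.
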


We can explicitely write out the reparametrisation action (defined in \eqref{comp}) of $\diff_k(1)$ on $\jetreg 1n$ as follows. Let $f_{\xi}(z)=z f'(0)+\frac{z^2}{2!}f''(0)+\ldots +\frac{z^k}{k!}f^{(k)}(0) \in \jetreg 1n$ the $k$-jet of a germ at the origin (i.e no constant term) in $\CC^n$ with $f^{(i)}\in \CC^n$ such that $f' \neq 0$ and let  
$\varphi(z)=\alpha_1z+\alpha_2z^2+\ldots +\alpha_k z^k \in \jetreg 11$ with $\alpha_i\in \CC, \alpha_1\neq 0$.  
 Then 
 \[f \circ\varphi(z)
=(f'(0)\alpha_1)z+(f'(0)\alpha_2+
\frac{f''(0)}{2!}\alpha_1^2)z^2+\ldots
+\left(\sum_{i_1+\ldots +i_l=k}
\frac{f^{(l)}(0)}{l!}\alpha_{i_1}\ldots \alpha_{i_l}\right)z^k=\]
\begin{equation}\label{jetdiffmatrix}
=(f'(0),\ldots, f^{(k)}(0)/k!)\cdot 
\left(
\begin{array}{ccccc}
\alpha_1 & \alpha_2   & \alpha_3          & \ldots & \alpha_k \\
0        & \alpha_1^2 & 2\alpha_1\alpha_2 & \ldots & 2\alpha_1\alpha_{k-1}+\ldots \\
0        & 0          & \alpha_1^3        & \ldots & 3\alpha_1^2\alpha_ {k-2}+ \ldots \\
0        & 0          & 0                 & \ldots & \cdot \\
\cdot    & \cdot   & \cdot    & \ldots & \alpha_1^k
\end{array}
 \right)
 \end{equation}
where the $(i,j)$ entry is $p_{i,j}(\bar{\alpha})=\sum_{a_1+a_2+\ldots +a_i=j}\alpha_{a_1}\alpha_{a_2} \ldots \alpha_{a_i}.$

\begin{remark}\label{naturalembedding}
The linearisation of the action of $\diff_k(1)$ on $\jetreg 1n$ given as the matrix multiplication in \eqref{jetdiffmatrix} represents $\diff_k(1)$ as a upper triangular matrix group in $\GL(n)$. This is a non-reductive group so Mumford's reductive GIT is not applicable to study the geometry of the quotient $\jetreg 1n/ \diff_k(1)$, see Doran-Kirwan for details. 
Note that our matrix group is parametrised along its first row with the free parameters $\alpha_1,\ldots, \alpha_k$ and the other entries are certain (weighted homogeneous) polynomials in these free parameters. It is a $\CC^*$ extension of its maximal  unipotent radical
\[\diff_k(1)=U \rtimes \CC^*\]
where $U$ is the subgroup we get via substituting $\alpha_1=1$ and the diagonal $\CC^*$ acts with weights $0,1\ldots, n-1$ on the Lie algebra $\mathrm{Lie}(U)$. In B\'erczi and Kirwan \cite{bk} and B\'erczi, Doran, Hawes and Kirwan \cite{bdhk,bdhk2} we study actions of groups of this type in a more general context. 
\end{remark}

Fix an integer $N\ge 1$ and define
\[\Theta_k=\left\{\Psi\in J_k(n,N):\exists \g \in \jetreg 1n: \Psi \circ \g=0
\right\},\]
that is, $\Theta_k$ is the set of those $k$-jets of germs on $\CC^n$ at the origin which vanish on some regular curve. By definition, $\Theta_k$ is the image
of the closed subvariety of $\jetk nN \times \jetreg 1n$ defined by
the algebraic equations $\Psi \circ \g=0$, under the projection to
the first factor. If $\Psi \circ \gamma=0$, we call $\g$ a {\em test
curve} of $\Theta$. 

\begin{remark}
The subset $\Theta_k$ is the closure of an important singularity class in the jet space $J_k(n,N)$. These are called Morin singularities and the equivariant dual of $\Theta_k$ in $J_k(n,N)$ is called the Thom polynomial of Morin singularities, see B\'erczi and Szenes \cite{bsz} and Feh\'er and Rim\'anyi \cite{rf} for details.
\end{remark}

Test curves of germs are generally not unique. A basic but crucial observation is the following. If $\g$ is a test
curve of $\Psi \in \Theta_k$, and $\vp \in \diff_k(1)$ is a 
holomorphic reparametrisation of $\CC$, then $\g \circ \vp$ is,
again, a test curve of $\Psi$:
\begin{displaymath}
\label{basicidea}
\xymatrix{
  \CC \ar[r]^\vp & \CC \ar[r]^\g & \CC^n \ar[r]^{\Psi} & \CC^N}
\end{displaymath}
\[\Psi \circ \g=0\ \ \Rightarrow \ \ \ \Psi \circ (\g \circ \vp)=0\]

In fact, we get all test curves of $\Psi$ in this way if the
following property, open and dense in $\theta_k$, holds: the linear part of $\Psi$ has
$1$-dimensional kernel. Before stating this in Theorem 
\ref{embedgrass} below, let us write down the equation $\Psi \circ
\g=0$ in coordinates in an illustrative case. Let
$\g=(\g',\g'',\ldots, \g^{(k)})\in \jetreg 1n$ and
$\Psi=(\Psi',\Psi'',\ldots, \Psi^{(k)})\in \jetk nN$ be the
$k$-jets of the test curve $\g$ and the map $\Psi$ respectively. Using the chain rule and the notation $v_i=\g^{(i)}/i!$, the equation $\Psi \circ \g=0$ reads
as follows for $k=4$:
\begin{eqnarray} \label{eqn4}
& \Psi'(v_1)=0 \\ \nonumber & \Psi'(v_2)+\Psi''(v_1,v_1)=0 \\
\nonumber
& \Psi'(v_3)+2\Psi''(v_1,v_2)+\Psi'''(v_1,v_1,v_1)=0 \\
&
\Psi'(v_4)+2\Psi''(v_1,v_3)+\Psi''(v_2,v_2)+
3\Psi'''(v_1,v_1,v_2)+\Psi''''(v_1,v_1,v_1,v_1)=0
\nonumber
\end{eqnarray}

%To simplify our formulas we introduce the following notations for a
%partition $\ell=[i_1\ldots i_l]$ of the integer $i_1+\ldots +i_l$:
%\begin{itemize}
%\item
% the
%{\emlength}: $|\ell|=l$,
%\item  the {\emsum}: $\sum\ell=i_1+\ldots +i_l$,
%\item {\emnumber of permutations}: $\comb(\ell)$, which is
 % the number of different sequences consisting of the numbers
%  $i_1,\dots, i_l$; e.g. $\comb([1,1,1,3])=4$.
%\item $\bg_\ell = \prod_{j=1}^l \g^{(i_j)}\in\Sym^l\CC^n\;\text{ and }\;
%\Psi(\bg_\ell)=\Psi^l(\g^{(i_1)},\dots,\g^{(i_l)})\in\CC^N$.
%\end{itemize}

\begin{lemma}[Gaffney \cite{gaffney}, B\'erczi-Szenes \cite{bsz}]\label{explgp} Let
$\g=(\g',\g'',\ldots, \g^{(k)})\in \jetreg 1n$ and
$\Psi=(\Psi',\Psi'',\ldots, \Psi^{(k)})\in \jetk nN$ be $k$-jets.
Then substituting $v_i=\g^{(i)}/i!$, the equation $\Psi\circ \g=0$ is equivalent to
  the following system of $k$ linear equations with values in
  $\CC^N$:
\begin{equation}
  \label{modeleq}
\sum_{\ell \in \mathcal{P}(m)} \Psi(\bv_\ell)=0,\quad m=1,2,\dots, k
\end{equation}
Here $\mathcal{P}(m)$ denotes the set of partitions $\ell=1^{\ell_1}\ldots m^{\ell_m}$ of $m$ into nonnegative integers and $\bv_\ell=v_1^{\ell_1}\cdots v_{m}^{\ell_m}$. 
\end{lemma}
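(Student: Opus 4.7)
The plan is a direct computation: expand $\Psi\circ \gamma$ as a formal power series in $z$ modulo $z^{k+1}$ and match coefficients. Write the test curve as $\gamma(z) = \sum_{i=1}^k v_i z^i$ (using $v_i = \gamma^{(i)}(0)/i!$, so that the $v_i$ are exactly the Taylor coefficients without factorials), and decompose $\Psi = \Psi^{(1)} + \Psi^{(2)} + \ldots + \Psi^{(k)}$ into homogeneous components, where $\Psi^{(j)}$ is the symmetric $j$-linear form $(\CC^n)^{\otimes j}\to \CC^N$ given by the $j$-th derivative of $\Psi$ at $0$, equivalently a linear functional on $\Sym^j\CC^n$.

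Substituting and expanding multilinearly gives
\[\Psi(\gamma(z)) \;\equiv\; \sum_{j=1}^{k}\ \sum_{i_1,\ldots,i_j\ge 1}\Psi^{(j)}(v_{i_1},\ldots,v_{i_j})\, z^{\,i_1+\ldots+i_j}\pmod{z^{k+1}}.\]
I would then extract the coefficient of $z^m$ for each $1\le m\le k$ and group ordered tuples $(i_1,\ldots,i_j)$ with $\sum i_r=m$ by their multiplicity profile $\ell=1^{\ell_1}\cdots m^{\ell_m}\in\mathcal{P}(m)$ (so $j=\sum_r \ell_r$). By the symmetry of $\Psi^{(j)}$, the $\binom{j}{\ell_1,\ldots,\ell_m}$ orderings with a given profile collapse into a single symmetric term. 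Reading $\bv_\ell = v_1^{\ell_1}\cdots v_m^{\ell_m}$ as the corresponding monomial in $\Sym^j\CC^n$, so that $\Psi(\bv_\ell)$ absorbs the multinomial weight into the notation, the coefficient of $z^m$ becomes precisely $\sum_{\ell\in\mathcal{P}(m)}\Psi(\bv_\ell)$. The condition $\Psi\circ \gamma=0$ in $J_k(1,N)$ is by definition the vanishing of all these coefficients for $m=1,\ldots,k$, which is the claimed system; both implications are clear once the coefficient formula is established.

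As a sanity check, in the case $k=4$ the partitions of $m=3$ are $\{3\},\{1,2\},\{1,1,1\}$ with multinomial weights $1,2,1$, reproducing $\Psi'(v_3)+2\Psi''(v_1,v_2)+\Psi'''(v_1,v_1,v_1)=0$ from \eqref{eqn4}; for $m=4$ the five partitions give weights $1,2,1,3,1$, matching the fourth equation there. The argument is essentially the multivariate Fa\`a di Bruno formula and presents no serious obstacle; the only care required is to fix a consistent convention so that $\bv_\ell$ carries the multinomial coefficient, making the final formula uniform across partitions and identifying each linear equation as an element of $\CC^N$ obtained by evaluating $\Psi$ on a fixed polynomial expression in the $v_i$'s.
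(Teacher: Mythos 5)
Your argument is correct and is essentially the computation the paper has in mind: the paper does not give a formal proof of Lemma~\ref{explgp}, but cites it to Gaffney and B\'erczi--Szenes and motivates it immediately beforehand by the phrase ``Using the chain rule'' together with the worked $k=4$ display \eqref{eqn4}. Your expansion of $\Psi(\gamma(z))$ modulo $z^{k+1}$, grouping ordered index tuples $(i_1,\ldots,i_j)$ with $\sum i_r = m$ by their multiplicity profile, is exactly that chain-rule computation made explicit, and your sanity check against \eqref{eqn4} confirms the bookkeeping. The only loose thread is the small internal inconsistency in your setup: you describe $\Psi^{(j)}$ as ``the $j$-th derivative'' but then write the expansion $\Psi(\gamma(z))\equiv\sum_j\sum_{i_1,\ldots,i_j}\Psi^{(j)}(v_{i_1},\ldots,v_{i_j})z^{\sum i_r}$ with no $1/j!$ factors; for that expansion to hold one needs $\Psi^{(j)}$ to be the $j$-th Taylor coefficient viewed as an element of $\Hom(\Sym^j\CC^n,\CC^N)$ (i.e.\ derivative divided by $j!$), which is indeed the convention fixed in the paper's identification \eqref{identification}. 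You already flag the related convention issue for $\bv_\ell$ (the multinomial coefficient is absorbed into the pairing $\Psi(\bv_\ell)$), so with that one clarification the proof is complete and matches the paper's intended route.
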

For a given $\g \in \jetreg 1n$ and $1\le i \le k$ let $\mathcal{S}^{i,N}_{\g}$ denote the set of 
solutions of the first $i$ equations in \eqref{modeleq}, that is,
\begin{equation}\label{solutionspace}
\mathcal{S}^{i,N}_\g=\left\{\Psi \in \jetk nN, \Psi \circ \g=0 \text{ up to order } i \right\}.
\end{equation}
The equations \eqref{modeleq} are linear in $\Psi$, hence
\[\mathcal{S}^{i,N}_\g \subset \jetk nN\]
is a linear subspace of codimension $iN$, i.e a point of $\grass_{\mathrm{codim}=iN}(J_k(n,N))$, whose orthogonal, $(\mathcal{S}^{i,N}_{\g})^\perp$, is an $iN$-dimensional subspace of $J_k(n,N)^*$. These subspaces are invariant under the reparametrization of $\g$. In fact, $\Psi \circ \gamma$ has $N$ vanishing coordinates and therefore
\[(\mathcal{S}^{i,N}_{\g})^\perp=(\mathcal{S}^{i,1}_{\g})^\perp \otimes \CC^N\]
holds. 

For $\Psi \in J_k(n,N)$ let $\Psi^1 \in \Hom(\CC^n,\CC^N)$ denote the linear part. When $N\ge n$ then the subset 
\[\tilde{\mathcal{S}}^{i,N}_{\g}=\{\Psi \in \mathcal{S}^{i,N}_{\g}: \dim \ker \Psi^1=1\}\]
is an open dense subset of the subspace $\mathcal{S}^{i,N}_{\g}$. In fact it is not hard to see that the complement $\tilde{\mathcal{S}}^{i,N}_{\g}\setminus \mathcal{S}^{i,N}_{\g}$where the kernel of $\Psi^1$ has dimension at least two is a closed subvariety of codimension $N-n+2$.

%Furthermore, for $\g \in J_k(1,n)$ by putting $\g^{(i)}/i! \in \CC^n$ into the $i$th column of a matrix we can identify elements of $\jetreg 1n$ as $k$-by-$n$ matrices, that is, elements of $\Hom(\CC^k,\CC^n)$ with nonzero first column.   

\begin{theorem}\label{embedgrass} The map
\[\phi: \jetreg 1n \rightarrow \grass_k(J_k(n,1)^*)\]
defined as  $\gamma  \mapsto (\mathcal{S}^{k,1}_\g)^\perp$
is $\diff_k(1)$-invariant and induces an injective map on the $\diff_k(1)$-orbits into the Grassmannian 
\[\phi^\grass: \jetreg 1n /\diff_k(1) \hookrightarrow \grass_k(J_k(n,1)^*).\]
Moreover, $\phi$ and $\phi^\grass$ are $\GL(n)$-equivariant with respect to the standard action of $\GL(n)$ on $\jetreg 1n \subset \Hom(\CC^k,\CC^n)$ and the induced action on $\grass_k(J_k(n,1)^*)$.
\end{theorem}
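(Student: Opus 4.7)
The plan is to verify the three claims of the theorem separately. The $\diff_k(1)$-invariance and $\GL(n)$-equivariance are essentially formal, and the injectivity of $\phi^\grass$ on orbits is the substantive content. For $\diff_k(1)$-invariance, associativity of composition gives $\Psi\circ(\gamma\circ\varphi)=(\Psi\circ\gamma)\circ\varphi$, which vanishes to order $k$ at $0$ if and only if $\Psi\circ\gamma$ does, since $\varphi$ has invertible linear part; hence $\mathcal{S}^{k,1}_\gamma=\mathcal{S}^{k,1}_{\gamma\circ\varphi}$ and $\phi(\gamma)=\phi(\gamma\circ\varphi)$. The $\GL(n)$-equivariance is analogous: with the pullback action $(A\cdot\Psi)(x)=\Psi(A^{-1}x)$ on $J_k(n,1)$, the identity $\Psi\circ(A\gamma)=(A^{-1}\cdot\Psi)\circ\gamma$ gives $\mathcal{S}^{k,1}_{A\gamma}=A\cdot\mathcal{S}^{k,1}_\gamma$, and passing to orthogonals (equivariant under the induced $\GL(n)$-action on $J_k(n,1)^*\simeq\Sym^{\le k}\CC^n$ by the natural symmetric-power representation) yields $\phi(A\gamma)=A\cdot\phi(\gamma)$.

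For injectivity I would use Lemma \ref{explgp}: under the identification $J_k(n,1)^*\simeq\Sym^{\le k}\CC^n$, the subspace $W:=\phi(\gamma)$ is spanned by the $k$ vectors
\[u_m := \sum_{\ell\in\mathcal{P}(m)}\bv_\ell \;\in\; \Sym^{\le m}\CC^n, \qquad m=1,\ldots,k,\]
whose leading (degree $m$) components in $\Sym^m\CC^n$ are the pure symmetric powers $v_1^m$. Intersecting $W$ with the natural degree filtration produces a canonical flag $W_m:=W\cap\Sym^{\le m}\CC^n$, depending only on $W$; regularity $v_1\ne 0$ together with the distinct symmetric degrees of the leading terms forces the $u_m$ to be linearly independent, so $\dim W_m=m$ and in particular $W\cap\Sym^1\CC^n=W_1=\CC v_1$.

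Given a second regular jet $\gamma'$ with $\phi(\gamma')=W$, I would construct $\varphi\in\diff_k(1)$ with $\gamma'\equiv\gamma\circ\varphi\pmod{z^{k+1}}$ by induction. As a base case, $W_1=\CC v_1=\CC v'_1$ yields a unique $\alpha_1\in\CC^*$ with $v'_1=\alpha_1 v_1$, and replacing $\gamma$ by $\gamma\circ(\alpha_1 z)$ reduces to $v_1=v'_1$. For the inductive step, if $v_i=v'_i$ for all $i\le m$, then all summands in $u_{m+1}-u'_{m+1}$ with $\ell_{m+1}=0$ cancel, leaving $u_{m+1}-u'_{m+1}=v_{m+1}-v'_{m+1}\in\Sym^1\CC^n$; since both vectors lie in $W_{m+1}$, their difference lies in $W_{m+1}\cap\Sym^1\CC^n=\CC v_1$, so $v'_{m+1}=v_{m+1}+\alpha_{m+1}v_1$ for a unique $\alpha_{m+1}\in\CC$. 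By the chain-rule expansion recorded in \eqref{jetdiffmatrix}, replacing $\gamma$ by $\gamma\circ(z+\alpha_{m+1}z^{m+1})$ corrects $v_{m+1}$ to $v'_{m+1}$ without disturbing $v_1,\ldots,v_m$. After $k$ iterations the accumulated reparametrisation is the desired $\varphi$.

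The main obstacle, though really a bookkeeping check rather than a conceptual hurdle, is the verification that the reparametrisation $z\mapsto z+\alpha_{m+1}z^{m+1}$ acts upper-triangularly on the $(v_1,\ldots,v_k)$ tuple, as dictated by the explicit form of the matrix \eqref{jetdiffmatrix}, so that the induction hypothesis $v_i=v'_i$ for $i\le m$ survives each step. Once this is in place, combined with the identification $W_{m+1}\cap\Sym^1\CC^n=\CC v_1$ inherited from the filtration on $W$, the induction runs cleanly and produces $\varphi\in\diff_k(1)$ with $\gamma'=\gamma\circ\varphi$.
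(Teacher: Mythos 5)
Your proof is correct, but it takes a genuinely different route from the paper. The paper reduces injectivity of $\phi^\grass$ to a uniqueness statement for test curves: for a generic $\Psi\in\Theta_k$ with $\dim\ker\Psi^1=1$, any two test curves of $\Psi$ differ by a reparametrisation; the inductive step extracts $w_k-v_k\in\ker\Psi^1=\CC v_1$ from the equation $\Psi^1(v_k)=\Psi^1(w_k)$. You instead argue entirely intrinsically on the image subspace $W=\phi(\gamma)\subset\Sym^{\le k}\CC^n$: using the spanning vectors $u_m=\sum_{\ell\in\mathcal{P}(m)}\bv_\ell$ with leading $\Sym^m$-term $v_1^m$, you observe that the filtration pieces $W_m=W\cap\Sym^{\le m}\CC^n$ are canonically attached to $W$, that $\dim W_m=m$, and in particular $W\cap\Sym^1\CC^n=\CC v_1$; this replaces the role of $\ker\Psi^1$ in pinning the inductive ambiguity to a multiple of $v_1$. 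Both proofs then apply the same correction $z\mapsto z+\alpha_{m+1}z^{m+1}$, whose upper-triangular effect on the $(v_1,\dots,v_k)$ you correctly verify from \eqref{jetdiffmatrix}: on the coefficient of $z^{m+1}$ only the $j=1$ and $j=m+1$ terms of $(z+\alpha z^{m+1})^j$ contribute, so $v_{m+1}\mapsto v_{m+1}+\alpha v_1$ and $v_1,\dots,v_m$ are untouched. What your route buys: it needs no auxiliary $\Psi$, no genericity hypothesis on $\ker\Psi^1$, and works directly with the data that the Grassmannian embedding actually records, which makes the injectivity statement self-contained. You also spell out the $\diff_k(1)$-invariance (associativity of composition) and the $\GL(n)$-equivariance (the pullback action intertwining with the $\Sym^{\le k}\CC^n$ representation), which the paper treats as immediate without comment.
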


\begin{proof}
For the first part it is enough to prove that for $\Psi \in \Theta_k$ with $\dim \ker \Psi^1=1$ and $\gamma,\delta \in \jetreg 1n$
\[\Psi \circ \gamma=\Psi \circ \delta=0 \Leftrightarrow \exists
\Delta \in \jetreg 11 \text{ such that } \gamma=\delta
\circ \Delta.\]
We prove this statement by induction. Let $\gamma=v_1t+\dots +v_kt^k$ and
$\delta=w_1t+\dots+ w_kt^k$. Since $\dim \ker \Psi^1=1$, $v_1=\lambda w_1$, for some
$\lambda\neq0$. This proves the $k=1$ case. 

Suppose the statement is true for $k-1$. Then, using the appropriate
order-($k-1$) diffeomorphism, we can assume that $v_m=w_m$, $m=1\ldots
k-1.$ It is clear then from the explicit form \eqref{modeleq}
(cf. \eqref{eqn4}) of the equation
$\Psi\circ\gamma=0$, that  $\Psi^1(v_k)=\Psi^1(w_k)$, and hence
$w_k=v_k-\lambda v_1$ for some $\lambda\in\CC$. Then
$\gamma=\Delta\circ\delta$ for $\Delta=t+\lambda t^k$, and the proof
is complete.
\end{proof}

\begin{remark}\label{remark:orbit}
%\item In particular the second part of Theorem \ref{embedgrass} tells us that the curvilinear component $\overline{CX}^{[k+1]}_p=\overline{\mathrm{im}(\phi^\grass_p)}$ has a $\GL(n)$-equivariant embedding into the Grassmannian $\grass_k(\cald^k_{X,p})$ as the closure of the image of $\phi^\grass_p$. 
For a point $\gamma\in J_k^\reg(1,n)$ let $v_i=\frac{\g^{(i)}}{i!}\in \CC^n$ denote the normed $i$th derivative. Then from Lemma \ref{explgp} immediately follows that for $1\le i \le k$ (see \cite{bsz}):
\begin{equation}\label{sgamma}
(\mathcal{S}^{i,1}_\g)^\perp=\mathrm{Span}_\CC (v_1,v_2+v_1^2,\ldots, \sum_{i_1+\ldots +i_r=k}v_{i_1}\ldots v_{i_r})\subset \symdot
\end{equation} 
\end{remark}

Recall from Remark \ref{remark:diffop} that $J_k(n,1)^*=\symdot$. Note that the image of $\phi$ and the image of $\rho$ defined in Remark \ref{remark:embed} of the previous section coincide in $\grass_k(\symdot)$:
\[\mathrm{im}(\phi)=\mathrm{im}(\rho)\subset \grass_k(J_k(n,1)^*).\]
Hence their closure is the same, which is the first part of the next theorem. The second part follows from Remark \ref{remark:orbit} because $\phi$ is $\GL(n)$-equivariant.
\begin{theorem}[B\'erczi-Szenes model for $\CHilb^k_0(\CC^n)$, \cite{bsz}]\label{bszmodel}
\begin{enumerate}
\item For any $k,n$ we have 
\[\CHilb^{k+1}_0(\CC^n)=\overline{\mathrm{im}(\phi)} \subset \grass_k(J_k(n,1)^*).\]
\item Let $\{e_1,\ldots, e_n\}$ be a basis of $\CC^n$. For $k\le n$ the $\GL(n)$-orbit of 
\[p_{k,n}=\phi(e_1,\ldots, e_k)=\mathrm{Span}_\CC (e_1,e_2+e_1^2,\ldots, \sum_{i_1+\ldots +i_r=k}e_{i_1}\ldots e_{i_r})\] 
forms a dense subset of the image $\jetreg 1n$ and therefore (see e.g Remark \ref{remark:embed})
\[\CHilb^{k+1}_0(\CC^n)=\overline{\mathrm{GL_n} \cdot p_{k,n}}.\] 
\end{enumerate}
\end{theorem}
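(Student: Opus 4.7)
The plan is to identify the embedding $\phi$ (modulo the $\diff_k(1)$-action of Lemma~\ref{lemma:quotient}) with the natural Grassmannian embedding $\rho$ of Remark~\ref{remark:embed}, and then deduce Part~(2) from density of a $\GL(n)$-orbit inside $J_k^{\reg}(1,n)$. For Part~(1), given $\gamma\in J_k^{\reg}(1,n)$, let $\xi_\gamma\in\mathrm{Curv}^{k+1}_0(\CC^n)$ be the length-$(k+1)$ curvilinear subscheme supported at $0$ contained in the smooth curve germ parametrised by $\gamma$; Lemma~\ref{lemma:quotient} guarantees this is well-defined modulo $\diff_k(1)$. The core computation is that for any $\Psi\in J_k(n,1)$, viewed as a polynomial in $\mathfrak{m}/\mathfrak{m}^{k+1}$, the condition $\Psi\circ\gamma=0$ in $J_k(1,1)$ holds iff $\Psi$ vanishes on $\xi_\gamma$, i.e.\ $\Psi\in I_{\xi_\gamma}$. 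Hence $\mathcal{S}^{k,1}_\gamma=I_{\xi_\gamma}\cap\mathfrak{m}/\mathfrak{m}^{k+1}$, and dualising through $J_k(n,1)^*\cong\symdot$ of Remark~\ref{remark:diffop} yields $\phi(\gamma)=(\mathcal{S}^{k,1}_\gamma)^\perp=S_{\xi_\gamma}=\rho(\xi_\gamma)$. Therefore $\mathrm{im}(\phi)=\rho(\mathrm{Curv}^{k+1}_0(\CC^n))$, and Part~(1) follows by taking closures and recalling that $\CHilb^{k+1}_0(\CC^n)$ is by definition the closure of the curvilinear locus inside the punctual Hilbert scheme.

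For Part~(2), I would exploit the $\GL(n)$-equivariance of $\phi$ established in Theorem~\ref{embedgrass}: the orbit $\GL(n)\cdot p_{k,n}$ in the Grassmannian is the $\phi$-image of the $\GL(n)$-orbit of $(e_1,\ldots,e_k)$ in $J_k^{\reg}(1,n)$. Identifying $\gamma$ with its normalised derivatives $(v_1,\ldots,v_k)\in(\CC^n)^k$, the $\GL(n)$-action becomes the diagonal action. When $k\le n$, the locus of tuples with $v_1,\ldots,v_k$ linearly independent is a single $\GL(n)$-orbit, namely the orbit of $(e_1,\ldots,e_k)$, and it is Zariski open and dense in $J_k^{\reg}(1,n)=\{v_1\neq 0\}$. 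Applying $\phi$ and passing to closures, Part~(1) then gives $\overline{\GL(n)\cdot p_{k,n}}=\overline{\mathrm{im}(\phi)}=\CHilb^{k+1}_0(\CC^n)$.

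The only delicate point is the duality identification in Part~(1): the isomorphism $J_k(n,1)^*\simeq\symdot$ of Remark~\ref{remark:diffop} must be calibrated so that $(\mathcal{S}^{k,1}_\gamma)^\perp$ coincides, on the nose, with the dual $S_{\xi_\gamma}$ of $I_{\xi_\gamma}$ used to define $\rho$. The explicit formula of Remark~\ref{remark:orbit}, $(\mathcal{S}^{k,1}_\gamma)^\perp=\Span(v_1,\,v_2+v_1^2,\,\ldots,\,\sum_{i_1+\ldots+i_r=k}v_{i_1}\cdots v_{i_r})$, serves as the crucial sanity check: specialising to $v_i=e_i$ reproduces $p_{k,n}$ exactly, confirming both the calibration and that $\phi$ lands in the curvilinear component of $\Hilb^{k+1}_0(\CC^n)$ rather than in some ancillary component where $\calo_\xi$ fails to be generated by a single element.
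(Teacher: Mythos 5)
Your proof is correct and follows essentially the same route as the paper: identify $\phi$ with the restriction to the curvilinear locus of the embedding $\rho$ of Remark~\ref{remark:embed} via the duality $\mathcal{S}^{k,1}_\gamma = I_{\xi_\gamma}\cap(\mathfrak{m}/\mathfrak{m}^{k+1})$, then take closures; for Part~(2), use $\GL(n)$-equivariance of $\phi$ (Theorem~\ref{embedgrass}) together with the fact that for $k\le n$ the linearly independent $k$-tuples form a single dense $\GL(n)$-orbit in $J_k^\reg(1,n)$. The paper states this argument more tersely (``$\mathrm{im}(\phi)=\mathrm{im}(\rho)$, hence same closure; Part~2 follows from Remark~\ref{remark:orbit} by equivariance''), whereas you correctly emphasise that the identification of $\mathrm{im}(\phi)$ with $\mathrm{im}(\rho)$ holds on the curvilinear locus, and you make the continuity-and-density step in Part~(2) explicit, which is a modest but genuine improvement in rigour over the paper's sketch.
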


\subsection{Tautological integrals and Thom polynomials}
%Ultimate goal: Determine the geometric and topological invariants of $X^{[n]}$. \\
%In this paper we develop a localisation method to compute tautological integrals on moduli of jets of curves and its connection to Hilbert schemes.

%For surfaces these were extensively studied such as \textbf{Betti numbers} (Ellingsrud, Stromme, G\"ottsche), \textbf{Hodge numbers} (S\"orgel, G\"ottsche), \textbf{cohomology ring} (Nakajima, Grojnowski), \textbf{Chern numbers of tautological bundles} (Lehn, Rennemo, Marian-Oprea-Pandharipande).\\

%When $\dim(X)>2$ not much is known: Rennemo, Tzeng, motivic DT invariants (Behrend-Bryan-Szendroi).\\
In this subsection we recall from \cite{bsz} how  the test curve model explained in the previous section reinterprets Thom polynomials as certain equivariant tautological integrals over the punctual curvilinear Hilbert scheme $\CHilb^{k+1}_0(\CC^n)$. Recall from Theorem \ref{bszmodel} that 
\[\CHilb^{k+1}_0(\CC^n)=\overline{\mathrm{im}(\phi)} \subset \grass_k(\wedge^k \symdot)\]
and therefore there are two canonically defined bundles on $\CHilb^{k+1}_0(\CC^n)$: 
\begin{enumerate}
\item The tautological rank $k+1$ bundle $\calo_{\CC^n}^{[k+1]}$ associated to the trivial bundle on $\CC^n$.  
\item The tautological rank $k$ bundle $\cale$, which is the restriction of the tautological bundle over $\grass_k(\wedge^k \symdot)$. 
\end{enumerate}
These fit into the short exact sequence 
\[\xymatrix{0 \ar[r] & \calo_\grass \ar[r] & \calo_{\CC^n}^{[k+1]} \ar[r] & \cale \ar[r] & 0}.\]
Let $\CC^m$ be the trivial $\GL(m)$-equivariant bundle over $\grass_k(J_k(n,1)^*)$. By Lemma \ref{lemma:tauoncurvi}, by tensoring with $\CC^m$ we obtain the short exact sequence over $\CHilb^{k+1}_0(\CC^n)$
\[\xymatrix{0 \ar[r] & \CC^m  \ar[r] & (\CC^m)^{[k+1]} \ar[r] & \cale \otimes \CC^m \ar[r] & 0}.\]
The equivariant Euler class of $\cale \otimes \CC^m$ can be written as 
\begin{equation}\label{chernroots}
\Euler(\cale \otimes \CC^m)=\prod_{i=1}^k\prod_{j=1}^m(\eta_i+\theta_j)
\end{equation}
where $c^T(\cale)=\prod_{j=1}^m(1+\theta_j)$ and $c^T(\cale)=\prod_{i=1}^k(1+\eta_i)$ are the equivariant Chern classes for the corresponding bundles.
\begin{theorem}[\cite{bsz}] \label{thm:thomtau} Thom polynomials of Morin singularities can be expressed as equivariant tautological integrals over curvilinear Hilbert schemes as follows:
\[\Tp_k^{n,m}=\int_{\CHilb^{k+1}_0(\CC^n)}\Euler(\cale \otimes \CC^m)(TN,f^*(TM)),\]
obtained by substituting the Chern roots of $TN,TM$ into the equivariant integral, which sits in the ring $\CC[x_1,\ldots, x_n,y_1,\ldots, y_m]^{S_n\times S_m}$ of bi-symmetric polynomials on $n+m$ variables.
\end{theorem}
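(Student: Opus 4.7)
The plan is to assemble the theorem out of the test curve fibration of the previous sections together with the Thom principle. The backbone is the identity
\[
\Tp_k^{n,m} = \epdgl{\Sigma_k(n,m),J_k(n,m)}(TN,f^*TM),
\]
so the task is to rewrite the right-hand side as an equivariant tautological integral over $\CHilb^{k+1}_0(\CC^n)$.

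First, I would restrict attention to the dense open stratum $\Sigma_k^0(n,m) \subset \Sigma_k(n,m)$ consisting of jets whose linear part has one-dimensional kernel; since its complement has strictly larger codimension in $J_k(n,m)$, it contributes the same $\GL(n)\times\GL(m)$-equivariant dual. By Gaffney's test curve lemma (combined with Lemma \ref{explgp}), a germ $\Psi\in\Sigma_k^0(n,m)$ is precisely a solution of the linear system $\Psi\circ\gamma=0$ for some $\gamma\in\jetreg 1n$, and by Theorem \ref{embedgrass} the test curve is unique up to $\diff_k(1)$. Thus $\Sigma_k^0(n,m)$ fibers over the quotient $\jetreg 1n/\diff_k(1)$, with fiber over $[\gamma]$ the linear subspace $\tilde{\mathcal{S}}^{k,m}_\gamma$ of codimension $km$ inside $J_k(n,m)$.

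Next I would extend this linear fibration across the whole curvilinear Hilbert scheme. The key identity $(\mathcal{S}^{k,m}_\gamma)^\perp = (\mathcal{S}^{k,1}_\gamma)^\perp\otimes\CC^m$ shows that the fiber depends on $\gamma$ only through the point $\phi(\gamma)\in \grass_k(J_k(n,1)^*)$. Hence, via the Berczi–Szenes identification $\CHilb^{k+1}_0(\CC^n) = \overline{\mathrm{im}(\phi)}$ of Theorem \ref{bszmodel}, the family $\{\mathcal{S}^{k,m}_\gamma\}$ extends to a $\GL(n)\times\GL(m)$-equivariant subbundle
\[
V \;\hookrightarrow\; J_k(n,m)\times \CHilb^{k+1}_0(\CC^n),
\]
whose orthogonal (i.e.\ the normal bundle of $V$ inside the trivial bundle $J_k(n,m)\times\CHilb^{k+1}_0(\CC^n)$) is tautologically $\cale\otimes\CC^m$, where $\cale$ is the restriction of the tautological rank $k$ Grassmannian subbundle. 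Here $\cale$ plays the role of $(\mathcal{S}^{k,1}_\gamma)^\perp$ at the point $\phi(\gamma)$, by construction of the embedding $\phi$.

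Finally I would apply the standard fiber integration / Thom isomorphism argument: the $\GL(n)\times\GL(m)$-equivariant Poincaré dual of the total space of $V$ inside $J_k(n,m)\times\CHilb^{k+1}_0(\CC^n)$ is the Thom class $\Thom(V\subset J_k(n,m))$, and pushing forward along the projection $J_k(n,m)\times \CHilb^{k+1}_0(\CC^n)\to J_k(n,m)$ gives the equivariant dual of its image, namely $\overline{\Sigma}_k^0(n,m)=\overline{\Sigma}_k(n,m)$. This yields
\[
\epdgl{\Sigma_k(n,m),J_k(n,m)} \;=\; \int_{\CHilb^{k+1}_0(\CC^n)} \Thom(V\subset J_k(n,m)).
\]
Since the normal bundle of each fiber $V_\gamma$ inside $J_k(n,m)$ is $\cale_{\phi(\gamma)}\otimes\CC^m$, we have $\Thom(V\subset J_k(n,m)) = \Euler(\cale\otimes\CC^m)$, and substituting the Chern roots of $TN$ and $f^*TM$ for the equivariant parameters gives the required formula. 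The main obstacle I expect is the extension step: one must check that $V$ genuinely extends as an algebraic subbundle across the boundary $\CHilb^{k+1}_0(\CC^n)\setminus \mathrm{im}(\phi)$ and that the fiberwise Thom classes assemble coherently—but this follows from the fact that $V$ is pulled back from the tautological bundle on the Grassmannian via $\phi$ composed with its closure, and the birational correction contributes nothing to a top-degree integral because the excluded locus on the source side $\Sigma_k\setminus\Sigma_k^0$ has strictly higher codimension.
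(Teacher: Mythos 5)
Your argument follows essentially the same route that the paper sketches in \S 2 and \S 4.2 (the result itself is cited from B\'erczi--Szenes): you pass to the open stratum $\Sigma_k^0$, use the test-curve fibration and the identity $(\mathcal{S}^{k,m}_\gamma)^\perp=(\mathcal{S}^{k,1}_\gamma)^\perp\otimes\CC^m$ to extend $V$ as a subbundle of $J_k(n,m)\times\CHilb^{k+1}_0(\CC^n)$, and integrate the Thom class $\Thom(V\subset J_k(n,m))=\Euler(\cale\otimes\CC^m)$ along the fibers. This is the intended proof; the only point worth being slightly more careful about (though your conclusion matches the paper's conventions) is the duality identification of the normal bundle $J_k(n,m)/\mathcal{S}^{k,m}_\gamma$ with $(\mathcal{S}^{k,m}_\gamma)^\perp\otimes\CC^m$ rather than its dual.
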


\section{Non-reductive geometric invariant theory}\label{sec:nrgit}

In \cite{bdhk2} an extension of Mumford's classical GIT is developed for linear actions of a non-reductive linear algebraic group with internally graded unipotent radical  over an algebraically closed field $\kk$ of characteristic 0.

\begin{definition}\label{def:gradedunipotent} 
We say that a linear algebraic group $H = U \rtimes R$  has {\em internally graded unipotent radical} $U$ if there is a central one-parameter subgroup $\l:\GG_m \to Z(R)$ of the Levi subgroup $R$ of $H$ %unipotent radical $U$ of $H$ has an extension $\hat{U} = U \rtimes \GG_m$ in $H$ by the multiplicative group $\GG_m$ of $\kk$
 such that the adjoint action of $\GG_m$ on the Lie algebra of $U$ has all its weights strictly positive. 
 Then $\hat{U} = U \rtimes \l(\GG_m)$ is a normal subgroup of $H$ and $H/\hU \cong R/\l(\GG_m)$ is reductive.
%We say that $H$ is {\em externally positively graded} if $H$ is a linear algebraic group over $\kk$ with unipotent radical
%$U$ and there is an extension $\hat{H}$ of $H$ by $\GG_m$ acting by conjugation on the Lie algebra of $U$ with all weights strictly positive, and whose induced conjugation action on $R=H/U$ is trivial. That is, $H$ has a graded unipotent radical, with the extension $\hat{H}$ defining the grading.
%\end{enumerate}
\end{definition}

Let $H=U \rtimes R$ be a linear algebraic group with internally graded unipotent radical $U$ acting linearly with respect to an ample line bundle $L$ on a projective variety  $X$; that is, the action of $H$ on $X$ lifts to an action on $L$ via automorphisms of the line bundle. When $H=R$ is reductive, using Mumford's classical geometric invariant theory (GIT)  \cite{git}, we can define $H$-invariant open subsets $X^s \subseteq X^{ss}$ of $X$ (the stable and semistable loci for the linearisation) with a geometric quotient $X^s/H$ and  projective completion $X/\!/H \supseteq X^s/H$ which is the projective variety associated to the algebra  of invariants
 $\bigoplus_{k \geq 0} H^{0}(X,L^{\otimes k})^H$. %, which is a finitely generated graded algebra. 
The variety $X/\!/H$ is the image
of a surjective morphism $\phi$ from the open subset $X^{ss}$ of $X$  such that if $x,y \in X^{ss}$ then $\phi(x) = \phi(y)$ if and only if the closures of the $H$-orbits of $x$ and $y$ meet in $X^{ss}$. %(that is, $x$ and $y$ are \lq S-equivalent').  Moreover
Furthermore the subsets $X^s$ and $X^{ss}$ can be described using the Hilbert--Mumford criteria for stability and semistability. 

Mumford's GIT %can be made to work when $H$ is not reductive (cf. for example \cite{bdhk1, dk, F2,F1,GP1,GP2,KPEN,W}), although it
 does not have an immediate extension  to actions of non-reductive linear algebraic groups $H$, since the algebra of invariants $\bigoplus_{k \geq 0} H^{0}(X,L^{\otimes k})^H$ is not necessarily finitely generated as a graded algebra when $H$ is not reductive. 
It is still possible to define semistable and stable subsets $X^{ss}$ and $X^s$, with a geometric quotient $X^s/H$ which is an open subset of a so-called enveloping quotient $X\env H$ with an $H$-invariant morphism $\phi: X^{ss} \to X\env H$, and if the algebra of invariants 
 $\bigoplus_{k \geq 0} H^{0}(X,L^{\otimes k})^H$ is finitely generated then $X\env H$ is the associated projective variety
 \cite{BDHK0,dorankirwan}. But in general the enveloping quotient $X\env H$ is not necessarily projective, the morphism $\phi$ is not necessarily surjective (and its image may be only a constructible subset, not a subvariety, of $X\env H$). In addition there are in general no obvious analogues of the Hilbert--Mumford criteria.

However when $H = U \rtimes R$ has internally graded unipotent radical $U$ and acts linearly on a projective variety $X$, then provided that we are willing to modify the linearisation of the action by replacing the line bundle $L$ by a sufficiently divisible tensor power and multiplying by a suitable character of $H$ (which will not change the action of $H$ on $X$), many of the  key features of classical GIT still apply.

Let such an $H$ act linearly on an irreducible projective variety $X$ with respect to a very ample line bundle $L$.
Let $\chi: H \to \GG_m$ be a character of $H$. Its kernel contains $U$, and its restriction to $\hU$ can be identified with an integer so that the integer 1 corresponds to the character of $\hU$ which fits into the exact sequence $U \hookrightarrow \hat{U} \to \l(\GG_m)$. Let $\weight_{\min}$ be the minimal weight for the $\l(\GG_m)$-action on
$V:=H^0(X,L)^*$ and let $V_{\min}$ be the weight space of weight $\weight_{\min}$ in
$V$. Suppose that $\weight_{\min}=\weight_0 < \weight_{1} < 
\cdots < \weight_{\max} $ are the weights with which the one-parameter subgroup $\l: \GG_m \leq \hU \leq H$ acts on the fibres of the tautological line bundle $\calo_{\PP((H^0(X,L)^*)}(-1)$ over points of the connected components of the fixed point set $\PP((H^0(X,L)^*)^{\GG_m}$ for the action of $\GG_m$ on $\PP((H^0(X,L)^*)$; since $L$ is very ample $X$ embeds in $\PP((H^0(X,L)^*)$ and the line bundle $L$ extends to the dual $\calo_{\PP((H^0(X,L)^*)}(1)$ of the tautological line bundle on $\PP((H^0(X,L)^*)$.
Note that we can assume that there exist at least two distinct such weights since otherwise the action of the unipotent radical $U$ of $H$ on $X$ is trivial, and so the action of $H$ is via an action of the reductive group $R=H/U$.
\begin{definition}\label{def:welladapted}
Let $c$ be a positive integer such that 
$$ %\weight_{\min} <
\frac{\chi}{c} = \weight_{\min} + \epsilon$$
where $\epsilon>0$ is sufficiently small; we will call rational characters $\chi/c$  with this property {\it well adapted } to the linear action of $H$, and we will call the linearisation well adapted if $\weight_{\min} <0\leq \weight_{\min} + \epsilon$ for sufficiently small $\epsilon>0$. How small $\epsilon$ is required to be will depend on the situation; more precisely, we will say that some property P holds for well adapted linearisations if there exists $\epsilon(P) > 0$ such that property P holds for any linearisation for which $\weight_{\min} <0\leq \weight_{\min} + \epsilon(P)$.
\end{definition}

\begin{remark}
In \cite{bkgrosshans} it is shown that under hypotheses which will be satisfied in our situation it suffices to take $0< \epsilon < 1$.
\end{remark}

 The linearisation of the action of $H$ on $X$ with respect to the ample line bundle $L^{\otimes c}$ can be twisted by the character $\chi$ so that the weights $\weight_j$ are replaced with $\weight_jc-\chi$;
let $L_\chi^{\otimes c}$ denote this twisted linearisation. 
Let $X^{s,\GG_m}_{\min+}$ denote the stable subset of $X$ for the linear action of $\GG_m$ with respect to the linearisation $L_\chi^{\otimes c}$; by the theory of variation of (classical) GIT \cite{Dolg,Thaddeus}, if $L$ is very ample then  $X^{s,\GG_m}_{\min+}$ is the stable set for the action of $\GG_m$ with respect to any rational character $\chi/c$ such that 
$\weight_{\min} < \chi/c < \weight_{\min + 1}$.
Let
\[
Z_{\min}:=X \cap \PP(V_{\min})=\left\{
\begin{array}{c|c}
\multirow{2}{*}{$x \in X$} & \text{$x$ is a $\GG_m$-fixed point and} \\ 
 & \text{$\GG_m$ acts on $L^*|_x$ with weight $\weight_{\min}$} 
\end{array}
\right\}
\]
and
\[
X_{\min}^0:=\{x\in X \mid p(x)  \in Z_{\min}\}  \quad \mbox{ where } \quad  p(x) =  \lim_{\substack{ t \to 0\\ t \in \GG_m }} t \cdot x \quad \mbox{ for } x \in X.
\]   

\begin{definition} \label{cond star}(%Conditions %$(*) - (***)$  generalising \lq semistability equals stability' 
\textit{cf.}\ \cite{bdhk2}) 
With this notation, we define the following condition for the $\hU$-action on $X$:
\begin{equation}\label{star}
 \stab_{U}(z) = \{ e \}  \textrm{ for every } z \in Z_{\min}. \tag{$*$}
\end{equation}
%\begin{equation}\label{star2}  \stab_{U}(x) = \{ e \}  \textrm{ for generic } x \in X_{\min}^0. \tag{$**$}\end{equation}
%Moreover, if $ U \geqslant U^{(1)} \geqslant ... \geqslant U^{(s)}  \geqslant \{e\}$ denotes the derived series of $U$, we define condition \begin{equation}\label{star3} \textrm{for } 1 \leq j \leq s, \textrm{ there exists } d_j \in \mathbb{N} \textrm{ such that } \dim \stab_{U^{(j)}}(x) = d_j \textrm{ for all } x \in X^0_{\min}. \tag{$***$} \end{equation}
Note that \eqref{star} holds if and only if we have $\stab_{U}(x) = \{e\}$ for all $x \in X^0_{\min}$. This  is also referred to as the condition that \lq semistability coincides with stability' for the action of $\hU$ (or, when $\lambda:\GG_m \to R$ is fixed, for the linear action of $U$); see Definition \ref{def:s=ss} below. 
\end{definition}

\begin{definition}\label{def min stable}   %Let $T^H$ be a maximal torus of $R$; then $T^H$ is a maximal torus of $H$ and since $\lambda(\GG_m)$ is central in $R$ it is contained in $T^H$. The {\em min-stable} (respectively {\em min-semistable}) locus for a linear $H$-action with respect to the grading $\l$ is %satisfying condition \eqref{star3} for the action of the graded unipotent group $\hU$ is
%\[X^{s,{{H}}}_{\min+} :=  \bigcap_{h \in H} h X^{s,T^H}_{\min+}\] (respectively $X^{ss,{{H}}}_{\min+} :=  \bigcap_{h \in H} h X^{ss,T^H}_{\min+}$). We will also use the shorthand notation $X^{s,H}, X^{ss,H}$ for these loci. \end{definition}
When (\ref{star}) holds for a well adapted action of $\hU$ the min-stable locus for the $\hU$-action is 
\[ X^{s,{\hU}}_{\min+}= X^{ss,{\hU}}_{\min+}= \bigcap_{u \in U} u X^{s,\lambda(\GG_m)}_{\min+} = X^0_{\min} \setminus U Z_{\min}.  \]
\end{definition}

%The main theorem of \cite{BDHK2} (which is a version of Theorem \ref{mainthm} below, but with stronger hypotheses) concerns a linear action of a complex linear algebraic group $\hat{H}$ on a complex projective variety $X$ which is well adapted in the sense above and satisfies the condition that \lq semistability coincides with stability' for the linear action of $\hU$, in the terminology of \cite{BDHK2}. This requirement is that 
% whenever $U'$ is a subgroup of $U$ and $\xi$ belongs to the Lie algebra of $U$ but not the Lie algebra of $U'$, then the weight space with weight $-\weight_{\min}$ for the action of $\GG_m$ on $H^0(X,L)$ is contained in the image $\delta_\xi(H^0(X,L)^{U'})$ of $H^0(X,L)^{U'}$ under the corresponding derivation $\delta_\xi$. In this paper we will prove this theorem under the following (weaker) version of the condition that semistability coincides with stability for the well adapted 
%$\hat{U}$-linearisaton $L \to X$:

%%% eq:sss %%%

\begin{definition}\label{def:welladaptedaction} A {\it well-adapted linear action} of the linear algebraic group $H$ on an irreducible projective variety consists of the data $(X,L,H,\hat{U},\chi)$ where
\begin{enumerate}
\item $H$ is a linear algebraic group with internally graded unipotent radical $U$,  %$\hat{U}=U \rtimes \GG_m$. 
\item $H$ acts linearly on $X$ with respect to a very ample line bundle $L$, while $\chi: H \to \GG_m$ is a character of $H$ and $c$ is a positive integer such that the rational character $\chi/c$ is well adapted for the linear action of $\hU =U \rtimes \GG_m$ on $X$.
%\item The linear action of $\hU$ on $X$ satisfies the condition that \lq semistability coincides with stability' in the sense of ((\ref{star})) above.
\end{enumerate}
We will often refer to this set-up simply as a well-adapted action of $H$ on $X$.
\end{definition}

\begin{theorem}[\cite{bdhk2}] \label{mainthm} 
Let $(X,L,H,\hat{U},\chi)$ be a well-adapted linear action satisfying condition (\ref{star}). Then 
\begin{enumerate}
\item the algebras of invariants 
\[\oplus_{m=0}^\infty H^0(X,L_{m\chi}^{\otimes cm})^{\hat{U}} \text{ and } \oplus_{m=0}^\infty H^0(X,L_{m\chi}^{\otimes cm})^{H} = (\oplus_{m=0}^\infty H^0(X,L_{m\chi}^{\otimes cm})^{\hat{U}})^{R}\]
are finitely generated;   
\item the enveloping quotient $X\env \hat{U}$ is the projective variety associated to the algebra of invariants $\oplus_{m=0}^\infty H^0(X,L_{m\chi}^{\otimes cm})^{\hat{U}}$ and is a geometric quotient of the open subset $X^{s,\hU}_{\min+}$ of $X$ by $\hat{U}$;
  
%the quotient can be described as S-equivalence classes for the induced linear action of $\GG_m=\hat{U}/U$ on $X$.

\item the enveloping quotient $X\env H$ is the projective variety associated to the algebra of invariants $\oplus_{m=0}^\infty H^0(X,L_{m\chi}^{\otimes cm})^{{H}}$ and is the classical GIT quotient of $X \env \hat{U}$ by the induced action of $R/\l(\GG_m)$ with respect to the linearisation induced by a sufficiently divisible tensor power of $L$. 
\end{enumerate}
\end{theorem}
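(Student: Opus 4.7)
The plan is to separate the proof into two parts: first establish (1) and (2) for the action of $\hat U = U \rtimes \lambda(\GG_m)$, and then obtain (3) by applying classical Mumford GIT to the residual reductive group $R/\lambda(\GG_m)$ acting on the projective quotient $X \env \hat U$. The central geometric object throughout is the open plus-stratum $X^0_{\min}$ of the $\lambda(\GG_m)$-Bialynicki-Birula decomposition, equipped with its retraction $p: X^0_{\min} \to Z_{\min}$ given by $x \mapsto \lim_{t\to 0} t\cdot x$.

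First I would unpack what well-adaptedness and $(\star)$ give geometrically. Well-adaptedness means that after twisting $L^{\otimes c}$ by $\chi$, the minimum $\lambda(\GG_m)$-weight $\omega_{\min}c - \chi$ becomes slightly negative while all other weights are positive; by variation of classical GIT for the $\GG_m$-action (\cite{Dolg,Thaddeus}), the $\GG_m$-stable set for $L^{\otimes c}_\chi$ is exactly $X^0_{\min} \setminus Z_{\min}$. Under $(\star)$, the unipotent radical $U$ acts freely on $Z_{\min}$, and by $\GG_m$-equivariance of $p$ combined with openness of the trivial-$U$-stabiliser locus, freeness propagates to the $\hat U$-invariant open subset $X^0_{\min} \setminus UZ_{\min}$. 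This identifies Definition \ref{def min stable}'s min-stable locus and sets up the candidate for the geometric quotient.

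The heart of the proof is establishing that $\bigoplus_m H^0(X, L^{\otimes cm}_{m\chi})^{\hat U}$ is finitely generated and that $\Proj$ of this algebra coincides with a geometric quotient of $X^0_{\min} \setminus UZ_{\min}$ by $\hat U$. My strategy is a two-step averaging: first average sections over $U$ using the contracting $\GG_m$-flow provided by $\lambda$, then take $\lambda(\GG_m)$-invariants. Concretely, a section $s \in H^0(X,L^{\otimes cm}_{m\chi})$ has a leading-order component along $V_{\min}^{\otimes cm}$ near $Z_{\min}$, and $p$ allows invariants on $Z_{\min}$ to be extended back to $X^0_{\min}$ via the flow. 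The key lemma is that the restriction map from $\hat U$-invariants on $X$ to $\lambda(\GG_m)$-invariants on $Z_{\min}$ is essentially surjective up to a finite-dimensional correction controlled by the positive part of the weight decomposition of $V$. Since $Z_{\min}$ is a projective variety and $\lambda(\GG_m)$ is reductive, $\bigoplus_m H^0(Z_{\min}, L|_{Z_{\min}}^{\otimes cm})^{\lambda(\GG_m)}$ is finitely generated by Hilbert's theorem, and finite generation transfers to the $\hat U$-invariants on $X$.

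The main obstacle is establishing the geometric quotient property together with projectivity. This requires three ingredients: (a) that $UZ_{\min}$ is closed in $X^0_{\min}$, which uses strict positivity of the $\lambda(\GG_m)$-weights on $\lie U$ to rule out orbits limiting into $Z_{\min}$ from outside $UZ_{\min}$, combined with $(\star)$; (b) that $\hat U$-orbits in $X^0_{\min} \setminus UZ_{\min}$ are separated by $\hat U$-invariants, which I would prove by using the contracting flow to reduce separation to the corresponding statement for $\lambda(\GG_m)$-orbits on $Z_{\min}$, the latter being standard classical GIT on a projective variety; and (c) verifying that no boundary appears in $X \env \hat U$ beyond the geometric quotient, so that $\Proj$ of the invariant ring equals the geometric quotient itself (this is where well-adaptedness prevents spurious semistable points). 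Once (1) and (2) are proved, part (3) is immediate: $X \env \hat U$ is a projective $H/\hat U$-variety, where $H/\hat U \cong R/\lambda(\GG_m)$ is reductive, linearised by the line bundle descending from $L^{\otimes cm}_{m\chi}$. Since $H$-invariants factor as $\bigl((\,\cdot\,)^{\hat U}\bigr)^{R/\lambda(\GG_m)}$ and $R/\lambda(\GG_m)$ is reductive, classical Mumford GIT applied to this linearised action yields both finite generation of the $H$-invariant ring and the identification $X \env H = (X\env \hat U)\dblslash (R/\lambda(\GG_m))$.
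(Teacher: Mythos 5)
This statement is quoted in the paper from \cite{bdhk2} and is not proved there, so there is no in-paper argument to compare against; your proposal therefore has to stand on its own, and as it stands its core step does not work. Part (3) is fine: granted (1) and (2), the passage to $X\env H$ is indeed just Mumford's GIT for the reductive group $R/\l(\GG_m)$ acting on the projective variety $X\env\hU$. The problem is the mechanism you propose for (1) and (2). There is no ``averaging over $U$'': a unipotent group is not linearly reductive, admits no Reynolds operator, and the contracting $\GG_m$-flow does not produce one. More seriously, your key lemma --- that restriction of $\hU$-invariants on $X$ to $\l(\GG_m)$-invariants on $Z_{\min}$ is essentially surjective, so that finite generation can be transferred from $Z_{\min}$ --- fails at the first step: for a well-adapted twist the fibre of $L^{\otimes c}_{\chi}$ over any point of $Z_{\min}$ carries the nonzero $\GG_m$-weight $\weight_{\min}c-\chi$, so every $\GG_m$-invariant (a fortiori $\hU$-invariant) section of a positive tensor power vanishes identically on $Z_{\min}$. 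The restriction map you want to be ``essentially surjective'' is zero in positive degrees, and the ring of $\l(\GG_m)$-invariants on $Z_{\min}$ for this linearisation is trivial; it cannot control $\oplus_m H^0(X,L^{\otimes cm}_{m\chi})^{\hU}$, whose Proj has dimension $\dim X-\dim U-1$ and is in general not determined by $Z_{\min}$ alone. For the same reason your step (b), reducing separation of $\hU$-orbits in $X^0_{\min}\setminus UZ_{\min}$ to separation of $\GG_m$-orbits on $Z_{\min}$ by invariants, has nothing to reduce to.

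What a correct argument (as in \cite{bdhk2} and the Grosshans-theoretic companion \cite{bkgrosshans}) actually requires is different in kind: one uses the strictly positive grading of $\lieu$ and condition \eqref{star} to show that $UZ_{\min}$ is closed in $X^0_{\min}$ (this part of your sketch is right) and that $U$ acts on $X^0_{\min}\setminus UZ_{\min}$ with a slice-type local triviality, so that $X^{s,\hU}_{\min+}\to X^{s,\hU}_{\min+}/U$ is a geometric quotient; one then proves that the induced $\GG_m$-quotient is \emph{proper}, and deduces projectivity of $X\env\hU$ and finite generation of the invariants from this geometric statement (rather than the other way around), with well-adaptedness entering precisely to show that no semistable boundary beyond $X^{s,\hU}_{\min+}$ survives. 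Your sketch names some of the right geometric ingredients (closedness of $UZ_{\min}$, the retraction $p$, the role of $\epsilon$), but the finite-generation engine you build on them is not viable, so the proposal has a genuine gap at the heart of parts (1) and (2).
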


\begin{definition}\label{def:s=ss} Let $X$ be a projective variety %acted on by $\hat{U}=U \rtimes \GG_m$ where $\GG_m$ grades the unipotent group $U$. We say that $\hU$-stability=$\hU$-semistability if (\ref{star}) holds. If $X$ has
which has a well adapted linear action of a linear algebraic group $H=U\rtimes R$ with internally graded unipotent radical $U$. 
When (\ref{star}) holds we denote by $X^{s,{{H}}}_{\min+}$ and $X^{ss,{{H}}}_{\min+}$ the pre-images in $X^{s,{{\hU}}}_{\min+}=X^{ss,{{\hU}}}_{\min+}$ of the stable and semistable loci for the induced linear action of the reductive group $H/\hU = R/\l(\GG_m)$ on 
$X\env \hat{U} = X^{s,\hU}_{\min+}/\hat{U}$.

By $H$-stability=$H$-semistability we mean that (\ref{star}) holds and $X^{s,{{H}}}_{\min+}=X^{ss,{{H}}}_{\min+}$. The latter is equivalent to the requirement that $\Stab_H(x)$ is finite for all $x \in X^{ss,{{H}}}_{\min+}$; then the projective variety $X\env H$ is a geometric quotient of the open subset $X^{s,{{H}}}_{\min+}=X^{ss,{{H}}}_{\min+}$ of $X$ by the action of $H$.

\end{definition}

\begin{remark}
When the conditions of Theorem \ref{mainthm} hold, we call $X \env H$ (respectively $X \env \hU$) the GIT quotient and we denote it by $X/\!/H$ (respectively $X /\!/ \hU$). 
\end{remark}

It is shown in \cite{BDHK0} that if $(X,L,H,\hat{U},\chi)$ is a well-adapted linear action satisfying $H$-stability=$H$-semistability, then  
\begin{enumerate}
\item
there is a sequence of blow-ups of $X$ along $H$-invariant projective subvarieties  resulting in a projective variety $\hat{X}$ with a well adapted linear action of $H$ which satisfies the  condition \eqref{star}, so that Theorem \ref{mainthm} applies, giving us a projective geometric quotient
$$\hat{X} /\!/ \hat{U} = \hat{X}^{s,\hU}_{\min+}/\hU$$
and its (reductive) GIT quotient $\hat{X} /\!/ H = (\hat{X} /\!/ \hat{U} ) /\!/ R  = (\hat{X} /\!/ \hat{U} ) /\!/ R$ where $R=H/U$;
\item
there is a sequence of further blow-ups along $H$-invariant projective subvarieties  resulting in a projective variety $\tilde{X}$ satisfying the same conditions as $\hat{X}$ and in addition $\tilde{X} /\!/ H = \Proj(\oplus_{m=0}^\infty H^0(X,L_{m\chi}^{\otimes cm})^{H})$ is the geometric quotient by $H$ of  the  $H$-invariant open subset $\tilde{X}^{s,H}_{\min+}$. 
% \item
%Both $\hat{X}\env \hat{H}$ and $\tilde{X}\env \hat{H}$ are projective completions of the geometric quotient by $\hH$ of the $\hH$-invariant open subset $$X^{s,\hH}_{\min+} =  \bigcap_{h \in H} h \, X^{s,T}_{\min+} $$ of $X$. % which can be identified via the blow-down map with the complement in $\tilde{X}^{s,\hH}_{\min+}$ of the exceptional divisors.
\end{enumerate}

\section{Moment maps and cohomology of non-reductive quotients}\label{NRGITlocalisation}

In this section we briefly summarise the results of \cite{bkcoh}, which generalise results of the second author \cite{francesthesis} and Martin \cite{SM} to the cohomology of GIT quotients by non-reductive groups with internally graded unipotent radicals.

First let us recall the reductive picture. Let $X$ be a nonsingular complex projective variety acted on by a complex reductive group $G$ with respect to an ample linearisation. Then we can choose a maximal compact subgroup $K$ of $G$ and a $K$-invariant Fubini--Study K\"ahler metric on $X$ with corresponding moment map $\mu:X\to \mathfrak{k}^*$, where $\mathfrak{k}$ is the Lie algebra of $K$ and $\mathfrak{k}^* = \Hom_{\RR}(\mathfrak{k},\RR)$ is its dual. $\mathfrak{k}^*$ embeds naturally in the complex dual $\mathfrak{g}^* = \Hom_{\CC}(\mathfrak{k},\CC)$ of the 
Lie algebra $\mathfrak{g} = \mathfrak{k} \otimes \CC$ of $G$, as $\mathfrak{k}^* = \{\xi \in \mathfrak{g}^*: \xi(\mathfrak{k}) \subseteq \RR\}$; using this identification we can regard $\mu: X \to \mathfrak{g}^*$ %(or a $G$-equivariant family of such $\mu$ parametrised by all the maximal compact subgroups of $G$)
 as a \lq moment map'  for the action of $G$, although of course it is not a moment map for $G$ in the traditional sense of symplectic geometry. 

In \cite{francesthesis} it is shown that the norm-square $f=||\mu||^2$ of the moment map $\mu:X\to \mathfrak{k}^*$ induces an equivariantly perfect Morse stratification of $X$ such that that the open stratum which retracts equivariantly onto the zero level set $\mu^{-1}(0)$ of the moment map coincides with the GIT semistable locus $X^{ss}$ for the linear action of $G$ on $X$. In particular this tells us that the restriction map
$$H^*_G(X;\QQ) \to H^*_G(X^{ss};\QQ)$$
is surjective; we also have an isomorphism (of vector spaces  though not of algebras) 
$H^*_G(X;\QQ)  \cong H^*(X;\QQ) \otimes H^*(BG;\QQ).$
Moreover, $\mu^{-1}(0)$ is $K$-invariant and its inclusion in $X^{ss}$ induces a homeomorphism %in good cases when $X^s=X^{ss}$ one gets the fundamental relation  
\begin{equation}\label{fundamental}
\mu^{-1}(0)/K  \cong X/\!/G.
\end{equation} 
 When $X^{s}=X^{ss}$ the $G$-equivariant rational cohomology of $X^{ss}$ coincides with the ordinary rational cohomology of its geometric quotient $X^{ss}/G$, which is the GIT quotient $X/\!/G$, and we get expressions for the Betti numbers of $X/\!/G$ in terms of the equivariant Betti numbers of the unstable GIT strata, which can be described inductively, and of $X$ \cite{francesthesis}. In order to describe the ring structure on the rational cohomology of $X/\!/G$, the surjectivity of the composition
 $$\kappa: H^*_G(X;\QQ) \to H^*_G(X^{ss};\QQ) \cong H^*(X/\!/G;\QQ)$$
 can be combined with Poincar\'e duality on $X/\!/G$ and the nonabelian localisation formulas for intersection pairings on $X/\!/G$ given in \cite{jeffreykirwan}.

Martin \cite{SM} used \eqref{fundamental} to obtain formulas for the  intersection pairings on the quotient $X/\!/G$ in a different way, by relating these pairings to intersection pairings on the associated quotient $X/\!/T_\CC$, where $T_\CC\subseteq G$ is a maximal torus. He proved a formula expressing the rational cohomology ring of $X/\!/G$ in terms of the rational cohomology ring of $X/\!/T_\CC$ and an integration formula relating intersection pairings on the cohomology of $X/\!/G$ to corresponding pairings on $X/\!/T_\CC$. This integration formula, combined with methods from abelian localisation, leads to %Jeffrey-Kirwan type 
residue formulas for pairings on $X/\!/G$ which are closely related to those of \cite{jeffreykirwan} (see also \cite{vergne}).

In \cite{bkcoh} similar results are obtained for non-reductive actions.  Let $X$ be a nonsingular complex projective variety with a linear action of a complex linear algebraic group $H=U \rtimes R$ with internally graded unipotent radical $U$ with respect to an ample line bundle $L$; 
then the Levi subgroup $R$ is the complexification of a maximal compact subgroup $Q$ of $H$. %and maximal torus $T^H$ (which are then respectively a maximal compact subgroup and maximal torus of $H$). 
 The unipotent radical $U$ of $H$ is internally graded by a central 1-parameter subgroup
$\l: \CC^* \to Z(R)$  of  $R$. Let $\hU=U \rtimes \l(\CC^*) \subseteq H$; then $\l(S^1) \subseteq \l(\CC^*) \subseteq \hU$ is a maximal compact subgroup of $\hU$. Assume also that semistability coincides with stability for the $\hU$-action, in the sense of Definition \ref{def:s=ss}. 
Using the embedding $X \subseteq \PP^n$ defined by a very ample tensor power of $L$, and a corresponding Fubini--Study K\"ahler metric invariant under the maximal compact subgroup $Q$ of $H$,  an $H$-moment map $\mu_{H}:X\to \mathrm{Lie}H^* = \Hom_\CC(\mathrm{Lie}H,\CC)$ is defined in \cite{bkcoh} by composing  the $G=\GL(n+1)$-moment map $\mu_{G}:X\to \mathfrak{g}^*$ with the map of complex duals $\mathfrak{g}^* \to \mathrm{Lie}(H)^*$ coming from the representation $H \to \GL(n+1)$. It is shown in \cite{bkcoh} that if the linearisation of the action of  $H$ on $X$ is well-adapted (which can be achieved by adding a suitable central constant to the moment map) and if  $H$-stability=$H$-semistability (see Definition \ref{def:s=ss}), then $H\mu_{H}^{-1}(0) = X^{s,H} = X^{ss,H}$ and the embedding  of $\mu_{H}^{-1}(0) $ in $X^{ss,H}$ induces a homeomorphism 
\[\mu_{H}^{-1}(0)/Q \simeq X/\!/H = X^{s,H}/H.\] 
In particular when $H=\hU=U \rtimes \CC^*$, this tells that the embedding $\mu_{\hU}^{-1}(0) \hookrightarrow X^{ss,\hU}$ induces a homeomorphism $\mu_{\hU}^{-1}(0)/S^1 \simeq X/\!/\hU$. Indeed  to have an embedding  of $\mu_{H}^{-1}(0) $ in $X^{ss,H}$ and an induced homeomorphism 
$ \mu_{H}^{-1}(0)/Q \simeq X/\!/H$, the condition that $H$-stability=$H$-semistability can be weakened to the requirement that $\hU$-stability=$\hU$-semistability. 

Similar results hold more generally when $X$ is compact K\"ahler but not necessarily projective. Suppose that $Y$ is a compact K\"ahler manifold acted on by a complex reductive Lie group $G$ such that $G$ is the complexification of a maximal compact subgroup $K$, so their Lie algebras satisfy $\lieg=\liek \oplus i\liek$. Let $B \subseteq G$ be a Borel subgroup such that $G=KB$ and $K\cap B=T$ is a maximal torus in $K$.  We fix $\hat{U}=U\rtimes \l(\CC)^* \subseteq B$ where $\l: \CC^* \to T_\CC$ grades the unipotent subgroup $U$ of $B$; then $K \cap \hat{U}=S^1$ is a maximal compact subgroup of $\hU$. The Lie algebra of $\hU$ decomposes as a real vector space as 
\begin{equation}\label{decomp}
\hat{\lieu}=\RR \oplus i\RR \oplus \lieu
\end{equation}
where $\mathrm{Lie}(K \cap \hU)=\RR$ and $\lieu$ is the Lie algebra of the complex unipotent group $U$. The set of positive roots $\Delta^+ \subseteq \Delta$ contains the weights of the adjoint action of $G$ on the Lie algebra of the unipotent radical of $B$, that is, the Cartan decomposition has the form 
\[\lieg=\lieg^- \oplus \liet_{\CC} \oplus \lieg^+ \text { where } \mathfrak{b}=\liet_{\CC} \oplus \lieg^+ \text{ and } \lieg^{\pm}=\oplus_{\a \in \Delta^{\pm}}e(\a).\]
Suppose that $H = U \rtimes R \subseteq G$ where $R$ is the complexification of $Q = K \cap H$ and $\l(\CC^*)$ is central in $R$, so that $H$ has internally graded unipotent radical with $\hU \subseteq H$. Suppose also that $X\subseteq Y$ is a compact complex submanifold invariant under the $H$ action, and  that $K$ 
preserves the K\"ahler structure on $Y$, so $S^1=K\cap \hU$ and $Q = K \cap H$ preserve the induced K\"ahler structure on $X$. 
Then we can define (generalised) moment maps $\mu_{\hU}$ and $\mu_H$ from $X$ to the complex duals of the Lie algebras of ${\hU}$ and $H$ by composing the restriction maps from $\mathfrak{g}^*$ to these duals with the $G$-moment map on $Y$ and the inclusion of $X$ in $Y$.

In the present paper we will work with actions of the diffeomorphism group (see \S \ref{subsec:jetdiff})
\[\hU=\diff_k(1)=\left\{ \left(
\begin{array}{ccccc}
\alpha_1 & \alpha_2   & \alpha_3          & \ldots & \alpha_k \\
0        & \alpha_1^2 & 2\alpha_1\alpha_2 & \ldots & 2\alpha_1\alpha_{k-1}+\ldots \\
0        & 0          & \alpha_1^3        & \ldots & 3\alpha_1^2\alpha_ {k-2}+ \ldots \\
0        & 0          & 0                 & \ldots & \cdot \\
\cdot    & \cdot   & \cdot    & \ldots & \alpha_1^k
\end{array}
 \right)  \begin{array}{c} : \alpha_1,\ldots,\alpha_k \in \CC, \\ \alpha_1 \neq 0 \end{array} \right\}
\]
on projective varieties. Therefore we only state the results of \cite{bkcoh} for the $H=\hU$ case. In this situation the
 symplectic description of the GIT quotient $X/\!/\hU$ as 
$X/\!/ \hU=\mu_{\hU}^{-1}(0)/S^1$
 fits into the diagram  
\begin{equation}\label{diagrammartin2}
\xymatrix{\mu_K^{-1}(0)/S^1  \ar@{^{(}->}[r]^-{j} & \mu_{\hU}^{-1}(0)/S^1=X/\!/\hU  \ar@{^{(}->}[r]^-{i} & \mu_{S^1}^{-1}(0)/S^1=X/\!/\CC^* 
}
\end{equation}
\begin{definition} For a weight $\a$ of $\CC^* \subseteq \hU$, let $\CC_\a$  denote the corresponding $1$-dimensional complex representation of $\CC^*$ and let 
\[L_\a:=\mu_{S^1}^{-1}(0)\times_{S^1} \CC_\a \to X/\!/\CC^*,\]
denote the associated line bundle whose Euler class is denoted by $e(\a) \in H^2(X/\!/\CC^*)\simeq H_\CC^2(X)$.  
For a $\CC^*$-invariant complex subspace $\mathfrak{a} \subseteq \mathfrak{b}$ let 
\[V_{\mathfrak{a}}= \mu_{S^1}^{-1}(0) \times_{S^1} \mathfrak{a} \to X/\!/\CC^*\]
denote the corresponding vector bundle. 
\end{definition}
Then we have 
\begin{proposition}[\cite{bkcoh}, Proposition 5.11]\label{propmartin}
\begin{enumerate}
\item The vector bundle $V_\lieu^* \to X/\!/\CC^*$ has a $C^{\infty}$-section $s$ which
is transverse to the zero section %(transverse is the sense that $s$ is a section of a real bundle over the real, oriented manifold $X/\!/\CC^*$)
 and whose zero set is the submanifold
$\mu_{\hU}^{-1}(0)/\!/S^1 \subseteq X/\!/\CC^*$. Therefore the $\CC^*$-equivariant normal bundle is
\[\mathcal{N}(i)\simeq V_\lieu^*. \]
\item %Let $\liet^\perp \subseteq \liet$ denote the orthogonal complement of $\RR=\mathrm{Lie}(S^1)$ in $\liet$, so that 
%\[\liet =\RR \oplus \liet^\perp\]
%is a decomposition of $\liet$ as an $S^1$-representation. 
Let 
$\lieb=\hat{\lieu} \oplus \mathfrak{v}$
be a decomposition invariant under the adjoint $\CC^*$ action. Then the complex vector bundle $V^*_{\liev} \to X/\!/\CC^*$ has a transversal section whose zero set is the submanifold
$\mu_K^{-1}(0)/\!/S^1$. Therefore the $\CC^*$-equivariant normal bundles are
\[\mathcal{N}(j)\simeq V^*_{\liev} \text{ and } \mathcal{N}(i \circ j)\simeq V^*_{\liev \oplus \lieu}\]
%\item Let $\mathrm{vert}(\pi) \to \mu_K^{-1}(0)/\!/S^1$ denote the (real) vector bundle of tangents to the fibres of $\pi$. This has no similar description as in Shaun-Martin \cite{SM}. 
\end{enumerate}
\end{proposition}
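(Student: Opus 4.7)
The strategy is of Martin type: I will produce both transverse sections from moment maps, read off their zero loci from the real decomposition of $\hat{\lieu}$, and extract transversality from the standard symplectic identity together with the $\hU$-stability $=$ $\hU$-semistability hypothesis.

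For part (1), define $\tilde{s}: \mu_{S^1}^{-1}(0) \to \lieu^*$ by $\tilde{s}(x) = \mu_{\hU}(x)|_{\lieu}$. The $\hU$-equivariance of $\mu_\hU$ for the coadjoint action restricts to $S^1$-equivariance on the $\CC^*$-stable subspace $\lieu \subseteq \hat\lieu$, so $\tilde s$ descends to a smooth section $s$ of $V_\lieu^* = \mu_{S^1}^{-1}(0)\times_{S^1}\lieu^*$. Using the real splitting \eqref{decomp}, namely $\hat{\lieu} = \RR \oplus i\RR \oplus \lieu$, together with the $\CC$-linearity of $\mu_\hU \in \Hom_\CC(\hat\lieu,\CC)$, the vanishing of $\mu_\hU$ at $x$ decouples into $\mu_{S^1}(x)=0$ and $\mu_\hU(x)|_\lieu=0$; so on $\mu_{S^1}^{-1}(0)/S^1$ the zero locus of $s$ coincides with $\mu_\hU^{-1}(0)/S^1 = X/\!/\hU$.

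The crucial point is transversality. I invoke the symplectic identity $\operatorname{image}((d\mu_\hU)_x) = \operatorname{Ann}(\hat\lieu_x)$, where $\hat\lieu_x = \mathrm{Lie}(\Stab_\hU(x))$. Under the hypothesis $\hU$-stability $=$ $\hU$-semistability (Definition~\ref{def:s=ss}) all $\hU$-stabilisers along $\mu_\hU^{-1}(0)$ are finite, so $\hat\lieu_x=0$ and $(d\mu_\hU)_x$ is surjective onto $\hat\lieu^*$. Projecting onto the $\lieu^*$ factor along $\hat\lieu^* = \CC \oplus \lieu^*$ gives transversality of $s$ to the zero section, and the tubular neighbourhood theorem then yields $\mathcal{N}(i) \simeq V_\lieu^*|_{X/\!/\hU}$.

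For part (2) I run the same construction with $\tilde{t}(x) = \mu_G(x)|_{\liev}$ in place of $\tilde{s}$. The additional algebraic input is that $\mu_G$, being the $\CC$-linear extension of the real $K$-moment map $\mu_K$, satisfies $\mu_G(\bar\zeta) = \overline{\mu_G(\zeta)}$; so vanishing of $\mu_G$ on the complex subspace $\lieb = \hat\lieu \oplus \liev$ forces vanishing on $\overline{\lieb}$, hence on $\lieb + \overline{\lieb} = \lieg$, i.e.\ $\mu_K=0$. Thus the simultaneous vanishing of $s$ and $t$ cuts out exactly $\mu_K^{-1}(0)/S^1$ inside $X/\!/\CC^*$; transversality follows from the same moment-map surjectivity argument applied to $\mu_G$ using finiteness of stabilisers on $\mu_K^{-1}(0)$. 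The identification $\mathcal{N}(j) \simeq V_\liev^*$ is then immediate, and $\mathcal{N}(i\circ j) \simeq V^*_{\liev\oplus \lieu}$ follows from the short exact sequence of normal bundles for the composition of closed embeddings $j$ and $i$. The main obstacle is the transversality step; it is precisely there that the stability equals semistability assumption is used essentially.
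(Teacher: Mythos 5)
Your proposal is correct and follows the same Martin-type strategy as the cited proof in \cite{bkcoh}: both sections are obtained by restricting moment maps, the zero locus of $\tilde s$ is identified via the real splitting $\hat{\lieu}=\RR\oplus i\RR\oplus\lieu$ together with the $\CC$-linearity of $\mu_G$, and the reality relation $\mu_G(\bar\zeta)=\overline{\mu_G(\zeta)}$ combined with $\lieb+\bar\lieb=\lieg$ is exactly what upgrades vanishing on $\lieb$ to $\mu_K=0$ in part (2).

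One point deserves to be made more carefully. The identity $\operatorname{image}((d\mu_{\hU})_x)=\operatorname{Ann}(\hat\lieu_x)$ that you invoke is the classical symplectic-slice statement for a \emph{compact} group action; here $\hU$ is noncompact, $\mu_{\hU}$ is the generalised moment map $r\circ\mu_G$ obtained by composing $\mu_G$ with the restriction map $\lieg^*\to\hat\lieu^*$, and in the general setup of \cite{bkcoh} the manifold $X$ is only $H$-invariant, not $K$- or $G$-invariant. Consequently, knowing that $(d\mu_K)_x\colon T_xY\to\mathfrak{k}^*$ is surjective does not by itself give surjectivity of its restriction to $T_xX$ or of the induced map to $\lieu^*$, and the decoupled argument in part (2) has the same issue for $\mu_G|_\liev$. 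You correctly locate $\hU$-stability $=$ $\hU$-semistability as the relevant hypothesis (it forces $\hat\lieu_x=\{0\}$), but turning this into transversality of $s$ and $t$ is one of the genuine technical outputs of the non-reductive moment-map theory of \cite{bkcoh}, and should not be presented as a one-line appeal to the standard identity. The overall skeleton of your argument, including the deduction of $\mathcal{N}(i\circ j)\simeq V^*_{\lieu\oplus\liev}$ from the short exact sequence of normal bundles once $\mathcal{N}(i)$ and $\mathcal{N}(j)$ are in hand, is otherwise exactly the intended proof.
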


This leads us to the following theorems:

\begin{theorem}[\cite{bkcoh}, Theorem 5.12]\label{thm:cohomologyrings} 
Let $X$ be a smooth projective variety endowed with a well-adapted action of $\hU=U \rtimes \CC^*$ such that $\hU$-stability=$\hU$-semistability holds. Then there is a natural ring isomorphism
\[H^*(X/\!/\hU,\QQ)\simeq \frac{H^*(X/\!/\CC^*,\QQ)}{ann(\mathrm{Euler}(V_\lieu)}.\]
Here $\mathrm{Euler}(V_\lieu) \in H^*(X/\!/\CC^*)$ is the Euler class of the bundle $V_\lieu$ and  
\[\mathrm{ann}(\mathrm{Euler}(V_\lieu))=\{c \in H^*(X/\!/\CC^*,\QQ)| c \cup \mathrm{Euler}(V_\lieu)=0\} \subseteq H^*(X/\!/\CC^*,\QQ).\]
is the annihilator ideal. 
\end{theorem}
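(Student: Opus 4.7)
I would exploit the geometric setup of Proposition 5.11: the embedding $i: X/\!/\hU \hookrightarrow X/\!/\CC^*$ realises $X/\!/\hU$ as the transverse zero locus of a $C^\infty$-section of the vector bundle $V_\lieu^* \to X/\!/\CC^*$. Hence $\mathcal{N}(i) \simeq V_\lieu^*|_{X/\!/\hU}$ and, by the standard formula for Poincar\'e duals of transverse zero loci, $i_*(1) = \mathrm{Euler}(V_\lieu^*)$ in $H^*(X/\!/\CC^*,\QQ)$. Since $\mathrm{Euler}(V_\lieu^*) = (-1)^{\dim U}\mathrm{Euler}(V_\lieu)$, the two classes share the same annihilator ideal, so it suffices to establish (i) that $i^*$ is surjective and (ii) that $\ker(i^*) = \mathrm{ann}(\mathrm{Euler}(V_\lieu))$; the first isomorphism theorem then yields the claimed ring identification.

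The inclusion $\ker(i^*) \subseteq \mathrm{ann}(\mathrm{Euler}(V_\lieu))$ is immediate from the projection formula: if $i^*\alpha = 0$, then $\alpha \cup \mathrm{Euler}(V_\lieu^*) = \alpha \cup i_*(1) = i_*(i^*\alpha) = 0$. For the surjectivity of $i^*$, I would appeal to the non-reductive equivariant Morse theory developed in \cite{bkcoh}, which extends the reductive framework of \cite{francesthesis}. The norm-square $f = \|\mu_{\hU}\|^2$ of the $\hU$-moment map is a $Q$-equivariant Morse-Bott function on $X$; the well-adapted hypothesis together with $\hU$-stability$=\hU$-semistability ensures that the minimum stratum of its gradient flow retracts equivariantly onto $\mu_{\hU}^{-1}(0)$, that $S^1$ acts freely there, and that all non-minimal critical sets appear with strictly positive Morse index and equivariantly orientable negative normal bundles. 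Equivariant perfection of the stratification then yields a surjection $H^*_{S^1}(X,\QQ) \twoheadrightarrow H^*_{S^1}(\mu_{\hU}^{-1}(0),\QQ) = H^*(X/\!/\hU,\QQ)$, and the classical Kirwan argument applied to $\|\mu_{S^1}\|^2$ gives the analogous surjection $H^*_{S^1}(X,\QQ) \twoheadrightarrow H^*(X/\!/\CC^*,\QQ)$. Since the first factors through the second via $i^*$, the map $i^*$ must itself be surjective.

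The reverse inclusion $\mathrm{ann}(\mathrm{Euler}(V_\lieu)) \subseteq \ker(i^*)$ then comes from Poincar\'e duality on the compact smooth manifold $X/\!/\hU$. Given $\alpha \in \mathrm{ann}(\mathrm{Euler}(V_\lieu))$ and an arbitrary class $\gamma \in H^*(X/\!/\hU,\QQ)$ of complementary degree, use (i) to write $\gamma = i^*\tilde\gamma$ and compute
\[\int_{X/\!/\hU} i^*\alpha \cup \gamma \;=\; \int_{X/\!/\hU} i^*(\alpha \cup \tilde\gamma) \;=\; \int_{X/\!/\CC^*} \alpha \cup \tilde\gamma \cup \mathrm{Euler}(V_\lieu^*) \;=\; 0,\]
the last step holding because $\alpha \cup \mathrm{Euler}(V_\lieu^*)$ differs from $\alpha \cup \mathrm{Euler}(V_\lieu) = 0$ only by a sign. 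Non-degeneracy of the Poincar\'e pairing on $X/\!/\hU$ then forces $i^*\alpha = 0$, completing (ii).

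The principal obstacle is the Morse-theoretic step underlying surjectivity: one must verify that $\|\mu_{\hU}\|^2$ genuinely defines an equivariantly perfect stratification of $X$, with the correct Morse-Bott structure on non-minimal strata, despite the non-reductivity of $\hU$ and the subtle interaction between the internal $\CC^*$-grading and the gradient flow on the unipotent directions. This is precisely the technical content that \cite{bkcoh} has established, so in practice the argument above invokes that perfection statement rather than proving it from scratch.
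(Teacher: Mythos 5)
The paper cites this as Theorem~5.12 of \cite{bkcoh} and does not reproduce a proof, so there is no in-text argument to compare against. That said, your sketch is a faithful reconstruction of the standard Martin-type argument that the cited reference adapts to the non-reductive setting, and it is logically sound as far as the key steps go: the identification $i_*(1)=\mathrm{Euler}(V_\lieu^*)$ from the transverse section of Proposition~\ref{propmartin}, the projection-formula inclusion $\ker(i^*)\subseteq\mathrm{ann}(\mathrm{Euler}(V_\lieu))$, the surjectivity of $i^*$ obtained by factoring $\kappa_{\hU}$ through $\kappa_{\CC^*}$ (both surjective by the equivariant perfection of the moment-map norm-square stratifications), and the Poincar\'e-duality argument for the reverse inclusion. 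You are also correct that the genuine technical weight is borne by the non-reductive perfection statement for $\|\mu_{\hU}\|^2$, which is what \cite{bkcoh} establishes and what you rightly invoke rather than reprove. One minor point worth making explicit: the Poincar\'e-duality step requires $X/\!/\hU$ to be a rational homology manifold, which follows from $\hU$-stability $=$ $\hU$-semistability (the quotient is geometric with at worst finite stabilisers), so the pairing you use is well-defined over $\QQ$. With that caveat noted, the proposal is consistent with the strategy the paper is drawing on.
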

\begin{theorem}[\cite{bkcoh}, Theorem 5.13]\label{thm:integration} 
Let $X$ be a smooth projective variety endowed with a well-adapted action of $\hU=U \rtimes \CC^*$ such that $\hU$-stability=$\hU$-semistability holds.
 Assume that the stabiliser in $\hU$ of a generic $x \in X$ is trivial. Given a cohomology class $a \in H^*(X/\!/\hU)$  with a lift $\tilde{a}\in H^*(X/\!/\CC^*)$, then 
\[\int_{X/\!/\hU}a=\int_{X/\!/\CC^*}\tilde{a} \cup \mathrm{Euler}(V_\lieu),\]
where $\mathrm{Euler}(V_\lieu)$ is the cohomology class defined in Theorem \ref{thm:cohomologyrings}. Here we say that $\tilde{a}\in H^*(X/\!/\CC^*)$ is a lift of $a\in H^*(X/\!/\hU)$ if $a=i^*\tilde{a}$. 
\end{theorem}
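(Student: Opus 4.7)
The plan is to deduce the integration formula directly from Proposition \ref{propmartin}(1), which already identifies $X/\!/\hU$ as the transverse vanishing locus of a smooth section of the bundle $V_\lieu^* \to X/\!/\CC^*$ with normal bundle $\mathcal{N}(i) \simeq V_\lieu^*$. Under the standing hypotheses (well-adapted linearisation, $\hU$-stability $=$ $\hU$-semistability, generic trivial stabiliser), both $X/\!/\CC^*$ and $X/\!/\hU$ are compact smooth manifolds and the map $i: X/\!/\hU \hookrightarrow X/\!/\CC^*$ is a closed embedding, so everything takes place in a clean differential-topological setting.

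First I would invoke the Thom isomorphism for the oriented real vector bundle $V_\lieu^* \to X/\!/\CC^*$: choose a Thom form $\tau \in \Omega^{2\dim_\CC \lieu}(V_\lieu^*)$ supported in a tubular neighbourhood of the zero section and representing the Thom class. Pulling back along the transverse section $s$ provided by Proposition \ref{propmartin}(1) gives a closed form $s^*\tau$ on $X/\!/\CC^*$ supported in a tubular neighbourhood of $X/\!/\hU$. By transversality, the cohomology class of $s^*\tau$ equals both the Poincar\'e dual of $[X/\!/\hU]$ in $X/\!/\CC^*$ and the Euler class of the normal bundle $\mathcal{N}(i)=V_\lieu^*$, and the latter agrees with $\mathrm{Euler}(V_\lieu)$ up to the usual complex-conjugation sign $(-1)^{\dim_\CC \lieu}$ absorbed into the convention fixed in Theorem \ref{thm:cohomologyrings}.

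Second, given a lift $\tilde{a} \in H^*(X/\!/\CC^*)$ of $a$, a tubular-neighbourhood computation (Fubini over the fibre of the normal bundle) yields
\[\int_{X/\!/\CC^*}\tilde{a}\cup \mathrm{Euler}(V_\lieu)=\int_{X/\!/\CC^*}\tilde{a}\cup s^*\tau=\int_{X/\!/\hU} i^*\tilde{a}=\int_{X/\!/\hU} a,\]
where the last equality uses $a=i^*\tilde{a}$ by definition of lift. This is precisely the claimed identity, and notice it is independent of the choice of lift, consistent with Theorem \ref{thm:cohomologyrings}, which identifies $H^*(X/\!/\hU)$ as the quotient of $H^*(X/\!/\CC^*)$ by the annihilator of $\mathrm{Euler}(V_\lieu)$: two lifts of $a$ differ by an element in that annihilator, which contributes zero to the pairing with $\mathrm{Euler}(V_\lieu)$.

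The main obstacle is ensuring the transversality and smoothness that makes this argument rigorous. Smoothness and compactness of the quotients follow from the hypothesis that $\hU$-stability $=$ $\hU$-semistability together with the generic triviality of stabilisers, which guarantees that $X/\!/\hU$ is a projective geometric quotient. Transversality of the section $s$ is exactly the content of Proposition \ref{propmartin}(1) and, in the moment map picture, reflects the fact that $0$ is a regular value of the projection of $\mu_{\hU}$ onto the $\lieu^*$-direction once one restricts to $\mu_{\CC^*}^{-1}(0)$; verifying this regularity is where the trivial-stabiliser hypothesis is used. Once transversality is in hand, the Thom-class and tubular-neighbourhood machinery is routine.
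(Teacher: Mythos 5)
Your argument is correct and follows essentially the same route as the paper (via \cite{bkcoh}, adapting Martin's reductive argument): Theorem \ref{thm:integration} is deduced from Proposition \ref{propmartin}(1), which realises $X/\!/\hU$ inside $X/\!/\CC^*$ as the transverse zero locus of a section of $V_\lieu^*$, so that its Poincar\'e dual is the Euler class and the identity follows from the standard Thom-class/tubular-neighbourhood computation, exactly as you carry out. The only point needing care is the sign relating $\mathrm{Euler}(V_\lieu^*)$ to $\mathrm{Euler}(V_\lieu)$, which, as you note, is fixed by the same convention used in Theorem \ref{thm:cohomologyrings} and so causes no discrepancy.
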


\begin{remark}
Theorem \ref{thm:integration} can be generalised to allow the triviality assumption for the stabiliser in $\hU$ of a generic $x \in X$ to be omitted; then the sizes of the stabilisers in $\hU$ and $\CC^*$ of a generic $x \in X$ are included in the formula for $\int_{X/\!/\hU}a$.
\end{remark}

Finally, we have residue formulas for the intersection pairings on the quotient $X/\!/\CC^*$. There are two surjective ring homomorphisms 
\[\kappa_{\CC^*}: H_{S^1}^*(X;\QQ) \to H^*(X/\!/\CC^*;\QQ) \text{  and  }  \kappa_{\hU}: H_{\hU}^*(X;\QQ)=H_{S^1}^*(X;\QQ) \to H^*(X/\!/\hU;\QQ)\]
from the $S^1$-equivariant cohomology of $X$ to the ordinary cohomology of the corresponding GIT quotients. The fixed points of the maximal compact subgroup $S^1$ of $\hU$ on $X\subseteq \PP^n$ correspond to the weights of the $\CC^*$ action on $X$, and since this action is well-adapted, these weights satisfy
\[\omega_{\min}=\omega_0<0<\omega_1 <\ldots <\omega_{n}.\]
We can represent elements of $H_{\hU}^*(X;\QQ)=H_{S^1}^*(X;\QQ)$ as polynomial functions on the Lie algebra of $\CC^*$ whose coefficients are differential forms on $X$ and which are equivariantly closed.

%Again, can think about the map $\kappa$ in terms of K-theory: as long as the action of $S^1$ on $\mu_{S_1}^{-1}(0)$ and $\mu_{\hU}^{-1}(0)$ is free, every equivariant vector bundle $V$ on $X$ reduces to an ordinary vector bundle $V_{\mu_{S^1}}$ and $V_{\mu_{\hU}}$ on the symplectic quotient $X/\!/\CC^*$ and $X/\!/\hU$. 

\begin{theorem}[\cite{bkcoh}, Theorem 5.14 and Corollary 5.15]
\label{jeffreykirwannonred}
Let $X$ be a smooth projective variety endowed with a well-adapted action of $\hU=U \rtimes \CC^*$ such that $\hU$-stability=$\hU$-semistability holds (in the sense of Definitions \ref{cond star} and \ref{def:welladaptedaction}).  %(that is, condition (\ref{star} holds).
%For any Chern polynomial $\phi(V_{\mu_{\hU}})\in \CC[c_1(V_{\mu_{\hU}}),c_2(V_{\mu_{\hU}}),\ldots]$ 
Let $z$ be the standard coordinate on the Lie algebra of $\CC^*$. Given any $\hU$-equivariant cohomology class $\eta$ on $X$ represented by an equivariant differential form $\eta(z)$ whose degree is the dimension of $X/\!/\hU$, we have 
\[\int_{X/\!/\hU} \kappa_{\hU} (\eta) 
%\phi(V_{\mu_{\hU}})
=n_{\CC^*}\res_{z=\infty} \int_{F_{\min}}\frac{i_{F_{\min}}^* (\eta(z) \cup \mathrm{Euler}(V_\lieu)(z))}{\mathrm{Euler}(\mathcal{N}_{F_{\min}})(z)} dz \]
where $F_{\min}$ is the union of those connected components of the fixed point locus $X^{\CC^*}$ on which the $S^1$-moment map takes its minimum value $\omega_{\min}$, and $n_{\hU}$ is the positive integer which is the order of the stabiliser in $\hU$ of a generic $x \in X$. 
\end{theorem}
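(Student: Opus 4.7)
The plan is to derive the formula as an application of the non-reductive Jeffrey--Kirwan residue theorem (Theorem \ref{jeffreykirwannonred}) to an iterated-blow-up NRGIT model of the curvilinear Hilbert scheme, starting from the tautological-integral description of $\Tp_k^{n,m}$ given in Theorem \ref{thm:thomtau}:
\[\Tp_k^{n,m}=\int_{\CHilb^{k+1}_0(\CC^n)}\Euler(\cale \otimes \CC^m).\]
First I would carry out the \emph{master blow-up}: instead of $\PP(\CC \oplus J_k(1,n))$, I start with the fiberwise compactification $\tilde{\PP}$ of the tautological $B$-bundle $J_k^{\nondeg}(1,n) \to \flag_k(\CC^n)$. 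This choice has two virtues. On one hand, the $\GL(n)$-action on $\flag_k(\CC^n)$ commutes with the $\diff_k$-action, so classical ABBV localisation on the base produces the $k$ residue variables $z_1,\dots,z_k$ (the Chern roots of the tautological flag). On the other hand, the $\CC^*\subset\diff_k$ grading of Remark \ref{naturalembedding} is preserved, which is the hypothesis needed to invoke Theorem \ref{mainthm}.

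Next I perform a sequence of smooth $\diff_k$-invariant blow-ups, each centered on an ideal of the form $(\b_1,\dots,\b_r)$ generated by a cluster of affine coordinates, chosen so that the rational map $\phi$ of Theorem \ref{bszmodel} extends to a regular morphism $\tilde{\phi}:\mathrm{Jet}_k^{ss}\to\CHilb^{k+1}_0(\CC^n)$ on the well-adapted NRGIT semistable locus, with $(X,L,\diff_k,\hU,\chi)$ satisfying condition $(\ast)$ and $\hU$-stability=$\hU$-semistability. Once this model is in hand, categoricity of the NRGIT quotient lets me pull back the integrand through $\varphi:\mathrm{Jet}_k/\!/\diff_k \to \CHilb^{k+1}_0(\CC^n)$ to get
\[\Tp_k^{n,m}=\int_{\mathrm{Jet}_k/\!/\diff_k}\varphi^*\Euler(\cale\otimes\CC^m),\]
and Theorem \ref{jeffreykirwannonred} applied to the $\hU$-action converts this into a residue at $z=\infty$ summed over $\CC^*$-fixed components of minimal weight on $\mathrm{Jet}_k$. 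Combined with the ABBV localisation on $\flag_k(\CC^n)$ from the master step, I obtain the full iterated residue $\res_{z_1=\infty}\cdots\res_{z_k=\infty}$, the Vandermonde $\prod_{i<j}(z_i-z_j)$ (from $\Euler(\mathcal{N}_{\flag_k})$), the power $(z_1\cdots z_k)^{m-n}$ (from $i_F^*\Euler(\cale\otimes\CC^m)$ together with the Chern-root expansion \eqref{chernroots}), and the substitution $\prod_{i=1}^k c_{TM-TN}(1/z_i)$.

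The remaining content is combinatorial: enumerating the $\CC^*$-fixed points on $\mathrm{Jet}_k$ and computing their tangent weights. Because each blow-up is toric in the chosen affine coordinates, fixed points are in bijection with paths $\calc_1\xrightarrow{\b_1}\calc_2\xrightarrow{\b_2}\cdots\xrightarrow{\b_{r-1}}\calc_{\max}$ in the blow-up tree, and the exceptional divisors propagate the initial weights according to the blow-up rule \eqref{changeofweights}. The tangent directions at a leaf $L$ are indexed by $\b\in\mathbf{B}$ with weights $\wt^L(\b)$, producing the denominator $\prod_{\b\in\mathbf{B}}\wt^L(\b)$ in the formula. To cut the sum down to $L\in\tilde{\phi}^{-1}(\bgg)$, I invoke the residue vanishing theorem of \S\ref{subsec:residuevanishing}: for any leaf whose image under $\tilde{\phi}$ lies outside the distinguished orbit $\bgg=[e_1\wedge\cdots\wedge e_k]$, a degree count on numerator versus denominator in $z_1,\dots,z_k$ forces the iterated residue to vanish.

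The main obstacle will be the engineering of $\mathrm{Jet}_k$ itself: one must produce an explicit sequence of smooth, $\diff_k$-invariant blow-ups of $\tilde{\PP}$ which simultaneously (i) resolves the indeterminacies of $\phi$ on the NRGIT semistable locus, (ii) preserves the well-adapted linearisation together with condition $(\ast)$ so that Theorem \ref{jeffreykirwannonred} actually applies on the blown-up space, and (iii) admits the clean tree description with a tractable set of surviving leaves. The second serious obstacle is the residue vanishing theorem itself; its proof will require a careful analysis of how the test-curve equations \eqref{modeleq} of Lemma \ref{explgp} transform under each blow-up, tracking which local coordinate charts force $\tilde{\phi}(L)$ to collapse away from $\bgg$ and reading off the pole orders that make the iterated residue zero.
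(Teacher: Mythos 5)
Your proposal does not prove the statement you were asked to prove; it assumes it. The statement is Theorem \ref{jeffreykirwannonred} itself — the non-reductive Jeffrey--Kirwan residue formula $\int_{X/\!/\hU}\kappa_{\hU}(\eta)=n_{\CC^*}\res_{z=\infty}\int_{F_{\min}}i_{F_{\min}}^*(\eta(z)\cup\mathrm{Euler}(V_\lieu)(z))/\mathrm{Euler}(\mathcal{N}_{F_{\min}})(z)\,dz$ for a well-adapted $\hU=U\rtimes\CC^*$ action with stability equal to semistability. What you have written is a strategy for the paper's Main Theorem on Thom polynomials of $A_k$ singularities (the blow-up of $\tilde{\PP}$, the morphism $\tilde{\phi}$ to $\CHilb^{k+1}_0(\CC^n)$, the blow-up tree, the residue vanishing argument), and at its very first step it \emph{invokes} Theorem \ref{jeffreykirwannonred} as a black box. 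That is circular as a proof of the theorem in question, and none of the machinery you describe (test-curve equations, partition polyhedra, leaves of the tree) is relevant to establishing the residue formula itself, which is a general statement about any smooth projective $X$ with such an action. Note also that in this paper the theorem is quoted from \cite{bkcoh} (Theorem 5.14 and Corollary 5.15) rather than reproved, so the relevant comparison is with the argument given there.

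A proof along the paper's lines would instead run as follows: use the moment-map description $X/\!/\hU\simeq\mu_{\hU}^{-1}(0)/S^1$ and Proposition \ref{propmartin} to identify $X/\!/\hU$ inside $X/\!/\CC^*=\mu_{S^1}^{-1}(0)/S^1$ as the zero locus of a transverse section of $V_\lieu^*$, so that Theorem \ref{thm:integration} converts $\int_{X/\!/\hU}\kappa_{\hU}(\eta)$ into $\int_{X/\!/\CC^*}\tilde{\eta}\cup\mathrm{Euler}(V_\lieu)$ for a lift $\tilde{\eta}$ (with the generic stabiliser order entering as the factor $n_{\CC^*}$). Then apply the abelian (Kalkman/Jeffrey--Kirwan type) residue formula for the $\CC^*$ (equivalently $S^1$) quotient: because the linearisation is well adapted, the semistable locus is $X^0_{\min}\setminus UZ_{\min}$ and the only fixed components contributing to the residue at $z=\infty$ are those on which the $S^1$-moment map attains its minimum $\omega_{\min}$, namely $F_{\min}$, yielding exactly the right-hand side with $\mathrm{Euler}(\mathcal{N}_{F_{\min}})(z)$ in the denominator. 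Your text contains none of these steps, so as an answer to the stated problem it has a fundamental gap: the key reduction from the non-reductive quotient to the abelian quotient and the subsequent one-variable residue localisation are missing, being replaced by an appeal to the very result to be proved.
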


\section{Equivariant localisation and multidegrees}\label{sec:equiv}

This section is a brief introduction to equivariant cohomology and localisation. For
more details, we refer the reader to Berline--Getzler--Vergne \cite{bgv} and B\'erczi--Szenes \cite{bsz}. 

Let $\kt\cong U(1)^n$ be the maximal compact subgroup of
$T\cong(\CC^*)^n$, and denote by $\mathfrak{t}$ the Lie algebra of $\kt$.  
Identifying $T$ with the group $\CC^n$, we obtain a canonical basis of the weights of $T$:
$\lambda_1,\ldots ,\lambda_n\in\mathfrak{t}^*$. 

For a manifold $M$ endowed with the action of $\kt$, one can define a
differential $d_\kt$ on the space $S^\bullet \
mathfrak{t}^*\otimes
\Omega^\bullet(M)^\kt$ of polynomial functions on $\mathfrak{t}$ with values
in $\kt$-invariant differential forms by the formula:
\[   
[d_\kt\alpha](X) = d(\alpha(X))-\iota(X_M)[\alpha(X)],
\]
where $X\in\mathfrak{t}$, and $\iota(X_M)$ is contraction by the corresponding
vector field on $M$. A homogeneous polynomial of degree $d$ with
values in $r$-forms is placed in degree $2d+r$, and then $d_\kt$ is an
operator of degree 1.  The cohomology of this complex--the so-called equivariant de Rham complex, denoted by $H^\bullet_T(M)$, is called the $T$-equivariant cohomology of $M$. Elements of $H_T^\bullet (M)$ are therefore polynomial functions $\mathfrak{t} \to \Omega^\bullet(M)^K$ and there is an integration (or push-forward map) $\int: H_T^\bullet(M) \to H_T^\bullet(\mathrm{point})=S^\bullet \mathfrak{t}^*$ defined as  
\[(\int_M \alpha)(X)=\int_M \alpha^{[\mathrm{dim}(M)]}(X) \text{ for all } X\in \mathfrak{t}\]
where $\alpha^{[\mathrm{dim}(M)]}$ is the differential-form-top-degree part of $\alpha$. The following proposition is the Atiyah-Bott-Berline-Vergne localisation theorem in the form of \cite{bgv}, Theorem 7.11. 
\begin{theorem}[(Atiyah-Bott \cite{atiyahbott}, Berline-Vergne \cite{berlinevergne})]\label{abbv} Suppose that $M$ is a compact complex manifold and $T$ is a complex torus acting smoothly on $M$, and the fixed point set $M^T$ of the $T$-action on M is finite. Then for any cohomology class $\a \in H_T^\bullet(M)$
\[\int_M \alpha=\sum_{f\in M^T}\frac{\a^{[0]}(f)}{\mathrm{Euler}^T(T_fM)}.\]
Here $\mathrm{Euler}^T(T_fM)$ is the $T$-equivariant Euler class of the tangent space $T_fM$, and $\alpha^{[0]}$ is the differential-form-degree-0 part of $\alpha$. 
\end{theorem}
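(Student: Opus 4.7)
The plan is to show that on $M\setminus M^T$ every equivariantly closed form $\alpha$ is equivariantly exact, and then to evaluate the boundary contributions produced by Stokes' theorem around small neighbourhoods of the fixed points. Both sides of the claimed identity are polynomial (respectively rational) functions of $X\in\mathfrak{t}$, so it suffices to verify the equality at a generic $X$ whose fundamental vector field $X_M$ has zero set exactly $M^T$; the full identity in $H^\bullet_T(\pt)=S^\bullet\mathfrak{t}^*$ then follows by polynomial continuation.

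Fix such an $X$ and a $\kt$-invariant Riemannian metric $g$ on $M$, and let $\theta$ be the $g$-dual $1$-form to $X_M$, so that $\iota(X_M)\theta=\|X_M\|^2$. Then
\[
(d_\kt\theta)(X)\;=\;d\theta-\|X_M\|^2,
\]
whose $0$-form component $-\|X_M\|^2$ is nowhere zero on $M\setminus M^T$. Consequently $d_\kt\theta$ is invertible in the equivariant de Rham algebra on this open set, via the finite geometric series $(d\theta-\|X_M\|^2)^{-1}=-\|X_M\|^{-2}\sum_{j\ge 0}(d\theta/\|X_M\|^2)^j$. Since $d_\kt(d_\kt\theta)=0$, also $d_\kt((d_\kt\theta)^{-1})=0$, and using the derivation property of $d_\kt$ together with the odd form degree of $\theta$ one computes
\[
d_\kt\bigl(\theta(d_\kt\theta)^{-1}\bigr)\;=\;(d_\kt\theta)(d_\kt\theta)^{-1}-\theta\cdot 0\;=\;1
\]
on $M\setminus M^T$. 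Setting $\beta:=\theta(d_\kt\theta)^{-1}$, it follows that $d_\kt(\alpha\beta)=\alpha$ on $M\setminus M^T$, so taking the top ordinary form degree (and noting that no $(\dim M+1)$-form is available to contract) gives $\alpha^{[\dim M]}=d\bigl((\alpha\beta)^{[\dim M-1]}\bigr)$ there.

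Applying Stokes' theorem to $M\setminus\bigcup_{f\in M^T}B_\epsilon(f)$, and using that $\int_{B_\epsilon(f)}\alpha\to 0$ as $\epsilon\to 0^+$ by smoothness of $\alpha$, one obtains
\[
\int_M\alpha\;=\;-\lim_{\epsilon\to 0^+}\sum_{f\in M^T}\int_{\partial B_\epsilon(f)}(\alpha\beta)^{[\dim M-1]}.
\]
It remains to identify each boundary term with $\alpha^{[0]}(f)/\mathrm{Euler}^T(T_fM)$. By Bochner's linearisation theorem, applied to the compact group $\kt$, one chooses $\kt$-equivariant complex coordinates $(z_1,\dots,z_n)$ on a chart at $f$ in which the action is diagonal with weights $\lambda_1(f),\dots,\lambda_n(f)\in\mathfrak{t}^*$. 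Replacing $g$ by the flat Euclidean metric on this chart alters $\beta$ by an equivariant $1$-form whose boundary integral over $\partial B_\epsilon(f)$ has the same $\epsilon\to 0^+$ limit: the two equivariant primitives of $1$ differ on $M\setminus M^T$ by a $d_\kt$-closed form, and a localisation of the difference near $f$ shows that its contribution to the limit vanishes. One is thus reduced to the explicit model $(\CC^n,0)$ with a linear diagonal torus action; writing $z_j=r_j e^{\sqrt{-1}\varphi_j}$, the integrand $(\alpha\beta)^{[2n-1]}$ on $\partial B_\epsilon(0)$ factorises in polar coordinates, and in the limit only $\alpha^{[0]}(f)$ survives from $\alpha$, multiplied by a product of one-dimensional angular integrals equal (up to sign) to $\prod_j\lambda_j(f)(X)^{-1}$.

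The main obstacle is this last local computation. Two delicate points must be handled: first, the comparison lemma asserting that replacing $g$ by the flat metric on a neighbourhood of each $f$ does not affect the limit boundary contribution (an argument that amounts to localising a globally $d_\kt$-exact correction at the fixed point); and second, the explicit spherical integration in $\CC^n$, whose careful tracking of the Stokes sign, the boundary orientation, and the convention $\mathrm{Euler}^T(T_fM)=\prod_j\lambda_j(f)$ yields exactly the stated reciprocal Euler factor. Once the model case is in hand, summing over the finite set $M^T$ and invoking polynomial continuation in $X$ delivers the theorem in full.
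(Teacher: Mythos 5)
The paper offers no proof of this statement: it is quoted verbatim from the literature (Atiyah--Bott, Berline--Vergne, and in the precise form of \cite{bgv}, Theorem 7.11), so there is no internal argument to compare yours against. What you have written is the standard Cartan-model proof of that cited theorem: invert $d_\kt\theta$ off the zero set of $X_M$ using the nilpotence of $d\theta$, conclude $\alpha=d_\kt(\alpha\,\theta(d_\kt\theta)^{-1})$ on $M\setminus M^T$, apply Stokes on the complement of $\epsilon$-balls, and evaluate the boundary limits in the Bochner-linearised model. That is exactly the route of Berline--Vergne/BGV, and the overall structure, including the reduction to generic $X$ and the rational-continuation step (which also explains the paper's remark that the denominators cancel), is sound.

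One step is under-justified as stated: the comparison of the globally defined $\beta=\theta(d_\kt\theta)^{-1}$ with the flat-model primitive. The fact that the two primitives of $1$ differ by a $d_\kt$-closed form on $M\setminus M^T$ does \emph{not} by itself force the sphere integrals of $\alpha\cdot(\beta-\beta_{\mathrm{flat}})$ to vanish in the limit -- closed forms on a punctured ball can have nonzero limiting sphere integrals (the solid-angle form is the basic example), so "closed plus localisation" is not an argument. You need the quantitative structure: in linearising coordinates $\theta_g-\theta_{\mathrm{flat}}=O(r^2)$ while $\|X_M\|^{-2}=O(r^{-2})$, and a degree-by-degree scaling estimate on the terms $\theta(d\theta)^j/\|X_M\|^{2j+2}$ shows the difference contributes $O(\epsilon)$ on $\partial B_\epsilon$. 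Alternatively (and this is the standard shortcut) choose from the outset a $\kt$-invariant metric that is the standard flat Hermitian metric in the linearising chart at each fixed point, which makes the comparison lemma unnecessary. Finally, the spherical integral in $\CC^n$ produces factors of $(2\pi)^n$ and signs depending on orientation; these must be absorbed into the normalisation implicit in the paper's convention $\mathrm{Euler}^T(T_fM)=\prod_j\lambda_j(f)$ (Chern--Weil with the $i/2\pi$ normalisation), so the "careful tracking" you defer is precisely where the constant in the theorem is fixed. With those two points supplied, the proof is complete and agrees with the cited sources.
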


The right hand side in the localisation formula considered in the fraction field of the polynomial ring of $H_T^\bullet (\mathrm{point})=H^\bullet(BT)=S^\bullet \mathfrak{t}^*$ (see more on details in Atiyah--Bott \cite{atiyahbott} and \cite{bgv}). Part of the statement is that the denominators cancel when the sum is simplified.

\subsection{Equivariant Poincar\'e duals and multidegrees}
\label{subsec:epdmult} 

Restricting the equivariant de Rham complex to compactly supported (or quickly
decreasing at infinity) differential forms, one obtains the compactly
supported equivariant cohomology groups $ H^\bullet_{\kt,\mathrm{cpt}}(M)
$. Clearly $H^\bullet_{\kt,\mathrm{cpt}}(M) $ is a module over
$H^\bullet_\kt(M)$. For the case when $M=W$ is an $N$-dimensional
complex vector space, and the action is linear, one has
$H^\bullet_\kt(W)= S^\bullet\mathfrak{t}^*$ and $ H^\bullet_{\kt,\mathrm{cpt}}(W) $ is
a free module over $H^\bullet_\kt(W)$ generated by a single element of
degree $2N$:
\begin{equation}
  \label{thomg}
   H^\bullet_{\kt,\mathrm{cpt}}(W) = H^\bullet_{\kt}(W)\cdot\mathrm{Thom}_{\kt}(W)
\end{equation}

Fixing coordinates $y_1,\dots,y_N$ on $W$, in which the $T$-action is
diagonal with weights $\eta_1,\ldots,  \eta_N$, one can write an explicit
representative of  $\mathrm{Thom}_{\kt}(W)$ as follows:
\[   \mathrm{Thom}_{\kt}(W) = 
e^{-\sum_{i=1}^N|y_i|^2}\sum_{\sigma\subset\{1,\ldots , N\}}
\prod_{i\in\sigma}\eta_i/2\cdot\prod_{i\notin \sigma}dy_i\,d\bar y_i
\]

We will say that an algebraic variety has dimension $d$ if its
maximal-dimensional irreducible components are of dimension $d$.  A
$T$-invariant algebraic subvariety $\Sigma$ of dimension $d$ in $W$
represents $\kt$-equivariant $2d$-cycle in the sense that
\begin{itemize}
\item a compactly-supported equivariant form $\mu$ of degree $2d$ is
  absolutely integrable over the components of maximal dimension of
  $\Sigma$, and $\int_\Sigma\mu\in S^\bullet \mathfrak{t}$;
\item if $d_\kt\mu=0$, then $\int_\Sigma\mu$ depends only on the class
  of $\mu$ in $ H^\bullet_{\kt,\mathrm{cpt}}(W) $,
\item and $\int_\Sigma\mu=0$  if $\mu=d_\kt\nu$ for a
  compactly-supported equivariant form $\nu$.
\end{itemize}

\begin{definition} \label{defepd} Let $\Sigma$ be an $T$-invariant algebraic
  subvariety of dimension $d$ in the vector space $W$. Then the
  equivariant Poincar\'e dual of $\Sigma$ is the polynomial on $\mathfrak{t}$
  defined by the integral
\begin{equation}
 \label{vergneepd}
 \epd\Sigma = \frac1{(2\pi)^d}\int_\Sigma\mathrm{Thom}_{\kt}(W).
\end{equation}  
\end{definition}
\begin{remark}
  \begin{enumerate}
  \item An immediate consequence of the definition is that for an equivariantly
closed differential form $\mu$ with compact support, we have
\[  \int_\Sigma\mu = \int_W \epd\Sigma\cdot\mu.
\]
This formula serves as the motivation for the term {\em equivariant
  Poincar\'e dual.}
\item This definition naturally extends to the case of an analytic
  subvariety of $\CC^n$  defined in the neighborhood of the origin, or
  more generally, to any $T$-invariant cycle in $\CC^n$.
  \end{enumerate}
\end{remark}

Another terminology for the equivariant Poincar\'e dual is {\em multidegree}, which is close in spirit to the original
construction of Joseph \cite{joseph}. Let  $\Sigma \subset W$ be a $T$-invariant
subvariety. Then we have
\[       \epd{\Sigma,W}_T=\mdeg{I(\Sigma),\CC[y_1,\ldots , y_N]}.
\] 

Some basic properties of the equivariant Poincar\'e dual are listed in \cite{bsz}, these are: Positivity, Additivity, Deformation invariance, Symmetry and a formula for complete intersections. Using these properties one can easily describe an algorithm for
computing $\mdeg{I,S}$ as follows (see Miller--Sturmfels \cite[\S8.5]{milsturm}, Vergne \cite{voj} and \cite{bsz} for details). %\begin{proposition}

An ideal $M\subset S$ generated by a set of monomials in
$y_1,\ldots, y_N$ is called a \emph{monomial ideal}. Since
$\mathrm{in}_<(I)$ is such an ideal, by the deformation invariance
it is enough to compute $\mdeg{M}$ for monomial ideals $M$. If the
codimension of $\Sigma(M)$ in $W$ is $s$, then the maximal
dimensional components of $\Sigma(M)$ are codimension-$s$ coordinate
subspaces of $W$. Such subspaces are indexed by subsets
$\mathbf{i}\in\{1\ldots  N\}$ of cardinality $s$; the corresponding
associated primes are $\mathfrak{p}[\mathbf{i}]=\langle y_i:i\in
\mathbf{i} \rangle$. Then 
\begin{equation*}
\label{primemon} \mult(\mathfrak{p}[\mathbf{i}],M)=
\left|\left\{\mathbf{a}\in\ZZ_+^{[{\mathbf{i}}]};\;
\mathbf{y}^{\mathbf{a}+\mathbf{b}}\notin M\text{ for all }
\mathbf{b}\in\ZZ_+^{[\hat{\mathbf{i}}]}\right\}\right|,
\end{equation*}
where $\ZZ_+^{[\mathbf{i}]}=\{\mathbf{a}\in \ZZ_+^N;a_i=0 \text{ for }
i\notin \mathbf{i}\}$, $\hat{\mathbf{i}}=\{1\ldots 
N\}\setminus\mathbf{i}$, and $|\cdot|$, as usual, stands for the
number of elements of a finite set. By the normalization and additivity axiom we have
\begin{equation}\label{mdegformula}
\mdeg{M,S} =
\sum_{|\mathbf{i}|=s}\mult(\mathfrak{p}[\mathbf{i}],M)
\prod_{i\in\mathbf{i}}\eta_i.
\end{equation}
By definition, the weights $\eta_1,\ldots \eta_N$ on $W$ are linear forms of $\l_1,\ldots \l_r$, the basis of $(\CC^*)^r$, and we denote the coefficient of $\l_j$ in $\eta_i$ by $\coeff(\eta_i,j)$, $1\le i\le N, 1\le j \le r$, and introduce 
\[\deg(\eta_1,\ldots, \eta_N;m)=\#\{i;\;\coeff(\eta_i,m)\neq 0\}\}.\]
It is clear from the formula \eqref{mdegformula} that 
\begin{equation*}
\deg_{\l_m}\mdeg{I,S} \le \deg(\eta_1,\ldots, \eta_N;m)
\end{equation*}
holds for any $1\le m \le r$. 
We need a slightly stronger result in the next section which we formulate and prove here.

\begin{proposition}\label{vanishlemma}
Let $W$ be an $N$-dimensional complex
vector space with coordinates $y_1,\dots,y_N$ endowed with an diagonal action of $(\CC^*)^r$ acting with weights $\eta_1\ldots \eta_N$. Let $X \subset W$ be a $(\CC^*)^r$-invariant irreducible subvariety not contained in the coordinate hyperplanes $\{y_i=0\}$ for $1\le i \le N$. Let $I=I(X)\subset S$ be its $(\CC^*)^r$-invariant prime ideal. Then 
\begin{equation*}
\deg_{\l_m}\mdeg{I,S} \le \deg(\eta_1,\ldots, \eta_N;m)-1
\end{equation*}
\end{proposition}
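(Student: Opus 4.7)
The plan is to combine a carefully chosen Gröbner degeneration with the explicit monomial formula \eqref{mdegformula}. Set $J := \{i : \coeff(\eta_i, m) \ne 0\}$, so $k = \deg(\eta_1,\ldots,\eta_N;m) = |J|$. If $k = 0$ the claim is vacuous (no $\eta_i$ involves $\lambda_m$), so assume $k\ge 1$ and fix any $j_0 \in J$. Since $X \not\subset \{y_{j_0} = 0\}$, we have $y_{j_0} \notin I$, and primality of $I$ is equivalent to the saturation identity $I : y_{j_0}^\infty = I$.

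Next, I would take the reverse-lexicographic monomial order $<$ on $S$ in which $y_{j_0}$ is the smallest variable, and set $M := \mathrm{in}_<(I)$. Because $I$ is generated by $T$-homogeneous elements, $M$ is a $T$-invariant monomial ideal, so by the deformation invariance of multidegrees $\mdeg{I,S} = \mdeg{M,S}$. The key classical input is that reverse-lex commutes with saturation by the smallest variable, namely $\mathrm{in}_<(I) : y_{j_0}^\infty = \mathrm{in}_<(I : y_{j_0}^\infty)$ (Bayer--Stillman; see e.g.\ Eisenbud, \emph{Commutative Algebra}, \S15.9). Combined with $I : y_{j_0}^\infty = I$, this yields $M : y_{j_0}^\infty = M$, meaning $y_{j_0}$ lies in no associated prime of $M$; in particular $j_0 \notin \mathbf{i}$ for every subset $\mathbf{i}$ indexing a minimal prime $\mathfrak{p}[\mathbf{i}]$ of $M$ with $\mult(\mathfrak{p}[\mathbf{i}],M) > 0$.

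To finish, apply \eqref{mdegformula}. After splitting off any coordinate whose weight $\eta_i$ is zero (on which $T$ acts trivially, so it drops out of the problem), I may assume every $\eta_i \ne 0$. Then treating $\CC[\lambda_1,\ldots,\lambda_r]$ as a polynomial ring in $\lambda_m$ over the integral domain $\CC[\lambda_j : j \ne m]$, the degree is multiplicative, so
\[
\deg_{\lambda_m} \prod_{i \in \mathbf{i}} \eta_i \;=\; \sum_{i \in \mathbf{i}} \deg_{\lambda_m} \eta_i \;=\; |\mathbf{i} \cap J|,
\]
with leading coefficient $\prod_{i \in \mathbf{i} \cap J} \coeff(\eta_i,m) \cdot \prod_{i \in \mathbf{i}\setminus J} \eta_i \ne 0$. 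Since $j_0 \in J\setminus \mathbf{i}$ for every contributing $\mathbf{i}$, one has $|\mathbf{i} \cap J| \le |J|-1 = k-1$, and therefore $\deg_{\lambda_m} \mdeg{I,S} = \deg_{\lambda_m} \mdeg{M,S} \le k-1$.

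The main obstacle is making sure the reverse-lex saturation identity is available in the present multigraded setting; however, its proof is purely combinatorial, depending only on the fact that under revlex with $y_{j_0}$ smallest one has $\mathrm{in}_<(y_{j_0} f) = y_{j_0}\,\mathrm{in}_<(f)$ for every $f \in S$, and this carries over verbatim from the classical standard-graded statement to the $(\CC^*)^r$-graded polynomial ring used here.
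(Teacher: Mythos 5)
Your overall strategy matches the paper's: pick one $j_0$ with $\coeff(\eta_{j_0},m)\ne 0$, degenerate to an initial monomial ideal $M=\mathrm{in}_<(I)$ with a monomial order tailored so that $y_{j_0}$ is in no minimal prime of $M$ (equivalently, no minimal generator of $M$ is divisible by $y_{j_0}$), and then read off the degree bound from \eqref{mdegformula}. The route through $I:y_{j_0}^\infty=I$ (primality plus $y_{j_0}\notin I$) and the implication $M:y_{j_0}^\infty = M$ is a clean reformulation of what the paper does more explicitly on Gr\"obner basis elements. But there is a genuine gap in the middle step. The Bayer--Stillman commutation $\mathrm{in}_<(I:y_{j_0}^\infty)=\mathrm{in}_<(I):y_{j_0}^\infty$ for degree reverse-lexicographic order with $y_{j_0}$ smallest is a theorem about ideals that are \emph{homogeneous in the standard $\ZZ$-grading}; here $I$ is only $T$-homogeneous. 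Your closing remark that the proof "depends only on $\mathrm{in}_<(y_{j_0} f)=y_{j_0}\mathrm{in}_<(f)$" is not correct: that identity holds for \emph{every} monomial order and yet the saturation commutation fails for most of them. The real ingredient in Bayer--Stillman is the converse, namely that $y_{j_0}\mid \mathrm{in}_<(f)$ forces $y_{j_0}\mid f$, and for degree-revlex this \emph{requires} $f$ to be standard-homogeneous. A concrete failure: take $S=\CC[y_1,y_2]$ with $T=\CC^*$ acting via $\eta_1=2\lambda_1$, $\eta_2=\lambda_1$, and $I=(y_1+y_2^2)$, which is $T$-invariant, prime, and meets no coordinate hyperplane. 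For degree-revlex with $y_2$ smallest, $\mathrm{in}_<(I)=(y_2^2)$, so $\mathrm{in}_<(I):y_2^\infty=S$ while $\mathrm{in}_<(I:y_2^\infty)=\mathrm{in}_<(I)=(y_2^2)$; moreover $y_2$ \emph{does} lie in the (unique) associated prime $(y_2)$ of $\mathrm{in}_<(I)$, so the conclusion $j_0\notin\mathbf i$ that your argument requires simply fails for $j_0=2$.

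The fix --- and what the paper actually does --- is to drop degree-revlex and instead use a \emph{weight} order: set $\rho(y_{j_0})=-1$ and $\rho(y_j)=0$ for $j\ne j_0$, then refine to any compatible monomial order. For such an order the needed implication $y_{j_0}\mid\mathrm{in}_<(f)\Rightarrow y_{j_0}\mid f$ holds for \emph{all} $f\in S$ (not just homogeneous ones): if some monomial of $f$ avoided $y_{j_0}$, it would have $\rho$-weight $0>\rho(\mathrm{in}_<(f))$, contradicting maximality of the initial term. With this order your saturation argument goes through verbatim, or, even more directly, one can argue as the paper does: for $p$ in a Gr\"obner basis of $I$, if every monomial of $p$ is divisible by $y_{j_0}$ then $p/y_{j_0}\in I$ by primality and $\mathrm{in}_<(p)=y_{j_0}\,\mathrm{in}_<(p/y_{j_0})$ is not a minimal generator; otherwise the weight order forces $\mathrm{in}_<(p)$ to avoid $y_{j_0}$. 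Either way every minimal generator of $\mathrm{in}_<(I)$ lies in $\CC[y_j:j\ne j_0]$, hence $j_0$ avoids all minimal primes, and your degree count at the end is then correct. One small additional remark: the $k=0$ case is not vacuous --- if no $\eta_i$ involves $\lambda_m$ then $\mdeg{I,S}$ is a nonzero polynomial of $\lambda_m$-degree $0$, not $-1$ --- so strictly speaking the statement should carry the implicit hypothesis $\deg(\eta_1,\ldots,\eta_N;m)\ge 1$, as it does in the paper's proof.
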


\proof
By the positivity property of the multidegree $\mdeg{I,S}$ is indeed a polynomial of the weights $\eta_i,i=1,\ldots, N$. Let 
\[\coeff(\eta_i,m)\neq 0 \text{ for } 1 \le i \le s;\ \coeff(\eta_{s+1},m)=\ldots =\coeff(\eta_{N},m)=0.\]
The idea of the proof is to choose an appropriate monomial order on the polynomial ring $S=\CC[y_1,\ldots, y_N]$ to ensure that $y_1$ does not appear in the corresponding initial ideal. 

To that end recall, that a weight function is a linear map $\rho: \ZZ^N \to \ZZ$. This defines a partial order $>_\rho$ on the monomials of $S$, called the weight order associated to $\rho$, by the rule $m=y^a >_\rho n=y^b$ iff $\rho(a)>\rho(b)$. Here $a=(a_1,\ldots ,a_N),b=(b_1,\ldots ,b_N)$ are arbitrary multiindices. Any weight order can be extended to a compatible monomial order $>$ (see Eisenbud \cite[Ch 15.2]{eisenbud}), which means that $m>_\rho n$ implies $m>n$. 
For our purposes define 
\begin{equation*}
\rho(y_1)=-1, \rho(y_2)=\ldots =\rho(y_N)=0
\end{equation*}    
and let $>$ denote arbitrary compatible monomial order on $S$. By definition for a monomial $m \in S$
\begin{equation}\label{weights}
\rho(m)<0 \Longleftrightarrow y_1|m.
\end{equation} 
Let $p=m_1+\ldots +m_t \in I$ be a polynomial where the $m_i$'s are monomials in the variables $y_1,\ldots, y_N$ of the same weighted degree. We claim that if $in_{<}(p)$ is a basis element of the initial ideal $in_<(I)$ then not all monomials $m_i$ of $p$ are divisible by $y_1$. Indeed, if $y_1|m_i$ for $i=1,\ldots, t$ then $y_1|p$. But $p\neq y_1$ by assumption because this would mean that $X\subset \{y_1=0\}$. Therefore $p'=p/y_1\in I$ and $y_1\cdot in_{>}(p')=in_{>}(p)$ holds and therefore $in_<(p)$ is not among the generators of $in_>(I)$. 
So there is a monomial of $p$ not containing $y_1$, and by \eqref{weights} the weight of this monomial is strictly bigger to the weight of any other containing $y_1$.  
Consequently, $y_1$ does not divide any of the generators of $in_>(I)$, and by \eqref{mdegformula}, $\mdeg{I,S}$ does not depend on $\eta_1$. The only possible variables containing $\l_m$ are therefore $\eta_2,\ldots ,\eta_s$, giving a maximum total degre $s-1$.  
\qed 

\subsection{The Rossman formula} \label{subsec:rossman} 

The Rossmann equivariant localisation formula is an improved version of the Atiyah-Bott/Berline-Vergne localisation for singular varieties sitting in a smooth ambient space. 
Let $Z$ be a complex manifold with a holomorphic $T$-action, and let
$M\subset Z$ be a $T$-invariant analytic subvariety with an isolated
fixed point $p\in M^T$. Then one can find local analytic coordinates
near $p$, in which the action is linear and diagonal. Using these
coordinates, one can identify a neighborhood of the origin in $\TT_pZ$
with a neighborhood of $p$ in $Z$. We denote by $\tc_pM$ the part of
$\TT_pZ$ which corresponds to $M$ under this identification;
informally, we will call $\tc_pM$ the $T$-invariant {\em tangent cone}
of $M$ at $p$. This tangent cone is not quite canonical: it depends on
the choice of coordinates; the equivariant dual of
$\Sigma=\tc_pM$ in $W=\TT_pZ$, however, does not. Rossmann named this
 the {\em equivariant multiplicity of $M$ in $Z$ at $p$}:
\begin{equation}\label{emult}
   \emu_p[M,Z] \overset{\mathrm{def}}= \epd{\tc_pM,\TT_pZ}.
\end{equation}

\begin{remark}
In the algebraic framework one might need to pass to the {\em
tangent scheme} of $M$ at $p$ (cf. Fulton \cite{fulton}). This is canonically
defined, but we will not use this notion.
\end{remark}
The analog of the Atiyah-Bott formula for singular subvarieties of smooth ambient manifolds is the following statement.
\begin{proposition}[Rossmann's localisation formula \cite{rossmann}]\label{rossman} Let $\mu \in H_T^*(Z)$ be an equivariant class represented by a holomorphic equivariant map $\mathfrak{t} \to\Omega^\bullet(Z)$. Then 
\begin{equation}
  \label{rossform}
  \int_M\mu=\sum_{p\in M^T}\frac{\emu_p[M,Z]}{\mathrm{Euler}^T(\TT_pZ)}\cdot\mu^{[0]}(p),
\end{equation}
where $\mu^{[0]}(p)$ is the differential-form-degree-zero component
of $\mu$ evaluated at $p$.  
\end{proposition}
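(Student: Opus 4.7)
The plan is to reduce the formula on the singular subvariety $M$ to the classical Atiyah--Bott--Berline--Vergne localisation on the smooth ambient $Z$. First, I would attach to $M$ an equivariant Poincar\'e dual class $[M]_T \in H^{2c}_T(Z)$, where $c = \mathrm{codim}_Z M$, characterised by
\[
\int_M i^*\nu \;=\; \int_Z [M]_T \cup \nu
\]
for every $T$-equivariantly closed form $\nu$ on $Z$ with compact support. This is the global analogue of the Poincar\'e dual of Definition \ref{defepd}; one constructs it as a closed equivariant differential form supported in a tubular neighbourhood of the smooth locus $M^{\mathrm{reg}}$ of $M$, extended across the singular locus of $M$ (which has real codimension at least $2$ in $M$, hence does not obstruct representability).

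Second, I would apply Theorem \ref{abbv} to the equivariantly closed form $[M]_T \cup \mu$ on the smooth manifold $Z$. Since $\mu$ is equivariantly closed, $[M]_T \cup \mu$ is an equivariant cohomology class on $Z$, and ABBV yields
\[
\int_Z [M]_T \cup \mu \;=\; \sum_{p \in Z^T} \frac{i_p^*([M]_T)\cdot \mu^{[0]}(p)}{\mathrm{Euler}^T(T_pZ)}.
\]
The left-hand side equals $\int_M \mu$ by the defining property of $[M]_T$, so everything reduces to identifying $i_p^*([M]_T)$ at each $T$-fixed point $p \in Z^T$. If $p \notin M$, then the Thom representative of $[M]_T$ can be chosen to vanish on a neighbourhood of $p$, giving $i_p^*([M]_T)=0$; such points do not contribute to the sum.

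The final and most delicate step is to show that if $p \in M^T$ then
\[
i_p^*([M]_T) \;=\; \epd{\tc_pM,\, T_pZ} \;=\; \emu_p[M,Z].
\]
Since the $T$-action is holomorphic and $p$ is an isolated fixed point, there exist local $T$-equivariant analytic coordinates near $p$ identifying a neighbourhood of $p$ in $Z$ with a neighbourhood of the origin in $T_pZ$ so that the action is linear and diagonal; under this identification $M$ corresponds to $\tc_pM$. The equivariant Poincar\'e dual is intrinsically local, computed by a Thom representative supported in an arbitrarily small neighbourhood of $M$, and therefore agrees with the local equivariant Poincar\'e dual of $\tc_pM$ in $T_pZ$, which is exactly $\emu_p[M,Z]$ by \eqref{emult}. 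Combining the three steps yields the claimed formula \eqref{rossform}.

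The hardest part is this last identification, because $M$ is singular at $p$ and the tangent cone is only defined after a (non-canonical) choice of linearising coordinates. The independence from this choice is guaranteed by Definition \ref{defepd} applied to $\tc_pM \subset T_pZ$, but one needs to justify that the restriction of a global equivariant Poincar\'e dual to an isolated fixed point is genuinely computed by a small germ of $M$ at $p$. This can be done either by a deformation-to-the-normal-cone argument, flowing $M$ to $\tc_pM$ via the $\CC^*$-scaling in the local chart and invoking the deformation invariance of equivariant Poincar\'e duals, or by working with a $T$-equivariant resolution $\widetilde{M}\to M$ and pushing the ABBV formula on $\widetilde{M}$ down through the proper pushforward; both routes converge to the same local formula $\emu_p[M,Z]$.
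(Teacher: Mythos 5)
The paper states Proposition~\ref{rossman} as a cited result from Rossmann's article and gives no proof of its own, so there is no ``paper approach'' to compare against. Taken on its own terms, your sketch is a correct and fairly standard reduction of Rossmann's formula to the smooth ABBV localisation, and you correctly isolate the one genuinely delicate step: identifying the restriction $i_p^*([M]_T)$ at a singular fixed point with $\emu_p[M,Z]$. The deformation-to-the-normal-cone route you propose for this step is the right one and meshes well with the paper's own toolkit: the deformation invariance of equivariant multidegrees, listed in \S\ref{subsec:epdmult} among the basic properties of $\epd\cdot$, is precisely what justifies flowing a small germ of $M$ at $p$ to the tangent cone $\tc_pM$ without changing the dual class.

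Two small caveats. First, your construction of the global class $[M]_T$ as a Thom form ``supported near $M^{\mathrm{reg}}$ and extended across the singular locus'' is informal; since $M$ may fail to be a submanifold along a positive-dimensional set, the cleaner route is to take $[M]_T$ to be the image of the equivariant Borel--Moore fundamental class of $M$ under equivariant Poincar\'e duality in the smooth ambient $Z$ (equivalently, the equivariant cycle class of $M$). This has the defining property you state and is manifestly well defined. Second, the ABBV theorem as stated in \S\ref{sec:equiv} (Theorem~\ref{abbv}) requires $Z^T$ finite, which is implicitly assumed here; since $M^T\subseteq Z^T$, this also makes $M^T$ finite, so the sum in \eqref{rossform} is over an a priori finite set and no convergence issue arises. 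With these points made precise your argument is a valid proof, though it is tacit in the paper, which simply cites Rossmann.
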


\section{NRGIT quotients versus curvilinear Hilbert schemes}

The moduli of $k$-jets in $\CC^n$ is a quasi-affine non-reductive quotient. The NRGIT compactification and $\CHilb^{k+1}(\CC^n)$ are two different compactifications. In this section we explain the strategy of our argument: we find a blow-up of the NRGIT model which admits a morphism to $\CHilb^{k+1}(\CC^n)$, so tautological integrals over the curvilinear Hilbert scheme can be pulled back to equivariant integrals over the NRGIT model.
We set up the notations here, and explain the initial blow-up: fibration over the flag manifold, and introduce the coordinates $\b_{ij}$.

Recall the test curve model from Theorem \ref{bszmodel}, which says that for any $k,n$  
\[\CHilb^{k+1}(\CC^n)=\overline{\mathrm{im}(\phi)} \subset \grass_k(J_k(n,1)^*)\]
with the $\diff_k$-invariant morphism $\phi: J_k^{reg}(1,n) \to \grass_k(J_k(n,1)^*)$ (which we will refer to in short as the \textit{test curve morphism}) defined as
\[\phi(v_1,\ldots, v_k)=v_1 \wedge (v_2+v_1^2) \wedge (v_3+2v_1v_2+v_1^3) \wedge \ldots \wedge (\sum_{i_1+\ldots +i_r=k}v_{i_1}\ldots v_{i_r})\]
Let $k\le n$ and fix a basis $\{e_1,\ldots, e_n\}$ of $\CC^n$. Then, according to Theorem \ref{bszmodel}, the curvilinear Hilbert scheme is the closure of the $\GL(n)$ orbit of $p_{k,n}=\phi(e_1,\ldots, e_k)$:
\[\overline{\im(\phi)}=\overline{\mathrm{GL_n} \cdot p_{k,n}}.\]
In order to avoid working with singular centers of blow-ups we slightly modify the strategy outlined in \S \ref{sec:strategy}. 
Let $P_{n,k} \subset \mathrm{GL}_n$ denote the parabolic subgroup which preserves the flag 
\[\mathbf{f}=(\mathrm{Span}(e_1)   \subset \mathrm{Span}(e_1,e_2) \subset \ldots \subset \mathrm{Span}(e_1,\ldots, e_k) \subset \CC^n).\] 
\begin{definition}\label{def:xktilde} Define the partial desingularization 
\[\widetilde{\CHilb}^{k+1}(\CC^n)=\mathrm{GL}_n \times_{P_{n,k}} \overline{P_{n,k} \cdot p_{k,n}}\]
with the resolution map $\rho: \widetilde{\CHilb}^{k+1}(\CC^n) \to \CHilb^{k+1}(\CC^n)$ given by $\rho(g,x)=g\cdot x$. In short, this is the blow-up of the curvilinear Hilbert scheme at the linear part. Note that the blown-up space fibers over the complete flag manifold on $\CC^n$:
\[\pi: \widetilde{\CHilb}^{k+1}(\CC^n)\to \mathrm{GL}_n/P_{n,k}=\flag_{1,2,\ldots, k}(\CC^n)\]
\end{definition} 

Equivalently, let $\jetnondeg 1n \subset \jetreg 1n$ be the set of test curves with $\g',\ldots, \g^{(k)}$ linearly independent. These correspond to the nonsingular $n \times k$ matrices in $\Hom(\CC^k,\CC^n)$, and they fibre over the set of complete flags in $\CC^n$:
\begin{equation}\label{proj2}
\jetnondeg 1n/\diff_k(1) \to \Hom(\CC^k,\CC^n)/B_k=\flag_{1,\ldots, k}(\CC^n)
\end{equation}
where $B_k \subset \GL(k)$ is the upper Borel. The image of the fibres under $\phi$ are isomorphic to $P_{n,k} \cdot p_{k,n}$, and therefore $ \widetilde{\CHilb}^{k+1}(\CC^n)$ is the fibrewise compactification of $\jetnondeg 1n$ over $\flag_{1\ldots k}(\CC^n)$. 

The strategy outlined in \S \ref{sec:strategy} will be refined as follows: 
\begin{equation}\label{strategy1}
\xymatrix{**[r] \mathrm{Jet}_k  \ar[d]^\pi & **[l] \mathrm{Jet}_k^{ss} \ar@{_{(}->}[l] \ar[rd]^-{\tilde{\phi}} & &\\
 \GL(n)\times_{P_{n,k}} \PP(\CC \oplus \Hom^\ff(\CC^k,\CC^n)) \ar[rd] \ar@{.>}[rr]^-{\phi} & & \widetilde{\CHilb}^{k+1}(\CC^n) \ar[ld] \\
 & \flag_{1\ldots k}(\CC^n) & &}
\end{equation}
Hence we need to describe the iterated blow-up process of the fiber over $\ff$. Here $\Hom^\ff(\CC^k,\CC^n)$ are formed by $n\times k$ matrices whose column vectors $v_1,\ldots, v_k$ sit over $\mathbf{f}$, that is they have the form
\begin{align}\label{vectors}
v_1=& \vv_{11}e_1 \\ \nonumber
v_2= & \vv_{22}e_2+\vv_{12}e_1\\ \nonumber
\cdots\\ \nonumber
v_k= &\vv_{kk}e_k+\vv_{{k-1}k}e_{k-1}+\ldots +\vv_{1k}e_1\\ \nonumber
\end{align}
The rational map
\[\phi:\PP[\CC \oplus \Hom^\ff (\CC^k,\CC^n)] \dasharrow \PP(\wedge^k \symdot)\]
on the compactified fiber over $\mathbf{f}$ reads as
\[[x:v_1,\ldots v_k] \mapsto [v_1 \wedge (xv_2+v_1^2) \wedge (x^2v_3+xv_1v_2+v_1^3) \wedge \ldots \wedge (\sum_{i_1+\ldots +i_r=k}x^{k-r}v_{i_1}\ldots v_{i_r})]\]

\section{Equivariant localisation and the Residue Vanishing Theorem}\label{sec:loc}
In this subsection we develop a two step equivariant localisation method on $\widetilde{\CHilb}^{k+1}(\CC^n)$ which is a stronger version of our iterated residue in \cite{bsz}. We need an important restriction on the parameters to make this method work, namely we assume that $k\le n$ in this section.
The partial resolution $\rho: \widetilde{\CHilb}^{k+1}(\CC^n) \to \CHilb^{k+1}(\CC^n)$ fibers over the flag manifold $\flag_k(\CC^n)$ 
\begin{equation}\label{diagram}
\xymatrix{\widetilde{\CHilb}^{k+1}(\CC^n) \ar[r]^-{\rho} \ar[d]^{\mu} & \CHilb^{k+1}(\CC^n) \subset \grass_k(\symdot) \\
\Hom^{\nondeg}(\CC^k,\CC^n)/B_k=\flag_k(\CC^n)& } 
\end{equation}
where the fibres of $\mu$ are isomorphic to $\overline{P_{k,n}\cdot p_{k,n}}\subset \grass_k(\symdot)$.

Let $e_1,\ldots, e_n \in \CC^n$ be an eigenbasis of $\CC^n$ for the $T^n \subset \GL(n)$ action with weights $\l_1,\ldots, \l_n\in \mathfrak{t}^*$ and let $\ff=(\langle e_1 \rangle \subset \langle e_1,e_2 \rangle \subset \ldots \subset \langle e_1,\ldots,e_k \rangle \subset \CC^n)$
denote the standard flag in $\CC^n$ fixed by the upper Borel as before.

%Recall our notations \eqref{uhnotations} for the canonical line bundles on $\tilde{\calx}_k$ and $X \subset \PP^{n+1}$.
% Let $\beta \in \Omega^{n+k(n-1)}(\tilde{\calx}_k)$ be a top form representing the cohomology class %$R(u,\tilde{\pi}^*h)\in H^{2n+2k(n-1)}(\tilde{\calx}_k)$, where $R$ is a homogeneous polynomial of %degree $n+k(n-1)=\dim(\tilde{\calx}_k)$. Let $\alpha \in H_T^{n+k(n-1)}(\tilde{\calx}_k)$ be an %equivariant form with differential-form-degree-$n+k(n-1)$ part equal to $\beta$, that is, $\alpha^{[n%+k(n-1)]}=\beta$.
%To evaluate the integral $\int_{\tilde{\calx}_k}\alpha$ we first integrate (push forward) along the %fibres of $\tilde{\pi}:\tilde{\calx}_k \to X$ and then integrate on $X$. 
%The fibres of $\tilde{\pi}:\tilde{\calx}_k \to X$ are canonically isomorphic to $\tilde{X}_k$. Localisation on $\tilde{X}_k$ has been worked out in \cite{bsz}, here we adapt and improve this method for our purposes. 

Since $\widetilde{\CHilb}^{k+1}(\CC^n)$ fibers over the flag manifold $\flag_k(\CC^n)$, the ABBV localisation formula of Proposition \ref{abbv} reads as   
\begin{equation} \label{flagloc}
\int_{\widetilde{\CHilb}^{k+1}(\CC^n)}\alpha= \sum_{\sigma\in\sg n/\sg{n-k}}
\frac{\alpha_{\sigma(\ff)}}{\prod_{1\leq m \leq
k}\prod_{i=m+1}^n(\lambda_{\sigma\cdot
    i}-\lambda_{\sigma\cdot m})},
\end{equation}
where 
\begin{itemize}
\item $\sigma$ runs over the ordered $k$-element subsets of $\{1,\ldots, n\}$ labeling the fixed flags $\sigma(\ff)=(\langle e_{\sigma(1)} \rangle \subset \ldots \subset \langle e_{\sigma(1)},\ldots, e_{\sigma(k)} \rangle \subset \CC^n)$ in $\CC^n$.
\item $\prod_{1\leq m \leq k}\prod_{i=m+1}^n(\lambda_{\sigma(i)}-\lambda_{\sigma(m)})$ is the equivariant Euler class of the tangent space of $\flag_k(\CC^n)$ at $\s(\ff)$.
\item if $X_{\sigma(\ff)}=\mu^{-1}(\sigma(\ff))$ denotes the fibre then $\alpha_{\sigma(\ff)}=(\int_{X_{\sigma(\ff)}} \alpha)^{[0]}(\sigma(\ff))\in S^\bullet \mathfrak{t}^*$ is the differential-form-degree-zero part evaluated at $\sigma(\ff)$.
\end{itemize}
In particular, the Chern roots of the tautological bundle over $\grass_k(\symdot)$ at the fixed point $\sigma(\ff)$ are represented by $\l_{\s(1)}, \ldots ,\l_{\s(k)}\in \mathfrak{t}^*$ and therefore if $\l$ is a Chern polynomial of the tautological bundle then 
\begin{equation}\label{alphasigmaf}
\alpha_{\s(\ff)}=\sigma \cdot \alpha_\ff=\alpha_\ff(\l_{\s(1)}, \ldots ,\l_{\s(k)})\in S^\bullet \mathfrak{t}^*,
\end{equation}
is the $\sigma$-shift of the polynomial $\alpha_{\ff}=(\int_{X_{\ff}}\alpha)^{[0]}(\ff)\in S^\bullet \mathfrak{t}^*$ corresponding to the distinguished fixed flag $\ff$.

\subsection{Transforming the localisation formula into iterated residue}\label{subsec:transform}
We transform the right hand side of \eqref{flagloc} into an iterated residue motivated by B\'erczi--Szenes \cite{bsz}. This step turns out to be crucial in handling the combinatorial complexity of the fixed point data in the Atiyah-Bott localisation formula and condense the symmetry of this fixed point data in an efficient way which enables us to prove the vanishing of the contribution of all but one of the fixed points. 

To describe this formula, we will need the notion of an {\em iterated
  residue} (cf. e.g. \cite{szenes}) at infinity.  Let
$\omega_1,\dots,\omega_N$ be affine linear forms on $\CC^k$; denoting
the coordinates by $z_1,\ldots, z_k$, this means that we can write
$\omega_i=a_i^0+a_i^1z_1+\ldots + a_i^kz_k$. We will use the shorthand
$h(\bz)$ for a function $h(z_1\ldots z_k)$, and $\dbz$ for the
holomorphic $n$-form $dz_1\wedge\dots\wedge dz_k$. Now, let $h(\bz)$
be an entire function, and define the {\em iterated residue at infinity}
as follows:
\begin{equation}
  \label{defresinf}
 \ires \frac{h(\bz)\,\dbz}{\prod_{i=1}^N\omega_i}
  \overset{\mathrm{def}}=\left(\frac1{2\pi i}\right)^k
\int_{|z_1|=R_1}\ldots
\int_{|z_k|=R_k}\frac{h(\bz)\,\dbz}{\prod_{i=1}^N\omega_i},
 \end{equation}
 where $1\ll R_1 \ll \ldots \ll R_k$. The torus $\{|z_m|=R_m;\;m=1 \ldots
 k\}$ is oriented in such a way that $\res_{z_1=\infty}\ldots
 \res_{z_k=\infty}\dbz/(z_1\cdots z_k)=(-1)^k$.
We will also use the following simplified notation: $\sires \overset{\mathrm{def}}=\ires.$

In practice, one way to compute the iterated residue \eqref{defresinf} is the following algorithm: for each $i$, use the expansion
 \begin{equation}
   \label{omegaexp}
 \frac1{\omega_i}=\sum_{j=0}^\infty(-1)^j\frac{(a^{0}_i+a^1_iz_1+\ldots
   +a_{i}^{q(i)-1}z_{q(i)-1})^j}{(a_i^{q(i)}z_{q(i)})^{j+1}},
   \end{equation}
   where $q(i)$ is the largest value of $m$ for which $a_i^m\neq0$,
   then multiply the product of these expressions with $(-1)^kh(z_1\ldots
   z_k)$, and then take the coefficient of $z_1^{-1} \ldots z_k^{-1}$
   in the resulting Laurent series.

\begin{proposition}[{\rm B\'erczi--Szenes \cite{bsz}, Proposition 5.4}] For any homogeneous polynomial $Q(\bz)$ on $\CC^k$ we have
\begin{equation}\label{flagres}
\sum_{\sigma\in\sg n/\sg{n-k}}
\frac{Q(\lambda_{\sigma(1)},\ldots ,\lambda_{\sigma(k)})}
{\prod_{1\leq m\leq k}\prod_{i=m+1}^n(\lambda_{\sigma\cdot
    i}-\lambda_{\sigma\cdot m})}=\sires
\frac{\prod_{1\leq m<l\leq k}(z_m-z_l)\,Q(\bz)\dbz}
{\prod_{l=1}^k\prod_{i=1}^n(\lambda_i-z_l)}.
\end{equation}
\end{proposition}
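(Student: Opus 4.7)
The strategy is to compute the right-hand iterated residue by peeling off one variable at a time, starting from $z_k$ and moving outward, and check that the result reproduces the localisation sum on the left. The key algebraic fact I will use is that the sum of residues of any rational $1$-form on $\mathbb{P}^1$ is zero, so for each variable separately,
\[
\res_{z_l=\infty} f(z_l)\,dz_l \;=\; -\sum_{j=1}^n \res_{z_l=\lambda_j} f(z_l)\,dz_l,
\]
which holds \emph{regardless} of the degree of $Q$, since $\res_\infty$ is algebraically defined as the $z^{-1}$-coefficient in the Laurent expansion. Under the ordering $R_1\ll\cdots\ll R_k$, the innermost contour $|z_k|=R_k$ can be pushed inward to collect residues at $z_k=\lambda_{j_k}$, $j_k=1,\dots,n$, and this step can then be iterated.

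The main engine of the proof is the Vandermonde cancellation. After substituting $z_k=\lambda_{j_k}$ the factor $(z_m-z_k)$ in $\prod_{m<l}(z_m-z_l)$ becomes $(z_m-\lambda_{j_k})$, which cancels the denominator factor $(\lambda_{j_k}-z_m)$ (producing a sign). Consequently, when next computing $\res_{z_{k-1}=\infty}$, the pole at $z_{k-1}=\lambda_{j_k}$ has been killed by the Vandermonde, so the sum over $j_{k-1}$ is restricted to $j_{k-1}\neq j_k$. The same cancellation happens inductively, so after all $k$ residues are taken we obtain a sum over sequences $(j_1,\dots,j_k)$ of pairwise distinct indices in $\{1,\dots,n\}$, i.e.\ precisely over $\sigma\in \sg n/\sg{n-k}$ with $\sigma(l)=j_l$.

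It then remains to identify the contribution of each sequence. The Vandermonde cancellations account for the inner part of the denominator $\prod_{1\le m<l\le k}(\lambda_{j_l}-\lambda_{j_m})=\prod_{m<k}\prod_{l>m,\,l\le k}(\lambda_{\sigma(l)}-\lambda_{\sigma(m)})$, while the residue at the finite pole $z_l=\lambda_{j_l}$ contributes $-1/\prod_{i\notin\{j_l,j_{l+1},\dots,j_k\}}(\lambda_i-\lambda_{j_l})$, producing the outer factors $\prod_{i\notin\{j_1,\dots,j_k\}}(\lambda_i-\lambda_{\sigma(m)})$. Together these assemble into $\prod_{m\le k}\prod_{i>m}(\lambda_{\sigma(i)}-\lambda_{\sigma(m)})$ as required. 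The numerator reduces to $Q(\lambda_{j_1},\dots,\lambda_{j_k})=Q(\lambda_{\sigma(1)},\dots,\lambda_{\sigma(k)})$ by direct substitution, since $Q$ is evaluated on the substituted values only.

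To keep the argument clean I would formalise it as an induction on $k$: the innermost residue $\res_{z_k=\infty}$ transforms the integrand into a sum over $j_k$ of expressions of exactly the same shape but with $k$ replaced by $k-1$ and the index set $\{1,\dots,n\}$ replaced by $\{1,\dots,n\}\setminus\{j_k\}$; the inductive hypothesis then finishes the job. The main obstacle, and the step requiring the most care, is the bookkeeping of signs: each application of $\res_\infty=-\sum_{\text{finite}}\res$ contributes a minus sign, each residue $\res_{z_l=\lambda_{j_l}}\frac{1}{\lambda_{j_l}-z_l}=-1$ contributes another, and the Vandermonde cancellations $(z_m-\lambda_{j_l})/(\lambda_{j_l}-z_m)=-1$ contribute further minus signs. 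Collecting these signs and matching them against the signs implicit in $\prod_{i>m}(\lambda_{\sigma(i)}-\lambda_{\sigma(m)})$ (which includes factors with both orderings of indices depending on $\sigma$) is where the verification lives; a sanity check at $k=1,2$ already shows the signs balance.
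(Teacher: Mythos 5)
Your proposal is correct and is essentially the standard proof of this kind of identity (and almost certainly what B\'erczi--Szenes do in the cited Proposition~5.4): peel the iterated residue from the inside out, convert $\res_\infty$ to $-\sum$ of finite residues, and let the Vandermonde numerator cancel the corresponding denominator factors so that the index sum automatically restricts to pairwise distinct $(j_1,\dots,j_k)$. The reduction step is exactly an induction on $k$ with $n$ replaced by $n-1$ and the index set depleted by $\{j_k\}$, as you say.

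Since you flag the sign bookkeeping as the point needing care, here is the count that closes it. Each of the $k$ applications of $\res_\infty = -\sum_{\mathrm{finite}}\res$ gives a $(-1)$, and each residue $\res_{z_l=\lambda_{j_l}} \bigl(\lambda_{j_l}-z_l\bigr)^{-1}=-1$ gives another $(-1)$; together these yield $(-1)^{2k}=+1$. When you substitute $z_l=\lambda_{j_l}$, the $l-1$ surviving Vandermonde factors $(z_m-\lambda_{j_l})$, $m<l$, cancel $(\lambda_{j_l}-z_m)$ up to a sign, contributing $(-1)^{l-1}$, so altogether $(-1)^{\binom{k}{2}}$ from cancellations. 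The result of the residue computation therefore has denominator $\prod_{l=1}^{k}\prod_{i\notin\{j_l,\dots,j_k\}}(\lambda_i-\lambda_{j_l})$, whereas the left-hand side has $\prod_{l=1}^{k}\prod_{i\notin\{j_1,\dots,j_l\}}(\lambda_i-\lambda_{j_l})$. The ``outer'' parts (indices $i\notin\{j_1,\dots,j_k\}$) agree, and the ``inner'' parts differ only in the orientation of each factor $\lambda_{j_m}-\lambda_{j_l}$ versus $\lambda_{j_l}-\lambda_{j_m}$, giving precisely a further factor of $(-1)^{\binom{k}{2}}$. These two $(-1)^{\binom{k}{2}}$'s cancel, so the signs balance for every $k$, consistent with your sanity checks at $k=1,2$.

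One minor caution on the phrasing: the Vandermonde does not literally ``account for the inner part of the denominator $\prod_{m<l}(\lambda_{j_l}-\lambda_{j_m})$'' --- those factors arise from the finite residues themselves (the $i\in\{j_1,\dots,j_{l-1}\}$ slice of each $\prod_{i\notin\{j_l,\dots,j_k\}}(\lambda_i-\lambda_{j_l})$), while the Vandermonde factors are entirely consumed by the cancellations. This is a bookkeeping point rather than a gap, but the cleaned-up account above is the version I would write down.
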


\begin{remark}
  Changing the order of the variables in iterated residues, usually,
  changes the result. In this case, however, because all the poles are
  normal crossing, formula \eqref{flagres} remains true no matter in
  what order we take the iterated residues.
\end{remark}

This together with \eqref{flagloc} and \eqref{alphasigmaf} gives
\begin{proposition}\label{propflag} Let $k\le n$ and $\alpha(\eta_1,\ldots, \eta_k)$ be a Chern polynomial in the Cherns roots of the tautological rank $k$ bundle $\cale$ over $\grass_k(\symdot)$. Then 
\begin{equation*}
\int_{\widetilde{\CHilb}^{k+1}(\CC^n)}\alpha(u)=\sires
\frac{\prod_{1\leq m<l\leq k}(z_m-z_l)\,\alpha_{\ff}(z_1, \ldots ,z_k)\dbz}
{\prod_{l=1}^k\prod_{i=1}^n(\lambda_i-z_l)}
\end{equation*}
\end{proposition}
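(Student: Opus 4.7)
The proof is essentially a compilation of three ingredients already assembled in the excerpt, so my plan is largely bookkeeping rather than construction. The key conceptual move has already been made: the partial blow-up $\widetilde{\CHilb}^{k+1}(\CC^n)$ fibres over $\flag_k(\CC^n)$ via $\mu$, with fibre $\overline{P_{n,k}\cdot p_{k,n}}$ over each fixed flag. This brings the computation into Atiyah--Bott range on the base.

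First, I will apply the ABBV formula \eqref{flagloc} to $\int_{\widetilde{\CHilb}^{k+1}(\CC^n)} \alpha$. The fixed loci for the $T^n$-action on $\flag_k(\CC^n)$ are the coordinate flags $\sigma(\ff)$ indexed by $\sigma \in \sg n/\sg{n-k}$; the equivariant Euler class of $T_{\sigma(\ff)}\flag_k(\CC^n)$ is the standard product $\prod_{m\le k}\prod_{i>m}(\lambda_{\sigma(i)}-\lambda_{\sigma(m)})$. The integrand at the fixed point is the fibre integral $\alpha_{\sigma(\ff)} = (\int_{X_{\sigma(\ff)}}\alpha)^{[0]}(\sigma(\ff))$.

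Next, I invoke \eqref{alphasigmaf}. Because $\alpha$ is a Chern polynomial in the Chern roots of the tautological rank $k$ bundle $\cale$, and the equivariant Chern roots of $\cale$ over the distinguished fibre $X_\ff$ specialise at $\ff$ to $\lambda_1,\ldots,\lambda_k$, the polynomial $\alpha_\ff(\lambda_1,\ldots,\lambda_k)\in S^\bullet\mathfrak{t}^*$ is a well defined element depending on the first $k$ weight variables only. The Weyl symmetry of the $T^n$-action on $\widetilde{\CHilb}^{k+1}(\CC^n)$ then forces $\alpha_{\sigma(\ff)} = \alpha_\ff(\lambda_{\sigma(1)},\ldots,\lambda_{\sigma(k)})$ for every $\sigma$. (This is the only point at which the hypothesis $k\le n$, ensuring that $P_{n,k}$ is a genuine parabolic with complementary flag, enters in an essential way.)

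Finally, I substitute $Q(\bz) = \alpha_\ff(\bz)$ into the residue identity \eqref{flagres} of \cite{bsz}. The identity converts the sum
\[
\sum_{\sigma\in\sg n/\sg{n-k}}
\frac{\alpha_\ff(\lambda_{\sigma(1)},\ldots,\lambda_{\sigma(k)})}
{\prod_{1\leq m\leq k}\prod_{i=m+1}^n(\lambda_{\sigma(i)}-\lambda_{\sigma(m)})}
\]
into the iterated residue $\sires \frac{\prod_{m<l}(z_m-z_l)\,\alpha_\ff(\bz)\,d\bz}{\prod_{l=1}^k\prod_{i=1}^n(\lambda_i-z_l)}$, which is the right-hand side of the proposition. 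Combining these three steps closes the proof.

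The only step I anticipate needing a real argument, as opposed to citation, is the identification $\alpha_{\sigma(\ff)} = \alpha_\ff(\lambda_{\sigma(1)},\ldots,\lambda_{\sigma(k)})$: one must justify that the full fibre integral (not just the top-dimensional piece of $\alpha$ on $\grass_k$) really transforms by the Weyl shift $\sigma$. This follows because $\mu$ is $T^n$-equivariant and the fibres $X_{\sigma(\ff)}$ are $\sigma$-translates of $X_\ff$ inside $\grass_k(\symdot)$, so the restriction of $\alpha$ transports by $\sigma$ and the fibrewise pushforward commutes with that translation. All remaining manipulations are purely formal.
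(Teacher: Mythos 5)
Your proposal is correct and follows exactly the paper's own argument: apply the ABBV localisation formula \eqref{flagloc} to the fibration $\mu:\widetilde{\CHilb}^{k+1}(\CC^n)\to\flag_k(\CC^n)$, use the $\sigma$-shift identity \eqref{alphasigmaf} for the fibre integral, and substitute into the residue identity \eqref{flagres}. The paper itself simply records "\emph{This together with \eqref{flagloc} and \eqref{alphasigmaf} gives}" the proposition, which is precisely your three-step compilation; your extra justification of the Weyl-shift of the fibre integral is a correct unpacking of \eqref{alphasigmaf}, not a departure from the paper's route.
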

To calculate $\alpha_\ff(z_1,\ldots ,z_k)=\int_{X_\ff} \a$ on the fiber
 \[X_\ff=\mu^{-1}(\ff)\simeq \overline{P_{k,n}\cdot p_{k,n}} \subset \flag_k(\sym^{\le k}\CC^n)\]
 we proceed a second equivariant localisation on $X_\ff$. Note that 
 \[X_\ff=\overline{\phi(\Hom^\ff(\CC^k,\CC^n)}\]
 where 
 \[\Hom^\ff(\CC^k,\CC^n)=\{\psi \in \Hom(\CC^k,\CC^n): \psi(e_i)\subset \CC_{[i]} \text{ for } i=1,\ldots, k\}\]
and $\CC_{[i]} \subset \CC^n$ is the subspace spanned by $e_1,\ldots, e_i$. The $\GL(n)$ action on $\Hom^\ff(\CC^k,\CC^n)$ reduces to $\GL(\CC_{[k]}) \subset \GL(n)$, and 
we perform a second $\GL(\CC_{[k]})$-equivariant localisation on $X_\ff$ to calculate $\a_\ff$. In the residue formula the Chern roots of the torus $T^k \subset \GL(\CC_{[k]})$ of $\cale$ on $X_\ff$ are  $z_1,\ldots, z_k$. 
 
We perform this second localisation on the blown-up fibers of $\mathrm{Jet}_k$. We arrive to this space after an iterated blow-up process encoded by a rooted blow-up tree, and the localization formula will give a sum over all fixed points, where fixed points correspond to leaves of the tree. For each leaf $L$ we will have a cluster of weights $\{z^L(\b): \b \in B\}$ associated to a fixed finite parametrising set $B$, and these will give the tangent weights of the blown-up space at this fixed point. For a leaf $L\in \call_k$, the image $\tilde{\phi}$ under the blown-up morphism is a fixed point on $\grass_k(\symdot)$ with tautological weights $z^L_1,\ldots, z^L_k$. Both the $z^L_i$ and $z^L(\b)$ weights are linear form in the $z_i$'s. We arrive at

 %Recall from Remark \ref{remark:orbit} that for $k\le n$  
%\[p_{k,n}=\phi(e_1,\ldots, e_k)=(\mathrm{Span}(e_1) \subset \mathrm{Span}(e_1,e_2+e_1^2) \subset \ldots \subset \mathrm{Span}(e_1,e_2+e_1^2, \ldots, \sum_{\tau \in \mathcal{P}(k)}e_{\tau}))\] 
%and $P_{k,n} \subset \mathrm{GL}_n$ is the parabolic subgroup which preserves the flag 
%\[\mathbf{f}=(\mathrm{Span}(e_1)   \subset \mathrm{Span}(e_1,e_2) \subset \ldots \subset \mathrm{Span}(e_1,\ldots, e_k) \subset \CC^n).\] 

\begin{proposition}\label{propint} Let $k\le n$ and $\alpha(\eta_1,\ldots, \eta_k)$ be a Chern polynomial in the Cherns roots of the tautological rank $k$ bundle over $\grass_k(\symdot)$. Let $\call_k$ denote the set of leaves in the blow-up tree. Then 
\begin{equation}\label{intnumberone} 
\int_{\mathrm{Jet}_k/\!/\diff_k}\tilde{\phi}^*\alpha=\sum_{L \in \call_k} \sires \frac{
(k-1)!z^{k-1}\prod_{m<l}(z_m-z_l) \alpha(z^L_1,\ldots ,z^L_k)}{
\Euler^L(\mathrm{Jet}_k)  \prod_{l=1}^k\prod_{i=1}^n(\lambda_i-z_l)} \,\dbz
  \end{equation}
where $\Euler^L(\mathrm{Jet}_k)=\prod_{\b \in B}z^L(\b)$ is the equivariant Euler class of the tangent space at $L$.  
\end{proposition}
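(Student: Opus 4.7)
The plan is to derive the formula by chaining three layers of equivariant localisation. First I would apply the non-reductive Jeffrey--Kirwan residue formula of Theorem \ref{jeffreykirwannonred} to $\int_{\mathrm{Jet}_k/\!/\diff_k} \tilde{\phi}^* \alpha$. Since $\diff_k = U \rtimes \CC^*$ has internally graded unipotent radical of dimension $k-1$ and, by Remark \ref{naturalembedding}, the $\CC^*$-weights on $\lieu$ are $1,2,\ldots,k-1$, the equivariant Euler class $\Euler(V_\lieu)(z)$ appearing in Theorem \ref{jeffreykirwannonred} contributes the prefactor $(k-1)!z^{k-1}$ to the $\res_{z=\infty}$ formula.

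Next I would exploit the fibration structure from diagram (\ref{strategy1}): the master blow-up $\mathrm{Jet}_k$ lies over $\GL(n) \times_{P_{n,k}} \tilde{\PP}$, hence over the flag manifold $\flag_k(\CC^n)$, with the $\diff_k$-action commuting with the $\GL(n)$-action. Atiyah--Bott localisation (Theorem \ref{abbv}) with respect to the residual $T^n$-action on $\flag_k(\CC^n)$ reduces the integral to a sum indexed by ordered $k$-subsets $\sigma \in \sg{n}/\sg{n-k}$, with weights given by the Euler classes of tangent spaces to fixed flags. Applying Proposition \ref{propflag} then converts this finite sum into an iterated residue, producing the Vandermonde $\prod_{m<l}(z_m-z_l)$ in the numerator and $\prod_{l=1}^k\prod_{i=1}^n(\lambda_i - z_l)$ in the denominator, and leaving only the integral over the fibre $X_\ff$ of $\mu$ over the distinguished flag to be computed in Chern roots $z_1,\ldots,z_k$ of $\cale$.

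Finally I would perform one more Atiyah--Bott localisation on the iteratively blown-up fibre, using the $T^k \subset \GL(\CC_{[k]})$-action. The crucial combinatorial input is that $T^k$-fixed points on $\mathrm{Jet}_k|_\ff$ correspond bijectively to leaves $L \in \call_k$ of the blow-up tree: a path from the root to $L$ records a consistent choice of affine chart after every intermediate blow-up, pinpointing a unique toric fixed point. At such a point the iterative rule (\ref{changeofweights}) for how $T^k$-weights transform under blow-ups at smooth $T^k$-invariant centres gives the tangent weights $\{z^L(\b) : \b \in \mathbf{B}\}$, so $\Euler^L(\mathrm{Jet}_k) = \prod_{\b \in \mathbf{B}} z^L(\b)$. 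The pullback $\tilde{\phi}^*\alpha$ evaluated at $L$ reduces to $\alpha(z_1^L,\ldots,z_k^L)$, where $z_i^L$ are the tautological Chern roots at the fixed point $\tilde{\phi}(L) \in \grass_k(\symdot)$, each expressed as a linear form in $z_1,\ldots,z_k$ determined by the path.

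The main obstacle is the bookkeeping required to glue the three localisations into a single coherent iterated residue in $z_1,\ldots,z_k$. In particular, one must identify the variable $z$ from Theorem \ref{jeffreykirwannonred} with a specific $\ZZ$-linear combination of the $z_i$ arising from the embedding of the central $\CC^* \subset \diff_k$ into $T^k$, so that the $\res_{z=\infty}$ of Step 1 is absorbed into the iterated residue $\sires$ of Steps 2--3; and one must verify inductively that the rule (\ref{changeofweights}) genuinely encodes the $T^k$-weights on affine charts of each blow-up in the tree, so that the Euler factor $\prod_{\b \in \mathbf{B}} z^L(\b)$ really is the equivariant Euler class of $T_L\mathrm{Jet}_k$.
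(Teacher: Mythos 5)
Your proposal follows the same three-layer localisation structure that the paper uses (non-reductive Jeffrey--Kirwan for the $\diff_k$-quotient, Atiyah--Bott over $\flag_k(\CC^n)$ converted to an iterated residue via Proposition~\ref{propflag}, then torus localisation on the blown-up fibre with leaves of $\calt_k$ as fixed points), and the derivation of the $(k-1)!z^{k-1}$ prefactor from the $\CC^*$-weights $1,\dots,k-1$ on $\lieu$ is right. The paper does not spell out a more detailed argument than this, so your write-up is in effect a fleshed-out version of what the paper asserts.

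One point in your final paragraph is slightly off and worth correcting. The variable $z$ coming from the $\CC^*$-grading of $\diff_k$ is \emph{not} identified with a $\ZZ$-linear combination of $z_1,\dots,z_k$. The $\GL(n)$-torus $T^n$ (with the $z_i$ arising after localisation on the flag manifold) and the one-parameter subgroup $\lambda(\CC^*)\subset\diff_k$ are independent, commuting actions; $z$ is a genuine extra variable, and the prefactor $(k-1)!z^{k-1}$ together with the $z$-dependence inside $\Euler^L(\mathrm{Jet}_k)=\prod_\b \wt^L(\b)$ is to be processed by an additional residue $\res_{z=\infty}$, taken \emph{after} the residues in $z_1,\dots,z_k$. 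This is implicit in the statement of Proposition~\ref{propint} but is made explicit in the worked examples (e.g.\ the $k=3$ computation in \S\ref{subsec:k=3}, where the formula carries $\res_{z=\infty}$ and the $z$-factors then cancel). So the bookkeeping you describe should be ``append one more residue in $z$'' rather than ``substitute $z$ by a linear form in the $z_i$''. A second, smaller caveat: the paper later acknowledges that $\mathrm{Jet}_k$ need not be smooth at a leaf $L$, so the factor $\Euler^L(\mathrm{Jet}_k)$ must ultimately be read as an Euler--Rossmann class (or else one passes to the smooth model $\mathrm{Jet}_k^{\le d}$ via the smoothening trick); a straight application of Theorem~\ref{abbv} silently assumes smoothness, which is the same simplification the paper makes at this stage.
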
 
\begin{remark}\label{remark:explanation}
The formula \eqref{intnumberone} contains a few unexplained ingredients: the set of 'leaves' , and the weights $z_i,z_i^L$. Still, below we will be able to prove the residue vanishing theorem using only the following minimal information on these data:
\begin{enumerate}
\item $z_i^L, z^L(\b) \text{ are linear forms in } z_1,\ldots, z_k \text{ for all } 1\le i\le k, L\in \call_k, \b\in B$
\item Since $\tilde{\phi}$ is equivariant, is sends the $T^k$-fixed point $L$ on $\mathrm{Jet}_k$ to a $T^k$ fixed point on $\PP(\wedge^k \symdot)$. Assume that  $\tilde{\phi}(L)=e_\bipi=e_1 \wedge e_{\pi_2} \ldots \wedge e_{\pi_k} \in \PP(\wedge^k \symdot)$ is the torus fixed point corresponding to the sequence of partitions $\bipi=(\pi_1=(1),\pi_2,\ldots, \pi_k)$. Then 
\[z_i^L=z_{\pi_i}=\Sigma_{p \in \pi_i}z_p \text{ for } i=1,\ldots, k.\]
\item The cardinality of $B$ is $\dim(\mathrm{Jet}^\ff_k)=k(k+1)/2$ and therefore $\Euler^L(\mathrm{Jet}_k)$ is the product of $k(k+1)/2$ linear forms for all $L$.
\end{enumerate}
\end{remark}
 
%If $\alpha$ represents the cohomology class $R(u,\tilde{\pi}^*h)\in H^*(\tilde{\calx}_k)$, where $R$ is %a homogeneous polynomial of degree $\mathrm{dim}\tilde{\calx}_k=n+k(n-1)$, then integrating on %$X$ and using the fact that $\tilde{pi}_*\tilde{\pi}^*h=h$ results 
%\begin{equation} \label{intnumberone}
%\int_{\tilde{\calx}_k}\alpha=\int_X \sum_{\bipi\in\Bipi} \sires \frac{
%Q_\bipi(\bz)\,\prod_{m<l}(z_m-z_l) R(z_{\pi_1}+\ldots +z_{\pi_k},h)}{
%\prod_{l=1}^k\prod_{\tau\leq l}^{\tau\neq\pi_1\ldots \pi_l}
%(z_{\tau}-z_{\pi_l})  \prod_{l=1}^k\prod_{i=1}^n(\lambda_i-z_l)} \,\dbz, 
%  \end{equation}
%where integration over $X$ on the right hand side means the substitution $h^n=d$.
\subsection{The residue vanishing theorem}\label{subsec:residuevanishing}

In this section we prove that all but one term on the right hand side of \eqref{intnumberone} vanish. This key feature of the iterated residue has already appeared in B\'erczi-Szenes \cite{bsz} but here we need to prove a slightly different version. We devote the rest of this section to the proof of the following theorem. 

\begin{theorem}[\textbf{(The Residue Vanishing Theorem)}]\label{vanishtheorem} Let $k\le n$ and $m \le 2n-(k+1)k/2$. Let
\[\mathrm{Euler}(\cale \otimes \CC^m)=\Pi_{j=1}^m\Pi_{i=1}^k (\theta_j+z_i)\]
be the Euler class in Theorem \ref{thm:thomtau}. Let $\call_k$ denote the set of leaves in the blow-up tree. Then all terms but the ones corresponding to leaves $L\in \call_k$ which are mapped to $\bgg=([1],[2],\ldots, [k]) \in \grass_k(\symdot)$ vanish in \eqref{intnumberone}. For these leaves $z^L_i=z_i$ for $i=1,\ldots k$,  leaving us with
\begin{equation}\label{intnumbertwo} 
\int_{\mathrm{Jet}_k/\!/\diff_k} \Euler(\cale \otimes \CC^m)=\sum_{L \in \tilde{\phi}^{-1}(\bgg)} \sires \frac{
(k-1)!z^{k-1}\prod_{m<l}(z_m-z_l) \Pi_{j=1}^m\Pi_{i=1}^k (\theta_j+z_i)}{
\Euler^L(\mathrm{Jet}_k)  \prod_{l=1}^k\prod_{i=1}^n(\lambda_i-z_l)} \,\dbz
  \end{equation}
where $\Euler^L(\mathrm{Jet}_k)=\prod_{\b \in B}z^L(\b)$ is the equivariant Euler class of the tangent space at $L$.   
\end{theorem}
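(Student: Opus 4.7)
My plan is to follow the degree-counting strategy used for the residue vanishing theorem in B\'erczi--Szenes \cite{bsz}, adapted to the present blow-up model. By Proposition \ref{propint}, the integral $\int_{\mathrm{Jet}_k/\!/\diff_k} \tilde{\phi}^* \alpha$ decomposes as a finite sum over the leaves $L \in \call_k$, so it suffices to prove that each term indexed by a leaf $L$ with $\tilde{\phi}(L) = e_\bipi \neq \bgg$ vanishes as an iterated residue. For the surviving leaves $L \in \tilde{\phi}^{-1}(\bgg)$, Remark \ref{remark:explanation}(2) (applied with $\pi_i = \{i\}$) gives $z^L_i = z_i$, which together with $\alpha = \Euler(\cale \otimes \CC^m) = \prod_j \prod_i (\theta_j + z^L_i)$ produces \eqref{intnumbertwo} immediately.

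The key input is the standard fact that $\res_{z=\infty}(P(z)/Q(z))$ vanishes whenever $\deg_z P < \deg_z Q - 1$, treating the remaining residue variables as parameters. For a leaf $L$ with $\tilde{\phi}(L) = e_\bipi \neq \bgg$, some partition $\pi_i$ has $|\pi_i| \ge 2$, so a variable $z_j$ appears in $z_{\pi_i} = \sum_{p \in \pi_i} z_p$ together with at least one other $z_p$. The plan is to identify, for each such configuration, a variable $z_{j(\bipi)}$ in which the integrand has strictly smaller degree than the denominator degree minus one. Proposition \ref{vanishlemma} supplies the required bound: applied to the closure of the $T^k$-orbit through $\tilde{\phi}(L)$ in a suitable affine coordinate chart of $\grass_k(\symdot)$ --- which is $T^k$-invariant and lies genuinely off the coordinate hyperplane cut out by a Pl\"ucker coordinate vanishing at $e_\bipi$ --- the multidegree of the orbit closure in $z_{j(\bipi)}$ is strictly less than the number of ambient chart weights containing $z_{j(\bipi)}$. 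Combined with the explicit contributions from the Vandermonde factor $\prod_{m<l}(z_m-z_l)$, the prefactor $(k-1)!z^{k-1}$, the tangent Euler product $\Euler^L(\mathrm{Jet}_k) = \prod_\beta z^L(\beta)$ (a product of $k(k+1)/2$ linear forms by Remark \ref{remark:explanation}(3)), and the dense-denominator factor $\prod_l \prod_p (\lambda_p - z_l)$ (contributing degree $n$ in every $z_l$), the strict inequality survives and the corresponding iterated residue vanishes.

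The main technical obstacle is the bookkeeping of the transformed tangent weights $z^L(\beta)$ along a path $\calc_1 \to \cdots \to \calc_{\max}$ in the blow-up tree: each edge transforms cluster weights via the recursive rule \eqref{changeofweights}, and one must track how this propagation interacts with the $z_{j(\bipi)}$-degree. I would proceed by induction on the depth at which the leaf's blow-up path first deviates from the paths leading to $\tilde{\phi}^{-1}(\bgg)$, showing that the required degree deficit is created at the deviation step and preserved afterwards. The assumption $k \le n$ is needed for the flag-residue formula of Proposition \ref{propflag} to apply in the first place, and the bound $m \le 2n - (k+1)k/2$ enters the proof precisely to guarantee that the $\theta$-dependence of $\Euler(\cale \otimes \CC^m)$ cannot create enough extra $z_{j(\bipi)}$-degree to spoil the degree inequality. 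The leaves that survive, labelled by $\tilde{\phi}^{-1}(\bgg)$, are then exactly the red leaves of the examples of \S\ref{sec:examples}, and summing their contributions yields \eqref{intnumbertwo}.
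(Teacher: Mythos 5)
Your opening reduction via Proposition \ref{propint} and the identification of the surviving term for $\bipi=\bipi_\dist$ (so that $z^L_i=z_i$ and $\alpha$ becomes $\prod_{j,i}(\theta_j+z_i)$) are both correct, but the vanishing mechanism you propose has a genuine gap. You invoke Proposition \ref{vanishlemma}, which bounds the $\lambda_m$-degree of the multidegree of a $T$-invariant \emph{irreducible positive-dimensional subvariety not contained in any coordinate hyperplane}. Nothing of that shape appears in \eqref{intnumberone}: the denominator factor $\Euler^L(\mathrm{Jet}_k)=\prod_{\b\in B}z^L(\b)$ is simply the equivariant Euler class of the tangent space at a torus-fixed point of the blown-up model, a bare product of $\#B=k(k+1)/2$ linear forms (Remark \ref{remark:explanation}(3)). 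The ``$T^k$-orbit through $\tilde{\phi}(L)$'' you propose to feed into Proposition \ref{vanishlemma} is a single fixed point; its closure is a point lying on \emph{every} coordinate hyperplane, so the hypotheses of that proposition are not met, and in any case that orbit lives in the Grassmannian and has no link to the denominator factor living on $\mathrm{Jet}_k$. You appear to be importing a step from the B\'erczi--Szenes $Q_k$ argument, where the denominator genuinely contains a multidegree; the whole point of the present construction is to replace that multidegree with the smooth tangent Euler class of a blow-up, and Proposition \ref{vanishlemma} is never invoked in the paper's proof of this theorem.

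The paper's actual argument is more elementary and requires no induction over the blow-up tree. It splits $p/q=(p_1/q_1)(p_2/q_2)$ with $p_1/q_1=(k-1)!z^{k-1}\prod_{m<l}(z_m-z_l)/\prod_{\b\in B}z^L(\b)$ and $p_2/q_2=\prod_{j,i}(\theta_j+z_{\pi_i})/\prod_{l,i}(\lambda_i-z_l)$, then runs a descending induction on the index $i$ of $\pi_i$. If $\pi_k\neq[k]$ then $z_k$ is absent from $p_2$, $\deg(p_1;k)=k-1$, while $\deg(q_2;k)=n$, so Lemma \ref{1lemma} kills the $z_k$-residue. If $\pi_k=[k]$ but $\pi_{k-1}\neq[k-1]$, one records $\deg(q_2;k-1,k)=2n$, $\deg(p_2;k-1,k)=m$, $\deg(q_1;k-1,k)\le\#B=k(k+1)/2$, $\deg(p_1;k-1,k)=2k-3$, and the hypothesis $m\le 2n-k(k+1)/2$ yields the inequality required by Proposition \ref{vanishprop} with $l=k-1$; one continues downward. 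The only inputs about the blown-up model are the items in Remark \ref{remark:explanation}: linearity of the weights $z^L(\b)$, the rule $z^L_i=z_{\pi_i}$, and $\#B=k(k+1)/2$. The tracking of weight propagation along the blow-up path that you sketch is unnecessary; the uniform count $\#B=k(k+1)/2$ does the job. Replace the appeal to Proposition \ref{vanishlemma} and the tree-depth induction with this direct use of Proposition \ref{vanishprop}.
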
 
\begin{remark} Since the Thom polynomial $\Tp_k^{n,m}$ only depends on $k$ and $m-n$, the numerative conditions on these parameters in the Residue Vanishing Theorem do not affect our formula: for any fixed codimension $m-n$ it is possible to choose $n$ big enough so that these conditions are met. 
\end{remark}
%\begin{remark}\label{remarkq}
%\begin{enumerate}
%\item 
%The geometric meaning of $Q_k(\bz)$ in \eqref{intnumberthree} is explained in the Introduction and in \cite{bsz} Theorem 6.16.
%\item To understand the significance of this vanishing theorem we note that while the fixed point set $\Bipi$ on $\flag_k^*(\symdot)$ is well understood, it is not clear which of these fixed points sit in $X_{\mathbf{f}}$. But we have enough information to prove that none of those fixed points in $X_{\mathbf{f}}$ contribute to the iterated residue except for the distinguised fixed point $\bipi_\dist=([1],[2],\ldots, [k])$.
%\end{enumerate}
%\end{remark}

For the preparation of the proof, we describe following \cite{bsz} $\S6.2$ the conditions under which iterated
residues  of the type appearing in the sum in \eqref{intnumberone}
vanish and we prove Theorem \ref{vanishtheorem}.

We start with the 1-dimensional case, where the residue at infinity
is defined by \eqref{defresinf} with $d=1$. By bounding the integral
representation along a contour $|z|=R$ with $R$ large, one can
easily prove the following lemma.
\begin{lemma}\label{1lemma}
  Let $p(z),q(z)$ be polynomials of one variable. Then
\[\res_{z=\infty}\frac{p(z)\,dz}{q(z)}=0\quad\text{if }\deg(p(z))+1<\deg(q).
\]
\end{lemma}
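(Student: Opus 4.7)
The plan is to prove the vanishing by a direct asymptotic estimate of the contour integral representation of the residue at infinity, since this is the most transparent argument and the rest of the paper leans on the algorithmic (Laurent-expansion) viewpoint of residues for actual computations. By the definition of the residue at infinity as a special case of \eqref{defresinf} with $d=1$, we have
\[
\res_{z=\infty}\frac{p(z)\,dz}{q(z)}=\frac{1}{2\pi i}\int_{|z|=R}\frac{p(z)}{q(z)}\,dz
\]
for any sufficiently large $R$, where the contour is oriented in the convention declared after \eqref{defresinf}. The value on the right is independent of $R$ once $R$ exceeds the modulus of every root of $q$, because $p(z)/q(z)$ is holomorphic outside a compact disc.

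Next, I would estimate the integrand. Write $n=\deg q$ and $d=\deg p$, with leading coefficients $a_n, b_d$ respectively. For $|z|=R$ with $R$ large, standard bounds give $|p(z)|\le 2|b_d|R^{d}$ and $|q(z)|\ge \tfrac{1}{2}|a_n|R^{n}$. Hence
\[
\left|\int_{|z|=R}\frac{p(z)}{q(z)}\,dz\right|\le 2\pi R\cdot \frac{4|b_d|}{|a_n|}R^{d-n}=\frac{8\pi|b_d|}{|a_n|}R^{d-n+1}.
\]
Under the hypothesis $d+1<n$, the exponent $d-n+1$ is strictly negative, so the right-hand side tends to $0$ as $R\to\infty$. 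Combined with the $R$-independence observed in the previous step, this forces the residue to equal zero.

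As a sanity check, one can also see the same conclusion from the algorithmic recipe: expanding $1/q(z)$ at infinity via \eqref{omegaexp} applied to each linear factor of $q$ (or directly, by geometric series in the leading term) shows that $p(z)/q(z)$ is a Laurent series in $z^{-1}$ whose lowest power is $z^{d-n}$; the residue at infinity is (up to sign) the coefficient of $z^{-1}$, which is absent precisely when $d-n<-1$, i.e.\ when $d+1<n$. I do not anticipate any obstacle: the only content is the degree count, and either the contour estimate or the Laurent expansion makes it immediate.
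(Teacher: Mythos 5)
Your argument is correct and is exactly the paper's approach: the paper states the lemma is proved "by bounding the integral representation along a contour $|z|=R$ with $R$ large," which is precisely your estimate $|p(z)/q(z)|=O(R^{\deg p-\deg q})$ giving an integral of order $R^{\deg p-\deg q+1}\to 0$ under $\deg p+1<\deg q$. The supplementary Laurent-expansion check is consistent with the paper's computational recipe but adds nothing beyond the contour bound.
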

Consider now the multidimensional situation. Let $p(\bz),q(\bz)$ be
polynomials in the $k$ variables $z_1\ldots z_k$, and assume that
$q(\bz)$ is the product of linear factors $q=\prod_{i=1}^N L_i$, as
in \eqref{intnumberone}. We continue to use the notation $\dbz=dz_1\dots dz_k$.
We would like to formulate conditions under which the iterated
residue
\begin{equation}
  \label{ires}
\ires\frac{p(\bz)\,\dbz}{q(\bz)}
\end{equation}
vanishes. Introduce the following notation:
\begin{itemize}\label{notations}
\item For a set of indices $S\subset\{1\ldots k\}$, denote by $\deg(p(\bz);S)$
  the degree of the one-variable polynomial $p_S(t)$ obtained from $p$
  via the substitution $z_m\to
  \begin{cases}
t\text{ if }m\in S,\\ 1\text{ if }m\notin S.
  \end{cases}$. 
%When $p(\bz)$ is the product of linear forms and $1\le m \le k$ let $\deg(p(\bz);m)$ denote the number of terms in $p(\bz)$ with nonzero coefficients in front of $z_m$.
\item For a nonzero linear form $L=a_0+a_1z_1+\ldots +a_kz_k$,
  denote by $\coeff(L,z_i)=a_i$ the coefficient $a_i$.
 \item Finally, for $1\leq m\leq k$, set
  \[\lead(q(\bz);m)=\#\{i;\;\max\{l;\;\coeff(L_i,z_l)\neq0\}=m\}\]
which is the number of those factors $L_i$ in which the coefficient
of $z_m$ does not vanish, but the coefficients of $z_{m+1},\ldots,
z_k$ are $0$.
\end{itemize}
We can group the $N$ linear factors of $q(\bz)$ according to the
nonvanishing coefficient with the largest index; in particular, for
$1\leq m\leq k$ we have
\[   \deg(q(\bz);m)\geq\lead(q(\bz);m))  \text{ and } \sum_{m=1}^k\lead(q(\bz);m)=N.
\]

Now applying Lemma \ref{1lemma} to the first residue in
\eqref{ires}, we see that
\[ \res_{z_d=\infty}\frac{p(z_1,\ldots,z_{d-1},z_d)\dbz}{q(z_1,\ldots,z_{d-1},z_d)}=0
\]
whenever $\deg(p(\bz);d)+1<\deg(q(\bz),d)$; in this case, of course,
the entire iterated residue \eqref{ires} vanishes.

Now we suppose the residue with respect to $z_d$ does not vanish,
and we look for conditions of vanishing of the next residue:
\begin{equation}
  \label{2res}
\res_{z_{k-1}=\infty}\res_{z_k=\infty}\frac{p(z_1,\ldots,z_{k-2},z_{k-1},z_k)\dbz}
{q(z_1,\ldots,z_{k-2},z_{k-1},z_k)}.
\end{equation}
 Now the condition $\deg(p(\bz);k-1)+1<\deg(q(\bz),k-1)$ will
{\em insufficient};  for example,
\begin{equation}
\res_{z_{k-1}=\infty}\res_{z_k=\infty}\frac{dz_{k-1}dz_k}{z_{k-1}(z_{k-1}+z_k)}=
\res_{z_{k-1}=\infty}\res_{z_k=\infty}\frac{dz_{k-1}dz_k}{z_{k-1}z_k}
\left(1-\frac{z_{k-1}}{z_k}+\ldots\right)=1.
\end{equation}
After performing the expansions \eqref{omegaexp} to $1/q(\bz)$, we
obtain a Laurent series with terms $z_1^{-i_1}\ldots z_k^{-i_k}$ such that
$i_{k-1}+i_k\geq\mathrm{deg}(q(z);k-1,k)$, hence the condition
\begin{equation}\label{toprove}
\deg(p(\bz);k-1,k)+2<\deg(q(\bz);k-1,k)
\end{equation}
will suffice for the vanishing of \eqref{2res}. This argument easily
generalizes to the following statement.

 \begin{proposition}[\cite{bsz} Proposition 6.3]
  \label{vanishprop}
Let $p(\bz)$ and $q(\bz)$ be polynomials in the variables $z_1\ldots 
z_k$, and assume that $q(\bz)$ is a product of linear factors:
$q(\bz)=\prod_{i=1}^NL_i$; set $\dbz=dz_1\dots dz_k$. Then
\[ \ires\frac{p(\bz)\dbz}{q(\bz)} = 0
\]
if for some $l\leq k$, the following holds:
\[\deg(p(\bz);d,d-1,\dots,l)+d-l+1<\deg(q(\bz);d,d-1,\dots,l)\]
\end{proposition}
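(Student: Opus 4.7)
The strategy is to apply the iterated-residue vanishing criterion of Proposition \ref{vanishprop} separately to each summand in \eqref{intnumberone} indexed by a leaf $L \in \call_k$ with $\tilde\phi(L) \neq \bgg$; the remaining distinguished summands, after the substitution $z_i^L = z_i$, produce exactly \eqref{intnumbertwo}.

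By Remark \ref{remark:explanation}, each $T^k$-fixed point $L$ on $\mathrm{Jet}_k$ corresponds to a sequence of partitions $\bipi^L = (\pi_1=(1), \pi_2, \ldots, \pi_k)$ with $\tilde\phi(L) = e_{\pi_1} \wedge \cdots \wedge e_{\pi_k}$ and Chern roots $z_i^L = \sum_{p \in \pi_i} z_p$. The distinguished point $\bgg$ corresponds to $\pi_i = (i)$ for every $i$. Thus for any leaf $L$ with $\tilde\phi(L) \neq \bgg$, the multiset $\mathrm{Supp}(\bipi^L) := \bigsqcup_i \pi_i$ (of total size $k(k+1)/2$) differs from $\{1,2,\ldots,k\}$; since the total weight is fixed at $k(k+1)/2$, some index $m_0 \in \{2,\ldots,k\}$ is absent from $\mathrm{Supp}(\bipi^L)$. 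Choose $l^*$ to be the largest such absent index and set $S := \{l^*, l^*+1, \ldots, k\}$.

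Writing the summand of \eqref{intnumberone} at $L$ as $p_L(\bz)/q_L(\bz)\,d\bz$ with $q_L = \Euler^L(\mathrm{Jet}_k)\cdot\prod_{l=1}^k\prod_{i=1}^n(\lambda_i - z_l)$ a product of linear forms, I would estimate: on the numerator side, the Vandermonde $\prod_{i<j}(z_i-z_j)$ and the prefactor $(k-1)!z^{k-1}$ contribute the same $S$-degree as at the distinguished leaf, while the Euler factor $\prod_{j,i}(\theta_j + z_i^L)$ contributes $S$-degree $m\cdot |A|$ where $A = \{i : \pi_i \cap S \neq \emptyset\}$; the absence of $l^*$ from $\mathrm{Supp}(\bipi^L)$ forces $|A|\le |S|-1$, strictly below the distinguished value $|S|$. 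On the denominator side, $\prod_{l,i}(\lambda_i - z_l)$ contributes $n|S|$, and the recursive weight rule \eqref{changeofweights} applied along the path $\calc_1 \to \calc_2 \to \cdots \to \calc_{\max}$ defining $L$ gives a lower bound on the number of weights $\wt^L(\b)$ that actually involve indices from $S$. Under the numerical hypothesis $m \le 2n - k(k+1)/2$, assembling these estimates yields the strict inequality $\deg(p_L; S) + |S| < \deg(q_L; S)$, and Proposition \ref{vanishprop} then forces the iterated residue at $L$ to vanish.

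The hard part will be the denominator count: the rule \eqref{changeofweights} makes $\wt^L(\b)$ depend globally on the entire path in the blow-up tree, so establishing the needed lower bound on $\#\{\b : \wt^L(\b)\text{ involves some }z_j,\, j \in S\}$ requires an inductive combinatorial analysis coupling the successive blow-up centers (the clusters $\calc_i$ coming from the partition-polyhedron combinatorics) to the partition data $\bipi^L$. This is where the geometry of the blow-ups enters in an essential way, and where the hypothesis $k \le n$ is used to prevent degenerate coincidences among weights that would spoil the degree gap.
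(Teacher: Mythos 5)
Your proposal does not prove the statement at hand: its very first sentence invokes Proposition \ref{vanishprop} as a known criterion and then sketches how to apply it, leaf by leaf, to the sum \eqref{intnumberone}. What you have outlined is (part of) a proof of the Residue Vanishing Theorem \ref{vanishtheorem}, not of Proposition \ref{vanishprop} itself. The proposition is a purely analytic/combinatorial fact about iterated residues at infinity of $p(\bz)/q(\bz)$ with $q$ a product of linear forms; using that criterion as your main tool is circular with respect to the requested statement, and nothing in your text supplies the missing content. All the geometric input you bring in (the partition data $\bipi^L$, the weights $\wt^L(\b)$, the bounds $m\le 2n-k(k+1)/2$ and $k\le n$) belongs to the later application, not to this proposition -- and even for that application your sketch concedes that the crucial denominator-degree bound is left open, whereas the paper's proof of Theorem \ref{vanishtheorem} needs only the crude facts recorded in Remark \ref{remark:explanation} (e.g.\ that $\#B=k(k+1)/2$, so $\deg(q_1(\bz);\cdot)\le k(k+1)/2$), not a fine analysis of the blow-up weights.

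The argument the paper actually gives (following \cite{bsz} \S 6.2) is a short degree count. The one-variable case is Lemma \ref{1lemma}, proved by bounding the contour integral over $|z|=R$ for $R$ large. In several variables one expands each factor $1/L_i$ as in \eqref{omegaexp} with respect to its highest-index variable; in the resulting Laurent series every monomial $z_1^{-i_1}\cdots z_k^{-i_k}$ satisfies $i_l+i_{l+1}+\cdots+i_k\ \ge\ \deg(q(\bz);l,l+1,\ldots,k)$ for each $l$, as illustrated in the two-variable model computation leading to \eqref{toprove}. Multiplying by $p(\bz)$ can raise the total exponent in $z_l,\ldots,z_k$ by at most $\deg(p(\bz);l,\ldots,k)$, while the iterated residue \eqref{ires} extracts the coefficient of $z_1^{-1}\cdots z_k^{-1}$, which requires that total exponent to be exactly $-(k-l+1)$. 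Hence if $\deg(p(\bz);l,\ldots,k)+(k-l+1)<\deg(q(\bz);l,\ldots,k)$ (the displayed hypothesis, with $d$ the top index $k$), no such term occurs and the residue vanishes. This elementary expansion argument is what a proof of Proposition \ref{vanishprop} must contain, and it is entirely absent from your proposal.
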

Note that the equality
$\deg(q(\bz);l)=\lead(q(\bz);l)$ means that
  \begin{equation}
    \label{op2cond}
\text{ for each }i=1\ldots N\text{ and }m>l)(
\coeff(L_i,z_{l})\neq0\text{ implies }\coeff(L_i,z_{m})=0.
  \end{equation}

We are ready to proof the Residue Vanishing Theorem. Recall that our goal is to show that all the terms of the sum in \eqref{intnumberone} vanish except for the ones corresponding to leaves L such that $\tilde{\phi}(L)=\bipi_\dist=e_1 \wedge \ldots \wedge e_k$. The plan is to apply Proposition \ref{vanishprop} in stages to show that the itrated residue vanishes unless $z^L_i=[i]$ holds, starting with $i=k$ and going backwards. 

Fix a sequence $\bipi=(\pi_1,\dots,\pi_k)\in\Bipi$, and consider the
iterated residue on the right hand side of
\eqref{intnumberone} corresponding to a leaf $L$ with $\tilde{\phi}(L)=e_\bipi$ The expression under the residue is the product of
two fractions:
\[\frac{p(\bz)}{q(\bz)}=\frac{p_1(\bz)}{q_1(\bz)}\cdot\frac{p_2(\bz)}{q_2(\bz)}\]
where
\begin{equation}
  \label{pq}
\frac{p_1(\bz)}{q_1(\bz)}=\frac{
(k-1)!z^{k-1}\,\prod_{m<l}(z_m-z_l) }{\prod_{\b \in B}z^L(\b)
}\text{ and
  }
\frac{p_2(\bz)}{q_2(\bz)} = \frac{
\Pi_{j=1}^m\Pi_{i=1}^k (\theta_j+z_{\pi_i})} {
\prod_{l=1}^k\prod_{i=1}^n(\lambda_i-z_l)}.
\end{equation}

Note that $p(\bz)$ is a polynomial, while $q(\bz)$ is a product of
linear forms.  As a first step we show that if $\pi_k \neq [k]$, then already the
first residue in the corresponding term on the right hand side of
\eqref{intnumberone} -- the one with respect to $z_k$ -- vanishes.
Indeed, if $\pi_k\neq[k]$, then $\deg(q_2(\bz);k)=n$, while
$z_k$ does not appear in $p_2(\bz)$. On the other hand, $\deg(p_1(\bz);k)=k-1$, hence
\begin{equation}
\deg(p(\bz);k)=k-1 \text{ and } \deg(q(\bz);k)\ge n
\end{equation}
and $k \le n$, so $\deg(p(\bz))\le \deg(q(\bz))-2$ holds and we can apply Lemma \ref{1lemma}.

We can thus assume that $\pi_k=[k]$, and proceed to the next step and take the residue 
with respect to $z_{k-1}$. Assume that $\pi_{k-1}\neq[k-1]$. Then  
\begin{equation}\label{k-1}
\deg(q_2(\bz),k-1,k)=2n, \deg(p_2(\bz);k-1,k)=m.
\end{equation}
and 
\begin{equation}\label{k}
\deg(q_1(\bz),k-1,k)\le \#B=(k+1)k/2, \deg(p_1(\bz);k-1,k)=2k-3.
\end{equation}
Hence for $m<2n+2k-5-(k+1)k/2$ we can apply Proposition \ref{vanishprop} with $l=k-1$ to deduce the vanishing of the residue with respect to $k-1$.

Continuing this argument in the same way provides an inductive proof of the the Residue Vanishing Theorem.

\subsection{Initial blow-up to achieve stability-semistability for the $\diff_k$ action}\label{subsection:initial}
Recall the rational map
\[\phi: \PP=\PP[\CC \oplus \Hom^\ff (\CC^k,\CC^n)] \dasharrow \PP(\wedge^k \symdot)\]
on the compactified fiber over $\mathbf{f}$ has reads as
\[[x:v_1,\ldots v_k] \mapsto [v_1 \wedge (xv_2+v_1^2) \wedge (x^2v_3+xv_1v_2+v_1^3) \wedge \ldots]\]
Here $v_1,\ldots, v_k \in \CC^n$ are vectors of the form \eqref{vectors} below, representing the columns of a matrix $M \in \Hom^\ff(\CC^k, \CC^n)$ and $x$ is the compactifying coordinate, while $\diff_k$ acts via the right action
\[ [x:M]\cdot \hat{u}=[x:M\hat{u}] \text{ for } \hat{u} \in \diff_k\]
or equivalently via the left action
\[\hat{u}\cdot [x:M]=[x:M(\hat{u})^{-1}] \text{ for } \hat{u} \in \diff_k.\]
We want to apply the results of non-reductive GIT described in $\S$ \ref{sec:nrgit}, which are stated for left actions. For this we need a one-parameter subgroup $\lambda:\CC^* \to \diff_k$ whose adjoint action on the Lie algebra of $U \subset \diff_k$ has only strictly positive weights; we can take
\[\lambda(t)=\left(
\begin{array}{ccccc}
t^{-1} &0   & 0          & \ldots & 0 \\
0        & t^{-2} & 0 & \ldots & 0 \\
0        & 0          & t^{-3}        & \ldots & 0 \\
0        & 0          & 0                 & \ldots & \cdot \\
\cdot    & \cdot   & \cdot    & \ldots &t^{-k}
\end{array}
 \right),
\]
and then 
\[\lambda(t)  \cdot [x:M]=[x:M  \left(
\begin{array}{ccccc}
t &0   & 0          & \ldots & 0 \\
0        & t^{2} & 0 & \ldots & 0 \\
0        & 0          & t^{3}        & \ldots & 0 \\
0        & 0          & 0                 & \ldots & \cdot \\
\cdot    & \cdot   & \cdot    & \ldots &t^{k}
\end{array}
 \right) ].
\]
 The weights of this (left)  action of the one-parameter subgroup of $\diff_k$ defined by $\lambda$  are $\{0, 1, 2, \ldots, k\}$. The minimal weight space is the point
\[Z_{\min}=\{ [1:0:\ldots :0] \} \]
and the $U$-stabiliser of this point is $%\mathrm{Stab}_U(Z_{\min})=
U$. Thus the non-reductive GIT blow-up process described at the end of $\S$3 starts with blowing up the projective space $\PP$ along $Z_{\min}$ to  get 
\[\tPP=\blow_{[1:0:\ldots :0]}\PP=\{([x:v_1,\ldots, v_k],[w_1,\ldots w_k]):w_i\otimes v_j=w_j\otimes v_i \mbox{ for }  1\le i<j\le k\}\]
embedded in $\PP^{{k \choose 2}} \times \PP^{{k \choose 2}-1} \subset \PP^{({k \choose 2}+1){k \choose 2}-1}$. Here 
\begin{align}\label{vectors}
v_1=& \b_{11}e_1 \\ \nonumber
v_2= & \b_{22}e_2+\b_{12}e_1\\ \nonumber
\cdots\\ \nonumber
v_k= &\b_{kk}e_k+\b_{{k-1}k}e_{k-1}+\ldots +\b_{1k}e_1 \nonumber
\end{align}
We fix the ample linearisation $L=\calo_{\PP^{{k \choose 2}}}(1) \otimes \calo_{\PP^{{k \choose 2}-1}}(1)$ on $\PP^{{k \choose 2}} \times \PP^{{k \choose 2}-1}$ and restrict it to $\tPP$. The minimal weight space for the action of $\lambda(\CC^*)$ on $\tPP$ is the intersection $\tZmin$ of the exceptional divisor $E$ and the strict transform of $\PP[x:v_1:0:\cdots:0] \subset \PP$:
\[\tZmin=\{([1:0:\ldots :0],[w_1:0:\ldots:0]): w_1 \in \CC^n,  w_1 \neq 0 \}\subset E \subset \tPP.\] 
and its contracting set is
\[\tPP_{\min}^0=\{([x:v_1:\ldots :v_k],[w_1:w_2:\ldots: w_k]): w_1 \neq 0, \text{ and not all of } w_2,\ldots, w_k \text{ is zero}\}\subset \tPP\]
The $U$-stabiliser of any point in $\tZmin$ is trivial, and hence stability coincides with semistability for the induced $\diff_k$ action on $\tPP$. This remains valid after blow-ups, hence semistability=stability holds for all spaces in this blow-up process.

Due to the form \eqref{vectors} of the vectors, on $\tPP^{ss}=\tPP_{\min}^0\setminus \tZmin$ we have $w_1=\b_{11}e_1$ and on the affine chart $U_0 \subset \tPP$, where $\b_{11} \neq 0$, we set $\b_{11}=1$. Note that $\tPP^{ss} \subset U_0$. We get relations $\b_{11} w_i=v_i$, and hence the affine coordinates on $U_0$ are $\b_{ij}$ for $1\le i\le j\le k$. We will use the notation $t=\b_{11}$ for this distinguished coordinate. The exceptional divisor is $\{t=0\} \subset U_0$ and the rational map  
\[\phi:\tPP^{ss} \dasharrow \PP(\wedge^k \symdot)\]
can be written as  
\begin{multline}\label{phiw}
\phi(t:w_1,\ldots w_k) = [te_1 \wedge (tw_2+(tw_1)^2) \wedge (tw_3+t^2w_1w_2+(tv_1)^3) \wedge \ldots]=\\
e_1 \wedge (w_2+tw_1^2) \wedge \ldots \wedge (\sum_{i_1+\ldots +i_r=k}t^{r-1}w_{i_1}\ldots w_{i_r})
\end{multline}

\section{Examples: Thom polynomials for $k\le 5$}\label{sec:examples}
In this section we work out the strategy for small examples, and we compute Thom polynomials of $A_3, A_4$ and $A_5$ singularities to demonstrate the key ingredients of out new approach. 
\subsection{Thom polynomials of $A_3$ singularities}\label{subsec:k=3}
The affine coordinates over the flag $\ff=(e_1,e_2,e_3) \in \flag_3(\CC^n)$ are
\begin{align*}
w_1=& e_1 \\
w_2= &\b_{22}e_2+\b_{12}e_1\\
w_3= &\b_{33}e_3+\b_{23}e_2+\b_{13}e_1
\end{align*}
We sort the map $\phi_3:U_0=\mathrm{Spec}(\CC[t,\b_{ij}]) \to \PP(\sym^{\le 3}\CC^n)$ by the degree of $t$:
\begin{align}\label{mapk=3}\nonumber
\phi_3(t,w_1, w_2, w_3)=& w_1 \wedge (w_2+tw_1^2) \wedge (w_3 + tw_1w_2+ t^2w_1^3)= \\ \nonumber
& [\b_{22}\b_{33} e_1 \wedge e_2 \wedge e_3] +\\ \nonumber
t & [\b_{33}e_1 \wedge e_1^2 \wedge e_3 + (\b_{23}-\b_{12}\b_{22}) e_1 \wedge e_1^2 \wedge e_2+ \b_{22}^2 e_1 \wedge e_2 \wedge e_1e_2]+\\
t^2 & [\b_{22} e_1 \wedge e_2 \wedge e_1^3 + \b_{22} e_1 \wedge e_1^2 \wedge e_1e_2]+\\ \nonumber
t^3 & [e_1 \wedge e_1^2 \wedge e_1^3] \nonumber
\end{align}

The ideal of indeterminacy locus of $\phi_3$ is generated by the coefficients:
\[I(\phi_3)=(\b_{22}\b_{33}, \b_{33}t, \b_{22}^2t, (\b_{23}-\b_{12}\b_{22})t, \b_{22}t^2, t^3)\] 
We blow-up along the indeterminacy locus of the monomial map 
\[\phi_3^{mon}(t,w_1, w_2, w_3)=[\b_{22}\b_{33}, \b_{33}t, \b_{22}^2t, \b_{23}t, \b_{12}\b_{22}t, \b_{22}t^2, t^3] \in \PP^6\]
formed by all monomials in the $\phi_3$. The indeterminacy locus is $\mathrm{Spec}(M(\phi_3))$ with 
\[M(\phi_3)=(\b_{22}\b_{33}, \b_{33}t, \b_{22}^2t, \b_{23}t, \b_{12}\b_{22}t, \b_{22}t^2, t^3] \in \PP^6)\]
is generated by the coordinates all monomials in $\phi_3$. Now $\mathrm{Spec}(M(\phi_3))$ is the union of coordinate subspaces with multiplicities, determined by the primary decomposition of the monomial ideal $M(\phi_3)$:
\[M(\phi_3)=(t,\b_{22}) \cap (t,b_{33}) \cap (t^3,\b_{22},\b_{23},\b_{33}) \cap (t^2,\b_{12},\b_{22}^2,\b_{23},\b_{33})\]
We proceed by blowing up along one of the components which is i) reduced for the blow-up to be smooth ii) $\diff_3$-invariant and iii) of maximal dimension. Then we choose the affine chart corresponding to the minimal weight space. We pick $(\underline{t},\b_{33})$, and the affine chart $t\neq 0$ as the only choice as the weight of $t$ is $1$ and the weight of $\b_{33}$ is $2$. Let $\tilde{\phi}_3:\blow_{(t,b_33),t}U_0 \to \PP(\sym^{\le 3}\CC^n)$ denote the blown-up rational map, then its monomial ideal is
\[M(\tilde{\phi}_3)=(t\b_{22}\tilde{\b}_{33}, t^2\tilde{\b}_{33}, t\b_{22}^2, t\b_{23}, t\b_{12}\b_{22}, t^2\b_{22}, t^3)\]
which defines the same monomial map to $\PP(\sym^{\le 3}\CC^n)$ as the one by dividing all monomials by $t$, and we take the primary decomposition again:
\[M(\tilde{\phi}_3)=(\b_{22}\tilde{\b}_{33}, t\tilde{\b}_{33}, \b_{22}^2, \b_{23}, \b_{12}\b_{22}, t\b_{22}, t^2)=(t,\b_{22},\b_{23})\cap (t^2,\b_{22},\b_{23},\tilde{\b}_{33}) \cap (t,\b_{12},\b_{22}^2,\b_{23},\tilde{\b}_{33})\]
Then we pick the ideal $(t,\b_{22},\b_{23})$ and note that the weight of $t$ and $\b_{22}$ is $1$ and the weight of $\b_{23}$ is $2$. So we have two minimal weight charts, and the blow-up maps have the following monomial ideals:
\begin{align*}
M(\tilde{\phi}_3^{\b_{22}})&= (\tilde{\b}_{33}, \tilde{t}\tilde{\b}_{33}, \b_{22}, \tilde{\b}_{23}, \b_{12}, \tilde{t}\b_{22}, \tilde{t}^2\b_{22})=(\tilde{\b}_{33},\b_{22}, \tilde{\b}_{23}, \b_{12})\\
M(\tilde{\phi}_3^{t})&=(\tilde{\b}_{22}\tilde{\b}_{33}, \tilde{\b}_{33}, t\tilde{\b}_{22}^2, \tilde{\b}_{23}, \b_{12}\tilde{\b}_{22}, t\tilde{\b}_{22}, t)
\end{align*}
On the first affine chart our blow-up process terminates: there is only one reduced component, and we can write the blown-up map on this chart as
\begin{align*}
\tilde{\tilde{\phi}}_3(\tilde{t},\b_{12},\b_{13},\b_{22},\tilde{\b}_{23},\tilde{\b}_{33})= \tilde{\b}_{33} e_1 \wedge e_2 \wedge e_3 +
\tilde{t}\tilde{\b}_{33}e_1 \wedge e_1^2 \wedge e_3 + (\tilde{\b}_{23}-\b_{12}) e_1 \wedge e_1^2 \wedge e_2 + \\
+\b_{22} e_1 \wedge e_2 \wedge e_1e_2+
+\tilde{t}\b_{22} e_1 \wedge e_2 \wedge e_1^3 + \tilde{t}\b_{22} e_1 \wedge e_1^2 \wedge e_1e_2+
\tilde{t}^2\b_{22}e_1 \wedge e_1^2 \wedge e_1^3
\end{align*}
All coefficients are linear combinations of the generators of $M(\tilde{\phi}_3^{\b_{22}})$, and the indeterminacy ideal of this map is (which is the ideal generated by the coefficients) is
\[(\tilde{\b}_{33},\b_{22}, \tilde{\b}_{23}-\b_{12})\]
which is not a monomial ideal, but a complete intersection, hence the blown-up at this is smooth, and $\phi_3$ is well defined on the blown-up. 

It remains to check the other affine chart, where $t\neq 0$, but in fact the points which are not covered by the $\b_{22}\neq 0$ affine chart is the locus $\tilde{\b}_{22}=0$. The map on this set has the form
\begin{align*}
\phi_3(t,\b_{12},\b_{13},\tilde{\b}_{22},\tilde{\b}_{23},\tilde{\b}_{33})= \tilde{\b}_{33}e_1 \wedge e_1^2 \wedge e_3 + \tilde{\b}_{23} e_1 \wedge e_1^2 \wedge e_2 +
te_1 \wedge e_1^2 \wedge e_1^3
\end{align*}
The image does not contain the point $e_1\wedge e_2 \wedge e_3$, hence by the Residue Vanishing Theorem the contribution to the residue formula of this part is zero. 
In short, the map $\phi_3$ is well-defined on the $\diff_3$-semistable=stable  set of the iterated blown-up space 
\[\blow_{(\tilde{\b}_{33},\b_{22}, \tilde{\b}_{23}-\b_{12})}\blow_{(t,\b_{22},\b_{23})}\blow_{(t,\b_{33})}U_0\]
The blow-up process can be summarized in the following blow-up tree.

\begin{figure}[h]
{\tiny 
\centering
\begin{tikzpicture}[node distance=2cm]
\node (I) [boxes] {$(t^1,\b_{33}^2)$};
\node (II) [boxes, below of=I,yshift=1cm] {$(t^1,\b_{22}^1,\b_{23}^2)$};
\node (nocontrII) [nocontr, left of=II,xshift=0cm ] {No contr.};
\node (III) [boxes, below of=II,yshift=1cm] {$(\b_{22}^1,\b_{23}^1-\b_{12}^1,\b_{33}^1)$};
\node (contrIII) [contr, below of=III,yshift=1cm] {Contr.};

\draw [arrow] (I) -- node[anchor=east] {$t^1$} (II);
\draw [arrow] (II) -- node[anchor=east] {$\b_{22}^1$} (III);
\draw [arrow] (II) -- node[anchor=south] {$t^1$} (nocontrII);
\draw [arrow] (III) -- node[anchor=east] {$\b_{33}^0$} (contrIII);
\end{tikzpicture}
}
\caption{The blow-up tree for $k=3$}
\end{figure}
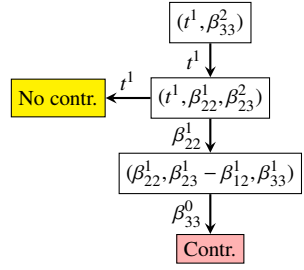

The $\CC^* \times \GL(3)$ weights of the affine coordinates change after each blow-up as follows:
\begin{align*}
\text{ Initial weights: }  &  \g(t)=z+z_1,\  \g(\b_{ij})=(j-1)z+z_i-z_1\\
\text{After first blow-up: } & \g(\tilde{\b}_{33})=\g(\b_{33})-\g(t)=2z+z_3-z_1-(z+z_1)=z+z_3-2z_1\\
\text{After second blow-up: } & \g(\tilde{\b}_{23})=\g(\b_{23})-\g(\b_{22})=z\\
& \g(\tilde{t})=\g(t)-\g(\b_{22})=z+z_1-(z+z_2-z_1)=z_2-2z_1\\
\text{After final blow-up: } & \g(\tilde{\tilde{\b}}_{23})=\g(\tilde{\b}_{23})-\g(\tilde{b}_{33})=2z_1-z_3\\
& \g(\tilde{\b}_{22})=\g(\b_{22})-\g(\tilde{\b}_{33})=(z+z_2-z_1)-(z+z_3-2z_1)=z_1+z_2-z_3
\end{align*}
The following table collects the weights in the process:

{\tiny \begin{tabular}{l|c|c|c|c|c|c|}
Ideal & $t$ & $\b_{12}$ & $\b_{13}$ & $\b_{22}$ & $\b_{23}$ & $\b_{33}$ \\
\hline 
-  &  $z+z_1$ & $z$ & $2z$ & $z+z_2-z_1$ & $2z+z_2-z_1$ & $2z+z_3-z_1$ \\
\hline
$(\underline{t},\b_{33})$ & $z+z_1$ & $z$ & $2z$ & $z+z_2-z_1$ & $2z+z_2-z_1$ & \makecell{$(2z+z_3-z_1)-(z+z_1)$\\ $=z+z_3-2z_1$}\\
\hline
$(\underline{t},\b_{22},\b_{23})$ & \makecell{$(z+z_1)-(z+z_2-z_1)$\\ $=2z_1-z_2$} & $z$ & $2z$ & $z+z_2-z_1$ & \makecell{$(2z+z_2-z_1)-$\\$(z+z_2-z_1)=z$} & $z+z_3-2z_1$ \\
\hline
$(\b_{22},\b_{23},\underline{\b}_{33})$ & $2z_1-z_2$ & $z$ & $2z$ & \makecell{$(z+z_2-z_1)-(z+z_3-2z_1)$\\$=z_1+z_2-z_3$} & \makecell{$z-(z+z_3-2z_1)$\\=$2z_1-z_3$} & $z+z_3-2z_1$
\end{tabular}}

The residue formula has the form 
\begin{multline}\nonumber
\Tp_3=\res_{z_1=\infty}\res_{z_2=\infty}\res_{z_3=\infty}\res_{z=\infty}\frac{2z^2(z_1-z_2)(z_1-z_3)(z_2-z_3)\prod_{i=1}^N\prod_{j=1}^3(\theta_i-z_j)d\bz}{z(2z)(2z_1-z_2)(2z_1-z_3)(z_1+z_2-z_3)(z+z_3-2z_1)\prod_{i=1}^n\prod_{j=1}^3(\lambda_i-z_j)}=\\
=\res_{z_1=\infty}\res_{z_2=\infty}\res_{z_3=\infty}\frac{(z_1-z_2)(z_1-z_3)(z_2-z_3)d\bz}{(2z_1-z_2)(2z_1-z_3)(z_1+z_2-z_3)}\prod_{i=1}^3 c_{TM-TN}(1/z_i) 
\end{multline}
which gives back the formula in \cite{bsz}.

\subsection{Thom polynomials of $A_4$ singularities}\label{subsec:k=4} The affine coordinates over the flag $\ff=(e_1,e_2,e_3,e_4) \in \flag_4(\CC^n)$ are
\begin{align*}
w_1=& e_1 \\
w_2= &\b_{22}e_2+\b_{12}e_1\\
w_3= &\b_{33}e_3+\b_{23}e_2+\b_{13}e_1\\
w_4= &\b_{44}e_4+\b_{34}e_3+\b_{24}e_2+\b_{14}e_1
\end{align*}
The rational map $\phi: U_0=\mathrm{Spec}(\CC[t,\b_{ij}]) \to \PP(\sym^{\le 4}\CC^n)$ is given as
{\tiny \begin{align*}
\phi(t,w_1,\ldots, w_4) = & w_1 \wedge w_2 \wedge w_3 \wedge w_4 + \\
t &[(w_1 \wedge w_1^2 \wedge w_3 \wedge w_4) + (w_1 \wedge w_2 \wedge w_1w_2 \wedge w_4) + (w_1 \wedge w_2 \wedge w_3 \wedge (w_2^2+w_1w_3))]+\\
t^2 & [(w_1 \wedge w_2 \wedge w_3 \wedge w_1^2w_2) + (w_1 \wedge w_2 \wedge w_1w_2 \wedge (w_1w_3 +w_2^2))+(w_1 \wedge w_2 \wedge w_1^3 \wedge w_4)+ \\
& +(w_1 \wedge w_1^2 \wedge w_3 \wedge (w_1w_3 +w_2^2))+(w_1 \wedge w_1^2 \wedge w_1w_2 \wedge w_4)]+\\
t^3 & [(w_1 \wedge w_1^2 \wedge w_1^3 \wedge w_4) +( w_1 \wedge w_1^2 \wedge w_1w_2 \wedge (w_1w_3+w_2^2))+\\
& +(w_1 \wedge w_1^2 \wedge w_3 \wedge w_1^2w_2)+(w_1 \wedge w_2 \wedge w_1w_2 \wedge w_1^2w_2)]+\\
t^4 & [(w_1 \wedge w_1^2 \wedge w_1^3 \wedge (w_1w_3+w_2^2)) + (w_1 \wedge w_1^2 \wedge w_1w_2+ \wedge w_1^2w_2)+(w_1\wedge w_2 \wedge w_1^3 \wedge w_1^2w_2) ]+\\
t^5 & [(w_1 \wedge w_1^2 \wedge w_1^3 \wedge w_1^2w_2) ]+\\
t^6 & [w_1 \wedge w_1^2 \wedge w_1^3 \wedge w_1^4]
\end{align*}}
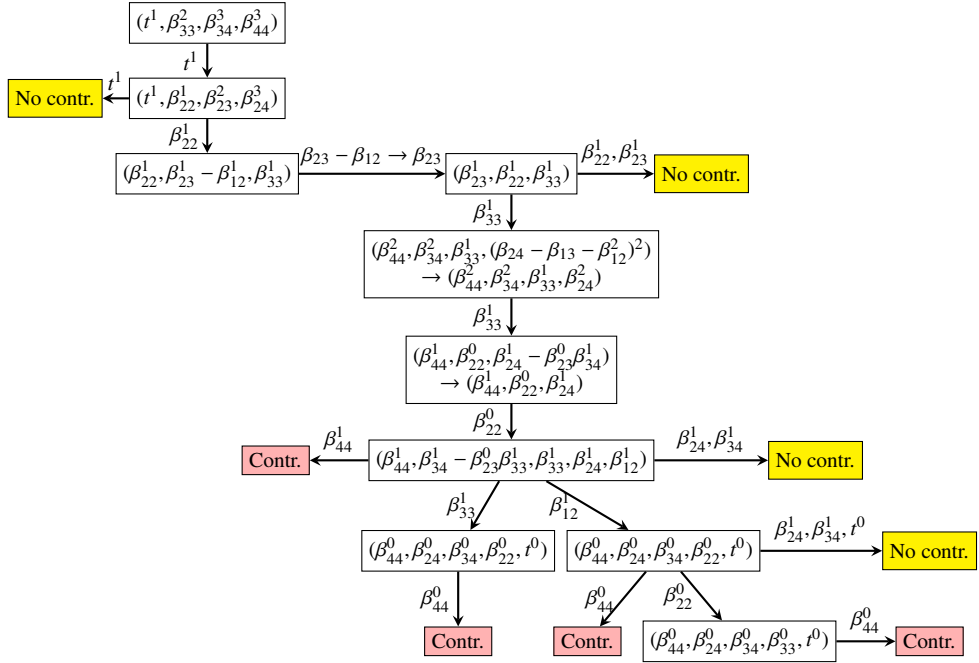
\begin{figure}[h!]
\centering
{\tiny \begin{tikzpicture}[node distance=2cm]
%\node (I) [boxes] {$(t^1,\b_{44}^3)$};
\node (II) [boxes] {$(t^1,\b_{33}^2,\b_{34}^3,\b_{44}^3)$};
\node (III) [boxes, below of=II,yshift=1cm] {$(t^1,\b_{22}^1,\b_{23}^2,\b_{24}^3)$};
\node (nocontrIII) [nocontr, left of=III,xshift=0cm ] {No contr.};
\node (IV) [boxes, below of=III,yshift=1cm] {$(\b_{22}^1,\b_{23}^1-\b_{12}^1,\b_{33}^1)$};
%\node (nocontrIV) [nocontr, left of=IV,xshift=-1.1cm] {No contr.};
\node (V) [boxes, right of=IV, xshift=2cm] {$(\b_{23}^1,\b_{22}^1,\b_{33}^1)$};
\node (nocontrV) [nocontr, right of=V,xshift=0.5cm] {No contr.};
\node (VI) [boxes, below of=V, yshift=0.8cm] {\makecell{$(\b_{44}^2,\b_{34}^2,\b_{33}^1,(\b_{24}-\b_{13}-\b_{12}^2)^2)$ \\ $\to (\b_{44}^2,\b_{34}^2,\b_{33}^1,\b_{24}^2)$}};
\node (VII) [boxes, below of=VI, yshift=0.6cm] {\makecell{$(\b_{44}^1,\b_{22}^0,\b_{24}^1-\b_{23}^0\b_{34}^1)$ \\ $\to (\b_{44}^1,\b_{22}^0,\b_{24}^1)$}};
\node (VIII) [boxes, below of=VII,yshift=0.8cm] {$(\b_{44}^1,\b_{34}^1-\b_{23}^0\b_{33}^1,\b_{33}^1,\b_{24}^1,\b_{12}^1)$};
\node (nocontrVIII) [nocontr, right of=VIII,xshift=2cm] {No contr.};
\node (IX) [boxes, below of=VIII,xshift=-0.7cm,yshift=0.8cm] {$(\b_{44}^0,\b_{24}^0,\b_{34}^0,\b_{22}^0,t^0)$};
\node (contrIX) [contr, below of=IX,yshift=0.8cm] {Contr.};
\node (contrVIII) [contr, left of=VIII,xshift=-1.1cm ] {Contr.};
\node (X) [boxes, below of=VIII,xshift=2cm,yshift=0.8cm] {$(\b_{44}^0,\b_{24}^0,\b_{34}^0,\b_{22}^0,t^0)$};
\node (nocontrX) [nocontr, right of=X,xshift=1.5cm] {No contr.};
\node (XI) [boxes, below of=X,xshift=1cm,yshift=0.8cm] {$(\b_{44}^0,\b_{24}^0,\b_{34}^0,\b_{33}^0,t^0)$};
\node (contrX) [contr, below of=X,xshift=-1cm,yshift=0.8cm] {Contr.};
\node (contrXI) [contr, right of=XI,xshift=0.5cm] {Contr.};

%\draw [arrow] (I) -- node[anchor=east] {$t^1$} (II);
\draw [arrow] (II) -- node[anchor=east] {$t^1$} (III);
\draw [arrow] (III) -- node[anchor=east] {$\b_{22}^1$} (IV);
\draw [arrow] (III) -- node[anchor=south] {$t^1$} (nocontrIII);
%\draw [arrow] (IV) -- node[anchor=south] {$\b_{12}^1,\b_{22}^1,\b_{23}^1$} (nocontrIV);
\draw [arrow] (IV) -- node[anchor=south] {$\b_{23}-\b_{12}  \to \b_{23}$} (V);
\draw [arrow] (V) -- node[anchor=south] {$\b_{22}^1,\b_{23}^1$} (nocontrV);
\draw [arrow] (V) -- node[anchor=east] {$\b_{33}^1$} (VI);
\draw [arrow] (VI) -- node[anchor=east] {$\b_{33}^1$} (VII);
\draw [arrow] (VII) -- node[anchor=east] {$\b_{22}^0$} (VIII);
\draw [arrow] (VIII) -- node[anchor=south] {$\b_{24}^1,\b_{34}^1$} (nocontrVIII);
\draw [arrow] (VIII) -- node[anchor=east] {$\b_{33}^1$} (IX);
\draw [arrow] (IX) -- node[anchor=east] {$\b_{44}^0$} (contrIX);
\draw [arrow] (VIII) -- node[anchor=south] {$\b_{44}^1$} (contrVIII);
\draw [arrow] (VIII) -- node[anchor=east] {$\b_{12}^1$} (X);
\draw [arrow] (X) -- node[anchor=south] {$\b_{24}^1,\b_{34}^1,t^0$} (nocontrX);
\draw [arrow] (X) -- node[anchor=east] {$\b_{44}^0$} (contrX);
\draw [arrow] (X) -- node[anchor=east] {$\b_{22}^0$} (XI);
\draw [arrow] (XI) -- node[anchor=south] {$\b_{44}^0$} (contrXI);

\end{tikzpicture}}
\caption{The blow-up tree for $k=4$}\label{tree4}
\end{figure}
Figure \ref{tree4} summarises the blowing up process. We make some general explanatory comments before we describe the first few steps of this procedure in detail.
\begin{itemize}
\item The initial $\CC^*$-weight of $t$ is $1$, and the weight of $\b_{ij}$ is $j-1$. The upper index of the variables indicate the $z$-weight at each stage.
%\item Let $T\subset B_k \subset \GL(k)$ be the maximal torus, the $T$-weights on $\CC^k$ are $z_1,\ldots, z_k$. The $\CC^* \times T$-weight of the variable $t$ is $z+z_1$, that of $\b_{ij}$ is $z_i+(j-1)z-z_1$.
\item At each stage, after suitable change of affine coordinates if necessary, we perform a blow-up of an affine space along a coordinate subspace. The ideal of this subspace is generated by a cluster of affine coordinates, and vertices of the tree are labeled by these cluster. In general let $x_1,\ldots, x_m$ be coordinates on the affine space $\AAA^m$.  For a non-empty cluster $C \subset \{1,\ldots ,m\}$ and an element $i\in C$ we define the map 
\[\pi_{C,i}: \AAA^m \to \AAA^m\] 
by 
\[\pi_{C,i}^*(x_j)=\begin{cases} x_j & j \notin C \setminus \{i\} \\ x_ix_j & j \in C \setminus \{i\} \end{cases}\]
Let $\AAA_C=\mathrm{Spec}(x_j:j\in C) \subset \AAA^m$ be the coordinate subspace defined by the equations $x_j=0, j\in C$; we have
$\pi_{C,i}^{-1}(\AAA_C)=\AAA_{{i}}$ and 
\[\pi_{C,i}: \AAA^m \setminus \AAA_{{i}} \to \AAA^m \setminus \AAA_C\] is an isomorphism. This is the restriction of the blow-up morphism $\pi_{C}: \blow_{C}\AAA^m \to \AAA^m$ to the ith coordinate affine chart $\blow_{C,i}\AAA^m$. We will keep the same notation $x_i$ for the affine coordinate $\pi_{C,i}^*(x_i)$ on $\blow_{C,i}\AAA^m$, keeping in mind the transformation. In the blowing-up process summarised in the blow-up tree above, in each step we choose a cluster $C \subset \{t,\b_{ij}\}$ and a $c \in C$, and look at the blowing up of the map $\phi$ on the affine chart $\blow_{C,c}\AAA^m$. The node is labeled by the cluster $C$, and for each $c \in C$ with minimal $\CC^*$-weight, there is an edge from $C$ labeled by $c$.  
\item At each step of the blowing-up process we need to make sure that 
\begin{enumerate}
\item We blow-up at $\diff_k$-invariant center, that is, the ideal $I=(C)$ generated by the cluster $C \subset \{t,\b_{ij}\}$ is invariant under the $\diff_k$-action. Note that the coordinates which generate the ideal are not necessarily $\diff_k$-invariant, but the ideal must be. 
\item The chosen $c \in C$ belongs to a coordinate in the minimal weight space $Z_{\min}$, that is, it has minimal $\CC^*$-weight. The weights of the coordinates after each blow up change following the simple transformation $\pi_{C,i}^*$ described in Eq. \eqref{changeofweights}
 \end{enumerate}
\item Torus fixed points correspond to the leaves of the tree. However, not all leaves have contribution to the final residue formula, as some of these sit in the kernel of the residue operator. Due to the residue vanishing theorem, which we prove in the next section, only those fixed points have nonzero contribution to the localisation formula, which are mapped by $\tilde{\phi}$ to the point $e_1\wedge \ldots \wedge e_k$.  These leaves are colored red, and we have 4 of them in Figure \ref{tree4}. However, two of these have zero residue due to simple degree reasons explained below. Finally, the yellow 'No contr' boxes show a branch (i.e affine chart) of the tree whose leaves are not mapped to $e_1\wedge \ldots \wedge e_k$, as explained below. 
\end{itemize}
Note that (2) and (3) are two very powerful, crucial requirements, which says that we have to worry only about a small portion of the leaves, namely those, which contain a fixed point mapped to $e_1\wedge \ldots \wedge e_k$.

We start with the ideal $I_1=(t^1,\b_{33}^2,\b_{34}^3,\b_{44}^3)$, corresponding to the coordinate subspace defined by the cluster $C_1=\{t,\b_{33},\b_{34},\b_{44}\}$. The only minimal weight variable is $c=t$ whose $\CC^*$-weight $1$. After the first blow-up each coordinate of the rational map $\tilde{\phi}_1:\blow_{C_1,t}\Spec(\CC[t,\b_{i}]) \to \PP(\wedge^4 \sym^{\le 4}\CC^n)$ can be divided by $t^3$.

Next we blow up $\AAA^m\simeq \blow_{C_1,t}\Spec(\CC[t,\b_{i}])$ at $I_2=(t^1, \b_{22}^1, \b_{23}^2,\b_{24}^3)$, because $I_2$ is a $\diff_4$-invariant subspace of the indeterminacy locus of the monomial map $\tilde{\phi}^{mon}$. Here we get two minimal weight coordinates in $C_2=\{t^1, \b_{22}^1, \b_{23}^2,\b_{24}^3\}$, namely $t$ and $\b_{22}$, corresponding to two edges of the tree, and we need to study these two branches separately. \\
\underline{$c=t$:} Look at the coefficients of $e_1\wedge \ldots \wedge e_4$ and $e_1 \wedge e_1^2 \wedge e_3 \wedge e_4$. These are transformed under the first two blow-up as
\[(\pi_{C_2,t}^* \circ \pi_{C_1,t}^*)(\b_{22}\b_{33}\b_{44})=t \b_{22}\b_{33}\b_{44} \text{ and }
(\pi_{C_2,t}^* \circ \pi_{C_1,t}^*)(t\b_{33}\b_{44})=t^3 \b_{33}\b_{44}\]
All coefficients (not just these two) of the blown-up map $\tilde{\phi}_2$ are divisible by $t^3$, hence we can divide by this all projective coordinates. Since the first is a multiple of the second, on this chart it is not possible to single-out the coefficient of $e_1\wedge \ldots \wedge e_4$, hence this chart (and its blow-ups) will not contain fixed points mapped to $e_1\wedge \ldots \wedge e_4$. By the Residue Vanishing Theorem this affine chart has zero contribution to the residue formula.\\ 
\underline{$c=\b_{22}$:}  The affine chart is $\blow_{C_2,c}\AAA^m$, the coefficients of the blown-up map $\tilde{\phi}_2:\blow_{C_2,c}\AAA^m \to \PP(\wedge^4 \sym^{\le 4}\CC^n)$ are divisible by $(t^{(1)})^2 b_{22}$, hence we can divide by this all projective coordinates of $\tilde{\phi}_2$.

Then after a change of affine coordinate $\b_{23}\to \b_{23}-\b_{12}$ (we do not change the other coordinates) we continue with $C_3=\{\b_{22}^1,\b_{23}^1,\b_{33}^1\}$, which is $\diff_4$-invariant component of the monomial ideal $M(\tilde{\phi}_2^{mon})$, generated by the monomials of $\tilde{\phi}_2$. All variables have minimal weight $1$, but only the $c=\b_{33}$ chart can contain points mapped to $e_1\wedge \ldots \wedge e_4$.  Indeed:\\ 
\underline{$c=\b_{22}$:} Look at the coefficients of $e_1\wedge \ldots \wedge e_5$ and $e_1 \wedge e_2 \wedge e_1e_2 \wedge e_4 \wedge e_5$. These are transformed under the first $3$ blow-up to 
\[(\pi_{C_3,\b_{22}}^* \circ \pi_{C_2,\b_{22}}^* \circ \pi_{C_1,t}^*)(\b_{33}\b_{44})=\b_{22}\b_{33}\b_{44}  \text{  and  } (\pi_{C_3,\b_{22}}^* \circ \pi_{C_2,\b_{22}}^* \circ \pi_{C_1,t}^*)(\b_{22}\b_{44})=\b_{22}\b_{44}\]
All coefficients (not just these two) of the blown-up map $\tilde{\phi}_3$ are divisible by $\b_{22}$, hence we can divide by this all projective coordinates. Again, the first is a multiple of the second, hence this chart (and its blow-ups) will not contain fixed points mapped to $e_1\wedge \ldots \wedge e_5$. \\
\underline{ $c=\b_{23}$:} Similar argument shows that this affine coordinate does not contribute to the residue. Hence we continue with $c=\b_{33}$ and proceed with the blow-up algorithm following the blow-up tree in Figure \ref{tree4}.

The following table collects the weights in the process:

\begin{table}[ht]
\centering
\scalebox{0.49}{
\begin{tabular}{l|c|c|c|c|c|c|c|c|}
Ideal & $t$ & $\b_{12}$ & $\b_{22}$ & $\b_{23}$ & $\b_{24}$ & $\b_{33}$ & $\b_{34}$ & $\b_{44}$\\
\hline \hline
-  &  $\wt_t$ & $\wt_{12}$ &$\wt_{22}$ & $\wt_{23}$ & $\wt_{24}$ & $\wt_{33}$ & $\wt_{34}$ & $\wt_{44}$\\
\hline
$(\underline{t},\b_{44})$ & $\wt_t$ & $\wt_{12}$ & $\wt_{22}$ & $\wt_{23}$ & $\wt_{24}$ & $\wt_{33}$ & $\wt_{34}$ & $\wt_{44}-\wt_t$ \\
\hline
$(\underline{t},\b_{33},\b_{34})$ & $\wt_t$ & $\wt_{12}$ & $\wt_{22}$ & $\wt_{23}$ & $\wt_{24}$ & $\wt_{33}-\wt_t$ & $\wt_{34}-\wt_t$ & $\wt_{44}-\wt_t$\\
\hline
$(t^1, \underline{\b}_{22}^1,\b_{23}^2,\b_{24}^3)$ & $\wt_t-\wt_{22}$ & $\wt_{12}$ & $\wt_{22}$ & $\wt_{23}-\wt_{22}$ & $\wt_{24}-\wt_{22}$ & $\wt_{33}-\wt_t$ & $\wt_{34}-\wt_t$ & $\wt_{44}-\wt_t$\\
\hline 
$(\b_{23}^1,\b_{22}^1,\underline{\b}_{33}^1)$ & $\wt_t-\wt_{22}$ & $\wt_{12}$ & $\wt_{22}-\wt_{33}+\wt_t$ & $\wt_{23}-\wt_{22}-\wt_{33}+\wt_t$ & $\wt_{24}-\wt_{22}$ & $\wt_{33}-\wt_t$ & $\wt_{34}-\wt_t$ & $\wt_{44}-\wt_t$\\
\hline
$(\b_{44}^2,\b_{34}^2,\underline{\b}_{33}^1,\b_{24}^2)$ & $\wt_t-\wt_{22}$ & $\wt_{12}$ & $\wt_{22}-\wt_{33}+\wt_t$ & $\wt_{23}-\wt_{22}-\wt_{33}+\wt_t$ & $\wt_{24}-\wt_{22}-\wt_{33}+\wt_t$ & $\wt_{33}-\wt_t$ & $\wt_{34}-\wt_{33}$ & $\wt_{44}-\wt_{33}$\\
\hline 
$(\b_{44}^1,\underline{\b}_{22}^0,\b_{24}^1)$ & $\wt_t-\wt_{22}$ & $\wt_{12}$ & $\wt_{22}-\wt_{33}+\wt_t$ & $\wt_{23}-\wt_{22}-\wt_{33}+\wt_t$ & $\wt_{24}-2\wt_{22}$ & $\wt_{33}-\wt_t$ & $\wt_{34}-\wt_{33}$ & $\wt_{44}-\wt_{22}-\wt_t$\\
\hline 
\rowcolor{red!30}
$(\underline{\b}_{44}^1,\b_{34}^1,\b_{33}^1,\b_{24}^1,\b_{12}^1)$ & $\wt_t-\wt_{22}$ & $\wt_{12}-\wt_{44}+\wt_{22}+\wt_t$ & $\wt_{22}-\wt_{33}+\wt_t$ & $\wt_{23}-\wt_{22}-\wt_{33}+\wt_t$ & $\wt_{24}-\wt_{22}-\wt_{44}+\wt_t$ & $\wt_{33}-\wt_{44}+\wt_{22}$ & $\wt_{34}-\wt_{33}-\wt_{44}+\wt_{22}+\wt_t$ & $\wt_{44}-\wt_{22}-\wt_t$\\
\hline
$(\b_{44}^1,\b_{34}^1,\underline{\b}_{33}^1,\b_{24}^1,\b_{12}^1)$ & $\wt_t-\wt_{22}$ & $\wt_{12}-\wt_{33}+\wt_t$ & $\wt_{22}-\wt_{33}+\wt_t$ & $\wt_{23}-\wt_{22}-\wt_{33}+\wt_t$ & $\wt_{24}-2\wt_{22}-\wt_{33}+\wt_t$ & $\wt_{33}-\wt_t$ & $\wt_{34}-2\wt_{33}+\wt_t$ & $\wt_{44}-\wt_{22}-\wt_{33}$\\
\hline
$(\b_{44}^1,\b_{34}^1,\b_{33}^1,\b_{24}^1,\underline{\b}_{12}^1)$ & $\wt_t-\wt_{22}$ & $\wt_{12}$ & $\wt_{22}-\wt_{33}+\wt_t$ & $\wt_{23}-\wt_{22}-\wt_{33}+\wt_t$ & $\wt_{24}-2\wt_{22}-\wt_{12}$ & $\wt_{33}-\wt_{12}-\wt_t$ & $\wt_{34}-\wt_{33}-\wt_{12}$ & $\wt_{44}-\wt_{22}-\wt_{12}-\wt_t$\\
\hline
\rowcolor{red!30}
$(\underline{\b}_{44}^0,\b_{24}^0,\b_{34}^0,\b_{22}^0,t^0)$ (a) & $\wt_t-\wt_{44}+\wt_{33}$ & $\wt_{12}-\wt_{33}+\wt_t$ & $2\wt_{22}+\wt_t-\wt_{44}$ & $\wt_{23}-\wt_{22}-\wt_{33}+\wt_t$ & $\wt_{24}-\wt_{22}-\wt_{44}+\wt_t$ & $\wt_{33}-\wt_t$ & $\wt_{34}-\wt_{33}-\wt_{44}+\wt_{22}+\wt_t$ & $\wt_{44}-\wt_{22}-\wt_{33}$\\
\hline
\rowcolor{red!30}
$(\underline{\b}_{44}^0,\b_{24}^0,\b_{34}^0,\b_{22}^0,t^0)$ (b) & $2\wt_t-\wt_{44}+\wt_{12}$ & $\wt_{12}$ & $2\wt_{22}+2\wt_t-\wt_{44}-\wt_{33}+\wt_{12}$ & $\wt_{23}-\wt_{22}-\wt_{33}+\wt_t$ & $\wt_{24}-\wt_{22}-\wt_{44}+\wt_t$ & $\wt_{33}-\wt_{12}-\wt_t$ & $\wt_{34}-\wt_{33}-\wt_{44}+\wt_{22}+\wt_t$ & $\wt_{44}-\wt_{22}-\wt_{12}-\wt_t$\\

\hline
$(\b_{44}^0,\b_{24}^0,\b_{34}^0,\underline{\b}_{22}^0,t^0)$ & $\wt_{33}-2\wt_{22}$ & $\wt_{12}$ & $\wt_{22}-\wt_{33}+\wt_t$ & $\wt_{23}-\wt_{22}-\wt_{33}+\wt_t$ & $\wt_{24}-3\wt_{22}+\wt_{33}-\wt_{12}-\wt_t$ & $\wt_{33}-\wt_{12}-\wt_t$ & $\wt_{34}-\wt_{12}-\wt_{22}-\wt_t$ & $\wt_{44}+\wt_{33}-2\wt_{22}-\wt_{12}-2\wt_t$\\
\hline
\rowcolor{red!30}
$(\underline{\b}_{44}^0,\b_{24}^0,\b_{34}^0,\b_{33}^0,t^0)$ & $\wt_{12}+2\wt_t-\wt_{44}$ & $\wt_{12}$ & $\wt_{22}-\wt_{33}+\wt_t$ & $\wt_{23}-\wt_{22}-\wt_{33}+\wt_t$ & $\wt_{24}-\wt_{22}-\wt_{44}+\wt_t$ & $2\wt_{22}+\wt_{t}-\wt_{44}$ & $\wt_{34}-\wt_{33}-\wt_{44}+\wt_{22}+\wt_t$ & $\wt_{44}+\wt_{33}-2\wt_{22}-\wt_{12}-2\wt_t$\\

\end{tabular}}
\end{table}
The initial $\CC^* \times T$ weights (where $T\subset \GL(4)$ maximal torus) are
\[\wt_t=z+z_1,\  \wt_{ij}=(j-1)z+z_i-z_1\]
and hence the weights in the two contributing rows above (now corresponding to Contr 1 and Contr2 columns)  are:

\begin{table}[ht]
\centering
\scalebox{0.7}{
\begin{tabular}{|c||c|c|c|c|}
Variable & Contr. 1 & Contr 2 & Contr 3 & Contr 4\\
\hline
$t$ & $2z_1-z_2$ & $z_1-z_4+z_3$ & $3z_1-z_4$ & $2z_1-z_4$\\
\hline
$\b_{12}$ & $z_1+z_2-z_4$ & $2z_1-z_3$ & z & $z$\\
\hline
$\b_{13}$ & $2z$ & $2z$ &$2z$ & $2z$ \\
\hline
$\b_{14}$ & $3z$ & $3z$ & $3z$ & $3z$\\
\hline
$\b_{22}$ & $z_1+z_2-z_3$ & $2z_2-z_4$ & $2z_2+2z_1-z_4-z_3$ & $z_1+z_2-z_3$\\
\hline
$\b_{23}$ & $2z_1-z_3$ & $2z_1-z_3$ & $2z_1-z_3$ & $2z_1-z_3$ \\
\hline
$\b_{24}$ & $2z_1-z_4$ & $2z_1-z_4$ &$2z_1-z_4$ &  $2z_1-z_4$\\
\hline
$\b_{33}$ & $z_2+z_3-z_4-z_1$ & $z+z_3-2z_1$ & $z_3-2z_1$ & $2z_2-z_4$ \\
\hline
$\b_{34}$ & $z_1+z_2-z_4$ & $z_1+z_2-z_4$ & $z_1+z_2-z_4$ & $z_1+z_2-z_4$\\
\hline
$\b_{44}$ & $z+z_4-z_2-z_1$ & $z_1+z_4-z_2-z_3$ & $z_4-z_2-z_1$ &$z_3+z_4-2z_2-2z_1$\\
\hline
\end{tabular}}
\end{table}
The Euler class of the blown-up space at the four contributing fixed points are
{\footnotesize
\begin{align*}
Contr_1(\bz)& =6z^2(z_2+z_3-z_1-z_4)(z_1+z_2-z_4)^2(2z_1-z_3)(2z_1-z_4)(2z_1-z_2)(z_1+z_2-z_3)(z+z_4-z_2-z_1)\\
Contr_2(\bz)& =6z^2(z_2+z_3-z_1-z_4)(z_1+z_2-z_4)(2z_1-z_3)^2(2z_1-z_4)(z_4-z_1-z_3)(2z_2-z_4)(z+z_3-2z_1)\\
Contr_3(\bz)& =6z^3(z_3+z_4-2z_1-2z_2)(z_1+z_2-z_4)^2(2z_1-z_3)^2(2z_1-z_4)(3z_1-z_4)\\
Contr_4(\bz)& =6z^3(z_3+z_4-2z_1-2z_2)(z_1+z_2-z_4)(2z_1-z_3)(2z_1-z_4)^2(z_4-z_1-z_3)(2z_2-z_4)
\end{align*}}
The residue formula has the form 
{\footnotesize \begin{align*}
\Tp_4& =\res_{z_1<z_2<z_3<z_4<z}6z^3(z_1z_2z_3z_4)^{N-n}\prod_{1\le i<j\le 4}(z_i-z_j)\left(\sum_{i=1}^4\frac{1}{Contr_i(z)}\right) \prod_{i=1}^4 c_{TM-TN}(1/z_i)d\mathbf{z}
\end{align*}}
We start with the simple observation that the third and fourth term vanishes. Indeed,  
{\footnotesize \begin{multline}\nonumber
\res_{z_1<z_2<z_3<z_4<z}\frac{6z^3(z_1z_2z_3z_4)^{N-n}\prod_{i<j}(z_i-z_j)}{Contr_3(z)} \prod_{i=1}^4 c_{TM-TN}(1/z_i)=\\
\res_{z_1<z_2<z_3<z_4<z}\frac{(z_1z_2z_3z_4)^{N-n}\prod_{i<j}(z_i-z_j)}{(z_3+z_4-2z_1-2z_2)(z_1+z_2-z_4)(2z_1-z_3)(2z_1-z_4)^2(z_4-z_1-z_3)(2z_2-z_4)} \prod_{i=1}^4 c_{TM-TN}(1/z_i)=0
\end{multline}}
simply because the $z$ factors cancel out, so there is no $z$ variable in the expression, and hence the first residue with respect to $z$ is zero. Similar argument shows the vanishing of the fourth contribution. We are left with the first two terms, which can be written as
{\footnotesize \begin{align*}
\Tp_4&=\res_{z_1<z_2<z_3<z_4<z}6z^3(z_1z_2z_3z_4)^{N-n}\prod_{i<j}(z_i-z_j)\left(\frac{1}{Contr_1(z)}+\frac{1}{Contr_2(z)}\right) \prod_{i=1}^4 c_{TM-TN}(1/z_i)d\mathbf{z}=\\
& =\res_{z_1<z_2<z_3<z_4<z}\frac{(z_1z_2z_3z_4)^{N-n}\prod_{i<j}(z_i-z_j)d\bz}{(z_2+z_3-z_1-z_4)(z_1+z_2-z_4)(2z_1-z_3)(2z_1-z_4)} \prod_{i=1}^4 c_{TM-TN}(1/z_i) \cdot\\
& \cdot \left(\frac{z}{(z_1+z_2-z_4)(2z_1-z_2)(z_1+z_2-z_3)(z+z_4-z_2-z_1)}+\frac{z}{(2z_1-z_3)(z_4-z_1-z_3)(2z_2-z_4)(z+z_3-2z_1)}\right)
\end{align*}}
Note that the first residue with respect to $z$ is nonzero if and only if we take the term $\frac{z_1+z_2-z_4}{z}$ from the expansion 
\[\frac{z}{z+z_4-z_2-z_1}=1+\frac{z_1+z_2-z_4}{z}+\frac{(z_1+z_2-z_4)^2}{z^2}+\ldots \]
and the term $\frac{2z_1-z_3}{z}$ from the expansion 
\[\frac{z}{z+z_3-2z_1}=1+\frac{2z_1-z_3}{z}+\frac{(2z_1-z_3)^2}{z^2}+\ldots. \]
Hence 
{\footnotesize \begin{align*}
\Tp_4 &=\res_{z_1<z_2<z_3<z_4}\frac{(z_1z_2z_3z_4)^{N-n}\prod_{i<j}(z_i-z_j)d\mathbf{z}}{(z_2+z_3-z_1-z_4)(z_1+z_2-z_4)(2z_1-z_3)(2z_1-z_4)}\cdot \prod_{i=1}^4 c_{TM-TN}(1/z_i) \cdot \\
& \cdot \left(\frac{1}{(2z_1-z_2)(z_1+z_2-z_3)}+\frac{1}{(z_4-z_1-z_3)(2z_2-z_4)}\right) 
\end{align*}}
And here comes the miracle. The following identity holds:
\begin{multline}\nonumber \frac{1}{(z_2+z_3-z_1-z_4)(2z_1-z_2)(z_1+z_2-z_3)}+\frac{1}{(z_2+z_3-z_1-z_4)(z_4-z_1-z_3)(2z_2-z_4)}=\\
=\frac{2z_1+z_2-z_4}{(2z_1-z_2)(z_1+z_2-z_3)(z_1+z_3-z_4)(2z_2-z_4)}
\end{multline}
This identity transforms our new formula into the old formula of \cite{bsz}:
{\footnotesize \begin{align*}
\Tp_4 =\res_{z_1<z_2<z_3<z_4}\frac{((2z_1+z_2-z_4)(z_1z_2z_3z_4)^{N-n}\prod_{i<j}(z_i-z_j)\prod_{i=1}^4 c_{TM-TN}(1/z_i)d\mathbf{z}}{(2z_1-z_2)(z_1+z_2-z_3)(z_4-z_1-z_3)(2z_2-z_4)(z_1+z_2-z_4)(2z_1-z_3)(2z_1-z_4)}=\Tp_4^{BSZ}
\end{align*}}

\subsection{Thom polynomials of $A_5$ singularities}\label{subsec:k=5} Here the affine coordinates over the flag $\ff=(e_1,e_2,e_3,e_4) \in \flag_4(\CC^n)$ are 
\begin{align*}
w_1=& e_1 \\
w_2= &\b_{22}e_2+\b_{12}e_1\\
w_3= &\b_{33}e_3+\b_{23}e_2+\b_{13}e_1\\
w_4= &\b_{44}e_4+\b_{34}e_3+\b_{24}e_2+\b_{14}e_1\\
w_5= &\b_{55}e_5+\b_{45}e_4+\b_{35}e_3+\b_{25}e_2+\b_{15}e_1
\end{align*}
and the rational map $\phi: U_0=\mathrm{Spec}(\CC[t,\b_{ij}]) \to \PP(\sym^{\le 5}\CC^n)$ is
{\footnotesize
\begin{align*}
\phi(t,w_1,\ldots, w_5)=& w_1 \wedge w_2 \wedge w_3 \wedge w_4 \wedge w_5 + \\
t &[(w_1 \wedge w_1^2 \wedge w_3 \wedge w_4 \wedge w_5) + (w_1 \wedge w_2 \wedge w_1w_2 \wedge w_4 \wedge w_5) + (w_1 \wedge w_2 \wedge w_3 \wedge (w_2^2+w_1w_3) \wedge w_5)\\
&+ (w_1 \wedge w_2 \wedge w_3 \wedge w_4 \wedge (w_1w_4+w_2w_3))  ]+\\
t^2 & [(w_1\wedge w_2 \wedge w_3 \wedge w_4 \wedge (w_1^2w_3 +w_1w_2^2))+(w_1 \wedge w_2 \wedge w_3 \wedge (w_1w_3 +w_2^2)\wedge (w_1w_4 +w_2w_3)) + \\
& (w_1 \wedge w_2 \wedge w_3 \wedge w_1^2w_2 \wedge w_5) +w_1 \wedge w_2 \wedge w_1w_2 \wedge w_4 \wedge (w_1w_4 +w_2w_3) + \\
& (w_1 \wedge w_2 \wedge w_1w_2 \wedge (w_1w_3 +w_2^2)\wedge w_5) +w_1 \wedge w_2 \wedge w_1^3 \wedge w_4 \wedge w_5 + \\
& (w_1 \wedge w_1^2 \wedge w_3 \wedge w_4 \wedge (w_1w_4 +w_2w_3))+w_1 \wedge w_1^2 \wedge w_3 \wedge (w_1w_3 +w_2^2)\wedge w_5 +\\
& (w_1 \wedge w_1^2 \wedge w_1w_2 \wedge w_4 \wedge w_5)]+\\
%t^3 & [(w_1 \wedge w_1^2 \wedge w_1^3 \wedge w_4 \wedge w_5) +( w_1 \wedge w_1^2 \wedge w_1w_2 \wedge (w_1w_3+w_2^2) \wedge w_5)+ \ldots ]+\\
%t^4 & [(w_1 \wedge w_1^2 \wedge w_1^3 \wedge (w_1w_3+w_2^2) \wedge w_5) + \ldots ]+\\
%t^5 & [(w_1 \wedge w_1^2 \wedge w_1^3 \wedge (w_1w_3+w_2^2) \wedge w_5) + \ldots ]+\\
%t^6 & [(w_1 \wedge w_1^2 \wedge w_1^3 \wedge w_1^4 \wedge w_5) + \ldots ]+\\
\ldots  & \ldots +\\
t^{10} & [w_1 \wedge w_1^2 \wedge w_1^3 \wedge w_1^4 \wedge w_1^5]
\end{align*}}
%Coefficients: By a $\diff_k$ action we can set $\b_{1i}=0$. We boxed those coefficients which belong to 5-dim partitions. All the other coefficients cannot play any role in the blow-up process!
%{\tiny \begin{align*}
%& [\underbrace{\underline{\b_{22}\b_{33}\b_{44}\b_{55}}}_4] \\
%t & [\underbrace{\underline{\b_{a3}\b_{b4}\b_{c5}:a\neq b\neq c}}_3 |\  \underbrace{\underline{\b_{22}^2\b_{b4}\b_{c5}:b\neq c>2,\b_{22}\b_{33}\b_{a3}\b_{c5}:c>3,\b_{22}\b_{33}\b_{44}\b_{b4}}}_4 |\  \underbrace{\underline{\b_{22}^3\b_{33}\b_{c5}:c>3, \b_{22}^2\b_{33}\b_{a3}\b_{44}}}_5]\\
%t^2 & [\underbrace{\underline{\b_{a3}\b_{b3}\b_{c5}:a\neq c}, \b_{22}\b_{b4}\b_{c5}: b\neq c, \underline{\b_{a3}\b_{b4}\b_{c4}:a\neq b}}_3| \\
%&   \underbrace{\b_{22}^2\b_{a3}\b_{c5}:(a,c)\neq (2,2) ,\underline{\b_{22}^2\b_{a4}\b_{b4}:(a,b)\neq (2,2)}, \b_{22}\b_{a3}\b_{a'3}\b_{b4}:(a,a')\neq (2,2)}_4| \\
%& \underbrace{\underline{\b_{22}^4\b_{c5}: c>2}, \b_{22}^3\b_{a3}\b_{b4}:(a,b)\neq (2,2),\underline{\b_{22}^2\b_{a3}^3}}_5 |\ \underbrace{\underline{\b_{22}^4\b_{a3}^2}}_6]\\
%t^3 & [\underbrace{\underline{\b_{b4}\b_{c5}:b\neq c}}_2 |\  \underbrace{235,244,334}_3|\ \underbrace{2234,2225,2333}_4| \underbrace{22224,22233}_5,\underbrace{2^53}_6 ]
%\end{align*}}

\begin{figure}[h]
\centering
{\tiny \begin{tikzpicture}[node distance=2cm]
\node (I) [boxes] {$(t^1,\b_{55}^4,\b_{44}^3,\b_{45}^4,\b_{33}^2,\b_{34}^3,\b_{35}^4)$};
\node (II) [boxes, below of =I ,yshift=1cm] {$(t^1,\b_{22}^1,\b_{23}^2,\b_{24}^3,\b_{25}^4)$};
\node (nocontrII) [nocontr, left of=II,xshift=-1.1cm ] {No contr.};
\node (III) [boxes, below of=II,yshift=1cm] {$(\b_{22}^1,\b_{33}^1,\b_{34}^2,\b_{44}^2)$};
\node (nocontrIII) [nocontr, left of =III,xshift=-1.1cm] {No contr.};
\node (IV) [boxes, below of=III, yshift=1cm] {$(\b_{22}^0,\b_{44}^1,\b_{45}^3,\b_{55}^3)$}; 
\node (V) [boxes, below of=IV, yshift=1cm] {$(\b_{12}^1,\b_{13}^2,\b_{14}^3,\b_{22}^0,\b_{23}^1,\b_{24}^2,\b_{25}^3)$};
\node (Vb) [boxes, below of=V, yshift=1cm] {$(\b_{22}^0,\b_{24}^2,\b_{33}^1,\b_{35}^3)$};
\node (VI) [boxes, below of=Vb, yshift=1cm] {$(\b_{23}^1(sub),\b_{24}^2(sub),\b_{33}^1)$};
%\node (VIb) [boxes, below of=VI, yshift=0.8cm] {\makecell{$((\b_{12})^2-\b_{24}^2,\b_{23}^1,\b_{33}^1)$\\ $\to (\b_{23}^1,\b_{24}^2,\b_{33}^1)$}};
\node (nocontrVI) [nocontr, left of =VI,xshift=-1cm] {No contr.};
\node (VII) [boxes, below of=VI, yshift=1cm] {$(\b_{22}^0,\b_{24}^1,\b_{34}^1,\b_{44}^1)$};
%\node (nocontrVII) [nocontr, left of =VII,xshift=-1cm] {No contr.};
\node (VIII) [boxes, below of=VII, yshift=1cm] {$(\b_{12}^1,\b_{24}^1,\b_{33}^1,\b_{34}^1,\b_{44}^1)$};
\node (nocontrVIII) [nocontr, left of =VIII,xshift=-1.2cm] {No contr.};
\node (IX) [boxes, below of=VIII, yshift=1cm] {$(\b_{25}^3(sub),\b_{35}^3,\b_{44}^1,\b_{45}^3,\b_{55}^3)$};
\node (IXb) [boxes, right of=IX,xshift = 2.5cm] {$(\b_{25}^3 (sub),\b_{33}^1,\b_{35}^3,\b_{45}^3,\b_{55}^3)$};
\node (Xb) [boxes, below of =IXb,xshift=2.6cm,yshift=1cm] {$(t^0,\b_{22}^0,\b_{24}^0,\b_{34}^0(sub),\b_{44}^0)$};
\node (XIb) [boxes, below of=Xb, yshift=1cm] {$\b_{22}^0,\b_{25}^2,\b_{45}^2,\b_{55}^2)$};
\node (XIc) [boxes, below of=XIb, yshift=1cm] {$\b_{44}^0,\b_{25}^2,\b_{45}^2,\b_{55}^2)$};
\node (XIIb) [boxes, below of=XIc, yshift=1cm] {$(\b_{22}^0)^2(\b_{44}^0)^3\b_{33}^1,\b_{25}^2,\b_{35}^2(sub),\b_{45}^2,\b_{55}^2)$};
\node (XIIIb) [boxes, below of=XIIb, yshift=1cm] {$\b_{25}^1,\b_{33}^1,\b_{35}^1,\b_{45}^1,\b_{55}^1)$};
\node (contrXIIIb) [contr, right of=XIIIb, xshift=0cm] {1};
\node (XIVb) [boxes, below of=XIIIb, yshift=1cm, xshift =-0.1cm] {$t^0,\b_{22}^0,\b_{25}^0,\b_{35}^0,\b_{55}^0)$};
\node (contrXIVb) [contr, below of=XIVb, yshift=1.3cm,xshift=-1.3cm] {8};
\node (XVb) [boxes, right of=XIVb, xshift = 0.5cm] {$t^0,\b_{22}^0,\b_{25}^0,\b_{35}^0,\b_{55}^0)$};
\node (contrXVb) [contr, right of=XVb, xshift=0cm] {9};
\node (XVIb) [boxes, below of=XVb, xshift = -1.7cm,yshift = 1cm] {$t^0,\b_{25}^0,\b_{35}^0,\b_{45}^0(sub)\b_{55}^0$};
\node (contrXVIb) [contr, left of=XVIb,yshift=-0.3cm, xshift=0.3cm] {13};
\node (XVIIIb) [boxes, below of=XVb, xshift = 1cm,yshift = 1cm] {$\b_{22},\b_{25}^0,\b_{35}^0,\b_{45}^0,\b_{55}^0$};
\node (contrXVIIIb) [contr, right of=XVIIIb, xshift=-0.5cm] {10};
\node (XVIIb) [boxes, below of=XVIb, xshift = -0.6cm,yshift = 1.1cm] {$\b_{23}^0,\b_{25}^0,\b_{35}^0,\b_{45}^0,\b_{55}^0,\b_{22}^0$};
\node (contrXVIIb) [contr, left of=XVIIb,yshift=-0.3cm, xshift=0.3cm] {14};
\node (XXb) [boxes, below of=XVIIb, xshift = 0cm,yshift = 1.1cm] {$\b_{23}^0,\b_{25}^0,\b_{35}^0,\b_{45}^0,\b_{55}^0,\b_{44}^0$};
\node (contrXXb) [contr, left of=XXb,yshift=-0.3cm, xshift=0.3cm] {15};
\node (XIXb) [boxes, below of=XVIIIb, xshift = 0cm,yshift = 1.1cm] {$\b_{23}^0,\b_{25}^0,\b_{35}^0,\b_{45}^0,\b_{55}^0,t^0$};
\node (contrXIXb) [contr, right of=XIXb, yshift=-0.3cm,xshift=-0.5cm] {11};
\node (XXIb) [boxes, below of=XIXb, xshift = 0cm,yshift = 1.1cm] {$\b_{23}^0,\b_{25}^0,\b_{35}^0,\b_{45}^0,\b_{55}^0,\b_{44}^0$};
\node (contrXXIb) [contr, right of=XXIb, yshift=-0.3cm,xshift=-0.5cm] {12};
\node (X) [boxes, below of=IX, yshift=1cm] {$(\b_{22}^0,\b_{25}^2,\b_{45}^2,\b_{55}^2)$};
\node (XI) [boxes, below of=X, yshift=1cm] {$\b_{25}^2,\b_{33}^0,\b_{35}^2 (sub),\b_{55}^2)$};
\node (XII) [boxes, below of=XI, yshift=1cm] {$(\b_{25}^2,(\b_{22}^0)^2\b_{33}^0\b_{44}^1,\b_{35}^2,\b_{45}^2,\b_{55}^2)$};
%\node (XIII) [boxes, below of=XII, yshift=1cm] {$(\b_{22}^0,\b_{25}^1,\b_{35}^1,\b_{45}^1,\b_{55}^1)$};
\node (XIV) [boxes, below of=XII, yshift=1cm] {$(\b_{25}^1,\b_{35}^1,\b_{44}^1,\b_{45}^1,\b_{55}^1)$};
\node (contrXIV) [contr, left of=XIV,xshift=-1.5cm,yshift=0.5cm ] {0};
\node (nocontrXIV) [nocontr, below of=contrXIV, yshift=1cm] {No contr.};
\node (XV) [boxes, below of=XIV, yshift=0.8cm, xshift=-1cm] {$(t^0 (sub),\b_{22}^0,\b_{25}^0,\b_{35}^0,\b_{55}^0)$};
\node (contrXV) [contr, left of=XV,xshift=-0.5cm] {2};
%\node (XVI) [boxes, right of=XV,xshift = 3.7cm] {\makecell{$(\b_{12}^0,\b_{25}^0-\b_{12}^0-\b_{24}\b_{45},\b_{35}^0-\b_{34}^0\b_{45}^0+2\b_{12}^0\b_{45}^0, $\\ $,\b_{33}^0,\b_{34}^0,\b_{55}^0) \to (\b_{12}^0,\b_{25}^0,\b_{33}^0,\b_{35}^0,\b_{55}^0)$}};
\node (XVII) [boxes, below of=XIV,xshift = 2.4cm,yshift = 0.8cm] {$(t^0,\b_{22}^0,\b_{25}^0,\b_{35}^0,\b_{55}^0))$};
\node (XVIII) [boxes, below of=XVII,yshift = 1.1cm,xshift=0.5cm] {$(\b_{22}^0,\b_{25}^0,\b_{35}^0,\b_{45}^0,\b_{55}^0))$};
\node (XIX) [boxes, below of=XVIII,yshift = 1.1cm] {$(t^0,\b_{25}^0,\b_{34}^0,\b_{35}^0,\b_{45}^0,\b_{55}^0))$};
\node (XX) [boxes, below of=XVII,yshift = 1.1cm,xshift=-2.7cm] {$(t^0,\b_{25}^0,\b_{35}^0,\b_{45}^0,\b_{55}^0))$};
\node (contrXX) [contr, left of=XX,xshift=-0.3cm] {6};
\node (XXI) [boxes, below of=XX,yshift = 1.1cm,xshift=-0.1cm] {$(\b_{22}^0,\b_{25}^0,\b_{34}^0,\b_{35}^0,\b_{45}^0,\b_{55}^0))$};
%\node (XX) [boxes, below of=XIX,yshift = 1.1cm] {$(\b_{25}^0,\b_{33}^0,\b_{35}^0,\b_{45}^0,\b_{55}^0))$};
%\node (XXI) [boxes, below of=XX,yshift = 1.1cm] {$(t^0,\b_{12}^0,\b_{25}^0,\b_{34}^0,\b_{35}^0,\b_{45}^0,\b_{55}^0)$};
\node (contrXVII) [contr, right of=XVII, xshift=0cm] {3};
\node (contrXVIII) [contr, right of=XVIII, xshift=0cm] {4};
\node (contrXIX) [contr, below of=XIX, yshift=1.1cm] {5};
\node (contrXXI) [contr, below of=XXI, yshift=1.1cm] {7};

\draw [arrow] (I) -- node[anchor=east] {$t^1$} (II);
\draw [arrow] (II) -- node[anchor=east] {$\b_{22}^1$}(III);
\draw [arrow] (II) -- node[anchor=south] {$t^1$} (nocontrII);
\draw [arrow] (III) -- node[anchor=south] {$\b_{22}^1$} (nocontrIII);
\draw [arrow] (III) -- node[anchor=east] {$\b_{33}^1$} (IV);
\draw [arrow] (IV) -- node[anchor=east] {$\b_{22}^0$} (V);
\draw [arrow] (V) -- node[anchor=east] {$\b_{22}^0$} (Vb);
\draw [arrow] (Vb) -- node[anchor=east] {$\b_{22}^0$} (VI);
\draw [arrow] (VI) -- node[anchor=east] {$\b_{33}^1$} (VII);
\draw [arrow] (VI) -- node[anchor=south] {$\b_{23}^1$} (nocontrVI);
%\draw [arrow] (VIb) -- node[anchor=east] {$\b_{33}^1$} (VII);
\draw [arrow] (VII) -- node[anchor=east] {$\b_{22}^0$} (VIII);
%\draw [arrow] (VII) -- node[anchor=south] {$\b_{24}^1,\b_{34}^1$} (nocontrVII);
\draw [arrow] (VIII) -- node[anchor=south] {$\b_{12}^1,\b_{24}^1,\b_{34}^1$} (nocontrVIII);
\draw [arrow] (VIII) -- node[anchor=east] {$\b_{44}^1$} (IX);
\draw [arrow] (VIII) -- node[anchor=south] {$\b_{33}^1$} (IXb);
\draw [arrow] (IXb) -- node[anchor=east] {$\b_{33}^1$} (Xb);
\draw [arrow] (Xb) -- node[anchor=east] {$\b_{44}^0$} (XIb);
\draw [arrow] (XIb) -- node[anchor=east] {$\b_{22}^0$} (XIc);
\draw [arrow] (XIc) -- node[anchor=east] {$\b_{44}^0$} (XIIb);
\draw [arrow] (XIIb) -- node[anchor=east] {$\b_{22}^0(2),\b_{44}^0(3),\b_{33}^1$} (XIIIb);
\draw [arrow] (XIIIb) -- node[anchor=south] {$\b_{55}^1$} (contrXIIIb);
\draw [arrow] (XIIIb) -- node[anchor=east] {$\b_{45}^1$} (XIVb);
\draw [arrow] (XIVb) -- node[anchor=west] {$\b_{55}^0$} (contrXIVb);
\draw [arrow] (XVb) -- node[anchor=south] {$\b_{55}^0$} (contrXVb);
\draw [arrow] (XIIIb) -- node[anchor=east] {$\b_{33}^1$} (XVb);
\draw [arrow] (XVb) -- node[anchor=east] {$\b_{22}^0$} (XVIb);
\draw [arrow] (XVb) -- node[anchor=east] {$t^0$} (XVIIIb);
\draw [arrow] (XVIb) -- node[anchor=east] {$t^0$} (XVIIb);
\draw [arrow] (XVIIIb) -- node[anchor=east] {$\b_{22}^0$} (XIXb);
\draw [arrow] (XVIIb) -- node[anchor=east] {$\b_{22}^0$} (XXb);
\draw [arrow] (XIXb) -- node[anchor=east] {$t^0$} (XXIb);
\draw [arrow] (IX) -- node[anchor=east] {$\b_{44}^1$} (X);
\draw [arrow] (X) -- node[anchor=east] {$\b_{22}^0 $} (XI);
\draw [arrow] (XI) -- node[anchor=east] {$\b_{33}^0$} (XII);
\draw [arrow] (XII) -- node[anchor=east] {$\b_{22}^0 \times 2,\b_{33}^0,\b_{44}^1$} (XIV);
%\draw [arrow] (XIII) -- node[anchor=east] {$\b_{22}^0$} (XIV);
\draw [arrow] (XIV) -- node[anchor=east] {$\b_{45}^1$} (XV);
\draw [arrow] (XIV) -- node[anchor=south] {$\b_{55}^1$} (contrXIV);
\draw [arrow] (XIV) -- node[anchor=south] {$\b_{25}^1,\b_{35}^1$} (nocontrXIV);
%\draw [arrow] (XIV) -- node[anchor=east] {$\b_{13}^1$} (XVI);
\draw [arrow] (XIV) -- node[anchor=east] {$\b_{44}^1$} (XVII);
%\draw [arrow] (XVI) -- node[anchor=east] {$\b_{33}^0$} (XVIII);
\draw [arrow] (XV) -- node[anchor=south] {$\b_{55}^0$} (contrXV);
\draw [arrow] (XVII) -- node[anchor=east] {$t^0$} (XVIII);
\draw [arrow] (XVIII) -- node[anchor=east] {$\b_{22}^0$} (XIX);
\draw [arrow] (XVII) -- node[anchor=west] {$\b_{22}^0$} (XX);
\draw [arrow] (XX) -- node[anchor=east] {$t^0$} (XXI);
\draw [arrow] (XVII) -- node[anchor=south] {$\b_{55}^0$} (contrXVII);
\draw [arrow] (XVIII) -- node[anchor=south] {$\b_{55}^0$} (contrXVIII);
\draw [arrow] (XIX) -- node[anchor=east] {$\b_{55}^0$} (contrXIX);
\draw [arrow] (XX) -- node[anchor=south] {$\b_{55}^0$} (contrXX);
\draw [arrow] (XXI) -- node[anchor=east] {$\b_{55}^0$} (contrXXI);
%\draw [arrow] (XIX) -- node[anchor=east] {$\b_{55}^0$} (contrXIX);
%\draw [arrow] (XVIII) -- node[anchor=east] {$\b_{12}^0$} (XX);
%\draw [arrow] (XVIII) -- node[anchor=east] {$\b_{44}^0=0,t^0 \text{ or } \b_{12}^0\neq 0$} (XX);
%\draw [arrow] (XVII) -- node[anchor=east] {$\b_{33}^0$} (XXII);
%\draw [arrow] (XXII) -- node[anchor=east] {$\b_{22}=0,t^0\neq 0$} (XXIII);
%\draw [arrow] (XXII) -- node[anchor=west] {$\b_{22}=t=0,\b_{12} \neq 0$} (XXIV);
%\draw [arrow] (XXII) -- node[anchor=west,yshift = -0.8cm] {$\b_{22} \neq 0$} (XXV);
%\draw [arrow] (XXV) -- node[anchor=south] {$t \neq 0$} (XXVI);
%\draw [arrow] (XXV) -- node[anchor=south,yshift= -0.2cm] {$t=0,\b_{23}\neq 0$} (XXVII);
%\draw [arrow] (XXV) -- node[anchor=west] {$t=\b_{23}=0,\b_{12}\neq 0$} (XXVIII);
%\draw [arrow] (XX) -- node[anchor=east] {$\b_{55}^0$} (contrXX);
%\draw [arrow] (XXVI) -- node[anchor=east] {$\b_{55}^0$} (contrXXVI);
%\draw [arrow] (XXVII) -- node[anchor=east] {$\b_{55}^0$} (contrXXVII);
%\draw [arrow] (XXVIII) -- node[anchor=east] {$\b_{55}^0$} (contrXXVIII);

%\draw [arrow] (V) -- node[anchor=south] {$\b_{22}^1,\b_{23}^1$} (nocontrV);
%\draw [arrow] (V) -- node[anchor=east] {$\b_{33}^1$} (VI);
%\draw [arrow] (VI) -- node[anchor=east] {$\b_{33}^1$} (VII);
%\draw [arrow] (VII) -- node[anchor=east] {$\b_{22}^0$} (VIII);
%\draw [arrow] (VIII) -- node[anchor=south] {$\b_{12}^0,\b_{24}^0,\b_{34}^0$} (nocontrVIII);
%\draw [arrow] (VIII) -- node[anchor=east] {$\b_{33}^1$} (IX);
%\draw [arrow] (IX) -- node[anchor=east] {$\b_{44}^0$} (contrIX);
%\draw [arrow] (VIII) -- node[anchor=south] {$\b_{44}^0$} (contrVIII);
\end{tikzpicture}}
\caption{The blow-up tree for $k=5$}\label{tree5}
\end{figure}
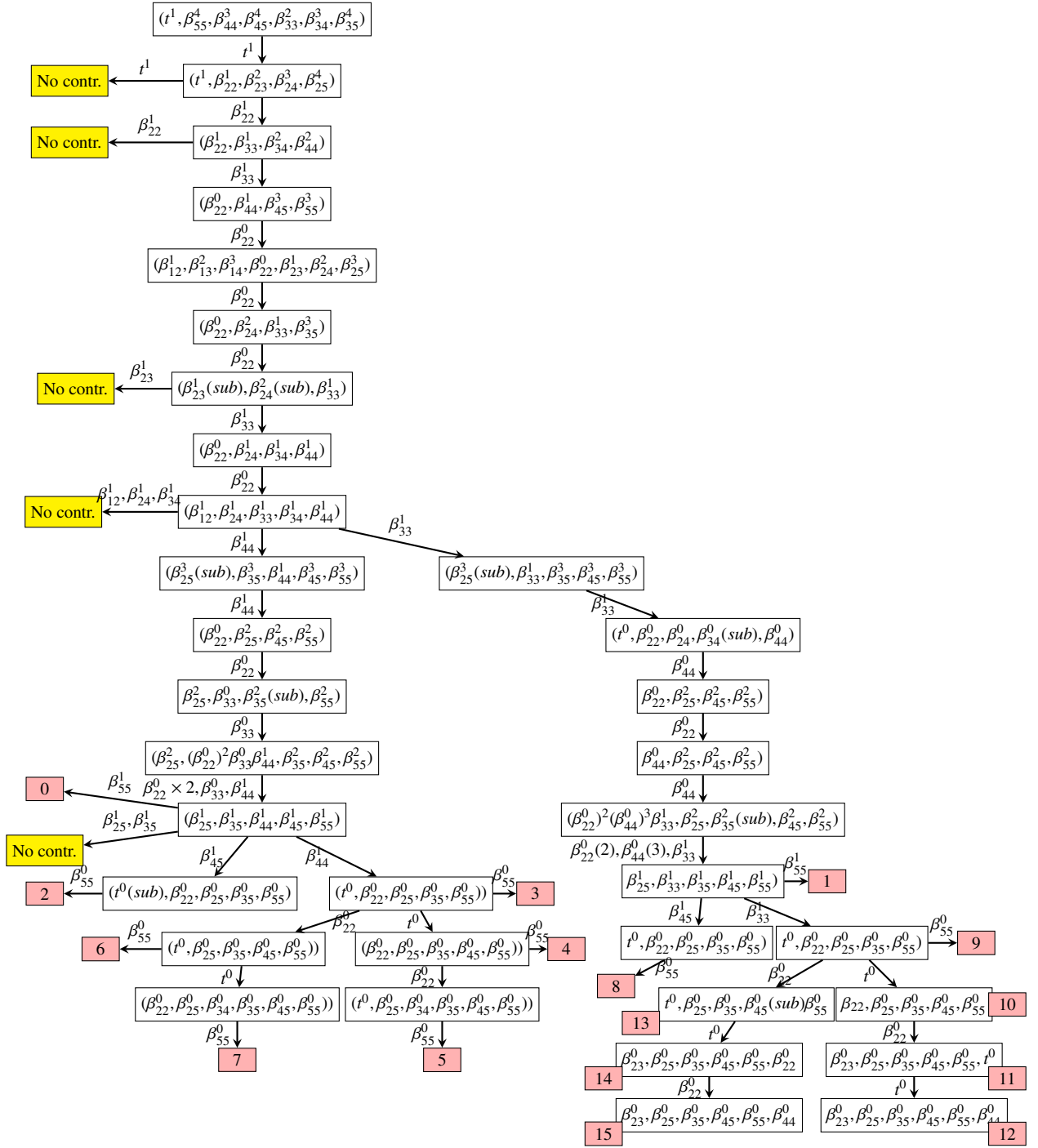

The blowing up process is described by the blow-up three in Figure \ref{tree5}. We end up with having $15$ leaves corresponding to $15$ fixed points on the blown-up space mapped to $\ff=e_1\wedge \ldots \wedge e_5$. At each step we (i) might perform some change of coordinates, we refer this as the smoothening step in the next section, and we indicate transformed variables by (sub) in the tree. This substitution does not make any impact on the weight calculations, but it is crucial in order to work with smooth centers in blow ups (ii) pick a $\diff_5$-invariant subspace corresponding to a cluster and (iii) choose the affine chart, corresponding to a minimal weight variable in the cluster.

The Euler classes $\mathrm{Contr(i)}$ at the $15$ fixed points are collected below. Each one of these is the products of $14$ linear factors (weights of the tangent space). Note that in Contr0 and Contr1 there are more than $5$ linear terms containing $z$, hence their $z$-residue is $0$, they do not contribute to the formula. The remaining 13 Euler classes all have degree $4$ in $z$.
The residue formula reduces to the following form: 
\begin{align*}
\Tp_5& =\res_{z_1<\ldots <z_5<z}24z^4(z_1z_2z_3z_4z_5)^{N-n}\prod_{i<j}(z_i-z_j)\left(\sum_{i=2}^{15}\frac{1}{Contr_i(z)}\right) \prod_{i=1}^5 c_{TM-TN}(1/z_i) d\mathbf{z}
\end{align*}
Comparing this with the formula in \cite{bsz}
\[\Tp_5^{BSZ}=\res_{\bz}\frac{(z_1\ldots z_5)^{N-n})\prod_{i<j}(z_i-z_j)Q_5(\bz)}{\prod_{i+j\le l\le 5}(z_i+z_j-z_l)}\prod_{i=1}^5 c_{TM-TN}(1/z_i) d\mathbf{z}\]
our new formula provides a partial fraction decomposition of the old:
\[\frac{(2z_1+z_2-z_5)(2z_1^2+3z_1z_2-2z_1z_5+2z_2z_3-z_2z_4-z_2z_5-z_3z_4+z_4z_5)}{\prod_{i+j\le l\le 5}(z_i+z_j-z_l)}=\res_{z}\sum_{i=2}^{15} \frac{24z^4 dz}{Contr_i(z,\bz)}\]
 
\begingroup
\allowdisplaybreaks
{\scriptsize 
\begin{align*} 
\mathrm{Contr2}&=(2\,{z}_{1}-{z}_{2})({z}_{1}+{z}_{2}-{z}_{4})(2\,{z}-{z}_{1}-{z}_{2}+{z}_{3})(3\,{z}-{z}_{1}-{z}_{2}+{z}_{3})(4\,{z})({z}_{1}+{z}_{2}-{z}_{3})(2\,{z}_{1}-{z}_{3})(2\,{z}_{1}-{z}_{4})\\&(3\,{z}_{1}-{z}_{2}-{z}_{3})(-{z}_{1}+{z}_{2}+{z}_{3}-{z}_{4})({z}_{1}+{z}_{2}-{z}_{4})(2\,{z}_{1}-{z}_{3})(-{z}_{1}-{z}_{2}+{z}_{4})({z}-{z}_{1}-{z}_{2}+{z}_{3})({z}_{1}-{z}_{2}-{z}_{3}+{z}_{5}) \\
\mathrm{Contr3}&=(2\,{z}_{1}-{z}_{2})({z}_{1}+{z}_{2}-{z}_{4})(2\,{z}-{z}_{1}-{z}_{2}+{z}_{3})(3\,{z}-{z}_{1}-{z}_{2}+{z}_{3})(4\,{z})({z}_{1}+{z}_{2}-{z}_{3})(2\,{z}_{1}-{z}_{3})(2\,{z}_{1}-{z}_{4})(4\,{z}_{1}-{z}_{3}-{z}_{4})\\&(-{z}_{1}+{z}_{2}+{z}_{3}-{z}_{4})({z}_{1}+{z}_{2}-{z}_{4})(3\,{z}_{1}+{z}_{2}-{z}_{3}-{z}_{4})({z}-2\,{z}_{1}-2\,{z}_{2}+{z}_{3}+{z}_{4})({z}_{1}+{z}_{2}-{z}_{4})(2\,{z}_{1}-{z}_{3}-{z}_{4}+{z}_{5})\\
\mathrm{Contr4}&= (2\,{z}_{1}-{z}_{2})({z}_{1}+{z}_{2}-{z}_{4})(2\,{z}-{z}_{1}-{z}_{2}+{z}_{3})(3\,{z}-{z}_{1}-{z}_{2}+{z}_{3})(4\,{z})(-{z}_{1}+2\,{z}_{2}-{z}_{3})(2\,{z}_{1}-{z}_{3})(2\,{z}_{1}-{z}_{4})(2\,{z}_{1}+{z}_{2}-{z}_{3}-{z}_{4})\\& (-{z}_{1}+{z}_{2}+{z}_{3}-{z}_{4})({z}_{1}+{z}_{2}-{z}_{4})({z}_{1}+2\,{z}_{2}-{z}_{3}-{z}_{4})({z}-2\,{z}_{1}-2\,{z}_{2}+{z}_{3}+{z}_{4})({z}_{1}+{z}_{2}-{z}_{4})({z}_{2}-{z}_{3}-{z}_{4}+{z}_{5})\\
\mathrm{Contr5}&=(2\,{z}_{1}-{z}_{2})({z}_{1}+{z}_{2}-{z}_{4})(2\,{z}-{z}_{1}-{z}_{2}+{z}_{3})(3\,{z}-{z}_{1}-{z}_{2}+{z}_{3})(4\,{z})(-{z}_{1}+2\,{z}_{2}-{z}_{3})(2\,{z}_{1}-{z}_{3})(2\,{z}_{1}-{z}_{4})(3\,{z}_{1}-{z}_{2}-{z}_{4})\\&(-{z}_{1}+{z}_{2}+{z}_{3}-{z}_{4})({z}_{1}+{z}_{2}-{z}_{4})(2\,{z}_{1}-{z}_{4})({z}-2\,{z}_{1}-2\,{z}_{2}+{z}_{3}+{z}_{4})(2\,{z}_{1}-{z}_{2}+{z}_{3}-{z}_{4})({z}_{1}-{z}_{2}-{z}_{4}+{z}_{5})\\
\mathrm{Contr6}&=({z}_{1}-2\,{z}_{2}+{z}_{3})({z}_{1}+{z}_{2}-{z}_{4})(2\,{z}-{z}_{1}-{z}_{2}+{z}_{3})(3\,{z}-{z}_{1}-{z}_{2}+{z}_{3})(4\,{z})({z}_{1}+{z}_{2}-{z}_{3})(2\,{z}_{1}-{z}_{3})(2\,{z}_{1}-{z}_{4})\\&(3\,{z}_{1}-{z}_{2}-{z}_{4})(-{z}_{1}+{z}_{2}+{z}_{3}-{z}_{4})({z}_{1}+{z}_{2}-{z}_{4})(2\,{z}_{1}-{z}_{4})({z}-2\,{z}_{1}-2\,{z}_{2}+{z}_{3}+{z}_{4})({z}_{1}+{z}_{2}-{z}_{4})({z}_{1}-{z}_{2}-{z}_{4}+{z}_{5})\\ 
\mathrm{Contr7}&=({z}_{1}-2\,{z}_{2}+{z}_{3})({z}_{1}+{z}_{2}-{z}_{4})(2\,{z}-{z}_{1}-{z}_{2}+{z}_{3})(3\,{z}-{z}_{1}-{z}_{2}+{z}_{3})(4\,{z})({z}_{1}+{z}_{2}-{z}_{3})(2\,{z}_{1}-{z}_{3})(2\,{z}_{1}-{z}_{4})\\&(2\,{z}_{1}+{z}_{2}-{z}_{3}-{z}_{4})(-{z}_{1}+{z}_{2}+{z}_{3}-{z}_{4})({z}_{1}+{z}_{2}-{z}_{4})({z}_{1}+2\,{z}_{2}-{z}_{3}-{z}_{4})({z}-2\,{z}_{1}-2\,{z}_{2}+{z}_{3}+{z}_{4})(3\,{z}_{2}-{z}_{3}-{z}_{4})({z}_{2}-{z}_{3}-{z}_{4}+{z}_{5})\\
\mathrm{Contr8}&=({z}_{1}+{z}_{3}-{z}_{4})(2\,{z}_{1}-{z}_{3})(2\,{z}-{z}_{1}-{z}_{2}+{z}_{3})(3\,{z}-{z}_{1}-{z}_{2}+{z}_{3})(4\,{z})(2\,{z}_{2}-{z}_{4})(2\,{z}_{1}-{z}_{3})\\&(2\,{z}_{1}-{z}_{4})(2\,{z}_{1}-{z}_{4})(-2\,{z}_{1}+{z}_{3})({z}_{1}+{z}_{2}-{z}_{4})({z}_{1}+{z}_{2}-{z}_{4})({z}_{1}-{z}_{2}-{z}_{3}+{z}_{4})({z}-{z}_{1}-{z}_{2}+{z}_{3})(-{z}_{4}+{z}_{5})\\
\mathrm{Contr9}&=({z}_{1}+{z}_{3}-{z}_{4})(2\,{z}_{1}-{z}_{3})(2\,{z}-{z}_{1}-{z}_{2}+{z}_{3})(3\,{z}-{z}_{1}-{z}_{2}+{z}_{3})(4\,{z})(2\,{z}_{2}-{z}_{4})(2\,{z}_{1}-{z}_{3}))(2\,{z}_{1}-{z}_{4})\\&(4\,{z}_{1}-{z}_{3}-{z}_{4})({z}-3\,{z}_{1}-{z}_{2}+2\,{z}_{3}({z}_{1}+{z}_{2}-{z}_{4})(3\,{z}_{1}+{z}_{2}-{z}_{3}-{z}_{4})({z}_{1}-{z}_{2}-{z}_{3}+{z}_{4})(2\,{z}_{1}-{z}_{3})(2\,{z}_{1}-{z}_{3}-{z}_{4}+{z}_{5})\\
\mathrm{Contr10}&=({z}_{1}+{z}_{3}-{z}_{4})(2\,{z}_{1}-{z}_{3})(2\,{z}-{z}_{1}-{z}_{2}+{z}_{3})(3\,{z}-{z}_{1}-{z}_{2}+{z}_{3})(4\,{z})(-{z}_{1}+2\,{z}_{2}-{z}_{3})(2\,{z}_{1}-{z}_{3})\\&(2\,{z}_{1}-{z}_{4})(3\,{z}_{1}-2\,{z}_{3})({z}-3\,{z}_{1}-{z}_{2}+2\,{z}_{3})({z}_{1}+{z}_{2}-{z}_{4})(2\,{z}_{1}+{z}_{2}-2\,{z}_{3})({z}_{1}-{z}_{2}-{z}_{3}+{z}_{4})(2\,{z}_{1}-{z}_{3})({z}_{1}-2\,{z}_{3}+{z}_{5})\\
\mathrm{Contr11}&=({z}_{1}+{z}_{3}-{z}_{4})(2\,{z}_{1}-{z}_{3})(2\,{z}-{z}_{1}-{z}_{2}+{z}_{3})(3\,{z}-{z}_{1}-{z}_{2}+{z}_{3})(4\,{z})(-{z}_{1}+2\,{z}_{2}-{z}_{3})(2\,{z}_{1}-{z}_{3})(2\,{z}_{1}-{z}_{4})\\&(4\,{z}_{1}-2\,{z}_{2}-{z}_{3})({z}-3\,{z}_{1}-{z}_{2}+2\,{z}_{3})({z}_{1}+{z}_{2}-{z}_{4})(3\,{z}_{1}-{z}_{2}-{z}_{3})({z}_{1}-{z}_{2}-{z}_{3}+{z}_{4})(3\,{z}_{1}-2\,{z}_{2})(2\,{z}_{1}-2\,{z}_{2}-{z}_{3}+{z}_{5})\\
\mathrm{Contr12}&=({z}_{1}+{z}_{3}-{z}_{4})(2\,{z}_{1}-{z}_{3})(2\,{z}-{z}_{1}-{z}_{2}+{z}_{3})(3\,{z}-{z}_{1}-{z}_{2}+{z}_{3})(4\,{z})(-{z}_{1}+2\,{z}_{2}-{z}_{3})({z}_{1}-2\,{z}_{3}+{z}_{4})(2\,{z}_{1}-{z}_{4})\\&(3\,{z}_{1}-2\,{z}_{2}-2\,{z}_{3}+{z}_{4})({z}-3\,{z}_{1}-{z}_{2}+2\,{z}_{3})({z}_{1}+{z}_{2}-{z}_{4})(2\,{z}_{1}-{z}_{2}-2\,{z}_{3}+{z}_{4})({z}_{1}-{z}_{2}-{z}_{3}+{z}_{4})(2\,{z}_{1}-2\,{z}_{2}-{z}_{3}+{z}_{4})\\&({z}_{1}-2\,{z}_{2}-2\,{z}_{3}+{z}_{4}+{z}_{5})\\
\mathrm{Contr13}&=({z}_{1}-2\,{z}_{2}+{z}_{3})(2\,{z}_{1}-{z}_{3})(2\,{z}-{z}_{1}-{z}_{2}+{z}_{3})(3\,{z}-{z}_{1}-{z}_{2}+{z}_{3})(4\,{z})(2\,{z}_{2}-{z}_{4})(2\,{z}_{1}-{z}_{3})(2\,{z}_{1}-{z}_{4})\\&(4\,{z}_{1}-2\,{z}_{2}-{z}_{3})({z}-3\,{z}_{1}-{z}_{2}+2\,{z}_{3})({z}_{1}+{z}_{2}-{z}_{4})(3\,{z}_{1}-{z}_{2}-{z}_{3})({z}_{1}-{z}_{2}-{z}_{3}+{z}_{4})(2\,{z}_{1}-{z}_{3})(2\,{z}_{1}-2\,{z}_{2}-{z}_{3}+{z}_{5})\\
\mathrm{Contr14}&=({z}_{1}-2\,{z}_{2}+{z}_{3})(2\,{z}_{1}-{z}_{3})(2\,{z}-{z}_{1}-{z}_{2}+{z}_{3})(3\,{z}-{z}_{1}-{z}_{2}+{z}_{3})(4\,{z})(2\,{z}_{2}-{z}_{4})(2\,{z}_{1}-{z}_{3})(2\,{z}_{1}-{z}_{4})\\&(3\,{z}_{1}-2\,{z}_{3})({z}-3\,{z}_{1}-{z}_{2}+2\,{z}_{3})({z}_{1}+{z}_{2}-{z}_{4})(2\,{z}_{1}+{z}_{2}-2\,{z}_{3})({z}_{1}-{z}_{2}-{z}_{3}+{z}_{4})({z}_{1}+2\,{z}_{2}-2\,{z}_{3})({z}_{1}-2\,{z}_{3}+{z}_{5})\\
\mathrm{Contr15}&=({z}_{1}-2\,{z}_{2}+{z}_{3})(2\,{z}_{1}-{z}_{3})(2\,{z}-{z}_{1}-{z}_{2}+{z}_{3})(3\,{z}-{z}_{1}-{z}_{2}+{z}_{3})(4\,{z})(2\,{z}_{2}-{z}_{4})(2\,{z}_{1}-2\,{z}_{2}-{z}_{3}+{z}_{4})(2\,{z}_{1}-{z}_{4})\\&(3\,{z}_{1}-2\,{z}_{2}-2\,{z}_{3}+{z}_{4})({z}-3\,{z}_{1}-{z}_{2}+2\,{z}_{3})({z}_{1}+{z}_{2}-{z}_{4})(2\,{z}_{1}-{z}_{2}-2\,{z}_{3}+{z}_{4})({z}_{1}-{z}_{2}-{z}_{3}+{z}_{4})({z}_{1}-2\,{z}_{3}+{z}_{4})\\&({z}_{1}-2\,{z}_{2}-2\,{z}_{3}+{z}_{4}+{z}_{5}) 
\end{align*}
}
\endgroup

\section{The blowing up process : polynomial vs monomial maps}
We recall that we aim to construct a $\diff_k$-equivariant blow-up of the rational map 
\[\phi:\tPP^{ss} \subset U_0 \dasharrow \grass_k(\symdot)\]
\[\phi(t:w_1,\ldots w_k) =e_1 \wedge (w_2+te_1^2) \wedge \ldots \wedge (\sum_{i_1+2i_2 +\ldots +ki_k=k}t^{i_1+\ldots +i_k-1}e_1^{i_1}\ldots e_{k}^{i_k}).\]
from the affine chart $U_0 \subset \tPP$ to the projective space $\PP[\wedge^k \symdot]$. Each basis element $e_\pi=e_1 \wedge e_{\pi_2}\wedge \ldots \wedge e_{\pi_k}$ is labeled by a sequence $\pi=(\pi_1,\ldots, \pi_k)$ of partitions, where $\pi_r=(i_1,\ldots, i_k)\in \ZZ_{\ge 0}^k$ is a positive partition of $r=i_1+\ldots +ri_r$  for $1\le r \le k$. We can and will associate to such a partition $\pi_r=(i_1,\ldots, i_k)\in \ZZ_{\ge 0}^k$ the box centered at $(i_1,\ldots, i_k)$ on an $k$-dimensional lattice, and hence $\pi$ is given by $k$ boxes. We call such a collection a {\em semi-partition} in dimension $k$ and we denote the set of semi-partitions by $\Pi_{k}$. In short, semi-partitions parametrise the coordinates of a subspace of $\PP(\wedge^k \symdot)$ in which the image of $\phi$ sits. Some of these semi-partitions are proper $k$-dimensional partitions, that is
\[(i_1,\ldots, i_k)\in \pi \Rightarrow (i_1-1,i_2,\ldots, i_k), \ldots, (i_1,\ldots, i_k-1) \in \pi \]
Torus fixed points under the $T^n \subset \GL(n)$ action on $\Hilb^k(\CC^n)$ correspond to proper partitions $\l$: indeed, these correspond to monomial ideals $I_\l$ of length $k$ fixed by the torus, and those partitions sitting in $\Span(e_1,\ldots, e_n) \subset \CC^n$ correspond to basis of the subspace where $im(\phi)$ sits.
%\begin{remark}
%Note that by Theorem \ref{bszmodel}, $\overline{im(\phi)}=\CHilb^k(\CC^n)$, and hence any fixed point in the closure of the image of $\phi$ is a point whose projective coordinates are $0$ except for one, and this nonzero coordinate corresponds to a proper partition basis. 
%\end{remark}
So $\phi$ has the form 
\[\phi(t,\b_{ij})=[p_{\pi}(t,\b_{i,j}):\pi \in \Pi_{k,n}] \]
where $p_\pi$ are polynomial functions (Pl\"ucker coordinates), see \eqref{mapk=3} for the $k=3$ case. The indeterminacy locus of $\phi$ is the zero set of the ideal
\[I_0=(p_{\pi}: \pi \in \Pi_{k,n}),\]
generated by the coordinate functions. This indeterminacy locus is highly singular.
\begin{remark} The zero set is highly singular, and the equivariant dual of its tangent cone at the distinguished point $e_1\wedge \ldots \wedge e_k$ encodes the $Q_k$ polynomial of \cite{bsz}, see the Introduction. As we explained, the main difficulty we face immediately working with $Q_k$, is that the description of the relations among these generators (i.e the first syzygies) is out of reach, and it involved deep Borel geometry. We have not seen any progress in their calculation in the last 12 years. 
\end{remark}
However, in our approach,  we do not need to work with the syzygies. We need a $\diff_k$-equivariant blow-up, such that $\phi$ extends to the NRGIT semistable locus. And we can perform most of these blow-ups at smooth centers, working with monomial ideals.  
\subsection{From polynomial to monomial maps}
For a polynomial $p \in \CC[x_1,\ldots, x_m]$ let $\calm(p)$ denote the set of monomials of $p$. For a polynomial rational map $f: \CC[x_1,\ldots, x_m] \dasharrow \PP^{r}$ given by $f(x_1,\ldots, x_m)=[p_0(\bx),\ldots, p_r(\bx)]$ we let 
\[I(f)=(p_1,\ldots, p_r) \subset \CC[x_1,\ldots, x_m]\]
denote the ideal of coordinate functions. We introduce the notation $\calm(f)=\cup_{i=1}^r \calm(p_i)$ for the set of all monomials (with multiplicity) of $f$ and 
\[M(f)=(\calm(f))=(\calm(p_i):\ i=1,\ldots ,r) \subset \CC[x_1,\ldots, x_m]\]
for the monomial ideal generated by all monomials of the polynomial coordinate functions of $f$. For a fixed order of elements of $\calm(f)$) let $f^\mon: \mathbb{A}^m \to \PP^{N-1}$, 
\[f^{\mon}(x_1,\ldots, x_m)=[m: m\in \calm(f)] \in \PP^{N-1}\]
denote the corresponding monomial map, where $N=|\calm(f)|$. For an ideal $I \subset \CC[x_1,\ldots, x_m]$ we use the shorthand notation $Z(I)=\mathrm{Spec}(\CC[\bx]/I)$ for the scheme determined by $I$ and $Z^\red(I)$ for the underlying reduced variety. A basic but important observation is that 
\begin{equation}\label{observation}
Z^\red(M(f)) \subset Z^\red(I(f))  
\end{equation}
holds for any $f$, which simply says that any polynomial vanishes at those points where all its monomials vanish. 

The idea is to blow-up along (irreducible components of) the locus $Z^\red(M(f))$ instead of the poorly understood $Z^\red(I(f)$). To do so, we take the primary decomposition
\[M(f)=Q_1 \cap Q_2 \cap \ldots \cap  Q_{l}\]
of $M(f)$, with primary ideals $Q_1,\ldots, Q_l$ whose radical ideals are the prime ideals $\mathrm{rad}(Q_i)=P_i$. This corresponds to the decomposition 
\[Z(M(f))= m_1Z(P_1) \cup m_2Z(P_2) \cup \ldots \cup m_{l}Z(P_{l})\]
of the scheme into irreducible (possibly embedded) reduced components $Z(P_i)$ with multiplicity $m_i$. Since $M(f)$ is monomial, this decomposition has a particularly nice form: 
\[P_i=(\calc_i) \text{ for a cluster of variables } \calc_i \subset \{x_1,\ldots, x_m\}\]
and $Q_i=(\tilde{\calc}_i)$ is generated by powers of elements in $\calc_i$. Hence 
\[Z(P_i)=Z(\calc_i)=\{(x_1,\ldots, x_m) \in \AAA^m: x_j=0 \text{ for } j\in \calc_i\}\]
is a coordinate subspace of $\AAA^m$. Pick $\calc:=\calc_i$ for some $i$, and let
\[\pi_\calc: \blow_{\calc}\AAA^m \to \AAA^m\]
denote the blow-up of $\AAA^m$ at $Z(\calc)$. Any $c\in \calc$ defines an affine chart $\blow_{\calc,c}\AAA^m \subset \blow_{\calc}\AAA^m$, and the restriction $\pi_{\calc,c}=\pi_{\calc}|_{\blow_{\calc,c}\AAA^m}$ is defined through  
\begin{equation}\label{transform}
\pi_{\calc,c}^*(x_j)=\begin{cases} x_j & j \notin \calc \setminus \{c\} \\ x_ix_j & j \in \calc \setminus \{c\} \end{cases}
\end{equation}
We will abuse notation and will use $x_j$ for the affine coordinate $\pi_{\calc,c}^*(x_j)$ on $\blow_{\calc,c}\AAA^m$. This way at each stage and on each affine chart we have the same notation for the coordinates, but we need to follow the simple transformation rule \eqref{transform} at each blow-up. The blow-up of the rational map $f: \AAA^m \rightarrow \PP^r$ is obtained on $\blow_{\calc,c}\AAA^m$ by the substitution \eqref{transform}:
\begin{equation}\label{blownupmorphism}
\xymatrix{
 \blow_{\calc}\AAA^m  \ar[d]^{\pi_\calc} &  \blow_{\calc,c}\AAA^m \ar@{_{(}->}[l] \ar@{.>}[rd]^-{\tilde{f}_{\calc,c}=f \circ \pi_{\calc,c}^*} &  \\
 \AAA^m \ar@{.>}[rr]^-{f} & & \PP^{r} }
\end{equation}
Note that all coordinate functions of the polynomial map $f \circ \pi_{\calc,c}^*$ is divisible by the variable $c$ (or even with some power of $c$), and hence we divide by this to get the polynomial map $\tilde{f}_{\calc,c}$.

We iterate this blow-up process. Take the primary decomposition of $M(\tilde{f}_{\calc,c})$ and pick a component of the monomial indeterminacy locus $Z(M(\tilde{f}_{\calc,c}))$ corresponding to a cluster (linear subspace) $\calc'$ of coordinates. This choice, of course, must be compatible throughout the affine charts, so that we blow-up along the same subvariety in the quasi-projective $\blow_{\calc}\AAA^m$. In order to see finiteness of this iterated blow-up process, we need to compare these primary decompositions, and apply
\begin{lemma} Let $Z(M(f))= m_1Z(P_1) \cup m_2Z(P_2) \cup \ldots \cup m_{l}Z(P_{l})$ be the primary decomposition and assume $Z(P_1)=Z(\calc_1)$ is an irreducible component (not embedded component). Let $\tilde{f}: \blow_{\calc_1,c}\AAA^m \to \PP^N$ be the blow-up map from the affine chart corresponding to $c\in \calc_1$. Then each component in the primary decomposition of $Z(M(\tilde{f}))$ is either (i) the proper transform of a component of $Z(M(f))$, or (ii) a subspace strictly contained in the proper transform of a component of $Z(M(f))$.
\end{lemma}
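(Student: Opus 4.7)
My plan is to analyze the transformation of monomial ideals directly under the explicit chart map $\pi_{\calc_1,c}$ of the blow-up. For each monomial $m = \prod_j x_j^{a_j} \in \calm(f)$, its pullback is $\pi_{\calc_1,c}^*(m) = c^{k(m)} \cdot m$, where $k(m) = \sum_{j \in \calc_1 \setminus \{c\}} a_j$ is the total degree of $m$ in the coordinates of $\calc_1 \setminus \{c\}$. Since $Z(\calc_1)$ is a component of $Z(M(f))$, every generator $m \in \calm(f)$ vanishes on $Z(\calc_1)$, so $m$ involves at least one variable from $\calc_1$. Setting $k_{\min} = \min_{m \in \calm(f)} k(m)$, one can divide by $c^{k_{\min}}$ to obtain
\[
M(\tilde{f}) = \left( c^{k(m) - k_{\min}} \cdot m : m \in \calm(f) \right),
\]
together with the scheme-theoretic identity $\pi^{-1}(Z(M(f))) \cap U_c = k_{\min} \cdot E + Z(M(\tilde{f}))$ on the $c$-chart, where $E = \{c = 0\}$ is the exceptional divisor.

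Next, I would use the primary decomposition $M(f) = Q_1 \cap \cdots \cap Q_l$ to decompose the pullback. For each coordinate subspace $Z(\calc_i)$, the preimage $\pi^{-1}(Z(\calc_i)) \cap U_c$ decomposes into the proper transform $\widetilde{Z(\calc_i)}$ together with a component supported on $E$; explicitly, if $c \notin \calc_i$ then $\widetilde{Z(\calc_i)}$ is cut out by the cluster $\calc_i$ in the chart coordinates, whereas if $\calc_1 \subseteq \calc_i$ the proper transform is empty on $U_c$. Combining this description with the identity above, every irreducible component of $Z(M(\tilde{f}))^{\mathrm{red}}$ lies inside some proper transform $\widetilde{Z(\calc_i)}$.

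To conclude, let $P = (\tilde{\calc})$ be any associated prime of the monomial ideal $M(\tilde{f})$. The corresponding coordinate subspace $Z(\tilde{\calc})$ lies in $Z(M(\tilde{f}))^{\mathrm{red}}$, so by the preceding paragraph $Z(\tilde{\calc}) \subseteq \widetilde{Z(\calc_i)}$ for some $i$. Since both are coordinate subspaces (hence irreducible), this containment forces $\tilde{\calc} \supseteq \calc_i'$ where $\calc_i'$ is the generating cluster of $\widetilde{Z(\calc_i)}$ in the chart coordinates, giving either case (i) when $\tilde{\calc} = \calc_i'$ or case (ii) when $\tilde{\calc} \supsetneq \calc_i'$. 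The main obstacle is the careful handling of embedded components of $M(\tilde{f})$, especially those supported in the exceptional divisor $E$: subtracting $k_{\min} E$ from $\pi^{-1}(Z(M(f)))$ may leave behind $E$-contributions, and these must be matched against a proper transform. The hypothesis that $Z(\calc_1)$ is a \emph{non-embedded} irreducible component is what rules out spurious new primes outside the range of proper transforms, by controlling the order along $E$ of each primary factor $\pi^*(Q_i)$.
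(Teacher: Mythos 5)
The paper states this lemma without proof --- it is invoked only to deduce finiteness of the blow-up algorithm --- so there is no reference argument to compare against, and your proposal must stand on its own. It does not, because the central step is missing. You correctly set up the identity $\pi^{-1}(Z(M(f))) \cap U_c = (\text{multiple of } E) + Z(M(\tilde f))$ and the decomposition of each $\pi^{-1}(Z(\calc_i))$ into a proper-transform piece plus an $E$-supported piece, but from these two facts you then simply \emph{assert} that ``every irreducible component of $Z(M(\tilde f))^{\mathrm{red}}$ lies inside some proper transform $\widetilde{Z(\calc_i)}$.'' What actually follows is only the weaker containment $Z(M(\tilde f))^{\mathrm{red}} \subseteq \bigcup_i \pi^{-1}(Z(\calc_i))^{\mathrm{red}}$, which leaves open the possibility that a component of $Z(M(\tilde f))$ sits inside the $E$-supported part of this union without being contained in any $\widetilde{Z(\calc_i)}$. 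Your final sentence concedes exactly this and offers the non-embeddedness hypothesis on $Z(\calc_1)$ as the mechanism that excludes such $E$-supported associated primes, but you never explain \emph{how}: that exclusion is precisely the content of the lemma, and gesturing at the hypothesis is not a proof.

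There are also two smaller technical slips that need fixing. The power of $c$ you divide by should not be $k_{\min}=\min_m k(m)$ (which counts only the exponents of the variables in $\calc_1\setminus\{c\}$) but the minimal $c$-adic valuation of $\pi^*(f)$, namely $\min_m\bigl(k(m)+a_c(m)\bigr)$ where $a_c(m)$ is the exponent of $c$ in $m$; these differ in general (in the paper's $k=3$ step, with $\calc_1=(t,\b_{33})$ and $c=t$, the generator $t^3$ has $k=0$ while the correct exponent is $1$), and the discrepancy shifts your $M(\tilde f)$ by an extra $E$-multiple --- which directly distorts the primary decomposition you are trying to control. And your case analysis of $\widetilde{Z(\calc_i)}$ only treats $c\notin\calc_i$ and $\calc_1\subseteq\calc_i$, omitting the remaining case $c\in\calc_i$ with $\calc_1\not\subseteq\calc_i$: there too the proper transform is empty on the chart $U_c$ (any point of $Z(\calc_i)$ off the centre $Z(\calc_1)$ pulls back into the other charts), and this case must be recorded in order to account for all of the $E$-supported pieces in $\bigcup_i\pi^{-1}(Z(\calc_i))$.
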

This implies that in each step, at least one component of the primary decomposition vanishes, or it splits into smaller subspaces, hence the blow-up process is finite.

\subsection{Smoothening trick} How long can we continue blowing-up along monomial ideals described above? Iterating the blow-up process, the degree of the coordinate functions of $f=[f_1,\ldots, f_{r+1}]$ eventually become lower, and essentially we arrive to a stage where one of these coordinate functions has a linear term. By a linear change of coordinates we can assume that this linear term is one of $x_1,\ldots, x_m$, and without loss of generality assume that $f_1=x_1+q(\bx)$ for some polynomial $q$. Then $x_1 \in \calm(f)$, and therefore all clusters in the primary decomposition contain $x_1$. For a cluster $\calc$ the blown-up map $\tilde{f}_{\calc,x_1}: \blow_{\calc,x_1}\AAA^m \dasharrow \PP^r$ has first coordinate function 
\[\tilde{f}_1=1+\tilde{q}(\bx)\] 
which is nonzero if $\tilde{q}=0$, and in this case $\tilde{f}_{\calc,x_1} :\blow_{\calc,x_1}\AAA^m \to \PP^r$ is a morphism. But if $\tilde{q} \neq 0$ then $\tilde{f}$ is not necessarily defined on the whole affine chart $\AAA^m$, and since $1 \in \calm(\tilde{f})$, the blow-up process terminates. In this situation we need a different blow-up, which we call the {\em smoothening trick}. 
\begin{definition} A rational map $f=[f_1,\ldots, f_{r+1}]$ is called \textbf{semi-linear}, if $f_i=x_j+p(\bx)$ for some $1\le i\le r+1$, $1\le j \le m$ with some polynomial $q$ of degree $\deg(q)\ge 2$. We will call such an $f_i$ \textbf{linear-headed}. %The total number of linear coordinate functions of $f$ is called the \textbf{linearity order} of $f$.  
\end{definition}

Let $f=[f_0,\ldots, f_r] :\AAA^m \dasharrow \PP^{r}$ be a semi-linear rational map such that 
\[f_i(\bx)= \begin{cases} x_i+p_i(\bx) & \text{ for } 1\le i \le s \\
q_i(\bx) & \text{ for } s+1 \le i \le r+1
\end{cases}\]
with degree of terminality $1\le s \le r+1$, and polynomials $p_i, q_i$ such that $\deg(q_i)\ge 2$ for $i=s+1,\ldots, r+1$ (note that $p_i=0$ or $\deg(p_i)\ge 2$). Let $\calc$ be a cluster corresponding to an irreducible component of the monomial ideal $M(f)$. Due to the linear terms we necessarily have $x_1,\ldots, x_s \in \calc$, so   
\[\calc=\{x_1,\ldots, x_s, x_{c_1},\ldots, x_{c_l}\}.\]
for some $c_1>s,\ldots, c_l >s$. We define the {\em semi-monomial ideal} 
\[M(f,\calc)=(x_1+p_1(\bx),\ldots, x_s+p_s(\bx),x_{c_1},\ldots, x_{c_l}),\]
by substituting $x_i$ with the entire coordinate function in $\calc$ if $x_i$ appears as a linear factor. At this stage, instead of  $\blow_{\calc}\AAA^m$ we will take $\blow_{M(f,\calc)}\AAA^m$, and study the blow-up rational map $\tilde{f}$ on the various affine charts. Note that 
\begin{multline}\nonumber
\blow_{M(f,\calc)}\AAA^m = \{((x_1,\ldots, x_m),[y_1:\ldots, y_s:z_1:\ldots :z_l]) \in \AAA^m \times \PP^{s+l-1}:\\
(x_i+p_i)y_j=(x_j+p_j)y_i, x_{c_i}z_j=x_{c_j}z_i, x_{c_j}y_i=(x_i+p_i)z_j\}
\end{multline}
 is not necessarily smooth, but nevertheless, the blown-up rational map $\tilde{f}: \blow_{M(f,\calc)}\AAA^m \dasharrow \PP^r$ is well-defined on the affine charts $U(y_i)=\{y_i \neq 0\}$, and more precisely, we will use the following 
 \begin{lemma} The $i$th projective coordinate of any point in $\tilde{f}|_{U(y_i)}: U(y_i) \to \PP^r$ is nonzero. 
 \end{lemma}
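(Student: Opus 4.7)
My plan is to exploit the defining relations of the blow-up $\blow_{M(f,\calc)}\AAA^m$ on the chart $U(y_i)$ to extract $f_i = x_i + p_i$ as a common factor from the first $s$ coordinate functions of $\pi^* f$, leaving the constant $1$ in the $i$-th slot. First I would write $U(y_i)$ explicitly: setting $y_i = 1$, the Rees-algebra relations read
\[
x_j + p_j = (x_i + p_i)\, y_j \quad (1 \le j \le s,\ j \ne i), \qquad x_{c_k} = (x_i + p_i)\, z_k \quad (1 \le k \le l),
\]
so the extended ideal $M(f,\calc) \cdot \mathcal{O}_{U(y_i)}$ is principal, generated by $f_i$, and every generator of $M(f,\calc)$ is divisible by $f_i$ on this chart.

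Next I would observe that for each $1 \le j \le s$ the coordinate function $f_j = x_j + p_j$ is itself one of the generators of $M(f,\calc)$, so $\pi^* f_j = f_i \cdot y_j$ on $U(y_i)$ (with the convention $y_i = 1$ when $j=i$). Hence the first $s$ entries of $\pi^* f$ share the common factor $f_i$. Dividing these entries through by $f_i$ gives the equivalent projective representative
\[
\tilde f = [\, y_1 : \cdots : y_{i-1} : 1 : y_{i+1} : \cdots : y_s : q_{s+1}/f_i : \cdots : q_r/f_i \,]
\]
on $U(y_i)$. The $i$-th entry is the constant function $1$, which is non-zero everywhere on the chart; hence at any point at which $\tilde f$ is defined as a map to $\PP^r$, its $i$-th projective coordinate is $1 \ne 0$, which is the content of the lemma.

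The only mildly delicate point is what happens along the exceptional divisor $\{f_i = 0\}$, where a quotient $q_j/f_i$ with $j > s$ may fail to be regular; but this merely restricts the locus on which $\tilde f$ extends to a morphism — it does not alter the value of the $i$-th coordinate at any point where $\tilde f$ is defined. I expect no serious obstacle: the statement is essentially a bookkeeping fact about how the blow-up along $M(f,\calc)$ forces $f_i$ to divide the first $s$ components of $\pi^* f$ on $U(y_i)$, and the proof reduces to the short computation sketched above.
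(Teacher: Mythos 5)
The paper itself offers no proof of this lemma, so there is nothing to compare against; but your argument has a genuine gap in the final paragraph. You are right that the Rees-algebra relations on $U(y_i)$ give $\pi^*f_j=(x_i+p_i)\,y_j$ for $1\le j\le s$ (with $y_i=1$), so after dividing through by $f_i=x_i+p_i$ the $i$-th coordinate of the representative $[y_1:\cdots:1:\cdots:y_s:\pi^*q_{s+1}/f_i:\cdots]$ is constantly $1$. What is not right is the claim that a possible pole of $\pi^*q_j/f_i$ along the exceptional divisor $E=\{f_i=0\}$ ``merely restricts the locus on which $\tilde f$ extends'' without affecting the $i$-th coordinate. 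If $\pi^*q_j/f_i$ fails to be regular at some $p\in E$, then the representative with $1$ in the $i$-th slot is simply not a representative of $\tilde f$ near $p$; one must instead use $[\pi^*f_1:\cdots:\pi^*f_{r+1}]=[f_iy_1:\cdots:f_i:\cdots:f_iy_s:\pi^*q_{s+1}:\cdots:\pi^*q_{r+1}]$, which \emph{is} regular at $p$, and whenever some $\pi^*q_{j_0}(p)\ne 0$ this representative defines $\tilde f(p)\in\PP^r$ with $i$-th coordinate $f_i(p)=0$. So at exactly the points you dismiss, $\tilde f$ is defined and the lemma would fail.

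The ``delicate point'' is therefore the whole content of the lemma: you must actually prove that $f_i$ divides $\pi^*q_j$ on $U(y_i)$ for every $j>s$, i.e.\ that the $\pi^*q_j$ lie in the ideal cut out by the exceptional divisor. This does not follow mechanically from the covering property. Covering gives $q_j\in(\calc)=(x_1,\dots,x_s,x_{c_1},\dots,x_{c_l})$, whereas the exceptional ideal on $U(y_i)$ is generated by the pullback of $M(f,\calc)=(x_1+p_1,\dots,x_s+p_s,x_{c_1},\dots,x_{c_l})$, and the inclusion $M(f,\calc)\subseteq(\calc)$ can be strict (already for $M(f,\calc)=(x_1+x_1^2)$ versus $(\calc)=(x_1)$, where $f=[x_1+x_1^2:x_1x_2]$ gives $\tilde f=[1+x_1:x_2]$ with vanishing first coordinate at $x_1=-1$). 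So membership in $(\calc)$ does not by itself yield membership in the exceptional ideal, and establishing the required divisibility needs a separate argument that uses more of the structure of the specific map $\phi$ under consideration (for instance, the way the monomials of the Pl\"ucker coordinates are related by the $\diff_k$-action), rather than only the stated hypotheses on $f$ and $\calc$.
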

\begin{remark} As an immediate corollary we will see that the torus fixed-points in $f(U(y_i)) \subset \PP(\wedge^k \symdot)$ in our setup will have nonzero $i$th coordinate, and hence the image of most of these affine charts will not contain the distinguished torus fixed-point of Residue Vanishing Theorem, and hence the contribution of these affine charts to the Thom polynomial formula is zero. 
\end{remark}
Now let's focus on the other affine charts $U(z_i) \subset \blow_{M(f,\calc)}\AAA^m$ and the blown-up maps $\tilde{f}_{z_i}:=\tilde{f}|_{U(z_i)}:U(z_i) \to \PP^{r}$. Fix $i=1$, then 
\[U(z_1)=\{((x_1,\ldots, x_m),(y_1,\ldots, y_s,z_2, \ldots, z_l))\in \AAA^m \times \AAA^{s+l-1} :x_{c_i}=x_{c_1}z_i, (x_i+p_i)=y_ix_{c_1}\}\]
The presentation of $\tilde{f}_{z_1}$ is not unique in these affine coordinates:
\begin{equation}
\tf_{z_1}=[x_1+p_1(\bx),\ldots x_s+p_s(\bx),q_{s+1}(\bx),\ldots, q_{r+1}(\bx)]=[y_1,\ldots, y_s, \hat{q}_{s+1}(\bx),\ldots, \hat{q}_{r+1}(\bx)]
\end{equation} 
where $\hat{q}_i(\bx)=\frac{1}{x_{c_1}}q_i(\hat{x}_1,\ldots, \hat{x}_m)$ with 
\[\hat{x}_i=\begin{cases} x_i & \text{ if } x_i \notin \{c_2,\ldots, c_l\} \\
x_{c_1}x_i & \text{ if } i \in \{c_2,\ldots, c_l\}
\end{cases}\]
Since $\calc$ covers all monomials of $f$, at least one term of each monomial in $q_i(x_1,\ldots, x_s, \hat{x}_{s+1},\ldots, \hat{x}_m)$ is divisible by $x_{c_1}$, and hence $\hat{q}_i(\bx)$ is a polynomial in $x_1,\ldots, x_m$. Now we would like to write $\hat{q}_i(\x)$ as a polynomial in $x_1+p_1,\ldots, x_s+p_s, x_{s+1},\ldots, x_m$, but this is, of course, not necessarily possible, as the non-linear change of coordinates 
\begin{equation}\label{tau} 
\xi_i= \begin{cases} x_i+p_i(\bx) & 1\le i \le s\\
x_i & s+1 \le i \le m 
\end{cases}
\end{equation}
is not globally invertible on $\AAA^m$. However, it is invertible in any finite jet space, that is, up to high order polynomials. Assume $p_1 \neq 0$ and hence $\deg(p_1) \ge 2$, then for any positive integer $d$ one can iteratively write 
\begin{align}\label{inverse}
x_1=\xi_1-p_1(x_1,\ldots, x_s,x_{s+1},\ldots, x_m)=\xi_1-p_1(\xi_1-p_1(\bx),\ldots, \xi_s-p_s(\bx),\xi_{s+1},\ldots, \xi_m)=\ldots =\\ \nonumber
=\xi_1+p_1^{\le d}(\xi_1,\ldots, \xi_m)+q_1^{\ge d}(\xi_1,\ldots, \xi_m, x_1,\ldots, x_m))
\end{align}
for some polynomials $p_1^{\le d},q_1^{\ge d}$ with $\deg(q_1^{\ge d})\ge d$. The {\em smoothening trick} is the following: we replace $\tilde{f}_{z_i}$ with its $d$-jet for sufficiently large $d=d(k)$ depending only on $k$. To do so, we define 
\begin{equation}
x_i^{\le d}= \begin{cases} \xi_i+p_1^{\le d}(\xi_1,\ldots, \xi_m) & 1\le i \le s\\
\xi_i & s+1 \le i \le m 
\end{cases}
\end{equation}
and think of $x_i^{\le d}$ as {\em order $d$ approximation} of $x_i$. We aim to replace the original $f$ with an approximation $f^{\le d}$, such that the latter is not a semi-linear map, and hence the monomial blow-up process proceeds. Take the diagram
\begin{equation}\label{strategy2b}
\xymatrix{& \Spec(\CC[\xi_1,\ldots, \xi_m]) \ar@{.>}[rd]_-{f^{\le d}}  \ar@<0.5ex>[ld]^-{\tau^{-1}_{\le d}}& \\
 \Spec(\CC[x_1,\ldots, x_m]) \ar@<0.5ex>[ru]^-{\tau} \ar@{.>}[rr]_{f} & & \PP^r }
\end{equation}
where 
\begin{itemize}
%\item $U(u_1) \subset \blow_{\{\xi_1,\ldots, \xi_s,\xi_{c_1},\ldots, \xi_{c_l}\}}\AAA^m$ and $U(z_i) \subset \blow_{(x_1+p_1(\bx),\ldots, x_s+p_s(\bx),x_{c_1},\ldots, x_{c_l})}\AAA^m$ are the relevant affine charts, where 
%\begin{align*}\nonumber \blow_{\{\xi_1,\ldots, \xi_s,\xi_{c_1},\ldots, \xi_{c_l}\}}\AAA^m = \{((\xi_1,\ldots, \xi_m),[t_1:\ldots,t_s:u_1:\ldots :u_l]) \in \AAA^m \times \PP^{s+l-1}:\\
%\xi_it_j=\xi_jt_i, \xi_{c_i}u_j=\xi_{c_j}u_i, \xi_{c_j}t_i=\xi_it_j\} \end{align*}

\item $\tau(x_1,\ldots, x_m)=(x_1+p_1(\bx),\ldots, x_s+p_s(\bx),x_{s+1},\ldots, x_m)$
\item $\tau^{-1}_{\le d}(\xi_1,\ldots, \xi_m)=(\xi_1+p_1^{\le d}(\mathbf{\xi}),\ldots ,\xi_s+p_s^{\le d}(\mathbf{\xi}),\xi_{s+1},\ldots, \xi_m)$
%\item $\tilde{\tau}(x_1,\ldots, x_m,y_1,\ldots, y_s,z_1,\ldots, z_l)=(x_1+p_1(\bx),\ldots, x_s+p_s(\bx),x_{s+1},\ldots, x_m,y_1,\ldots, y_s,z_1,\ldots, z_l)$
\item $f=[f_1:\ldots:f_{r+1}]$
%\item $\tf_{z_1}(\bx,\by,\bz)=[x_1+p_1(\bx),\ldots x_s+p_s(\bx),q_{s+1}(\bx),\ldots, q_{r+1}(\bx)]=[y_1,\ldots, y_s, \hat{q}_{s+1}(\bx),\ldots, \hat{q}_{r+1}(\bx)]$
\item $f^{\le d}(\xi)=f \circ \tau_{\le d}^{-1}=[\xi_1,\ldots, \xi_s, q_{s+1}(\bx^{\le d}),\ldots, q_{r+1}(\bx^{\le d})]$. 
\end{itemize}
Note that $f^{\le d}$ is not semi-linear map: any linear monomial is a coordinate function itself, without quadratic or higher order terms.
%\[U(u_1)=\{((\xi_1,\ldots, \xi_m),(t_1,\ldots, t_s,u_2, \ldots, u_l))\in \AAA^m \times \AAA^{s+l-1} :\xi_{c_i}=\xi_{c_1}u_i, \xi_i=t_i\xi_{c_1}\} \simeq \AAA^m\]
%is smooth (it is a blow-up of $\AAA^m$ at a coordinate subspace) with affine coordinates 
%\[\{\xi_{c_1},t_1,\ldots, t_s,u_2,\ldots, u_l, \xi_i: i \notin \calc\}.\] 
%For $s+1\le i \le r+1$ we can write $\hat{q}_{i}(\bx^{\le d})=Q_i(\xi_{c_1},t_1,\ldots, t_s,u_2,\ldots, u_l, \xi_i: i \notin \calc)$ with some polynomial $Q_i$, and hence we can write $\tf_{u_1}^{\le d}$ in the affine coordinates on $U(u_1)$ as  
%\[\tf_{u_1}^{\le d}(x_{c_1},t_1,\ldots, t_s,u_2,\ldots, u_l, x_i: x_i \notin \calc)=[t_1,\ldots, t_s, Q_{s+1}, \ldots, Q_{r+1}],\]
%which is a {\em good} polynomial rational map. 
On the other hand, the difference 
\[f-f^{\le d} \circ \tau=[0,0,\ldots, 0,q_{s+1}(\bx)-q_{s+1}(\tau_{\le d}^{-1}\circ \tau(\bx)),\ldots, q_{r+1}(\bx)-q_{r+1}(\tau_{\le d}^{-1}\circ \tau(\bx))]\] 
is a polynomial rational map of degree $\ge d$. More precisely, $\bx=(\tau_{\le d}^{-1}\circ \tau)(\bx)$ up to degree $d$, but even a stronger property holds. Let $m=x_1^{a_1} \ldots x_m^{a_m}$ be a monomial of $p_1$ with $a_1+\ldots +a_m \ge 2$. Then 
\begin{align*}
\xi_1-m(x_1,\ldots, x_s,x_{s+1},\ldots, x_m)=\xi_1-m(\xi_1-p_1(\bx),\ldots, \xi_s-p_s(\bx),\xi_{s+1},\ldots, \xi_m)=\\
=\xi_1-\xi_1^{a_1} \ldots \xi_m^{a_m}+\sum_{b_1,\ldots ,b_s} p_1(\bx)^{b_1}\ldots p_s(\bx)^{b_s}\prod_{i=1}^s\xi_i^{a_i-b_i}\prod_{i=s+1}^m\xi_i^{a_i}
\end{align*} 
What happens with the monomial $m$ during these substitutions? We see that all appearing monomials in $\xi_1,\ldots, \xi_m$ are obtained by iterating the following basic step:
\begin{equation}
m=\xi_1^{a_1}\ldots \xi_m^{a_m} \leadsto m(i,m')=\xi_1^{a_1}\ldots \xi_{i-1}^{a_{i-1}} \xi_i^{a_i-1}m'  \xi_{i+1}^{a_{i+1}} \ldots  \xi_m^{a_m} \text{ for some } m' \in M(p_i) \tag{Basic Step}
\end{equation}
where $M(p)$ stands for the set of monomials of the polynomial $p$. In short, for any $1\le i\le i$  we can substitute a factor $\xi_i$ with any monomial in $p_i$. For a fixed initial monomial $m$ and a positive integer $\ell \in \ZZ_+$ let $\mathrm{Mon}^\ell(m)$ denote the set of all monomials obtained from $m$ by $\ell$ iteration of the Basic Step. It is clear that the generated monomial ideal $M(\mathrm{Mon}^\ell(m))$ stabilises, that is, there is an $L=L(m)$ depending on $m$ such that 
\[M(\mathrm{Mon}^\ell(m))=M(\mathrm{Mon}^L(m)) \text{ for } \ell>L\]   
Taking $L=\max_{m \in M(p_1,\ldots, p_s)} L(m)$ to be the largest for all monomials appearing in the $p_i's$, we arrive to the following 
\begin{lemma} Let $\tau^*: \CC[\xi_1,\ldots, \xi_m] \to \CC[x_1,\ldots x_m]$ be the polynomial change defined in \eqref{tau}:
\begin{align*} \tau^*(\xi_i)= \begin{cases} x_i+p_i(\bx) & 1\le i \le s\\
x_i & s+1 \le i \le m 
\end{cases}\end{align*}
with $\deg(p_i)\ge 2$, and let 
\[\tau_{\le d}^*(x_i)=\xi_i+p_i^{\le d}(\xi_1,\ldots, \xi_m) \ \ \ 1\le i\le s\]
be the degree-d jet of the inverse transformation. Then there is $L>0$ such that 
\[M(\calm(p_1^{\le d},\ldots, p_s^{\le d})) = M(\calm(p_1^{\le L},\ldots, p_s^{\le L})) \text{ for any } d \ge L.\]
\end{lemma}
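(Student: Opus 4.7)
\noindent\emph{Proof plan.} The strategy is simply to reduce the stabilisation to Noetherianity of $\CC[\xi_1,\dots,\xi_m]$, and to use the Basic Step description above to pin down an explicit $L$.

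The first step is to observe that $\calm(p_i^{\le d})\subseteq \calm(p_i^{\le d+1})$ for every $i$ and every $d$. This is immediate from the iterative back-substitution~\eqref{inverse}, which canonically defines a formal power series $x_i(\xi)-\xi_i\in\CC[[\xi_1,\dots,\xi_m]]$ whose degree-$\le d$ truncation is by definition $p_i^{\le d}$; thus $p_i^{\le d+1}-p_i^{\le d}$ is homogeneous of degree $d+1$, and no monomial is lost in passing from $d$ to $d+1$. Consequently the monomial ideals
\[
I_d \;:=\; M\bigl(\calm(p_1^{\le d},\dots,p_s^{\le d})\bigr)\;\subseteq\;\CC[\xi_1,\dots,\xi_m]
\]
form an ascending chain $I_1\subseteq I_2\subseteq\cdots$, and by the Noetherian property of $\CC[\xi_1,\dots,\xi_m]$ (equivalently Dickson's lemma applied to monomial ideals) this chain must stabilise. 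Hence there exists $L$ such that $I_d=I_L$ for all $d\ge L$, which is exactly the lemma.

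To make $L$ explicit in the spirit of the Basic Step dynamics, I would next identify the monomials of $p_i^{\le d}$ expanded via~\eqref{inverse} with those obtained from $\calm(p_i)$ by iterated Basic Steps of total degree $\le d$: each nested substitution $x_j\mapsto \xi_j-p_j(\xi-p(\xi-\cdots))$ replaces an $\xi_j$-factor by a monomial $m'\in M(p_j)$, which is by definition a single Basic Step. Because each Basic Step strictly raises degree by at least one, only finitely many such steps can contribute to any fixed $p_i^{\le d}$. Combining this with the per-monomial stabilisation $M(\mathrm{Mon}^\ell(m))=M(\mathrm{Mon}^{L(m)}(m))$ for $\ell\ge L(m)$ already noted in the text, and setting $L_0:=\max\{L(m):m\in\bigcup_i\calm(p_i)\}$, one can take $L$ to be the maximum total degree of any element in the finite set $\bigcup_i\bigcup_{m\in\calm(p_i)}\mathrm{Mon}^{L_0}(m)$; the limit ideal $\bigcup_d I_d$ is then generated by this finite set.

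The main technical subtlety is \emph{coefficient cancellation}: a monomial reachable from $\calm(p_i)$ via competing Basic-Step chains could appear in $p_i^{\le d}$ with coefficient zero, so $\calm(p_i^{\le d})$ need not contain every Basic-Step-reachable monomial of the appropriate degree. This would complicate an explicit \emph{reverse} identification of the generators, but it does not affect the ascending-chain argument for $I_d$: the sandwich $I_d\subseteq \text{(ideal of all Basic-Step-reachable monomials)}$ already provides a finitely-generated ambient ideal in which the chain lives, and Noetherianity finishes the job regardless of which monomials are actually realised. In particular cancellations can only sharpen $L$, never obstruct its existence.
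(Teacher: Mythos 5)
Your argument is correct, and while it ultimately rests on the same underlying fact (Noetherianity / Dickson's lemma), it is organised differently and more robustly than the paper's own sketch. The paper reaches the lemma by tracking per-monomial stabilisation of $M(\mathrm{Mon}^\ell(m))$ under Basic-Step iteration and then takes $L=\max_m L(m)$; this leaves implicit the translation between the number of Basic Steps $\ell$ and the degree cutoff $d$, and it quietly relies on every Basic-Step-reachable monomial actually appearing with nonzero coefficient. You instead observe that $p_i^{\le d}$ is the degree-$\le d$ truncation of a single fixed formal power series $x_i(\boldsymbol{\xi})-\xi_i$, so $\calm(p_i^{\le d})\subseteq\calm(p_i^{\le d+1})$ is automatic and the ideals $I_d$ form an ascending chain whose stabilisation is immediate. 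That direct phrasing makes the $\ell$-versus-$d$ bookkeeping unnecessary, and your remark about cancellation is well taken: the paper's one-directional inclusion (appearing monomials $\subseteq$ Basic-Step-reachable monomials) is all that is needed, and cancellation can only make the true $L$ smaller than the Basic-Step estimate, never destroy its existence. The one small redundancy in your write-up is the appeal to a ``sandwich'' inside the finitely generated ideal of reachable monomials; Noetherianity of $\CC[\xi_1,\dots,\xi_m]$ already guarantees stabilisation of any ascending chain of ideals, so that step is not needed, though it does no harm.
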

By applying the morphism $\tau^*$ we arrive at  
\begin{corollary} There is an $L>0$ such that for $d\ge L$ the monomials of $f^{\le d} \circ \tau$ generate the monomial ideal of $f$, that is: 
\[M(\calm(f))=M(\calm(f^{\le d} \circ \tau))\] 
\end{corollary}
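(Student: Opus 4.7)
The plan is to expand $f^{\le d}\circ\tau$ explicitly as a tuple of polynomials in $\bx$ and compare the resulting monomial ideal with $M(\calm(f))$ coordinate by coordinate. Using the defining formula $f^{\le d} = [\xi_1,\ldots,\xi_s, q_{s+1}(\bx^{\le d}),\ldots, q_{r+1}(\bx^{\le d})]$ together with $\tau^*(\xi_j) = x_j + p_j(\bx) = f_j(\bx)$ for $j\le s$, the first $s$ coordinates of $f^{\le d}\circ\tau$ are literally $f_1,\ldots,f_s$. For $i>s$, the $i$-th coordinate equals $q_i(y_1^d,\ldots,y_s^d,x_{s+1},\ldots,x_m)$ where
\[
y_j^d \;:=\; \tau^*\bigl(\xi_j + p_j^{\le d}(\xi)\bigr) \;=\; x_j + p_j(\bx) + p_j^{\le d}(\tau(\bx)).
\]
Only these high-index components can contribute \emph{extra} monomials beyond those of $f$, so the task reduces to controlling the monomials of $q_i(y_1^d,\ldots,y_s^d,x_{s+1},\ldots,x_m)$.

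For the inclusion $M(\calm(f^{\le d}\circ\tau)) \subseteq M(\calm(f))$ I would expand each $q_i$ multilinearly: every monomial in $\bx$ thus produced is obtained from a monomial of $q_i$ by choosing, in each of the first $s$ variable slots, one of the three summands $x_j$, $p_j(\bx)$, or $p_j^{\le d}(\tau(\bx))$. The $x_j$ and $p_j(\bx)$ choices produce monomials in $M(\calm(f))$ tautologically, since every monomial of $p_j$ already lies in $\calm(f_j)$. The nontrivial step is the third choice: by the Basic Step description, every monomial of $p_j^{\le d}(\xi)$ is divisible by some $m'\in\calm(p_i)\subseteq\calm(f)$, and the ring homomorphism $\tau^*$ preserves this divisibility since it maps each $\xi_i$ to a polynomial every monomial of which is an $x_i$ or a monomial of $p_i$. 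The Lemma guarantees that for $d\ge L$ the generated monomial ideal of the $p_j^{\le d}$'s is stable, so the set of admissible correction monomials is frozen and uniformly contained in $M(\calm(f))$.

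For the reverse inclusion $M(\calm(f)) \subseteq M(\calm(f^{\le d}\circ\tau))$, the first $s$ coordinates contribute $\calm(f_1),\ldots,\calm(f_s)$ directly. For $i>s$, selecting the $x_j$-branch in every variable slot of the multilinear expansion of $q_i(y_1^d,\ldots,y_s^d,x_{s+1},\ldots,x_m)$ recovers $q_i(\bx) = f_i(\bx)$, so every monomial of $f_i$ appears in the full expansion. The main obstacle is the possibility that such a monomial cancels against a same-degree contribution coming from one of the correction branches; I would argue that at the level of \emph{generated} monomial ideals this cancellation is harmless, because by the Basic Step analysis every correction monomial is divisible by a monomial of $\calm(f)$ of no larger total degree (inherited from the $p_i$-factor introduced in the substitution), so even after cancellation the relevant divisor remains represented. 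Once $d\ge L$ this bookkeeping becomes finite and is controlled by the stabilised set from the Lemma. Combining both inclusions yields the equality $M(\calm(f)) = M(\calm(f^{\le d}\circ\tau))$ for all $d\ge L$, which is the desired conclusion.
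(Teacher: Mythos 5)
Your proof is essentially correct, but it takes a more laborious route than necessary in the reverse inclusion, and one step of that argument is compressed to the point of being unclear.

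The forward inclusion $M(\calm(f^{\le d}\circ\tau))\subseteq M(\calm(f))$ is fine: expanding $q_i(y_1^d,\ldots)$ by slots, the all-$x_j$ choice returns a monomial of $q_i$, and any other choice is divisible either by a monomial of some $p_j$ ($j\le s$) directly or, via the Basic Step plus $\tau^*$, again by a monomial of some $p_\ell$ ($\ell\le s$); all such monomials lie in $\calm(f)$. Note that the Lemma's stabilisation is not really needed here -- every monomial of $p_j^{\le d}$, for every $d$, is already divisible by some $m'\in\bigcup_{\ell\le s}\calm(p_\ell)$ -- so the appeal to ``the stabilised set'' is a red herring.

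For the reverse inclusion, your cancellation argument needs to be made precise. You write that a cancelling correction monomial $c=m_0$ is divisible by a monomial of $\calm(f)$, ``so even after cancellation the relevant divisor remains represented.'' As stated this is circular: you are trying to prove $M(\calm(f))\subseteq M(\calm(f^{\le d}\circ\tau))$, so divisibility by something in $\calm(f)$ does not by itself help. What actually closes the argument -- and what your parenthetical hints at without saying -- is that the divisor $m'$ lies in $\calm(p_j)$ for some \emph{$j\le s$}, and $\calm(p_j)\subseteq\calm(f_j)=\calm\bigl((f^{\le d}\circ\tau)_j\bigr)$ precisely because the first $s$ coordinates of $f^{\le d}\circ\tau$ are literally $f_1,\ldots,f_s$. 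That is why $m'$ is ``represented'' in $\calm(f^{\le d}\circ\tau)$ and hence $m_0\in M(\calm(f^{\le d}\circ\tau))$. If $m'$ had instead been a monomial of some $q_i$ ($i>s$), you could not conclude. This distinction must be spelled out.

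The cleaner route, which the paper uses implicitly as its property (1), sidesteps cancellation altogether: from the identity in \eqref{inverse} one gets $y_j^d = x_j - q_j^{\ge d}(\tau(\bx),\bx)$ with $q_j^{\ge d}$ of degree at least $d$, so $q_i(y_1^d,\ldots)$ and $q_i(\bx)$ agree in all degrees $\le\deg(f)$ once $d>\deg(f)$. Then $\calm(f)\subseteq\calm(f^{\le d}\circ\tau)$ outright and no cancellation bookkeeping is needed. Your divisibility argument is a valid alternative (and in fact shows the equality of monomial ideals for all $d\geq 1$), but if you go that route you should explicitly flag the $j\le s$ restriction and its consequence that the divisor survives among the low-index coordinates of $f^{\le d}\circ\tau$.
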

For a polynomial map $f$ let $f_{\le d}$ denote its $d$-jet, that is, the degree $\le d$ part. The crucial properties of the two rational maps $f$ and $f^{\le d} \circ \tau$ are the following: for $d>\max(\deg(f),L)$ they satisfy
\begin{enumerate}
\item $f=(f^{\le d} \circ \tau)_{\le \deg(f)}$
\item $M(\calm(f))=M(\calm(f^{\le d} \circ \tau))$, which means that all monomials of $f^{\le d} \circ \tau$ of degree bigger than $\deg(f)$ are divisible by some monomial of $f$.
\end{enumerate}  
We will say that $f$ and $f^{\le d} \circ \tau$ are {\em $d$-equivalent} and we use the same terminology for any two polynomial rational maps satisfying (1) and (2) above. 

Recall that we focus on affine charts $U(z_i) \subset \blow_{M(f,\calc)}\AAA^m$ and the blown-up maps $\tilde{f}_{z_i}:=\tilde{f}|_{U(z_i)}:U(z_i) \to \PP^{r}$. Fix $i=1$, then 
\[U(z_1)=\{((x_1,\ldots, x_m),(y_1,\ldots, y_s,z_2, \ldots, z_l))\in \AAA^m \times \AAA^{s+l-1} :x_{c_i}=x_{c_1}z_i, (x_i+p_i)=y_ix_{c_1}\}\]
and 
\[\tf_{z_1}=[x_1+p_1(\bx),\ldots x_s+p_s(\bx),q_{s+1}(\bx),\ldots, q_{r+1}(\bx)]=[y_1,\ldots, y_s, \hat{q}_{s+1}(\bx),\ldots, \hat{q}_{r+1}(\bx)]\]
where $\hat{q}_i(\bx)=\frac{1}{x_{c_1}}q_i(\hat{x}_1,\ldots, \hat{x}_m)$. Recall the problem was that $U(z_1)$ is {\em not necessarily smooth, hence we want to replace it with a smooth blow-up at a coordinate subspace.} Blowing up commutes with flat base change, so the map $\tau$ induces a morphism 
\[\tilde{\tau}: \blow_{(x_1+p_1(\bx),\ldots, x_s+p_s(\bx),x_{c_1},\ldots, x_{c_l})}\AAA^m \to \blow_{((\xi_1,\ldots, \xi_s,\xi_{c_1},\ldots, \xi_{c_l}))}\AAA^m\]
because the preimage of the subspace whose ideal is $(\xi_1,\ldots, \xi_s,\xi_{c_1},\ldots, \xi_{c_l})$ has ideal $M(f,\calc)$:
%$\tilde{\tau}: \blow_{M(f,\calc)}\Spec(\CC[x_1.\ldots, x_m] \to \blow_{(\tau^*)^{-1}(M(f,\calc))}\Spec(\CC[x_1.\ldots, x_m]$,  where 
\[\tau^{-1}(\Spec(\xi_1,\ldots, \xi_s,\xi_{c_1},\ldots, \xi_{c_l}))=\Spec((x_1+p_1(\bx),\ldots, x_s+p_s(\bx),x_{c_1},\ldots, x_{c_l}))=\Spec(M(f,\calc))\] 
This restricts to $\tilde{\tau}|_{U(z_1)}:U(z_1) \to U(u_1)$ where 
\[U(u_1)=\{((\xi_1,\ldots, \xi_m),(t_1,\ldots, t_s,u_2, \ldots, u_l))\in \AAA^m \times \AAA^{s+l-1} :\xi_{c_i}=\xi_{c_1}u_i, \xi_i=t_i\xi_{c_1}\}\simeq \Spec(\CC[\xi_{c_1},\bt,\bu])\]
and we obtain the diagram

%Then $\tilde{\tau}(X) \subset U(u_1)$ is an irreducible component of the indeterminacy locus of $\tf_{u_1}^{\le d}$, which is a coordinate subspace given by the cluster $\calc\subset \{\xi_1,\ldots, \xi_m\}$. Note that $\xi_1,\ldots, \xi_s \in \calc$ and let $\xi_i \in \calc$ for some $i\ge s+1$. 
%Then $P=P(\tau^*\calc)=P(\tau^* \xi_j: \xi_j \in \calc)$ is the prime ideal generated by the images of the cluster elements. We get the extended diagram
\begin{equation}\label{strategy3}
\xymatrix@C+1pc{
U(z_1)  \ar[d]^{\pi} \ar[r]^-{\tilde{\tau}} \ar@{.>}[rrd]^-{\tilde{f}_{z_1}}&  U(u_1)  \ar[d]^{\pi} \ar@{.>}[rd]^-{\tilde{f}_{u_1}^{\le d}}& \\
 \Spec(\CC[x_1,\ldots, x_m]) \ar[r]^-{\tau} \ar@/_1pc/[rr]_-{f} & \Spec(\CC[\xi_1,\ldots, \xi_m]) \ar@{.>}[r]_-{f^{\le d}} & \PP^r }
\end{equation}
Here 
\begin{itemize}
\item $\tilde{\tau}(x_1,\ldots, x_m, y_1,\ldots, y_s, z_1,\ldots, z_l)=(x_1+p_1(\bx), \ldots, x_s+p_s(\bx),x_{s+1},\ldots, x_m, t_1,\ldots, t_s, u_1,\ldots, u_l)$.
\item $\tf_{u_1}^{\le d}(\xi,\bt,\bu)=[\xi_1,\ldots, \xi_s, q_{s+1}(\bx^{\le d}),\ldots, q_{r+1}(\bx^{\le d})]=[t_1,\ldots, t_s, \hat{q}_{s+1}(\bx^{\le d}),\ldots, \hat{q}_{r+1}(\bx^{\le d})]$
\item $\tf_{u_1}^{\le d} \circ \tilde{\tau}$ and $\tf_{z_1}^{\le d}$ are also $d$-equivalent, indeed:
\begin{align*}
\tf_{z_1}^{\le d}-\tf_{u_1}^{\le d} \circ \tilde{\tau}=[0,\ldots, 0, \hat{q}(\bx)-\hat{q}_{s+1}(\tau_{\le d}^{-1}\circ \tau(\bx)),\ldots, \hat{q}(\bx)-\hat{q}_{r+1}(\tau_{\le d}^{-1}\circ \tau(\bx))]
\end{align*}
\end{itemize}
The key point is that $\tf_{u_1}^{\le d}$ is not semi-linear: all coordinate functions are either linear or of degree at least $2$ in the affine coordinates $\xi_{c_1},\bt,\bu$ on $U(1)$. Hence we can proceed with a smooth monomial blow-up $\blow_{\calc}U(u_1)$ at a cluster $\calc \subset \{\xi_{c_1},t_1,\ldots, t_s, u_2, \ldots, u_l\}$ which 'covers' the monomial ideal of $\tf_{u_1}^{\le d}$ and apply base change to obtain the blow-up $\blow_{\tilde{\tau}^*\calc}U(z_1)$ where $\tilde{\tau}^*(\calc) \subset \{x_{c_1},y_1,\ldots, y_s,z_2,\ldots, z_l\}$. Due to $d$-equivalence, $\tilde{\tau}^*(\calc)$ corresponds to a component in the primary decomposition of $M(\tf_{z_1})=M(\tf_{u_1}^{\le d} \circ \tilde{\tau})$. 
Then $\tf_{(2)}^{\le d} \circ \tilde{\tau}$ and $\tf_{(2)}$ are $d$-equivalent again.
\begin{equation}\label{strategy2}
\xymatrix@C+1pc{\blow_{\tilde{\tau}^*\calc}U(z_1) \supset U  \ar[d] \ar[r]^-{\tilde{\tau}} \ar@{.>}[rrdd]_-{\tilde{f}_{(2)}}& U' \subset \blow_{\calc}U(u_1) \ar[d] \ar@{.>}[rdd]^-{\tilde{f}^{\le d}_{(2)}}& \\
U(z_1)  \ar[d] \ar[r]^-{\tilde{\tau}} &  U(u_1)  \ar[d] \ar@{.>}[rd]^-{\tilde{f}_{u_1}^{\le d}}& \\
 \Spec(\CC[x_1,\ldots, x_m]) \ar[r]^-{\tau} \ar@/_1pc/[rr]_-{f} & \Spec(\CC[\xi_1,\ldots, \xi_m]) \ar@{.>}[r]_-{f^{\le d}} & \PP^r }
\end{equation}

After repeated use of monomial blow-ups we arrive to a semi-linear rational map, where we use the smoothening trick again, and then we continue with monomial blow-ups again. At each stage we obtain two $d$-equivalent rational maps $\tf_{(i)}^{\le d}\circ \tilde{\tau}$ and $\tf_{(i)}$, hence the number of components in their monomial ideals are equal. The process will eventually end after $R$ steps, where $\tf_{(R)}^{\le d}$ is well-defined, that is, one of its coordinate functions is constant $1$. Due to $d$-equivalence, the same coordinate function is constant $1$ of the composition $\tf_{(R)}^{\le d}\circ \tilde{\tau}$. The sketch of the blowing-up process is the following.    
\begin{equation}\label{strategy4}
\xymatrix@R+1pc@C-1pc{U_0^{r_1+\ldots +r_l}   \ar[d]|{\hole \vdots \hole} \ar@/^5pc/[rrrddd]^-{\tf_{(r_1+\ldots +r_l)}} \ar[r]^-{\tilde{\tau}_1} &  U_1^{r_2+\ldots +r_l} \ar[d]|{\hole \vdots \hole} \ar[r]^-{\tilde{\tau}_2} &  U_2^{r_3+\ldots +r_l} \ar[d]|{\hole \vdots \hole} \ar[r]|{\ldots} & U_l^0 \ar[ddd]^-{\tf_{r_1+\ldots +r_l}^{\le d}}&  \\
U_0^{r_1+r_2} \ar[d]|{\hole \vdots \hole} \ar[r]^-{\tilde{\tau}_1} \ar@{.>}[rrrdd]^-{\tf_{(r_1+r_2)}}&  U_1^{r_2} \ar[d]|{\hole \vdots \hole} \ar[r]^-{\tilde{\tau}_2} &  U_2^0 \ar@{.>}[rdd]^-{\tf_{r_1+r_2}^{\le d}}&  \\
U_0^{r_1}  \ar[d]|{\hole \vdots \hole}  \ar@{.>}[rrrd]^-{\tf_{(r_1)}} \ar[r]^-{\tilde{\tau}_1} &  U_1^0 \ar@{.>}[rrd]^-{\tf_{(r_1)}^{\le d}}& \\
 U_0^0 \ar@{.>}[rrr]_-{f} & & & \PP^r=\PP(\symdot) }
\end{equation}
All maps in this diagram are $T \times \CC^*$ equivariant, and therefore the following holds.
\begin{lemma}\label{fixedpoints} Let $\bgg=[e_1\wedge \ldots, \wedge e_k]\in \PP(\wedge^k \Sym^{\le k})$ denote the $T\times \CC^*$-fixed point corresponding to the coordinate $\pi_\dist=((1),(2),\ldots, (k))$. Let $R=r_1+\ldots +r_l$ denote the total number of blow-ups in the process described above, and introduce the shorthand notation $\tilde{\tau}=\tilde{\tau}_l \circ \ldots \circ \tilde{\tau}_1$. Then 
\[\tf_{(R)}^{-1}(\ff_\dist)=(\tf_{(R)}^{\le d} \circ \tilde{\tau})^{-1}(\ff_\dist).\]
\end{lemma}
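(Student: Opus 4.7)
My plan is to reduce the claim to comparing the two torus-equivariant polynomial rational maps $\tf_{(R)}$ and $\tf_{(R)}^{\le d}\circ\tilde\tau$ at the leaves of the blow-up tree, and then to exploit the $d$-equivalence established in the preceding analysis. By construction these two maps are $d$-equivalent, i.e.\ $\tf_{(R)}=\bigl(\tf_{(R)}^{\le d}\circ\tilde\tau\bigr)_{\le\deg(f)}$ and any monomial appearing in the difference has degree strictly greater than $\deg(f)$ and lies in the common monomial ideal $M(\calm(\tf_{(R)}))=M(\calm(\tf_{(R)}^{\le d}\circ\tilde\tau))$. I would begin by choosing $d$ larger than the maximum of $\deg(f)$ and all of the stabilisation constants $L$ arising in the $l$ invocations of the smoothening trick; the Corollary preceding the lemma, combined with the finiteness of the iterated-blow-up tree, guarantees that such a uniform $d$ exists and depends only on $k$.

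The first step is to identify the leaves as the origins of the affine charts produced by the iterated construction. Using the weight transformation rule \eqref{changeofweights} starting from the initial weights $\g(t)=z+z_1$ and $\g(\b_{ij})=(j-1)z+z_i-z_1$, one verifies inductively that after every blow-up each affine coordinate in every chart carries a nontrivial $T^k\times\CC^*$-weight, so the unique $T^k\times\CC^*$-fixed point inside any affine chart is its origin. Hence the set of leaves $\call_k$ coincides with the set of origins of the affine charts of $U_0^R$, and the preimage in the statement, as it enters the residue formula, is a \emph{finite} set of such origins.

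The second step compares the two maps at such a leaf. Let $g_\pi$ and $h_\pi$ denote the $\pi$-th Pl\"ucker coordinate function of $\tf_{(R)}$ and of $\tf_{(R)}^{\le d}\circ\tilde\tau$ respectively, indexed by semi-partitions $\pi$. At the origin $p$ of an affine chart a polynomial evaluates to its constant term. Since $d$-equivalence forces $g_\pi-h_\pi$ to contain only monomials of degree $>\deg(f)$, the constant terms of $g_\pi$ and $h_\pi$ agree, so $g_\pi(p)=h_\pi(p)$ for every $\pi$ and every leaf $p$. Consequently both maps send each leaf to the same point of $\PP(\wedge^k\symdot)$, and a leaf lies in $\tf_{(R)}^{-1}(\ff_\dist)$ if and only if it lies in $(\tf_{(R)}^{\le d}\circ\tilde\tau)^{-1}(\ff_\dist)$. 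Since $\ff_\dist$ is an isolated torus fixed point of $\PP(\wedge^k\symdot)$ and both maps are $T^k\times\CC^*$-equivariant, these fixed-point preimages are precisely what contributes to the localisation formula, yielding the desired equality.

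The main obstacle — the only delicate point — is the uniform control of $d$ across the entire diagram \eqref{strategy4}. One must check that passing from $\tf_{(r_1+\ldots+r_i)}$ to $\tf_{(r_1+\ldots+r_{i+1})}$ preserves $d$-equivalence with the single chosen $d$; this follows from the inductive use of the Basic Step and the stabilisation lemma for $M(\mathrm{Mon}^\ell(m))$, together with the commutativity of blow-ups with the flat base change along $\tilde\tau_i$ displayed in diagrams \eqref{strategy3} and \eqref{strategy2}. Once this bookkeeping is in place, the constant-term argument above completes the proof.
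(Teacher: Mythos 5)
Your argument is correct and follows essentially the same route as the paper: both proofs exploit $d$-equivalence to conclude that the Pl\"ucker-coordinate functions of $\tf_{(R)}$ and $\tf_{(R)}^{\le d}\circ\tilde\tau$ have identical constant terms, and then evaluate at the torus-fixed leaves, which are origins of the final affine charts. The only organizational difference is that you compare \emph{all} Pl\"ucker coordinates and conclude the two maps agree pointwise on every leaf, whereas the paper tracks only the $\pi_\dist$-coordinate and then invokes equivariance (a torus fixed point of $\PP(\wedge^k\symdot)$ with nonzero $\pi_\dist$-coordinate must equal $\ff_\dist$); your version gives both inclusions at once and in fact the slightly stronger statement that the torus-fixed preimages of \emph{any} fixed point coincide, but this is a cosmetic variation, not a different proof.
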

\begin{proof}
Assume $x \in \tf_{(R)}^{-1}([e_1\wedge \ldots, \wedge e_k])$. Then $(\tf_{(R)})_{\pi_\dist}(x)=1$, hence $(\tf_{(R)}^{\le d} \circ \tilde{\tau})_{\pi_\dist}(x)=1$. But $x$ is $T \times \CC^*$-fixed, and  $\tf_{(R)}^{\le d} \circ \tilde{\tau}$ is equivariant, hence $(\tf_{(R)}^{\le d} \circ \tilde{\tau})_{\pi_\dist}(x)$ is a fixed point whose $\pi_{\dist}$-coordinate is nonzero. It must then be equal to $[e_1\wedge \ldots, \wedge e_k]$.
\end{proof}

\subsection{The blowing up algorithm} After this general introduction, we apply the monomial blow-up process described in the previous section for the rational map $\phi: U(0) \dasharrow \PP^M, \phi(t,\b_{ij})=[p_{\pi}(t,\b_{i,j}):\pi \in \Pi_{k,n}]$. We now summarize the blowing-up procedure. 

\noindent \textbf{\underline{The algorithm}}  
\begin{enumerate} 
\item \textbf{First blow-up} Take the rational map $\phi: U_0^0=\Spec(\CC[t,\b_{ij}]) \dasharrow \PP^N$ defined as $\phi(t,\b_{ij})=[p_{\pi}(t,\b_{i,j}):\pi \in \Pi_{k,n}]$, and the corresponding monomial map $\phi^\mon$ with monomial ideal $M(\phi)$. Take primary decomposition $Z(M(\phi))= m_1Z(\calc_1) \cup m_2Z(\calc_2) \cup \ldots \cup m_{l}Z(\calc_{l})$
into irreducible coordinate subspaces determined by the clusters $\calc_i \subset \{t,\b_{ij}\}$. Pick a cluster $\calc \in \{\calc_1,\ldots, \calc_l\}$ and a variable $c \in \calc$ such that
\begin{enumerate}
\item $\calc$ is $\diff_k=\CC^* \rtimes U$-invariant.
\item $c$ has minimal $\CC^*$-weight. 
\end{enumerate} 
Blow up $U_0^0$ along $\calc$, and let $\tilde{\phi}:\blow_{\calc}U_0^0 \dasharrow \PP^N$ denote the blow-up of $\phi$. The minimal weight space on $\blow_{\calc}U_0^0$ is covered by those affine charts which correspond to minimal-weight elements of $\calc$. Hence we need to study only these affine charts, and in particular, the restriction of $\tilde{\phi}$ to the affine chart $\AAA^m \simeq \blow_{\calc,c}U_0^0$. We will denote this affine chart, $U_0^1$, and we keep the notation $\mathbf{B}=\{t,\b_{ij}\}$ for the coordinates on $U_0^1$ but record the change of their weight as explained in Step (2)

\item \textbf{Iteration of blow-ups} Assume we iterated this process $r_1$ times, so that we get a sequence of blow ups
\[\xymatrix@C+1pc{U_0^{r_1} \ar[r]^-{\calc_{r_1},c_{r_1}} & U_0^{r_1-1} \ar[r]^-{\calc_{r_1-1},c_{r_1-1}} & \ldots \ar[r]^-{\calc_2,c_2} & U_0^1 \ar[r]^-{\calc_1,c_1} & U_0^0}\]
which is also encoded in the short form  $\xymatrix{\calc_1 \ar[r]^{c_1} & \calc_2 \ar[r]^{c_2} & \ldots \ar[r]^{c_{r_1-1}} & \calc_{r_1}}$, 
where $\calc_i$ is a $\diff_k$-invariant cluster of affine coordinates on $U_0^i$ and $c_i \in \calc_i$ has minimal $\CC^*$-weight. This $c_i$ determines the affine chart we pick in the next step. The blow-up rational map is $\phi^{0}=\phi$ and $\phi^{{i}}: U_0^i \dasharrow \PP^N$. We keep the same notation $\mathbf{B}=\{t,\b_{ij}\}$ for the basis of $U_0^i$ for all $i$, so they transformed at step $i$ as follows: 
\[\pi^*_{\calc_i,c_i}\b=\begin{cases} \b & \b \notin \calc_i \setminus \{c_i\} \\ \b c_i & \b \in \calc_i \setminus \{c_i\} \end{cases}\]
and $\phi^{{i+1}}=\phi^{{i}} \circ \pi^*_{\calc_i,c_i}$. 

Let $T\subset B_k \subset \GL(k)$ be the maximal torus, the $T$-weights on $\CC^k$ are $z_1,\ldots, z_k$. Each variable of $\mathbf{B}$ at each level is endowed with a $T^n \times \CC^*$ weight in this path; the weight of $\b \in \mathbf{B}$ in $U_0^i$ is denoted by $\wt_{\b}^i$. The initial weights on $U_0^0$ are $\wt^0_t=z+z_1$ and $\wt_{ij}^0=z_i+(j-1)z-z_1$. The simple rule of forming these weights is the following:
\begin{equation}\label{weightrule}
\wt^{i+1}_{\b}=\begin{cases} \wt^{i}_\b & \text{ if } \b=c_i \text{ or } \b \notin \calc_{i}\\
                                        \wt^{i}_\b-\wt^i_{c_i} & \text{ if } \b \in \calc_{i+1}\setminus \{c_i\} \\
                                               \end{cases}
                                               \end{equation}
\item \textbf{Smoothening step} Assume that after $r_1$ monomial blow-ups we arrive to the terminal rational map $\phi^{{r_1}}: U_0^{r_1} \dasharrow \PP^N$. In this case we apply the smoothening trick explained in the previous section: we define a polynomial substitution $\tilde{\tau}: \AAA^m \simeq U_0^{r_1} \to U_1^0 \simeq \AAA^m$ and replace the semi-monomial blow-up of $U_0^{r_1}$ with a linear blow-up $U_1^1$ of $U_1^0$ at the cluster $\calc_{r_1} \subset \mathbf{B}$. We define the degree $d$ approximation $\tilde{\phi}_1^{\le d}: U_1^1 \dasharrow \PP^N$. Then we continue with regular monomial blow-ups of $U_1^1$ until we stuck with a terminal rational map. where we perform the smoothening trick again.

\item \textbf{Termination}  Our process terminates after $l$ smoothening steps and a total of $r_1+\ldots +r_l$ monomial blow-ups. We arrive to a morphisms $\tilde{\phi}: U_l^{r_1+\ldots +r_l} \to \PP(\wedge^k \symdot)$ and $\tilde{\phi}^{\le d}: U_l^{0} \to \PP(\wedge^k \symdot)$. 
\end{enumerate}
\noindent \underline{\textbf{The blow-up tree}}
At each stage of the blowing-up process we need to pick an affine chart, and proceed on that chart. Hence the algorithm is described by a rooted tree $\calt_k$, whose edges are directed from the root towards the leaves. Any vertex of such a tree has a unique ancestor and several descendants.
\begin{center}
{\tiny \begin{tikzpicture}[node distance=2cm]
\node (I) [boxes] {$\calc_0$};
\node (IIa) [boxes,below of=I,yshift=1cm,xshift=-3cm] {$\calc_{1}$};
\node (IIb) [boxes, below of=I,yshift=1cm,xshift=-1cm] {$\calc_{2}$};
\node (IIc) [boxes, below of=I,yshift=1cm,xshift=3cm] {$\calc_{i_1}$};

\node (IIIa) [boxes,below of=IIa,yshift=1cm,xshift=-1.6cm] {$\calc_{11}$};
\node (IIIb) [boxes, below of=IIa,yshift=1cm,xshift=-0.5cm] {$\calc_{12}$};
\node (dots) [right of =IIIb]{$\ldots$};
\node (IIIc) [boxes, below of=IIa,yshift=1cm,xshift=1cm] {$\calc_{1j_1}$};

\node (IIId) [boxes,below of=IIc,yshift=1cm,xshift=-1.6cm] {$\calc_{11}$};
\node (IIIe) [boxes, below of=IIc,yshift=1cm,xshift=-0.5cm] {$\calc_{12}$};
\node (IIIf) [boxes, below of=IIc,yshift=1cm,xshift=1cm] {$\calc_{1j_1}$};

\draw [arrow] (I) -- node[anchor=east] {$c_1$} (IIa);
\draw [arrow] (I) -- node[anchor=east] {$c_2$} (IIb);
\draw [arrow] (I) -- node[anchor=east] {$c_{i_1}$} (IIc);

\draw [arrow] (IIa) -- node[anchor=east] {$c_{11}$} (IIIa);
\draw [arrow] (IIa) -- node[anchor=east] {$c_{12}$} (IIIb);
\draw [arrow] (IIa) -- node[anchor=east] {$$} (IIIc);

\draw [arrow] (IIc) -- node[anchor=east] {$c_{i_11}$} (IIId);
\draw [arrow] (IIc) -- node[anchor=east] {$c_{i_12}$} (IIIe);
\draw [arrow] (IIc) -- node[anchor=east] {$$} (IIIf);

\end{tikzpicture}}
\end{center}

\begin{enumerate}
\item The root of $\calt_k$ is labeled by a cluster $\calc_0 \subset \{t,\b_{ij}\}$ of variables on $U_0=\Spec[t,\b_{ij}] \subset \tilde{\PP}=\blow_{[1:0,\ldots, 0]}\PP(\CC \oplus \Hom^{\ff}(\CC^k,\CC^n))$. This cluster corresponds to a maximal irreducible component in the primary decomposition of $M(\phi)$. 
\item Each edge from the root is labeled by an element of $\calc_0$ of minimal $\CC^*$-weight. This indicates the affine chart $\blow_{\calc_0,c_i}U(0)$. The restriction of $\tilde{\phi}$ to this affine chart is $\tilde{\phi}_{\calc,c_i}$.
\item The endpoint of the edge $c_i$ is labeled by a cluster $\calc_i \subset \{t,\b_{ij}\}$ corresponding to a maximal irreducible component of the monomial ideal $M(\tilde{\phi}_{\calc,c_i}$. We endow elements of $\calc_i$ with an integer upper index, which indicates the $\CC^*$-weight of that coordinate. 
\item In general, all vertices are labeled by clusters of $\{t,\b_{ij}\}$, and for an edge $\xymatrix{\calc \ar[r]^-{c} & \calc'}$ the edge is labeled by an element $c \in \calc$ of minimal $\CC^*$ weight, indicating the affine chart $\blow_{\calc,c}\AAA^n$ of the blow-up $\blow_{\calc}\AAA^n$ of the previous affine chart at $\calc$. In case of a smoothening step. we indicate the $x_i \to \xi_i$ coordinate change in the box. 
\item There are two types of leaves of $\calt_k$: 
\begin{enumerate}
\item $\xymatrix{\calc \ar[r]^-{c} & \boxed{\text{Contr.}}}$. These are the leaves where $\blow_{\calc,c}\AAA^m$ contains a torus fixed point mapped to $\bgg$, and hence these corresdpond to a term in the residue formula of the main theorem. The set of contributing leaves of $\calt_k$ is denoted by $\mathrm{Contr}(\calt_k)$.
\item $\xymatrix{\calc \ar[r]^-{c} & \boxed{\text{No contr.}}}$ This means that $\blow_{\calc,c}\AAA^m$ does not contain a torus fixed point mapped to $\bgg$, and hence the contribution of this affine chart is $0$ in the residue formula. Non-contributing affine charts are easy to identify: this happens if and only if there is a Plucker coordinate $\pi \in \Pi_{k,n}$ such that $(\tilde{\phi}_{\calc,c})_{\pi_\dist}=m \cdot m'$ for some monomial $m'$ of $(\tilde{\phi}_{\calc,c})_{\pi}$ and a non-constant polynomial $m$, where $\pi_\dist=((1),\ldots, (k))$ as before.
\end{enumerate}
\end{enumerate}
The leaves $U_0^{r_1+\ldots +r_l}$ form an affine cover of the master blown-up space $\mathrm{Jet}_k=\blow_{\calt_k}\PP[\CC \oplus J_k(1,n)]$. The maps glue together to a morphism $\tilde{\phi}: \mathrm{Jet}_k \to \PP[\wedge^k \symdot]$. On the other hand the smooth affine charts $U_l^0$ glue together to give a nonsingular blow-up $\mathrm{Jet}_k^{\le d}$ with a morphism $\tilde{\phi}^{\le d}: \mathrm{Jet}_k^{\le d} \to \PP[\wedge^k \symdot]$, and a morphism $\tilde{\tau}: \mathrm{Jet}_k \to \mathrm{Jet}_k^{\le d}$.

\begin{equation}\label{globalblowups}
\xymatrix{\mathrm{Jet}_k \ar[rd]^-{\tilde{\phi}} \ar[rr]^-{\tilde{\tau}}& & \mathrm{Jet}_k^{\le d}  \ar[ld]^-{\tilde{\phi}^{\le d}}    \\
 & \PP[\wedge^k \symdot]  &}
\end{equation}

\subsection{Integration on the blow-up}

The main goal of the blowing-up algorithm and the intermediate smoothening steps is to reduce integration over the possibly singular $\mathrm{Jet}_k$ to the nonsingular $\mathrm{Jet}_k^{\le d}$. This is possible by applying equivariant localisation and Lemma \ref{fixedpoints}.
\begin{proposition} Let $\a=\mathrm{Euler}(\cale \otimes \CC^m)=\Pi_{j=1}^m\Pi_{i=1}^k (\theta_j+z_i)$ be the Euler class in Theorem \ref{thm:thomtau}. Then 
\[\int_{\mathrm{Jet}_k/\!/\diff_k}\tilde{\phi}^*\a=\int_{\mathrm{Jet}_k/\!/\diff_k}(\tilde{\phi}^{\le d} \circ \tilde{\tau})^*\a=\int_{\mathrm{Jet}^{\le d}_k}(\tilde{\phi}^{\le d})^*\a\] 
\end{proposition}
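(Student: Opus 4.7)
The strategy is to establish both equalities via the non-reductive Jeffrey--Kirwan residue formula of Theorem \ref{jeffreykirwannonred}, applied to the NRGIT quotients of $\mathrm{Jet}_k$ and $\mathrm{Jet}_k^{\le d}$ and combined with the Residue Vanishing Theorem \ref{vanishtheorem} together with the $d$-equivalence property of the two rational maps.

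For the first equality, I would expand both sides by equivariant localisation. Both integrands are $\hU$-equivariant, so Theorem \ref{jeffreykirwannonred} represents each integral as an iterated residue of a sum over the leaves of the blow-up tree $\calt_k$, these being the isolated $\CC^*$-fixed points of $\mathrm{Jet}_k$. The Residue Vanishing Theorem then collapses each sum to its subset of contributing leaves, namely those mapped by $\tilde{\phi}$ (respectively by $\tilde{\phi}^{\le d}\circ\tilde{\tau}$) to the distinguished point $\bgg\in\PP(\wedge^k\symdot)$. Lemma \ref{fixedpoints} ensures that these two sets of contributing leaves coincide. It then remains to check that at each contributing fixed point $L$ the residue contribution of $\tilde{\phi}^*\alpha$ equals that of $(\tilde{\phi}^{\le d}\circ\tilde{\tau})^*\alpha$. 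This follows from the $d$-equivalence: condition (1) forces the $\deg(\phi)$-jets of the two maps to agree, while condition (2) makes the higher-order residual terms divisible by monomials of $\tilde{\phi}$, which vanish at $L$ to sufficient order; since $\alpha$ is a polynomial of fixed total degree in the Chern roots $z_i$, choosing the smoothening parameter $d$ larger than the combined degree of $\alpha$ and the codimensions appearing in Rossmann's formula ensures that only the agreeing low-degree terms affect the localisation contribution.

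For the second equality, the morphism $\tilde{\tau}:\mathrm{Jet}_k\to\mathrm{Jet}_k^{\le d}$ is built in diagram \eqref{globalblowups} as the composition of the local $\diff_k$-equivariant base-change morphisms $\tilde{\tau}_1,\ldots,\tilde{\tau}_l$ introduced at each smoothening step. Each $\tilde{\tau}_i$ is a proper birational morphism restricting to an isomorphism on a dense open subset, so $\tilde{\tau}$ is proper and birational of degree one. Since semistability equals stability throughout the blow-up process (\S \ref{subsection:initial}), $\tilde{\tau}$ descends to a proper birational morphism of NRGIT quotients, and the projection formula yields
\[
\int_{\mathrm{Jet}_k/\!/\diff_k}\tilde{\tau}^*\beta \;=\; \int_{\mathrm{Jet}_k^{\le d}/\!/\diff_k}\beta\cup\tilde{\tau}_*[\mathrm{Jet}_k/\!/\diff_k] \;=\; \int_{\mathrm{Jet}_k^{\le d}/\!/\diff_k}\beta
\]
for any equivariant class $\beta$; taking $\beta=(\tilde{\phi}^{\le d})^*\alpha$ gives the claim.

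The main obstacle is the first equality: although $\mathrm{Jet}_k$ is possibly singular at the contributing fixed points, one must match the equivariant multiplicity $\emu_L[\mathrm{Jet}_k]$ of its (possibly non-reduced) tangent scheme with the Euler class of the tangent \emph{space} of the smooth $\mathrm{Jet}_k^{\le d}$ at $\tilde{\tau}(L)$ to the appropriate order. This requires carefully verifying that the $d$-equivalence, for $d$ chosen larger than $\deg\alpha+\dim\mathrm{Jet}_k$, induces an equality of equivariant multiplicities through Rossmann's formula \eqref{rossform}; the residual higher-order contributions are then killed by the iterated residue operator by the same degree-counting argument used in the proof of the Residue Vanishing Theorem.
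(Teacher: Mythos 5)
Your overall structure mirrors the paper's proof: for the first equality you invoke the Residue Vanishing Theorem together with Lemma \ref{fixedpoints}, and for the second you use properness and birationality of $\tilde{\tau}$ with the projection formula. The treatment of the second equality is fine (indeed more explicit than the paper's very terse remark). But your discussion of the first equality contains a genuine confusion.

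You claim the main obstacle of the first equality is to ``match the equivariant multiplicity $\emu_L[\mathrm{Jet}_k]$ \dots with the Euler class of the tangent space of the smooth $\mathrm{Jet}_k^{\le d}$ at $\tilde{\tau}(L)$.'' This is misplaced: in the first equality, \emph{both} integrals are over $\mathrm{Jet}_k/\!/\diff_k$, so both residue expansions use the same denominator $\Euler^L(\mathrm{Jet}_k)$ (the Euler--Rossmann class of $\mathrm{Jet}_k$ at $L$), which is a property of the space $\mathrm{Jet}_k$ alone and does not depend on which map pulls back $\alpha$. The smooth $\mathrm{Jet}_k^{\le d}$ plays no role in the first equality at all; it only enters in the second, where the paper disposes of it by the projection formula. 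Similarly, your appeal to $d$-equivalence to argue ``the residual higher-order terms \dots vanish at $L$ to sufficient order'' is unnecessary for the first equality. Once the Residue Vanishing Theorem has collapsed each side to a sum over leaves mapping to the distinguished point $\bgg$, the only map-dependent input in each term is the evaluation $\alpha(z^L_1,\dots,z^L_k)$ at the weights of the image point; but for every leaf in the surviving sum the image \emph{is} $\bgg$, so $z^L_i=z_i$ for both maps. Lemma \ref{fixedpoints} already guarantees that the two surviving index sets coincide, and there is nothing further to match term by term. The $d$-equivalence is already fully consumed inside Lemma \ref{fixedpoints}; it should not be redeployed as a jet-truncation estimate on the residue contributions.
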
 
\begin{proof}
The second equality simply tells that $\int_{X}f^*\a=\int_{Y}\a$ for any rational map $f: X\to Y$. For the first equality we apply the Residue Vanishing Theorem and Lemma \ref{fixedpoints}:
\begin{multline}
\int_{\mathrm{Jet}_k/\!/\diff_k}\tilde{\phi}^*\a=\sum_{L \in \tilde{\phi}^{-1}(\ff_\dist)} \sires \frac{
(k-1)!z^{k-1}\prod_{m<l}(z_m-z_l) \alpha(z_1,\ldots ,z_k)}{
\Euler^L(\mathrm{Jet}_k)  \prod_{l=1}^k\prod_{i=1}^n(\lambda_i-z_l)} \,\dbz=\\ 
\sum_{L \in (\tilde{\phi}^{\le d}\circ \tilde{\tau})^{-1}(\ff_\dist)} \sires \frac{
(k-1)!z^{k-1}\prod_{m<l}(z_m-z_l) \alpha(z_1,\ldots ,z_k)}{
\Euler^L(\mathrm{Jet}_k)  \prod_{l=1}^k\prod_{i=1}^n(\lambda_i-z_l)} \,\dbz=\int_{\mathrm{Jet}_k/\!/\diff_k}(\tilde{\phi}^{\le d} \circ \tilde{\tau})^*\a
\end{multline}
Note that $\mathrm{Jet}_k$ is not necessarily smooth at $L$, but $\mathrm{Jet}_k \subset \PP^N$ is a projective variety, and hence $\Euler^L(\mathrm{Jet}_k)$ stands for the Euler-Rossmann class 
\[\Euler^L(\mathrm{Jet}_k)=\frac{\mathrm{Euler}^{T\times \CC^*}(\TT_L\PP^N)}{\emu_L[\mathrm{Jet}_k,\PP^N]}\]
\end{proof}
Since $\mathrm{Jet}^{\le d}_k$ is smooth, the equivariant Euler class of the tangent space at a fixed point corresponding to a leave $L$ is the product of the weights:
\[\Euler^L(\mathrm{Jet}_k^{\le d})=\prod_{\b \in \{t,\b_{ij}\}} \wt^L(\b)\]
where the weights $\wt^L(\b)$ are formed by the simple inductive rule \eqref{weightrule} followed along the path from the root to $L$ in $\calt_k$.  
This finishes the proof of the main theorem of this paper, which can be formulated as follows.
\begin{maintheorem}\label{maintheorem} For arbitrary integers $k\ll n \le m$, the Thom polynomial for the $A_k$-singularity with
$n$-dimensional source space and $m$-dimensional target space is given by the following iterated residue formula.
\[\Tp_k^{n,m}=\res_{z_1=\infty}\ldots \res_{z_k=\infty}\cdot \sum_{L \in \mathrm{Contr}(\calt_k)} \frac{(k-1)!z^{k-1} (z_1\ldots z_k)^{m-n}\prod_{i<j}(z_i-z_j)}{\prod_{\b \in \mathbf{B}} \wt^{L}(\b)} \prod_{i=1}^k c_{TM-TN}(1/z_i)d\bz\]
where we sum over the contributing leaves in the blow-up tree $\calt_{k}$.
\end{maintheorem}

\bibliographystyle{abbrv}
\bibliography{thom.bib}

\end{document}